\newcommand{\al}{\alpha}
\newcommand{\om}{\omega}
\newcommand{\sse}{\subseteq}
\newcommand{\contains}{\supseteq}
\DeclareMathOperator{\dom}{dom}
\DeclareMathOperator{\FIN}{FIN}
\newcommand{\rgl}{\rangle}
\newcommand{\lgl}{\langle}
\newcommand{\re}{\restriction}
\newcommand{\ra}{\rightarrow}
\theoremstyle{plain}
\newtheorem{thm}{Theorem}  
\newtheorem{prop}[thm]{Proposition} 
\newtheorem{lem}[thm]{Lemma} 
\newtheorem{cor}[thm]{Corollary} 
\newtheorem{fact}[thm]{Fact}
\theoremstyle{definition}   
\newtheorem{defn}[thm]{Definition}
\newtheorem{question}[thm]{Question}
\newtheorem{problem}[thm]{Problem}
\newtheorem*{notn}{Notation}
\theoremstyle{remark} 
\newtheorem*{rem}{Remark}
\newtheorem*{claim}{Claim}
\newtheorem*{claim1}{Claim 1}
\newtheorem*{claim2}{Claim 2}
\newtheorem*{subclaimn}{Subclaim}
\begin{document}





\title{Tukey types  of ultrafilters}


%
\author{Natasha Dobrinen} 
\address{Department of Mathematics\\
University of Denver\\
2360 S Gaylord St, Denver, CO 80208\\ 
USA} 
\email{natasha.dobrinen@du.edu} 
\thanks{The first author was supported by her University of Denver, Department of Mathematics start-up fund  and by an NSERC grant of the second author.}

\author{Stevo Todorcevic}
\address{Department of Mathematics\\
University of Toronto\\
40 St George St, Toronto, ON\\
 CANADA M5S 2E4}
\email{stevo@utoronto.edu}
\thanks{The second author was supported by grants from NSERC and CNRS}

\subjclass{03E05,
03E17
03E35,
06A07}


%


%

\begin{abstract}
We investigate the structure of the Tukey types of ultrafilters on countable sets partially ordered by reverse inclusion.
A canonization of cofinal maps from a p-point into another ultrafilter is obtained. 
This is used in particular to study the Tukey types of p-points and selective ultrafilters.
Results fall into three main categories:  comparison to a basis element for selective ultrafilters, embeddings of chains and antichains into the Tukey types,
and Tukey types generated by block-basic ultrafilters on $\FIN$.
\end{abstract}

\maketitle


\section{Introduction}\label{sec.intro}

Let $D$ and $E$ be partial orderings.
We say that a function $f:E\ra D$ is {\em cofinal} if the image of each cofinal subset of $E$ is cofinal in $D$.
We say that $D$ is {\em Tukey reducible} to $E$, and write $D\le_T E$, if there is a cofinal map from $E$ to $D$.
An equivalent formulation of Tukey reducibility was noticed by Schmidt in \cite{Schmidt55}.
Given partial orderings $D$ and $E$, a map
$g:D\ra E$ such that the image of each unbounded subset of $D$ is an unbounded subset of $E$ is called a {\em Tukey map} or an {\em unbounded map}.
$E\ge_T D$ iff there is a Tukey map from $D$ into $E$.
If both $D\le_T E$ and $E\le_T D$, then we write $D\equiv_T E$ and say that $D$ and $E$ are Tukey equivalent.
$\equiv_T$ is an equivalence relation, and $\le_T$ on the equivalence classes forms a partial ordering. 
The equivalence classes can be called {\em Tukey types} or  {\em Tukey degrees}.

In \cite{Tukey40}, Tukey introduced the Tukey ordering  to 
develop the notion of Moore-Smith convergence in topology to the more general setting of   directed partial orderings.
The study of cofinal types and Tukey types of partial orderings often reveals useful information for the comparison of different partial orderings.
For example, Tukey reducibility downward preserves calibre-like properties, such as c.c.c., property K, precalibre $\aleph_1$, $\sigma$-linked, and $\sigma$-centered (see \cite{Todorcevic96}).

Satisfactory classification theories of Tukey degrees have been developed for several classes of ordered sets.
The  cofinal types of countable directed systems are $1$ and $\om$ (see \cite{Tukey40}).
Day found a classification of
countable oriented systems (partially ordered sets) in \cite{Day44} in terms of a three element basis.
Assuming PFA, Todorcevic in \cite{TodorcevicDirSets85} 
 classified the Tukey degrees of 
directed partial orderings of cardinality $\aleph_1$ by showing that there are exactly five cofinal types, and in \cite{Todorcevic96} classified the Tukey degrees of oriented systems (partially ordered sets) of size $\aleph_1$
in terms of a basis consisting of five forms of partial orderings.
However, he also showed in \cite{Todorcevic96} that there are at least $2^{\aleph_1}$ many Tukey incomparable  separative $\sigma$-centered partial orderings of size $\mathfrak{c}$.
This would  preclude a satisfactory classification theory of all partial orderings of size continuum.

However, the structure of the Tukey types
of   particular classes of partial orderings of size continuum
can yield useful information.  
This has been fully stressed first in the paper \cite{Fremlin91} by Fremlin who considered partially ordered sets occurring in analysis.
After this, several papers appeared dealing with different classes of posets such as, for example, the paper \cite{Solecki/Todorcevic04} of Solecki and Todorcevic which makes
a  systematic study of the  structure of the Tukey degrees of topological directed sets.
The paper  \cite{Milovich08}  of Milovich is the first paper after Isbell \cite{Isbell65}
to study Tukey degrees of ultrafilters on $\om$.

In this paper, we investigate the  structure  of the Tukey degrees of ultrafilters on $\om$ ordered by reverse inclusion.
For any ultrafilter $\mathcal{U}$  on $\om$,
$(\mathcal{U},\contains)$ is a directed partial ordering.
We remark  that 
for any two directed partial orderings $D$ and $E$, $D\equiv_T E$ iff $D$ and $E$ are {\em cofinally similar}; that is, there is a partial ordering  into which both $D$ and $E$ embed as cofinal subsets \cite{Tukey40}.
So for ultrafilters, Tukey equivalence is the same as cofinal similarity.

Another motivation for 
this study  is that Tukey reducibility is a 
generalization of  Rudin-Keisler reducibility.

\begin{fact}\label{fact.TRK}
Let $\mathcal{U}$ and $\mathcal{V}$ be ultrafilters on $\om$.
If $\mathcal{U}\ge_{RK} \mathcal{V}$, then $\mathcal{U}\ge_T\mathcal{V}$.
\end{fact}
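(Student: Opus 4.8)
The plan is to unwind both reducibilities to their definitions and exhibit an explicit cofinal map. Fix a function $f:\om\ra\om$ witnessing $\mathcal{U}\ge_{RK}\mathcal{V}$; that is, $\mathcal{V}=f_*(\mathcal{U})=\{V\sse\om:f^{-1}[V]\in\mathcal{U}\}$. I would define $g:\mathcal{U}\ra\mathcal{V}$ by $g(A)=f[A]$, the pointwise image, and claim that $g$ is a cofinal map from $(\mathcal{U},\contains)$ to $(\mathcal{V},\contains)$; by the definition of Tukey reducibility this yields $\mathcal{V}\le_T\mathcal{U}$, i.e.\ $\mathcal{U}\ge_T\mathcal{V}$.

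First I would check that $g$ is well defined, i.e.\ that $f[A]\in\mathcal{V}$ for every $A\in\mathcal{U}$. Since $A\sse f^{-1}[f[A]]$ and $\mathcal{U}$ is upward closed under $\sse$, we have $f^{-1}[f[A]]\in\mathcal{U}$, and therefore $f[A]\in f_*(\mathcal{U})=\mathcal{V}$.

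Next comes the cofinality verification. Recall that in the reverse-inclusion order a subset is cofinal exactly when its members become arbitrarily small: $\mathcal{C}\sse\mathcal{U}$ is cofinal iff every $A\in\mathcal{U}$ contains some member of $\mathcal{C}$. So let $\mathcal{C}\sse\mathcal{U}$ be cofinal and fix an arbitrary $V\in\mathcal{V}$; I must produce a member of $g[\mathcal{C}]$ contained in $V$. Because $f^{-1}[V]\in\mathcal{U}$, cofinality of $\mathcal{C}$ supplies some $A\in\mathcal{C}$ with $A\sse f^{-1}[V]$, whence $g(A)=f[A]\sse f[f^{-1}[V]]\sse V$. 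Thus $g[\mathcal{C}]$ is cofinal in $(\mathcal{V},\contains)$, as required.

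The computation is short, so the only point demanding real attention is keeping the two order directions straight: in $(\mathcal{U},\contains)$ the cofinal families are those consisting of arbitrarily small sets, and the witnessing map must carry small sets of $\mathcal{U}$ to small sets of $\mathcal{V}$, which is precisely the content of the containment $f[A]\sse V$ whenever $A\sse f^{-1}[V]$. I would also remark that, read through Schmidt's reformulation, the same inequality shows that $V\mapsto f^{-1}[V]$ is a Tukey (unbounded) map from $\mathcal{V}$ into $\mathcal{U}$, giving an equivalent second route to the conclusion.
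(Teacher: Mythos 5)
Your proposal is correct and is essentially the paper's own argument: the paper takes the Rudin--Keisler witness $h$ and defines the cofinal map $X\mapsto h[X]$ on $\mathcal{U}$, exactly your $g$, merely omitting the well-definedness and cofinality verifications that you spell out. No issues.
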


\begin{proof}
Take a function $h:\om\ra\om$ satisfying $\mathcal{V}=h(\mathcal{U}):=\{X\sse\om:h^{-1}(X)\in\mathcal{U}\}$.  
Define $f:\mathcal{U}\ra\mathcal{V}$ by $f(X)=\{h(n):n\in X\}$, for each $X\in\mathcal{U}$.
Then $f$ is a cofinal map. 
\end{proof}

Thus arises the question:  How different are Tukey and Rudin-Keisler reducibility?
We shall study this question particularly for p-points.


\section{Notation and basic facts}
In this section, we fix notation and provide some basic facts.
All ultrafilters in this paper  have a base set which is countable.  The base set  will usually be $\om$, but in Section \ref{sec.FIN} we also investigate ultrafilters on $\FIN$, the family of finite, nonempty subsets of $\om$.

\begin{defn}\label{def.cofinal}
Let $(P,\le)$ be a partial ordering.
We say that a subset $C\sse P$ is {\em cofinal} in $P$ if for each $p\in P$ there is a $c\in C$ such that $p\le c$.
We say that $(P,\le)$ is  {\em directed}  if for any $p,r\in P$, there is an $s\in P$ such that $p\le s$ and $r\le s$.
\end{defn}

\begin{fact}\label{fact.cofinalTukeysame}
If $C$ is a cofinal subset of a partial ordering $(P,\le)$,
then $(C,\le)\equiv_T(P,\le)$.
\end{fact}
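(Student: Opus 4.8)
The plan is to exhibit the single inclusion map $\iota : C \ra P$, $\iota(c) = c$, and to observe that it simultaneously witnesses both reducibilities by verifying that it is at once a cofinal map and a Tukey (unbounded) map. Recall from the introduction that a cofinal map from $C$ to $P$ yields $(P,\le) \le_T (C,\le)$, while a Tukey map from $C$ into $P$ yields $(P,\le) \ge_T (C,\le)$; so if $\iota$ has both properties we obtain $(C,\le) \equiv_T (P,\le)$ in one stroke, without having to invoke choice to manufacture an auxiliary map in the harder direction.

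First I would check that $\iota$ is cofinal, i.e.\ that the image of every cofinal subset of $C$ is cofinal in $P$. Since $\iota$ is the identity on $C$, this reduces to showing that any cofinal $C' \sse C$ is already cofinal in $P$, which is just composition of cofinality: given $p \in P$, cofinality of $C$ in $P$ supplies $c \in C$ with $p \le c$, and cofinality of $C'$ in $C$ supplies $c' \in C'$ with $c \le c'$, whence $p \le c'$. This establishes $(P,\le) \le_T (C,\le)$.

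Next I would check that $\iota$ is a Tukey map, i.e.\ that the image of every unbounded subset of $C$ is unbounded in $P$. Arguing by contrapositive, suppose $A \sse C$ were bounded in $P$ by some $p$; cofinality of $C$ furnishes $c \in C$ with $p \le c$, and then $c$ bounds $A$ inside $C$, so $A$ is bounded in $C$ as well. Hence an unbounded $A \sse C$ remains unbounded in $P$, which gives $(C,\le) \le_T (P,\le)$. Combining the two reductions yields $(C,\le)\equiv_T(P,\le)$.

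I do not anticipate a genuine obstacle here: the entire content is the two one-line composition arguments, one for cofinality and one (by contraposition) for boundedness. The only point that demands care is the bookkeeping of the direction of $\le_T$ versus $\ge_T$ under the cofinal-map and Tukey-map characterizations, so that the lone inclusion map is correctly seen to do the work in both directions rather than requiring two separately constructed maps.
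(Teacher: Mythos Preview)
Your proposal is correct and follows essentially the same approach as the paper: both use the identity/inclusion map $C \to P$ and verify that it is simultaneously a cofinal map (giving $P \le_T C$) and a Tukey map (giving $C \le_T P$), with the Tukey verification done by the same contrapositive argument of pulling a bound in $P$ up into $C$ via cofinality.
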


\begin{proof}
Let $C$ be a cofinal subset of $P$ and let $id_C:C\ra P$ be the identity map.
Then $id_C$ is both a cofinal map and a Tukey map.
For if $D\sse C$ is cofinal in $(C,\le)$,
then $id_C''D=D$ is also cofinal in $(P,\le)$.
If $B\sse P$ is bounded in $(P,\le)$,
then there is a $p\in P$ bounding each element of $B$ from above. 
Take a $c\in C$ such that $p\le c$.
Then $c$ bounds $id_C^{-1}(B)$.  Thus, $id_C$ maps each unbounded subset of $C$ to an unbounded subset of $P$, hence is a Tukey map.
\end{proof}

The partial ordering $\le$ on an ultrafilter $\mathcal{U}$  is $\contains$;
that is, for $X,Y\in\mathcal{U}$,
 $X\le Y$ iff $X\contains Y$.
Note that $(\mathcal{U},\contains)$ is a directed partial ordering.

We now show that, for ultrafilters, there is a nice subclass of cofinal maps, namely the monotone cofinal maps, to which we may restrict our attention.

\begin{defn}\label{defn.monotonemap}
Let $(P,\le_P)$ and $(Q,\le_Q)$ be partial orderings.
A  map $f:P\ra Q$ is {\em monotone}
if whenever $p,r$ are in $P$ and $p\le_P r$, then $f(p)\le_Q f(r)$.
For the special case of ultrafilters $\mathcal{U},\mathcal{V}$, this translates to the following:
a map $f:\mathcal{U}\ra\mathcal{V}$ is {\em monotone} if 
whenever $W,X\in\mathcal{U}$ and $W\contains X$, then $f(W)\contains f(X)$.  
\end{defn}

\begin{fact}\label{fact.monotonemap}
Let $(P,\le_P)$ and $(Q,\le_Q)$ be partial orderings.
A monotone map $f:P\ra Q$ is a cofinal map if and only if its image $f''P$ is a cofinal subset of $Q$.
\end{fact}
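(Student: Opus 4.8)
The plan is to prove the two directions of the biconditional separately, and to note that only one of them actually uses the monotonicity hypothesis. For the forward direction, suppose $f$ is a cofinal map. I would simply observe that $P$ is a cofinal subset of itself, since every $p\in P$ satisfies $p\le_P p$. Thus $P$ qualifies as a cofinal subset in the definition of a cofinal map, and so $f''P$, being the image of the cofinal set $P$, must be cofinal in $Q$. This direction is immediate and uses neither monotonicity nor directedness.

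For the converse, assume $f$ is monotone and that $f''P$ is cofinal in $Q$; I would fix an arbitrary cofinal subset $C\sse P$ and show that $f''C$ is cofinal in $Q$. Given any $q\in Q$, the argument is a short three-step chase. First, cofinality of $f''P$ in $Q$ produces some $p\in P$ with $q\le_Q f(p)$. Next, cofinality of $C$ in $P$ produces some $c\in C$ with $p\le_P c$. Finally, monotonicity of $f$ converts this into $f(p)\le_Q f(c)$. Chaining the inequalities through transitivity of $\le_Q$ yields $q\le_Q f(p)\le_Q f(c)$, so $q\le_Q f(c)$ with $f(c)\in f''C$. Since $q$ was arbitrary, $f''C$ is cofinal, and since $C$ was an arbitrary cofinal subset, $f$ is a cofinal map.

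There is no genuine obstacle here; the only point requiring a little care is the \emph{order} of the two cofinality applications. One must first pass into the image to capture $q$ below some $f(p)$, and only then lift $p$ within $P$ using the cofinal set $C$. Monotonicity is precisely the ingredient that transports the lift $p\le_P c$ occurring in the domain into the lift $f(p)\le_Q f(c)$ needed in the codomain; without it, knowing $f''P$ is cofinal would say nothing about the image of the proper subset $C$. I would present the whole converse as a single displayed chain of inequalities to make the role of transitivity explicit.
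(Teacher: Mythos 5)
Your proof is correct and follows essentially the same argument as the paper: the forward direction is immediate since $P$ is cofinal in itself, and the converse uses exactly the same three-step chase (cofinality of $f''P$, then cofinality of $C$, then monotonicity) that the paper gives. No issues.
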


\begin{proof}
Let $f:P\ra Q$ be a monotone map.
If $f$ is a cofinal map,
then certainly the image of $P$ under $f$ is a cofinal subset of $Q$.

Conversely, suppose the image $f''P$ is cofinal in $Q$.
Let $C\sse P$ be a cofinal subset of $P$ and let $q\in Q$ be given.
Since $f''P$ is cofinal in $Q$, 
there is a $p\in P$ such that $q\le_Q f(p)$.
Since $C$ is cofinal in $P$, there is a $c\in C$ such that $p\le_P c$.
Since $f$ is monotone,
$q\le_Q f(p)\le_Q f(c)$.
Therefore, $f'' C$ is cofinal in $Q$.
\end{proof}

\begin{fact}\label{fact.monotone}
Let  $\mathcal{U}$ and $\mathcal{V}$ be ultrafilters.
If $\mathcal{U}\ge_T\mathcal{V}$, then this is witnessed by a monotone cofinal map. 
\end{fact}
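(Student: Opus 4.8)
The plan is to start from the hypothesis $\mathcal{U}\ge_T\mathcal{V}$, which by definition supplies a cofinal map $f:\mathcal{U}\to\mathcal{V}$, and to manufacture from it a monotone cofinal map. By Fact~\ref{fact.monotonemap}, a monotone map is cofinal as soon as its image is cofinal, so the goal reduces to producing a monotone $g:\mathcal{U}\to\mathcal{V}$ whose image is cofinal in $(\mathcal{V},\supseteq)$. The cleanest attempt is to monotonize $f$ directly by setting $g(X)=\bigcap\{f(Y):Y\in\mathcal{U},\ Y\supseteq X\}$. This $g$ is monotone by inspection, since enlarging $X$ shrinks the family $\{Y:Y\supseteq X\}$ and hence enlarges the intersection; and because $g(X)\subseteq f(X)$ while $f$ is cofinal, for every $V\in\mathcal{V}$ some $X$ satisfies $g(X)\subseteq f(X)\subseteq V$, so the image is cofinal.

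First I would dispatch the easy half: monotonicity of $g$ and the inclusion $g(X)\subseteq f(X)$ are immediate, and together with Fact~\ref{fact.monotonemap} they collapse the whole statement to a single point. It is worth noting why I take an intersection rather than a union. The dual candidate $X\mapsto\bigcup\{f(Y):Y\subseteq X\}$ is also monotone and, being a union of members of $\mathcal{V}$, automatically lands in $\mathcal{V}$ by upward closure; but it makes the values too large and so loses cofinality of the image. Thus the intersection is the natural object for cofinality, and the union is the natural object for membership, and the two pull against each other.

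The main obstacle is exactly that remaining point: showing $g(X)\in\mathcal{V}$. An ultrafilter is closed under supersets and \emph{finite} intersections but not under infinite ones, whereas cofinality pushes the construction toward genuinely infinite intersections, so a priori $g(X)$ could fall out of $\mathcal{V}$ altogether. I expect the resolution to exploit the ultrafilter structure rather than pure order theory. One route is to show that the cofinal-map hypothesis on $f$ already forces each $\bigcap\{f(Y):Y\supseteq X\}$ to contain a member of $\mathcal{V}$. If that fails, I would abandon the idea of monotonizing a fixed $f$ and instead build $g$ so that its values are unions of sets in $\mathcal{V}$, hence members of $\mathcal{V}$ by upward closure, choosing these values along a cofinal subfamily of $\mathcal{U}$ so as to keep the image cofinal and then extending to all of $\mathcal{U}$ by the monotone union extension, which preserves both monotonicity and membership. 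Reconciling the downward pressure of cofinality with membership in $\mathcal{V}$ is the crux; the rest is bookkeeping through Fact~\ref{fact.monotonemap} and Fact~\ref{fact.cofinalTukeysame}.
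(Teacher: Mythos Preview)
Your instinct to monotonize by intersection is right, but the gap you flag is fatal for the particular formula you chose. There exist cofinal maps $f:\mathcal{U}\to\mathcal{V}$ for which $\bigcap\{f(Y):Y\supseteq X\}\notin\mathcal{V}$. Concretely: take $\mathcal{V}=\mathcal{U}$ nonprincipal, fix a coinfinite $X\in\mathcal{U}$, enumerate $\omega\setminus X=\{a_n:n<\omega\}$, set $f(X\cup\{a_n\})=\omega\setminus\{n\}$ for each $n$, and $f(Y)=Y$ for every other $Y\in\mathcal{U}$. This $f$ is cofinal (given any cofinal $\mathcal{C}$ and any $V\in\mathcal{U}$, pick $C\in\mathcal{C}$ with $C\subseteq V\cap X$; then $C$ is none of the $X\cup\{a_n\}$, so $f(C)=C\subseteq V$), yet $\bigcap\{f(Y):Y\supseteq X\}\subseteq\bigcap_n(\omega\setminus\{n\})=\emptyset$. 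The structural reason is the one you almost said yourself: cofinal maps need not send bounded sets to bounded sets, and your intersection is over the $f$-image of the bounded family $\{Y:Y\supseteq X\}$.

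The missing idea is to work from the other side of Schmidt's equivalence. From $\mathcal{U}\ge_T\mathcal{V}$ take a \emph{Tukey} map $g:\mathcal{V}\to\mathcal{U}$ and define
\[
f(U)=\bigcap\{V\in\mathcal{V}:g(V)\supseteq U\}.
\]
Now the family being intersected is $g^{-1}(\{U':U'\supseteq U\})$, the $g$-preimage of a bounded set, and Tukey maps have bounded preimages of bounded sets by definition; hence the family is bounded in $(\mathcal{V},\supseteq)$ and its intersection lies in $\mathcal{V}$. Monotonicity is immediate (shrinking $U$ enlarges the index set), and cofinality of the image follows since for any $V'\in\mathcal{V}$ one has $f(g(V'))\subseteq V'$. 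This is exactly the paper's argument; your second fallback plan is unnecessary once you switch to the Tukey map.
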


\begin{proof}
Suppose $\mathcal{U}\ge_T\mathcal{V}$.  Then there is a Tukey map $g:\mathcal{V}\ra\mathcal{U}$ witnessing this.
Define $f:\mathcal{U}\ra\mathcal{V}$ by
$f(U)=\bigcap\{V\in\mathcal{V}:g(V)\contains U\}$.

First we check that $f$ is a function from $\mathcal{U}$ into $\mathcal{V}$.
Let $U\in\mathcal{U}$.
Note that $\{V\in\mathcal{V}:g(V)\contains U\}=
g^{-1}(\{U'\in\mathcal{U}:U'\contains U\})$.
Since the set $\{U'\in\mathcal{U}:U'\contains U\}$
is bounded in $\mathcal{U}$ and $g$ is a Tukey map,
it follows that $\{V\in\mathcal{V}:g(V)\contains U\}$ is bounded in $\mathcal{V}$.
Thus, $\bigcap \{V\in\mathcal{V}:g(V)\contains U\}$ is a member of $\mathcal{V}$.

Next we check that $f$ is monotone.
Let $U\contains U'$ be elements of $\mathcal{U}$.
Then it is the case that 
$\{V\in\mathcal{V}:g(V)\contains U\}\sse
\{V\in\mathcal{V}:g(V)\contains U'\}$.
Thus, 
$f(U)=\bigcap\{V\in\mathcal{V}:g(V)\contains U\}\contains
\bigcap\{V\in\mathcal{V}:g(V)\contains U'\}=f(U')$.

Finally, we show that $f''\mathcal{U}$ is cofinal in $\mathcal{V}$.
Let $V'\in\mathcal{V}$.
Then $g(V')$ is in $\mathcal{U}$; let $U$ denote $g(V')$.
By definition, $f(U)
=\bigcap\{V\in\mathcal{V}:g(V)\contains g(V')\}
\sse V'$.
Thus, by Fact \ref{fact.monotonemap},
$f$ is a monotone cofinal map from $\mathcal{U}$ into $\mathcal{V}$.
\end{proof}

Thus, for ultrafilters, we can restrict ourselves to using monotone cofinal maps.

We now fix some notation for the duration of the paper.
Recall that the partial ordering on a (finite or infinite) cartesian product of partially ordered sets is the coordinate-wise ordering.
Thus, the partial ordering on a cartesian product of directed partial orderings is again a directed partial ordering.

\begin{notn}\label{notation.cross}
Let $\mathcal{U}$, $\mathcal{V}$, and $\mathcal{U}_n$ ($n<\om$) be ultrafilters.
We define the notation for the following ultrafilters. 
\begin{enumerate}
\item
$\mathcal{U}\cdot\mathcal{V}= \{A\sse\om\times\om:\{i\in\om:\{j\in\om:(i,j)\in A\}\in\mathcal{V}\}\in\mathcal{U}\}$.
\item
$\lim_{n\ra\mathcal{U}}\mathcal{U}_n=\{A\sse\om\times\om:\{n\in\om:\{j\in\om:(n,j)\in A\}\in\mathcal{U}_n\}\in\mathcal{U}\}$.
\item
We shall use $\mathcal{U}^2$ to denote $\mathcal{U}\cdot\mathcal{U}$; and more generally, $\mathcal{U}^{n+1}$ shall denote $\mathcal{U}\cdot\mathcal{U}^n$.
We shall use $\mathcal{U}^{\om}$ to denote $\lim_{n\ra\mathcal{U}}\mathcal{U}^{k_n}$,
where $(k_n)_{n<\om}$ is any strictly increasing sequence of natural numbers.
More generally, for any ordinal $\al<\om_1$,
$\mathcal{U}^{\al+1}$ denotes $\lim_{n\ra\mathcal{U}}\mathcal{U}^{\al}$.
For $\al$ a limit ordinal,
$\mathcal{U}^{\al}$ is used to denote any ultrafilter of the form
$\lim_{n\ra\mathcal{U}}\mathcal{U}^{\beta_n}$,
where $(\beta_n)_{n<\om}$ is a strictly increasing sequence of ordinals such that $\sup_{n<\om}\beta_n=\al$.
(So for $\om\le\al<\om_1$, $\mathcal{U}^{\al}$ does not denote a unique ultrafilter, but rather any ultrafilter formed in the way described above.)
\item
$\mathcal{U}\times\mathcal{V}$ is defined to be the ordinary cartesian product of $\mathcal{U}$ and $\mathcal{V}$ with the coordinate-wise ordering $\lgl\contains,\contains\rgl$.
\item
$\Pi_{n<\om}\mathcal{U}_n$ is the cartesian product of the $\mathcal{U}_n$ with its natural coordinate-wise product ordering.
We will let
$\Pi_{n<\om}\mathcal{U}$ denote the cartesian product of $\om$ many copies of $\mathcal{U}$.
\end{enumerate}
\end{notn}

The following basic facts are used  throughout the paper.

\begin{fact}
Let $\mathcal{U},\mathcal{U}_0,\mathcal{U}_1,\mathcal{V},\mathcal{V}_0$, and $\mathcal{V}_1$ be ultrafilters.
\begin{enumerate}
\item
$\mathcal{U}\times\mathcal{U}\equiv_T\mathcal{U}$.
\item
$\mathcal{U}\times \mathcal{V}\ge_T\mathcal{U}$ and 
$\mathcal{U}\times \mathcal{V}\ge_T\mathcal{V}$.
\item
If $\mathcal{U}_1\ge_T \mathcal{U}_0$ and $\mathcal{V}_1\ge_T\mathcal{V}_0$, then $\mathcal{U}_1\times\mathcal{V}_1\ge_T\mathcal{U}_0\times\mathcal{V}_0$.
\item
If $\mathcal{W}\ge_T\mathcal{U}$ and $\mathcal{W}\ge_T\mathcal{V}$,
then $\mathcal{W}\ge_T\mathcal{U}\times\mathcal{V}$.
Thus, $\mathcal{U}\times\mathcal{V}$ is the minimal Tukey type which is Tukey greater than or equal to both $\mathcal{U}$ and $\mathcal{V}$.
\item
 $\mathcal{U}\cdot\mathcal{V}\ge_T \mathcal{U}$ and
 $\mathcal{U}\cdot\mathcal{V}\ge_T \mathcal{V}$,
 and therefore $\mathcal{U}\cdot\mathcal{V}\ge_T\mathcal{U}\times\mathcal{V}$. 
\end{enumerate}
\end{fact}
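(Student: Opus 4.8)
The plan is to treat parts (1)--(4) uniformly: in each case I would exhibit an explicit \emph{monotone} map between the relevant ultrafilters whose image is cofinal, and then invoke Fact~\ref{fact.monotonemap} to conclude the map is cofinal. The only ingredient beyond bare monotonicity is that an ultrafilter is closed under finite intersections and supersets, which lets me satisfy finitely many ``largeness'' requirements at once by intersecting witnesses. Part~(5) I would instead deduce from the Rudin--Keisler picture through Fact~\ref{fact.TRK}, since a direct monotone cofinal map onto $\mathcal{V}$ is awkward to construct.

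For~(2), the coordinate projections $\pi_0\colon(X,Y)\mapsto X$ and $\pi_1\colon(X,Y)\mapsto Y$ on $\mathcal{U}\times\mathcal{V}$ are plainly monotone for the coordinatewise order and surjective onto $\mathcal{U}$ and $\mathcal{V}$ respectively; a surjection has cofinal image, so Fact~\ref{fact.monotonemap} gives $\mathcal{U}\times\mathcal{V}\ge_T\mathcal{U}$ and $\mathcal{U}\times\mathcal{V}\ge_T\mathcal{V}$. For~(1), the inequality $\mathcal{U}\le_T\mathcal{U}\times\mathcal{U}$ is just the case $\mathcal{V}=\mathcal{U}$ of~(2). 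For the reverse direction I would use the diagonal map $\Delta\colon\mathcal{U}\ra\mathcal{U}\times\mathcal{U}$, $\Delta(X)=(X,X)$, which is monotone; its image is cofinal because for any $(Y,Z)\in\mathcal{U}\times\mathcal{U}$ we have $Y\cap Z\in\mathcal{U}$ with $Y\cap Z\sse Y$ and $Y\cap Z\sse Z$, so $\Delta(Y\cap Z)=(Y\cap Z,Y\cap Z)$ dominates $(Y,Z)$ in the reverse-inclusion product order. Hence $\mathcal{U}\times\mathcal{U}\le_T\mathcal{U}$, and~(1) follows.

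For~(3), Fact~\ref{fact.monotone} supplies monotone cofinal maps $f\colon\mathcal{U}_1\ra\mathcal{U}_0$ and $g\colon\mathcal{V}_1\ra\mathcal{V}_0$; the map $(X,Y)\mapsto(f(X),g(Y))$ is monotone and its image $f''\mathcal{U}_1\times g''\mathcal{V}_1$ is cofinal in $\mathcal{U}_0\times\mathcal{V}_0$ since each factor is cofinal, so Fact~\ref{fact.monotonemap} applies. For~(4), take monotone cofinal maps $f\colon\mathcal{W}\ra\mathcal{U}$ and $g\colon\mathcal{W}\ra\mathcal{V}$ and set $h(W)=(f(W),g(W))$, which is monotone. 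To see its image is cofinal, given $(X,Y)\in\mathcal{U}\times\mathcal{V}$ I choose $W_0,W_1\in\mathcal{W}$ with $f(W_0)\sse X$ and $g(W_1)\sse Y$ (possible as $f''\mathcal{W}$ and $g''\mathcal{W}$ are cofinal); then $W:=W_0\cap W_1\in\mathcal{W}$ satisfies $W\sse W_0$ and $W\sse W_1$, so monotonicity gives $f(W)\sse f(W_0)\sse X$ and $g(W)\sse g(W_1)\sse Y$, whence $h(W)$ dominates $(X,Y)$. The minimality clause is then immediate: by~(2) the product is an upper bound, and~(4) says any upper bound dominates it.

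Finally, for~(5) I would route through Rudin--Keisler reducibility rather than build a monotone cofinal map onto $\mathcal{V}$ by hand; this is the one genuinely fiddly point, because the naive ``choose a $\mathcal{V}$-large fibre'' map fails to be monotone. Instead observe that the two coordinate projections $p_0,p_1\colon\om\times\om\ra\om$ witness $\mathcal{U}\cdot\mathcal{V}\ge_{RK}\mathcal{U}$ and $\mathcal{U}\cdot\mathcal{V}\ge_{RK}\mathcal{V}$: unwinding the definition in Notation~\ref{notation.cross}, $p_0^{-1}(X)=X\times\om$ lies in $\mathcal{U}\cdot\mathcal{V}$ iff $X\in\mathcal{U}$ (each nonempty fibre is $\om\in\mathcal{V}$), and $p_1^{-1}(X)=\om\times X$ lies in $\mathcal{U}\cdot\mathcal{V}$ iff $X\in\mathcal{V}$ (every fibre equals $X$ and $\om\in\mathcal{U}$). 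Thus $p_0(\mathcal{U}\cdot\mathcal{V})=\mathcal{U}$ and $p_1(\mathcal{U}\cdot\mathcal{V})=\mathcal{V}$, so Fact~\ref{fact.TRK} yields $\mathcal{U}\cdot\mathcal{V}\ge_T\mathcal{U}$ and $\mathcal{U}\cdot\mathcal{V}\ge_T\mathcal{V}$; the concluding $\mathcal{U}\cdot\mathcal{V}\ge_T\mathcal{U}\times\mathcal{V}$ then follows at once from part~(4).
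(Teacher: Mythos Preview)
Your proof is correct and tracks the paper's argument closely for parts (1)--(3). For (4) you give a direct construction, whereas the paper simply observes that (1) and (3) combine to yield (4): from $\mathcal{W}\ge_T\mathcal{U}$ and $\mathcal{W}\ge_T\mathcal{V}$, part (3) gives $\mathcal{W}\times\mathcal{W}\ge_T\mathcal{U}\times\mathcal{V}$, and part (1) gives $\mathcal{W}\equiv_T\mathcal{W}\times\mathcal{W}$. Either route is fine.

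For (5), your Rudin--Keisler argument is correct (and the paper in fact mentions it parenthetically for the first half). However, your remark that a direct monotone cofinal map onto $\mathcal{V}$ is ``awkward'' is a bit pessimistic: the paper simply takes $g(A)=\pi_2''A=\bigcup_i (A)_i$, which is monotone, lands in $\mathcal{V}$ (since some fibre $(A)_i$ is in $\mathcal{V}$ and $g(A)\supseteq (A)_i$), and is cofinal (take $A=\omega\times V$). The ``choose a large fibre'' map you worried about is indeed non-monotone, but taking the union of all fibres avoids the issue.
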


\begin{proof}
 Let $\pi_1,\pi_2$ denote the projection maps $\pi_i:\om\times\om\ra\om$ ($i=1,2$) given by $\pi_1(m,n)=m$,
 and $\pi_2(m,n)=n$.

(1) $\pi_1$  induces the map $\bar{\pi}_1:\mathcal{U}\times\mathcal{U}\ra\mathcal{U}$, given by $\bar{\pi}_1(U,U')=U$, which is a cofinal map.  Conversely, the map $f(U)=(U,U)$ is a cofinal map from $\mathcal{U}$ into $\mathcal{U}\times\mathcal{U}$.

(2)
Again, the induced map $\bar{\pi}_1:\mathcal{U}\times\mathcal{V}\ra\mathcal{U}$ given by $\bar{\pi}_1(U,V)=U$ is a cofinal map. 
The second part follows since $\mathcal{U}\times\mathcal{V}\equiv_T\mathcal{V}\times\mathcal{U}$.

(3)
Given monotone cofinal maps $f:\mathcal{U}_1\ra\mathcal{U}_0$ and  
$g:\mathcal{V}_1\ra\mathcal{V}_0$,
define the map $h:\mathcal{U}_1\times\mathcal{V}_1\ra\mathcal{U}_0\times\mathcal{V}_0$ by
$h(U,V)=(f(U),g(V))$.
Let $\mathcal{X}$ be a cofinal subset of $\mathcal{U}_1\times\mathcal{V}_1$ 
and let $(A,B)\in\mathcal{U}_0\times\mathcal{V}_0$.
There are $U\in\mathcal{U}_1$ and $V\in\mathcal{V}_1$ such that $f(U)\sse A$ and $g(V)\sse B$.
Since $\mathcal{X}$ is cofinal in $\mathcal{U}_1\times\mathcal{V}_1$, there is some $(U',V')\in\mathcal{X}$ such that $U'\sse U$ and $V'\sse V$.
Since $f$ and $g$ are monotone,
$h(U',V')=(f(U'),g(V'))\ge (f(U),g(V))\ge (A,B)$.
Thus, $h''\mathcal{X}$ is cofinal in $\mathcal{U}_0\times\mathcal{V}_0$.

(4) 
follows immediately from (1) -  (3).

(5)
Define $f:\mathcal{U}\cdot\mathcal{V}\ra\mathcal{U}$
by $f(A)=\{\pi_1(m,n):(m,n)\in A\}$, for each $A\in\mathcal{U}\cdot\mathcal{V}$.
Then $f$ is monotone, and has cofinal range in $\mathcal{U}$. 
Hence, by Fact \ref{fact.monotone},
$\mathcal{U}\cdot\mathcal{V}\ge_T \mathcal{U}$.
(Alternatively, one can just note that the map $\pi_1$ is a Rudin-Keisler map from $\mathcal{U}\cdot\mathcal{V}$ to $\mathcal{U}$; and hence 
$\mathcal{U}\cdot\mathcal{V}\ge_T \mathcal{U}$.)

Let $g:\mathcal{U}\cdot\mathcal{V}\ge_T \mathcal{V}$ be defined by $g(A)=\{\pi_2(m,n):(m,n)\in A\}$, for each $A\in\mathcal{U}\cdot\mathcal{V}$.
Then $g$ is monotone and has cofinal range in $\mathcal{V}$, hence is a cofinal map.
\end{proof}

\begin{rem}\label{rem.dots}
One cannot conclude from the above that $\mathcal{U}\cdot\mathcal{V}\equiv_T\mathcal{U}\times\mathcal{V}$.
Section \ref{sec.omom} contains an investigation into this matter.
\end{rem}

At this point, we recall the definitions of the following special ultrafilters.  
All these definitions can found in \cite{Bartoszynski/JudahBK}.
Recall the standard notation $\sse^*$,
where for $X,Y$ in an ultrafilter $\mathcal{U}$, we write
$X\sse^* Y$ to denote that $|X\setminus Y|<\om$.

\begin{defn}\label{defn.uftypes}
Let $\mathcal{U}$ be an ultrafilter.
\begin{enumerate}
\item
$\mathcal{U}$ is {\em selective} if 
for every function $f:\om\ra\om$,
there is an $X\in\mathcal{U}$ such that either $f\re X$ is constant or $f\re X$ is one-to-one.
\item
$\mathcal{U}$ is a {\em p-point} if for every family $\{X_n:n<\om\}\sse\mathcal{U}$
there is an $X\in\mathcal{U}$ such that $X\sse^* X_n$ for each $n<\om$.
\item
$\mathcal{U}$ is a {\em q-point} if for each partition of $\om$ into finite pieces $\{I_n:n<\om\}$,
there is an $X\in\mathcal{U}$ such that $|X\cap I_n|\le 1$ for each $n<\om$.
\item
$\mathcal{U}$ is {\em rapid} if 
for each function $f:\om\ra\om$,
there exists an $X\in\mathcal{U}$ such that $|X\cap f(n)|\le n$ for each $n<\om$.
\end{enumerate}
\end{defn}

The following well-known implications can  be found in \cite{Bartoszynski/JudahBK}.

\begin{thm}
\begin{enumerate}
\item
An ultrafilter is  selective if and only if it is both a p-point and  a q-point.
\item
Every q-point is rapid.
\end{enumerate}
\end{thm}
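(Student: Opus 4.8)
The plan is to handle both parts with one technique: encode the given combinatorial datum (a function to be canonized, a partition, or a decreasing sequence of large sets) as an input to the defining property of the relevant ultrafilter, and then read off the conclusion. Throughout I will freely pass to subsets lying in $\mathcal{U}$, since membership in $\mathcal{U}$ is preserved under finite intersections.

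For part (1), first suppose $\mathcal{U}$ is selective. To see it is a p-point, let $\{X_n:n<\om\}\sse\mathcal{U}$ be given; replacing $X_n$ by $\bigcap_{k\le n}X_k$ I may assume the sequence is $\sse$-decreasing, and without loss of generality $\bigcap_n X_n=\emptyset$ (if that intersection lies in $\mathcal{U}$ I am already done, and otherwise I subtract it from each $X_n$). Define $f:\om\ra\om$ by letting $f(m)$ be the largest $n$ with $m\in X_n$, and $f(m)=0$ if $m\notin X_0$. By selectivity there is $Y\in\mathcal{U}$ with $f\re Y$ constant or one-to-one; a constant value $n$ forces $Y\sse\om\setminus X_{n+1}$, contradicting $X_{n+1},Y\in\mathcal{U}$, so $f\re Y$ is one-to-one. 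Then for each $n$ the set $Y\setminus X_n$ equals $\{m\in Y:f(m)<n\}$, which has at most $n$ elements, so $Y\sse^* X_n$. For the q-point property, given a partition $\{I_n:n<\om\}$ of $\om$ into finite pieces, I apply selectivity to the map sending $m$ to the unique $n$ with $m\in I_n$: a constant value would place a member of $\mathcal{U}$ inside a finite set, so $f\re Y$ is one-to-one, which says exactly that $|Y\cap I_n|\le 1$ for all $n$.

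For the converse, suppose $\mathcal{U}$ is both a p-point and a q-point and let $f:\om\ra\om$ be arbitrary. If some fiber $f^{-1}(k)$ lies in $\mathcal{U}$, then $f$ is constant on a member of $\mathcal{U}$ and we are done. Otherwise every $\om\setminus f^{-1}(k)\in\mathcal{U}$, and applying the p-point property to this family yields $X\in\mathcal{U}$ with $X\cap f^{-1}(k)$ finite for each $k$; that is, $f\re X$ is finite-to-one. Now the sets $f^{-1}(k)\cap X$ (for $k$ in the range of $f\re X$), together with the singletons of $\om\setminus X$, form a partition of $\om$ into finite pieces, and the q-point property supplies $Y\in\mathcal{U}$ meeting each piece in at most one point; hence $f$ is one-to-one on $X\cap Y\in\mathcal{U}$. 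I expect this converse to require the only genuine care: one must thin to a finite-to-one set \emph{before} the q-point property can be invoked, and must check that the padded collection of fibers really is a partition of $\om$ into infinitely many finite blocks (infinitude follows because a finite range would force $X$ finite).

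For part (2), let $f:\om\ra\om$ be given. Since enlarging $f$ only strengthens the conclusion $|X\cap f(n)|\le n$, I may replace it by a strictly increasing function dominating it, and so assume $f$ is strictly increasing. Consider the interval partition $I_0=[0,f(1))$ and $I_n=[f(n),f(n+1))$ for $n\ge 1$, whose pieces are finite. The q-point property gives $X\in\mathcal{U}$ with $|X\cap I_n|\le 1$ for all $n$; since $[0,f(n))=I_0\cup\cdots\cup I_{n-1}$ is a union of $n$ of these pieces, $|X\cap f(n)|\le n$ for every $n\ge 1$. Finally, intersecting $X$ with $[f(0),\om)\in\mathcal{U}$ deletes the finitely many points below $f(0)$ and thereby secures the remaining case $n=0$, producing the witness required for rapidity; this initial-segment adjustment is what absorbs the mild off-by-one in the indexing.
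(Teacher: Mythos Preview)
Your proof is correct and follows the standard argument. Note, however, that the paper does not actually prove this theorem: it is stated as a well-known result with a citation to \cite{Bartoszynski/JudahBK}, so there is no proof in the paper to compare against. Your argument is precisely the textbook one and would serve perfectly well as a written-out justification.
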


We point out that all of these special ultrafilters exist under CH, under  MA, and even under  weaker assumptions  involving cardinal invariants.
However, the existence of selective ultrafilters, p-points, q-points, or even rapid ultrafilters does not follow from ZFC.
We refer the interested reader to \cite{Bartoszynski/JudahBK} for further exposition on these topics.

We point out the next fact, since it is useful to know, especially in Section \ref{sec.omom}.

\begin{fact}\label{dotnotppoint}
For any ultrafilter $\mathcal{U}$,
$\mathcal{U}\cdot\mathcal{U}$ is not a p-point.
\end{fact}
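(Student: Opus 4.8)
The plan is to exhibit a countable subfamily of $\mathcal{U}\cdot\mathcal{U}$ that witnesses the failure of the p-point property from Definition~\ref{defn.uftypes}(2); that is, a family $\{E_i:i<\om\}\sse\mathcal{U}\cdot\mathcal{U}$ admitting no pseudo-intersection inside $\mathcal{U}\cdot\mathcal{U}$. The natural candidates are the complements of the columns of $\om\times\om$. For each $i<\om$ set $E_i=(\om\setminus\{i\})\times\om$, the complement of the $i$-th column $\{i\}\times\om$. Here I use the standing convention that $\mathcal{U}$ is nonprincipal, so that singletons are not in $\mathcal{U}$ and cofinite subsets of $\om$ are.

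First I would check that $E_i\in\mathcal{U}\cdot\mathcal{U}$ for each $i$. Unwinding the definition of $\mathcal{U}\cdot\mathcal{U}$ from Notation~\ref{notation.cross}(1), the relevant set of first coordinates is $\{i'<\om:\{j:(i',j)\in E_i\}\in\mathcal{U}\}$. For $i'\neq i$ the inner section $\{j:(i',j)\in E_i\}$ is all of $\om$, while for $i'=i$ it is empty. Hence this set equals $\om\setminus\{i\}$, which lies in $\mathcal{U}$ since $\mathcal{U}$ is nonprincipal, and therefore $E_i\in\mathcal{U}\cdot\mathcal{U}$.

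Next I would record the two observations that together yield the contradiction. On one hand, $X\sse^* E_i$ says exactly that $X$ meets the $i$-th column $\{i\}\times\om$ in only finitely many points. On the other hand, if $X\in\mathcal{U}\cdot\mathcal{U}$ then, again by the definition, the set of $i$ for which the section $X_i:=\{j:(i,j)\in X\}$ belongs to $\mathcal{U}$ is itself a member of $\mathcal{U}$, hence infinite; and for every such $i$ the section $X_i$ is infinite, so $X$ meets the $i$-th column in infinitely many points. These are incompatible, so no $X\in\mathcal{U}\cdot\mathcal{U}$ can satisfy $X\sse^* E_i$ for all $i$. Thus $\{E_i:i<\om\}$ is a countable subfamily of $\mathcal{U}\cdot\mathcal{U}$ with no pseudo-intersection in $\mathcal{U}\cdot\mathcal{U}$, and $\mathcal{U}\cdot\mathcal{U}$ is not a p-point.

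I do not expect a serious obstacle here; the only real content is choosing the right witnessing family (the column complements) and reading off from the definition of $\mathcal{U}\cdot\mathcal{U}$ that membership forces $\mathcal{U}$-many of the sections $X_i$ to be infinite. The single point to watch is the nonprincipality of $\mathcal{U}$, which is used both to place each $E_i$ in $\mathcal{U}\cdot\mathcal{U}$ and to guarantee that the $\mathcal{U}$-large set of ``good'' first coordinates is infinite; without it the statement fails, since for a principal $\mathcal{U}$ the product $\mathcal{U}\cdot\mathcal{U}$ is again principal and hence trivially a p-point.
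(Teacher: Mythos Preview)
Your proof is correct and follows essentially the same approach as the paper: you use the column-complements $E_i=(\om\setminus\{i\})\times\om$ while the paper uses $A_n=[n,\om)\times\om$, but since $\bigcap_{i<n}E_i=A_n$ these families generate the same filter and the argument is identical. Your closing remark about the principal case is apt---indeed the paper also treats it separately, implicitly relying on the convention that p-points are nonprincipal.
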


\begin{proof}
If $\mathcal{U}$ is principle, generated by $\{n\}$,
then $\mathcal{U}\cdot\mathcal{U}$ is also principle, 
generated by $\{(n,n)\}$.
If $\mathcal{U}$ is not principle, then it contains the Fr\'{e}chet filter.
For each $n<\om$, let $A_n=[n,\om)\times\om$.
Then each $A_n$ is in $\mathcal{U}$.
However, there is no $B\in\mathcal{U}\cdot\mathcal{U}$ such that $B\sse^* A_n$ for all $n<\om$; for if $B\sse^* A_n$ for all $n<\om$,
then for each $n$ there could only be finitely many $j$ such that $(n,j)\in B$.
\end{proof}

A word about the top Tukey type for ultrafilters.
The directed set $([\mathfrak{c}]^{<\om},\sse)$ is the maximal Tukey type among all directed partial orderings of cardinality $\mathfrak{c}$.

\begin{fact}\label{fact.c<omtop}
Let $(X,\le)$ be any directed partial ordering of cardinality $\mathfrak{c}$.
Then\\
 $(X,\le)\le_T([\mathfrak{c}]^{<\om},\sse)$.
\end{fact}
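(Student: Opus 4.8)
The plan is to use the equivalent Tukey-map formulation recalled in the introduction: to establish $(X,\le)\le_T([\mathfrak{c}]^{<\om},\sse)$ it suffices to produce a Tukey (unbounded) map $g:(X,\le)\ra([\mathfrak{c}]^{<\om},\sse)$, i.e.\ a map carrying every unbounded subset of $X$ to an unbounded subset of $([\mathfrak{c}]^{<\om},\sse)$. Since $|X|=\mathfrak{c}$, I would first fix a bijection between $X$ and $\mathfrak{c}$, which induces an order-isomorphism $([X]^{<\om},\sse)\cong([\mathfrak{c}]^{<\om},\sse)$ between the posets of finite subsets; so it is enough to build a Tukey map $g:X\ra[X]^{<\om}$ into the finite subsets of $X$ itself.

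The map I would take is the simplest available one, the singleton map $g(x)=\{x\}$ for each $x\in X$. To check that this is a Tukey map, let $A\sse X$ be unbounded in $(X,\le)$; I must show that $g''A=\{\{x\}:x\in A\}$ is unbounded in $([X]^{<\om},\sse)$. Suppose toward a contradiction that some finite $F\in[X]^{<\om}$ bounds $g''A$; then $\{x\}\sse F$ for every $x\in A$, whence $A\sse F$, so $A$ is finite.

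The crux of the argument, and the only place where the hypothesis is used, is the elementary fact that \emph{every finite subset of a directed partial ordering is bounded}: by directedness and a straightforward induction on cardinality, any finite subset of $X$ has an upper bound. Thus a finite $A$ would be bounded, contradicting the assumption that $A$ is unbounded; hence $g''A$ is unbounded, and $g$ is a Tukey map. I expect the main (very mild) obstacle to be precisely the correct invocation of directedness here: it is genuinely needed, since for a non-directed $X$ the singleton map fails, e.g.\ a two-element unbounded subset of an antichain maps to a pair of singletons that is bounded by their union. With directedness in hand, unbounded subsets are forced to be infinite, and an infinite family of distinct singletons can never be contained in a single finite set, which is exactly what makes $g$ work.
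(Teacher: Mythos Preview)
Your proof is correct and follows essentially the same approach as the paper: the paper simply takes $g:X\ra[\mathfrak{c}]^{<\om}$ to be \emph{any} one-to-one function, observes that unbounded subsets of a directed set must be infinite, and that any infinite subset of $[\mathfrak{c}]^{<\om}$ is unbounded. Your singleton map (transported through a bijection $X\leftrightarrow\mathfrak{c}$) is just a particular instance of such an injection, and your verification is the same argument phrased concretely.
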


\begin{proof}
Let $g:X\ra[\mathfrak{c}]^{<\om}$ be any one-to-one function.  
Then $g$ is a Tukey map.
To see this, 
let $W$ be any unbounded subset of $X$. 
Then in particular, $W$ must be infinite, since every finite subset of $X$ is bounded since $X$ is directed.
Since $g$ is one-to-one, the image $g''W$ is also infinite.
Every infinite subset of $[\mathfrak{c}]^{<\om}$ is unbounded, so $g''W$ is unbounded.
\end{proof}

The following combinatorial characterization of when an ultrafilter has top Tukey type is useful.

\begin{fact}\label{fact.Tukeytopchar}
Let $\mathcal{U}$ be an ultrafilter.
$(\mathcal{U},\contains)\equiv_T([\mathfrak{c}]^{<\om},\sse)$ if and only if there is a subset $\mathcal{X}\sse\mathcal{U}$ such that $|\mathcal{X}|=\mathfrak{c}$ and for each infinite $\mathcal{Y}\sse\mathcal{X}$,
$\bigcap\mathcal{Y}\not\in\mathcal{U}$.
\end{fact}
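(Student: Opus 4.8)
The plan is to use Fact \ref{fact.c<omtop} to dispose of one inequality for free and then argue the reverse one by hand. Since $(\mathcal{U},\contains)$ is a directed partial ordering of cardinality $\mathfrak{c}$, Fact \ref{fact.c<omtop} already gives $(\mathcal{U},\contains)\le_T([\mathfrak{c}]^{<\om},\sse)$. Hence the Tukey equivalence reduces to the single reduction $([\mathfrak{c}]^{<\om},\sse)\le_T(\mathcal{U},\contains)$, which by definition is witnessed by a Tukey map $g:[\mathfrak{c}]^{<\om}\ra\mathcal{U}$. The first thing I would record is the translation of ``unbounded'' on both sides: a family $\mathcal{A}\sse[\mathfrak{c}]^{<\om}$ is unbounded iff $\bigcup\mathcal{A}$ is infinite, while a family $\mathcal{B}\sse\mathcal{U}$ is bounded in $(\mathcal{U},\contains)$ iff $\bigcap\mathcal{B}\in\mathcal{U}$ (if $X\in\mathcal{U}$ refines every member of $\mathcal{B}$ then $X\sse\bigcap\mathcal{B}$, so $\bigcap\mathcal{B}\in\mathcal{U}$, and conversely $\bigcap\mathcal{B}$ itself bounds $\mathcal{B}$). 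With this dictionary, the whole statement becomes a statement about infinite intersections of members of $\mathcal{X}$ failing to lie in $\mathcal{U}$.

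For the direction ``$\Leftarrow$'', given $\mathcal{X}=\{X_\al:\al<\mathfrak{c}\}$ enumerated without repetition, I would define $g(F)=\bigcap_{\al\in F}X_\al$ for nonempty $F\in[\mathfrak{c}]^{<\om}$ and $g(\emptyset)=\om$. Each value is a finite intersection of members of $\mathcal{U}$, hence lies in $\mathcal{U}$, so $g$ really maps into $\mathcal{U}$. The crux is the identity $\bigcap_{F\in\mathcal{A}}g(F)=\bigcap_{\al\in\bigcup\mathcal{A}}X_\al$, valid for any $\mathcal{A}\sse[\mathfrak{c}]^{<\om}$. If $\mathcal{A}$ is unbounded then $S:=\bigcup\mathcal{A}$ is infinite, so $\mathcal{Y}:=\{X_\al:\al\in S\}$ is an infinite subset of $\mathcal{X}$ (distinctness of the enumeration is exactly what makes it infinite), and the hypothesis gives $\bigcap\mathcal{Y}\notin\mathcal{U}$; by the identity this says $g''\mathcal{A}$ is unbounded, so $g$ is a Tukey map, whence $([\mathfrak{c}]^{<\om},\sse)\le_T(\mathcal{U},\contains)$.

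For the direction ``$\Rightarrow$'', I would feed the singletons into a Tukey map $g$ witnessing $([\mathfrak{c}]^{<\om},\sse)\le_T(\mathcal{U},\contains)$: set $X_\al=g(\{\al\})$ and $\mathcal{X}=\{X_\al:\al<\mathfrak{c}\}$. Any infinite $\mathcal{Y}\sse\mathcal{X}$ equals $\{X_\al:\al\in S\}$ for an infinite index set $S$, and $\{\{\al\}:\al\in S\}$ is unbounded, so its $g$-image $\mathcal{Y}$ is unbounded, i.e.\ $\bigcap\mathcal{Y}\notin\mathcal{U}$, which is the required property. The one point that needs care --- and which I expect to be the main obstacle --- is establishing $|\mathcal{X}|=\mathfrak{c}$, since a Tukey map need not be injective. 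I would handle this by showing $\al\mapsto X_\al$ is finite-to-one: if some $X\in\mathcal{U}$ were equal to $X_\al$ for infinitely many $\al$, forming an infinite set $S$, then $\{\{\al\}:\al\in S\}$ would be unbounded while its image $\{X\}$ is bounded (by $X$ itself), contradicting that $g$ is a Tukey map. A finite-to-one map on $\mathfrak{c}$ has range of size $\mathfrak{c}$, so $|\mathcal{X}|=\mathfrak{c}$, completing the proof.
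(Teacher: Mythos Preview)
Your proof is correct. Both directions work, and your translation of boundedness on each side is accurate.

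The approach differs somewhat from the paper's. For the backward direction, the paper simply takes \emph{any} injection $g:[\mathfrak{c}]^{<\om}\ra\mathcal{X}$ and observes that an unbounded (hence infinite) $Z\sse[\mathfrak{c}]^{<\om}$ maps to an infinite subset of $\mathcal{X}$, whose intersection is not in $\mathcal{U}$ by hypothesis. Your map $g(F)=\bigcap_{\al\in F}X_\al$ is a perfectly good alternative; it has the added feature of being monotone, and your intersection identity makes the verification transparent. For the forward direction, the paper argues by contrapositive: assuming no such $\mathcal{X}$ exists, it shows that no $g:[\mathfrak{c}]^{<\om}\ra\mathcal{U}$ can be a Tukey map, by splitting into cases on the size of the range. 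Your direct extraction of $\mathcal{X}$ from the singletons is cleaner and more constructive --- it actually names the witnessing family, and the finite-to-one argument to secure $|\mathcal{X}|=\mathfrak{c}$ is exactly the right patch for the non-injectivity of Tukey maps. (Minor point: when you write that an infinite $\mathcal{Y}\sse\mathcal{X}$ equals $\{X_\al:\al\in S\}$ for infinite $S$, this is immediate by taking $S$ to be the full preimage $\{\al:X_\al\in\mathcal{Y}\}$, without yet invoking finite-to-one; you might reorder slightly for clarity.)
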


\begin{proof}
We first show the foreword direction by contrapositive.
Suppose that there is no subset 
$\mathcal{X}\sse\mathcal{U}$ such that $|\mathcal{X}|=\mathfrak{c}$ and for each infinite $\mathcal{Y}\sse\mathcal{X}$,
$\bigcap\mathcal{Y}\not\in\mathcal{U}$.
Then for each subset $\mathcal{X}\sse\mathcal{U}$ such that $|\mathcal{X}|=\mathfrak{c}$, there is an infinite $\mathcal{Y}\sse\mathcal{X}$ such that $\bigcap\mathcal{Y}\in\mathcal{U}$.
We shall show that there is no Tukey map from 
$([\mathfrak{c}]^{<\om},\sse)$ into $(\mathcal{U},\contains)$.

Let $g:([\mathfrak{c}]^{<\om},\sse)\ra(\mathcal{U},\contains)$ be given.
If the range of $g$ is countable, then there is an uncountable subset $\mathcal{C}\sse[\mathfrak{c}]^{<\om}$  and a $U\in\mathcal{U}$ such that $g''\mathcal{C}=\{U\}$.
So $g$ maps an unbounded set to a bounded set, hence is not a Tukey map.
Otherwise, the range of $g$ is uncountable.
By our hypothesis, there is an infinite set $\mathcal{Y}\sse g''[\mathfrak{c}]^{<\om}$ such that 
$\bigcap\mathcal{Y}\in\mathcal{U}$.
Letting $\mathcal{C}$ be the $g$-preimage of $\mathcal{Y}$, we see that $\mathcal{C}$ is infinite, hence unbounded.
Thus, $g$ is not a Tukey map.
Therefore, $([\mathfrak{c}]^{<\om},\sse)\not\le_T(\mathcal{U},\contains)$.

Suppose there is a subset $\mathcal{X}\sse\mathcal{U}$ such that $|\mathcal{X}|=\mathfrak{c}$ and for each infinite $\mathcal{Y}\sse\mathcal{X}$,
$\bigcap\mathcal{Y}\not\in\mathcal{U}$.
By Fact \ref{fact.c<omtop}, we know that 
$(\mathcal{U},\contains)\le_T([\mathfrak{c}]^{<\om},\sse)$, so it remains to show that 
$(\mathcal{U},\contains)\ge_T([\mathfrak{c}]^{<\om},\sse)$.
Let $g:[\mathfrak{c}]^{<\om}\ra\mathcal{X}$ be any one-to-one function.
Let $Z\sse[\mathfrak{c}]^{<\om}$ be unbounded.
Then $Z$ must be infinite, since 
$([\mathfrak{c}]^{<\om},\sse)$ is directed.
Since $g$ is one-to-one,
$g''Z$ is an infinite subset of $\mathcal{X}$.
Thus, $\bigcap g'' Z$ is not in $\mathcal{U}$, so $g'' Z$ is unbounded in $(\mathcal{U},\contains)$.
Therefore, $g$ is a Tukey map.
\end{proof}


\section{Basic and basically generated ultrafilters}\label{sec.basic}

The following type of partial ordering was introduced by Solecki and Todorcevic in \cite{Solecki/Todorcevic04}.

\begin{defn}[\cite{Solecki/Todorcevic04}]\label{defn.basic}
Let $D$ be a separable metric space and let $\le$ be a partial ordering on $D$.  We say that $(D,\le)$ is \em basic \rm if
\begin{enumerate}
\item
each pair of elements of $D$ has the least upper bound with respect to $\le$ and the binary operation of least upper bound from $D\times D$ to $D$ is continuous;
\item
each bounded sequence has a converging subsequence;
\item
each converging sequence has a bounded subsequence.
\end{enumerate}
\end{defn}

Each ultrafilter is a separable metric space using the metric inherited from $\mathcal{P}(\om)$ viewed as the Cantor space, and recall that we define $\le$ on an ultrafilter to be $\contains$.
In this context, a sequence $(W_n)_{n<\om}$ of elements of $\mathcal{P}(\om)$ converges to $W\in\mathcal{P}(\om)$ iff for each $m$ there is some $k$ such that for each $n\ge k$, $W_n\cap m=W\cap m$.
It is not hard to see that every bounded subset of an ultrafilter has a convergent subsequence.
Thus, an ultrafilter is basic iff (3) holds.

The next theorem shows that the basic ultrafilters are exactly the p-points. 
We recall the following characterization of non-meager ideals, which can be found in \cite{Jalali-Naini76} or \cite{Talagrand80}.
An ideal $\mathcal{I}\sse\mathcal{P}(\om)$ is called {\em unbounded} if for each strictly increasing sequence of natural numbers $(n_i)_{i<\om}$,
there is an $X\in\mathcal{I}$ such that $[n_i,n_{i+1})\sse X$ for infinitely many $i<\om$.
It was shown in \cite{Jalali-Naini76} that an ideal is unbounded if and only if it is nonmeager (as a subset
 of $\mathcal{P}(\om)$ with the topology inherited from the Cantor space).

\begin{thm}\label{thm.stevosummer}
An ideal $\mathcal{I}$ on $\mathcal{P}(\om)$ containing all finite subsets of $\om$ is basic relative to the Cantor topology iff $\mathcal{I}$ is a non-meager p-ideal.
Hence, an ultrafilter is basic iff it is a p-point.
\end{thm}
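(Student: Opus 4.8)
The plan is to reduce the stated equivalence to condition (3) of Definition \ref{defn.basic} and then to pin down exactly when (3) can fail. First I would observe that for \emph{any} ideal $\mathcal{I}\sse\mathcal{P}(\om)$ containing the finite sets (ordered by $\sse$), conditions (1) and (2) hold automatically, so that $\mathcal{I}$ is basic iff (3) holds. For (1), the least upper bound of $A,B$ is $A\cup B\in\mathcal{I}$, and the union map $\mathcal{P}(\om)\times\mathcal{P}(\om)\ra\mathcal{P}(\om)$ is continuous because membership of each $n$ in $A\cup B$ depends on only finitely much information about $A$ and $B$. For (2), if $(A_n)$ is bounded by $B\in\mathcal{I}$ then every $A_n$ lies in $\mathcal{P}(B)$, a closed and hence compact subset of the Cantor space, so a convergent subsequence exists with limit in $\mathcal{P}(B)\sse\mathcal{I}$. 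This mirrors the observation already made for ultrafilters.

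The crux is the sufficiency direction. Assume $\mathcal{I}$ is a non-meager p-ideal and let $(A_n)$ converge to $A$, all in $\mathcal{I}$; I must produce a bounded subsequence. Applying the p-ideal property to $\{A\}\cup\{A_n:n<\om\}$ yields $Y\in\mathcal{I}$ with $A\sse^* Y$ and $A_n\sse^* Y$ for all $n$. Put $G=A\setminus Y$ and $F_n=A_n\setminus Y$, both finite; a short computation gives $F_n\setminus G\sse A_n\sd A$, and since $A_n\to A$ the minimum of $A_n\sd A$ tends to infinity, so $\min(F_n\setminus G)\to\infty$. Hence I can thin to a subsequence along which the sets $F_n\setminus G$ are contained in successive intervals $[m_k,m_{k+1})$. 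Now the non-meagerness, in the form of the unboundedness characterization recalled before Theorem \ref{thm.stevosummer}, supplies an $X\in\mathcal{I}$ containing $[m_k,m_{k+1})$, and hence $F_n\setminus G$, for infinitely many $k$. Along that sub-subsequence $A_n\sse Y\cup G\cup(F_n\setminus G)\sse Y\cup G\cup X\in\mathcal{I}$, a single bound, so (3) holds. I expect this to be the main obstacle, since it is exactly here that both hypotheses are genuinely needed: the p-ideal property disposes of the bulk $A_n\cap Y$, while non-meagerness absorbs the finite, spread-out remainders into one ideal element.

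For the converse I would argue the contrapositive in two cases. If $\mathcal{I}$ is meager, the recalled characterization gives intervals $I_i=[n_i,n_{i+1})$ such that no member of $\mathcal{I}$ contains infinitely many $I_i$; then $(I_i)$ converges to $\emptyset\in\mathcal{I}$, and any bounded subsequence would yield $Y\in\mathcal{I}$ with $I_i\sse Y$ for infinitely many $i$, a contradiction, so (3) fails. If $\mathcal{I}$ is not a p-ideal, fix an increasing sequence $(X_n)$ in $\mathcal{I}$ with no pseudo-union in $\mathcal{I}$ and set $B_k=X_k\cap[k,\om)$; then $B_k\to\emptyset$, and a bounded subsequence $(B_{k_j})\sse Y\in\mathcal{I}$ would give $X_{k_j}\sse^* Y$, whence $X_n\sse^* Y$ for every $n$ by monotonicity, contradicting the choice of $(X_n)$. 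Thus (3) fails in either case, which together with the previous paragraph settles the main equivalence.

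Finally I would deduce the ultrafilter statement by complementation duality. The map $c(X)=\om\setminus X$ is a homeomorphism of $\mathcal{P}(\om)$ carrying $(\mathcal{U},\contains)$ onto its dual ideal $(\mathcal{I},\sse)$ as an order isomorphism, so $\mathcal{U}$ is basic iff $\mathcal{I}$ is basic. The dual ideal of an ultrafilter is always non-meager: given any interval partition, one of the two unions of alternate intervals lies in $\mathcal{U}$, so its complement lies in $\mathcal{I}$ and contains infinitely many intervals. Moreover $c$ translates the p-point property of $\mathcal{U}$ precisely into the p-ideal property of $\mathcal{I}$. Hence $\mathcal{U}$ basic $\Llra$ $\mathcal{I}$ a non-meager p-ideal $\Llra$ $\mathcal{I}$ a p-ideal $\Llra$ $\mathcal{U}$ a p-point.
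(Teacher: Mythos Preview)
Your proof is correct and follows essentially the same approach as the paper: in both directions the same key ideas appear (finite intervals converging to $\emptyset$ to test non-meagerness, the tail shifts $X_k\cap[k,\om)$ to test the p-ideal property, and in the converse the p-ideal pseudo-bound followed by non-meagerness to absorb the finite remainders into a single ideal element). Your bookkeeping in the sufficiency direction---isolating $F_n\setminus G\sse A_n\sd A$ and bounding it by a single $X\in\mathcal{I}$---is slightly cleaner than the paper's interleaved choice of $m_k$, and you also supply the complementation-duality deduction of the ultrafilter statement that the paper leaves implicit.
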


\begin{proof}
Let $\mathcal{I}$ be an ideal on $\mathcal{P}(\om)$ containing all finite subsets of $\om$.

Assume $\mathcal{I}$ is basic.
Let $\lgl n_k:k<\om\rgl$ be an increasing sequence of integers.
Note that each $[n_k,n_{k+1})\in\mathcal{I}$, since Fin $\sse \mathcal{I}$.
 $[n_k,n_{k+1})\ra\emptyset$; so by basicness, there is a subsequence whose union is in $\mathcal{I}$.
Hence, $\mathcal{I}$ unbounded, and thus is nonmeager.

Let $\{A_n:n<\om\}\sse\mathcal{I}$.
We can assume that for each $n<\om$,
$A_n\sse A_{n+1}$.
Let $A'_n=A_n\setminus n$.
Then $A'_n\sse A_n$, so $A'_n\in\mathcal{I}$.
$A'_n\ra\emptyset$ in the Cantor topology, so since $\mathcal{I}$ is basic,
there is a subsequence $n_k$ such that $\bigcup_{k<\om}A'_{n_k}\in\mathcal{I}$.
Let $A=\bigcup_{k<\om}A_{n_k}$.
Then for each $n<\om$, 
$A_n\sse^* A$, since for each $n$ there is an $n_k>n$ such that $A_n\sse A_{n_k}\sse^* A'_{n_k}\sse A$.
Thus, $\mathcal{I}$ is a p-ideal.

Now suppose $\mathcal{I}$ is a nonmeager p-ideal.
Suppose $A_n, A\in\mathcal{I}$ and $A_n\ra A$ in the Cantor topology. 
Take $B\in\mathcal{I}$ such that for each $n$, $A_n\sse^* B$.
Let $m_k$ be a strictly increasing sequence such that $m_0=0$ 
and 
\begin{enumerate}
\item
$n\ge m_{k+1}$ implies $A_n\cap m_k=A\cap m_k$, and 
\item
 $n\le m_k$ implies
$A_n\setminus m_{k+1}\sse B$.
\end{enumerate}
Since $\mathcal{I}$ is nonmeager, there is a subsequence $(m_{k_i})_{i<\om}$ of $(m_k)_{k<\om}$ such that 
$C:=\bigcup_{i<\om}[m_{k_i},m_{k_i+2})\in\mathcal{I}$.
Let $X=\bigcup_{i<\om}A_{m_{k_i}+1}$.

We claim that $X\sse A\cup B\cup C$.
Let $i<\om$ be given.
Then $A_{m_{k_i}+1}\cap m_{k_i} = A\cap m_{k_i}$, by (1).
$A_{m_{k_i}+1}\cap[m_{k_i},m_{k_i +2})\sse C$, since $C$ contains the interval $[m_{k_i},m_{k_i +2})$.
Finally, $A_{m_{k_i}+1}\setminus m_{k_i+2}\sse B$, by (2).
Thus, $A_{m_{k_i}+1}\sse A\cup B\cup C$.
Since $i$ was arbitrary, we have the desired conclusion that $X\sse A\cup B\cup C$, and hence $X\in\mathcal{I}$.
Therefore, $\mathcal{I}$ is basic, since every convergent sequence of elements of $\mathcal{I}$ has a bounded subsequence.
\end{proof}

\begin{rem}
From the proof, we can see that an ultrafilter is basic iff every sequence which converges to $\om$ has a bounded subsequence.
\end{rem}

The next definition gives a notion of ultrafilters which is weaker than p-point.

\begin{defn}
We say that an ultrafilter $\mathcal{U}$ on $\mathcal{P}(\om)$ is \em basically generated \rm if it has a filter basis $\mathcal{B}\sse\mathcal{U}$ (i.e.\ $\forall A\in\mathcal{U}$ $\exists B\in\mathcal{B}$ $B\sse A$)
with the property that each sequence 
$\{A_n:n<\om\}\sse\mathcal{B}$ converging to an element of $\mathcal{B}$
has a subsequence $\{A_{n_k}:k<\om\}$ such that $\bigcap_{k<\om}A_{n_k}\in\mathcal{U}$.
\end{defn}

\begin{thm}\label{thm.2}
Suppose that $\mathcal{U}$ and $\mathcal{U}_n$ ($n<\om$) are basically generated ultrafilters on $\mathcal{P}(\om)$ by filter bases which are closed under finite intersections.  
Then $\mathcal{V}=\lim_{n\ra\mathcal{U}}\mathcal{U}_n$ is basically generated by a filter basis which is closed under finite intersections.
It follows that 
the collection of all ultrafilters  basically generated by some filter base  closed under finite intersections
is closed under  Fubini products. 
\end{thm}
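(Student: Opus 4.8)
The plan is to exhibit an explicit filter basis for $\mathcal{V}=\lim_{n\ra\mathcal{U}}\mathcal{U}_n$, built from the given bases, and then verify basic generation for it. Let $\mathcal{B}$ witness basic generation of $\mathcal{U}$ and $\mathcal{B}_n$ witness it for $\mathcal{U}_n$, each closed under finite intersections. For $A\sse\om\times\om$ write $(A)_n=\{j<\om:(n,j)\in A\}$ for its $n$-th vertical section. I would take
\[
\mathcal{C}=\Bigl\{\bigcup_{n\in B}\{n\}\times B_n : B\in\mathcal{B},\ B_n\in\mathcal{B}_n\text{ for each }n\in B\Bigr\}.
\]
A set $C=\bigcup_{n\in B}\{n\}\times B_n$ has $(C)_n=B_n\in\mathcal{U}_n$ exactly when $n\in B$, so $\{n:(C)_n\in\mathcal{U}_n\}=B\in\mathcal{U}$ and hence $C\in\mathcal{V}$; conversely, any $A\in\mathcal{V}$ contains such a $C$ by choosing $B\in\mathcal{B}$ below $\{n:(A)_n\in\mathcal{U}_n\}$ and $B_n\in\mathcal{B}_n$ below $(A)_n$. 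Closure under finite intersections is immediate from $\bigl(\bigcup_{n\in B}\{n\}\times B_n\bigr)\cap\bigl(\bigcup_{n\in B'}\{n\}\times B'_n\bigr)=\bigcup_{n\in B\cap B'}\{n\}\times(B_n\cap B'_n)$ together with the corresponding closure of $\mathcal{B}$ and of each $\mathcal{B}_n$. So $\mathcal{C}$ is a finite-intersection-closed filter basis for $\mathcal{V}$, and the whole content of the theorem is to verify basic generation for $\mathcal{C}$.

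So suppose $C^k=\bigcup_{n\in B^k}\{n\}\times B^k_n$ ($k<\om$) is a sequence in $\mathcal{C}$ converging to some $C=\bigcup_{n\in B}\{n\}\times B_n\in\mathcal{C}$. Since convergence in the Cantor topology on $\mathcal{P}(\om\times\om)$ is coordinatewise, for each fixed $m$ the sections satisfy $(C^k)_m\to(C)_m$ in $\mathcal{P}(\om)$. I would extract the required subsequence in two layers: first control the base coordinate using basic generation of $\mathcal{U}$, then diagonalize over the fibers using basic generation of the $\mathcal{U}_m$.

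\emph{Base layer.} For $m\in B$ we have $(C)_m=B_m\neq\emptyset$, so coordinatewise convergence gives $(C^k)_m\neq\emptyset$, hence $m\in B^k$, for all large $k$. Therefore the sets $D^k:=B^k\cap B\in\mathcal{B}$ converge to $B\in\mathcal{B}$ (for $m\in B$ eventually $m\in D^k$, and for $m\notin B$ always $m\notin D^k$). This is the trick that lets me apply the hypothesis: basic generation of $\mathcal{U}$ applied to $\langle D^k\rangle$ yields a subsequence with $\bigcap_i D^{k_i}\in\mathcal{U}$. Setting $B':=B\cap\bigcap_i B^{k_i}=\bigcap_i D^{k_i}\in\mathcal{U}$ and relabelling the subsequence as $\langle C^i\rangle$, I have arranged that $m\in B^i$ for every $m\in B'$ and every $i$ (so $(C^i)_m=B^i_m$ there), and for each fixed $m\in B'$ the fiber sequence $\langle B^i_m\rangle_i$ converges to $B_m\in\mathcal{B}_m$.

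\emph{Fiber layer, the main obstacle.} The difficulty is that basic generation of each $\mathcal{U}_m$ supplies, for every single $m\in B'$, a sub-subsequence whose fiber-intersection lies in $\mathcal{U}_m$, but these sub-subsequences a priori depend on $m$, and $B'$ is infinite; I must find one index set serving $\mathcal{U}$-many columns at once. I would diagonalize: enumerate $B'=\{m_s:s<\om\}$ and build $\sse$-decreasing infinite sets $\om=I_{-1}\supseteq I_0\supseteq I_1\supseteq\cdots$, where at stage $s$ I apply basic generation of $\mathcal{U}_{m_s}$ to $\langle B^i_{m_s}:i\in I_{s-1}\rangle$ (still converging to $B_{m_s}\in\mathcal{B}_{m_s}$) to get infinite $I_s\sse I_{s-1}$ with $\bigcap_{i\in I_s}B^i_{m_s}\in\mathcal{U}_{m_s}$. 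Let $I$ be a pseudo-intersection of the $I_s$ (pick $i_s\in I_s$ strictly increasing and set $I=\{i_s:s<\om\}$), so $I\sse^* I_s$ for every $s$. The point is that $I\sse^* I_s$ already suffices: $\bigcap_{i\in I}B^i_{m_s}$ is the intersection of $\bigcap_{i\in I\cap I_s}B^i_{m_s}\contains\bigcap_{i\in I_s}B^i_{m_s}\in\mathcal{U}_{m_s}$ with the finite intersection $\bigcap_{i\in I\setminus I_s}B^i_{m_s}$ of sets from $\mathcal{B}_{m_s}\sse\mathcal{U}_{m_s}$, hence it lies in $\mathcal{U}_{m_s}$. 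Thus $\bigcap_{i\in I}B^i_m\in\mathcal{U}_m$ for all $m\in B'$, and since $(C^i)_m=B^i_m$ for $m\in B'$, $i\in I$, the section $\bigl(\bigcap_{i\in I}C^i\bigr)_m=\bigcap_{i\in I}B^i_m$ lies in $\mathcal{U}_m$ for every $m\in B'\in\mathcal{U}$; therefore $\bigcap_{i\in I}C^i\in\mathcal{V}$, and $\{C^i:i\in I\}$ (pulled back through the base-layer subsequence) is the desired subsequence. Finally, closure of the class under Fubini products is the special case $\mathcal{U}_n=\mathcal{W}$ for all $n$, since $\mathcal{U}\cdot\mathcal{W}=\lim_{n\ra\mathcal{U}}\mathcal{W}$.
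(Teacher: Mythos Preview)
Your proof is correct and follows essentially the same approach as the paper's. Both use the identical filter basis $\mathcal{C}$ (sets whose projection lies in $\mathcal{B}$ and whose nonempty sections lie in the $\mathcal{B}_n$), the same trick of intersecting projections with $B$ to force convergence inside $\mathcal{B}$ before invoking basic generation of $\mathcal{U}$, and the same fiberwise diagonalization to handle the $\mathcal{U}_m$; your pseudo-intersection $I=\{i_s:s<\om\}$ is exactly the paper's choice $m_k=\min M_k$.
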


\begin{proof}
Let $\mathcal{B},\mathcal{B}_n$  be filter bases of $\mathcal{U}$, $\mathcal{U}_n$ ($n<\om$)
which are closed under finite intersections and which witness the fact that $\mathcal{U}$, $\mathcal{U}_n$ are basically generated, respectively. 
Let $p_1:\om\times\om\ra\om$ be the projection map onto the first coordinate.
For $A\sse\om\times\om$ and $n<\om$,
let $(A)_n$ denote $\{j<\om:(n,j)\in A\}$.
Let $\mathcal{C}=\{A\in\mathcal{V}:p_1[A]\in\mathcal{B}$ and for each $n<\om$, either $(A)_n=\emptyset$ or $(A)_n\in\mathcal{B}_n\}$.
Then $\mathcal{C}$ is a filter basis for $\mathcal{V}$ which is closed under finite intersections.

Consider a converging sequence $A_n\ra B$ in $\mathcal{C}$.
Note that $p_1[A_n]\ra X$ for some $X\in\mathcal{U}$ containing $p_1[B]$.
$X$ might not be in $\mathcal{B}$, but $p_1[B]$ is in $\mathcal{B}$, since $B\in\mathcal{C}$.
So for each $n<\om$, let $A'_n=A_n\cap(p_1[B]\times \om)$, so that $A'_n\in\mathcal{C}$.
Note that $A'_n\ra B$, $p_1[A'_n]\ra p_1[B]$, and all $p_1[A'_n]\in\mathcal{B}$, since $\mathcal{B}$ is closed under finite intersections.
Since $\mathcal{B}$ witnesses that $\mathcal{U}$ is basically generated, 
there is a subsequence 
of $(p_1[A'_n])_{n<\om}$ whose intersection is in $\mathcal{U}$.
Take such a subsequence and reindex it, so that we have 
$\bigcap_{n<\om}p_1[A'_n]\in\mathcal{U}$.
Let $U$ denote $\bigcap_{n<\om}p_1[A'_n]$.
Note that  $U\sse\bigcap_{n<\om}p_1[A_n]$.
Enumerate $U$ as $(n_k)_{k<\om}$.
Then for each $k<\om$ and each $m<\om$,
$(A'_m)_{n_k}=(A_m)_{n_k}$ since $n_k\in U\sse p_1[B]$.
So for each $k<\om$,
we have that $(A_m)_{n_k}\ra(B)_{n_k}$ as $m\ra\infty$.
Take a decreasing sequence $M_0\contains M_1\contains\dots\contains M_k\contains\dots$ 
of infinite subsets of $\om$ such that for each $k$,
$\bigcap_{m\in M_k}(A_m)_{n_k}\in\mathcal{U}_{n_k}$.
We may assume that $m_k=\min M_k$ is a strictly increasing sequence.

Let $C=\bigcap_{l<\om}A_{m_l}$.
We claim that  
$C\in\mathcal{V}$.
Note that  $U=\{n_k:k<\om\}\sse p_1[A_{m_l}]$ for all $l$,
so $U\sse p_1[C]$.
Thus, $p_1[C]\in\mathcal{U}$.
For each $k$, $\bigcap_{l\ge k}(A_{m_l})_{n_k}\contains \bigcap_{m\in M_k}(A_m)_{n_k}$ which is in $\mathcal{U}_{n_k}$.
Hence, intersecting $\bigcap_{l\ge k}(A_{m_l})_{n_k}$ with finitely more members $(A_{m_l})_{n_k}$, $l<k$, of $\mathcal{U}_{n_k}$ still yields a member of $\mathcal{U}_{n_k}$.
Thus, $(C)_{n_k}=\bigcap_{l<\om}(A_{m_l})_{n_k}$, which is in $\mathcal{U}_{n_k}$.
Therefore, $C\in\mathcal{V}$.
\end{proof}

\begin{rem}
For any ultrafilter $\mathcal{U}$, $\mathcal{U}\cdot\mathcal{U}$ is not a p-point.
Thus, there are basically generated ultrafilters which are not p-points.
\end{rem}

Recall Fact \ref{fact.c<omtop}
which says that for every ultrafilter $\mathcal{U}$, $(\mathcal{U},\contains)\le_T([\mathfrak{c}]^{<\om},\sse)$.
We say that an ultrafilter $\mathcal{U}$ {\em has top Tukey type} if 
$(\mathcal{U},\contains)\equiv_T([\mathfrak{c}]^{<\om},\sse)$.
The following theorem of Isbell shows that, in ZFC, there is always an ultrafilter which has top Tukey type.

\begin{thm}[Isbell \cite{Isbell65}]\label{thm.3}
There is an ultrafilter $\mathcal{U}_{\mathrm{top}}$ on $\om$ realizing the maximal cofinal type among all directed sets of cardinality continuum, i.e.\ $\mathcal{U}_{\mathrm{top}}\equiv_T[\mathfrak{c}]^{<\om}$.
\end{thm}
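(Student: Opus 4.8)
The plan is to reduce the statement to the combinatorial characterization of top Tukey type established in Fact~\ref{fact.Tukeytopchar}, and then to build the required ultrafilter from an independent family. By that fact, it suffices to produce an ultrafilter $\mathcal{U}$ on $\om$ together with a subfamily $\mathcal{X}\sse\mathcal{U}$ with $|\mathcal{X}|=\mathfrak{c}$ such that $\bigcap\mathcal{Y}\notin\mathcal{U}$ for every infinite $\mathcal{Y}\sse\mathcal{X}$. Thus I do not need to control the whole ultrafilter, only to guarantee that such a witnessing family sits inside it; this is where the freedom to extend a suitable filter to an ultrafilter will be used.

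First I would fix an \emph{independent} family $\{A_\al:\al<\mathfrak{c}\}$ of subsets of $\om$, which exists in ZFC. Recall that independence means that for any disjoint finite $F,G\sse\mathfrak{c}$ the set $\bigcap_{\al\in F}A_\al\cap\bigcap_{\be\in G}(\om\setminus A_\be)$ is infinite; in particular the $A_\al$ are pairwise distinct, so the family $\mathcal{X}=\{A_\al:\al<\mathfrak{c}\}$ has size $\mathfrak{c}$. I would then consider the family $\mathcal{F}$ consisting of all the $A_\al$ together with all complements $\om\setminus\bigcap\mathcal{Y}=\bigcup_{\be\in\mathcal{Y}}(\om\setminus A_\be)$, where $\mathcal{Y}$ ranges over the infinite subfamilies of $\mathcal{X}$. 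Once $\mathcal{F}$ is shown to have the finite intersection property, I extend it to an ultrafilter $\mathcal{U}$. Then $\mathcal{X}\sse\mathcal{U}$ has cardinality $\mathfrak{c}$, while for every infinite $\mathcal{Y}\sse\mathcal{X}$ the set $\om\setminus\bigcap\mathcal{Y}$ lies in $\mathcal{U}$, so $\bigcap\mathcal{Y}\notin\mathcal{U}$, which is exactly the hypothesis required by Fact~\ref{fact.Tukeytopchar}.

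The one real step, and the main obstacle, is verifying that $\mathcal{F}$ has the finite intersection property, and here independence is used essentially. A typical finite intersection from $\mathcal{F}$ has the form $\bigcap_{i\le k}A_{\al_i}\cap\bigcap_{j\le m}\bigl(\bigcup_{\be\in\mathcal{Y}_j}(\om\setminus A_\be)\bigr)$. Since each $\mathcal{Y}_j$ is infinite, I can choose indices $\be_j\in\mathcal{Y}_j$ greedily so that $\be_1,\dots,\be_m$ are distinct from one another and from $\al_1,\dots,\al_k$, the point being that at each stage only finitely many indices must be avoided. With all the chosen indices distinct, independence guarantees that $\bigcap_{i\le k}A_{\al_i}\cap\bigcap_{j\le m}(\om\setminus A_{\be_j})$ is infinite, and this set is contained in the displayed intersection, which is therefore nonempty. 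The delicate part is precisely this bookkeeping—keeping each witnessing complement-index $\be_j$ distinct from the positive indices $\al_i$ and from the previously chosen $\be_{j'}$—but it is handled cleanly by the greedy selection; once the finite intersection property is in hand, extending $\mathcal{F}$ to an ultrafilter and invoking Fact~\ref{fact.Tukeytopchar} completes the proof.
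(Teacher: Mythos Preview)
Your proposal is correct and is precisely the canonical construction the paper alludes to. The paper does not spell out a proof of Theorem~\ref{thm.3}, citing Isbell instead, but immediately afterward remarks that ``any collection of independent sets can be used in a canonical way to construct an ultrafilter with top Tukey type''; your argument is exactly that construction, carried out via Fact~\ref{fact.Tukeytopchar}.
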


We remark here that the same construction in Isbell's  proof was done independently by  Juh\'{a}sz in \cite{Juhasz66} (stated in \cite{Juhasz67}) in connection with strengthening a theorem of Posp{\'{i}}{\v{s}}il \cite{Pospisil39}, though without the Tukey terminology.

There are in fact $2^{\mathfrak{c}}$ many ultrafilters on $\om$ having Tukey type exactly $([\mathfrak{c}]^{<\om},\sse)$, since any collection of independent sets can be used in  a canonical way to construct an ultrafilter with top Tukey type. 
Thus, already we see that for the case of the top Tukey type, the Rudin-Keisler equivalence relation is strictly finer than the Tukey 
equivalence relation, since every Rudin-Keisler equivalence class has cardinality $\mathfrak{c}$.

Note also that $\mathcal{U}_{\mathrm{top}}$ is not basically representable, or in other words,

\begin{thm}\label{thm.4}
If $\mathcal{U}$ is a basically generated ultrafilter on $\om$, then $\mathcal{U}<_T[\mathfrak{c}]^{<\om}$.
\end{thm}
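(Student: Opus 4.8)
The plan is to combine the two characterizations already available. Since $\mathcal{U}$ is in particular a directed partial ordering of cardinality $\mathfrak{c}$, Fact \ref{fact.c<omtop} gives $(\mathcal{U},\contains)\le_T([\mathfrak{c}]^{<\om},\sse)$ for free, so the entire content of the theorem is that this reduction is \emph{strict}. To rule out $(\mathcal{U},\contains)\equiv_T([\mathfrak{c}]^{<\om},\sse)$ I would invoke the contrapositive of the combinatorial criterion in Fact \ref{fact.Tukeytopchar}: it suffices to show that for every $\mathcal{X}\sse\mathcal{U}$ with $|\mathcal{X}|=\mathfrak{c}$ there is an infinite $\mathcal{Y}\sse\mathcal{X}$ with $\bigcap\mathcal{Y}\in\mathcal{U}$.

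First I would reduce from $\mathcal{U}$ to its witnessing filter basis $\mathcal{B}$. For each $X\in\mathcal{X}$ choose $B_X\in\mathcal{B}$ with $B_X\sse X$. If some single $B\in\mathcal{B}$ occurs as $B_X$ for infinitely many $X$, then the corresponding $\mathcal{Y}$ already satisfies $B\sse\bigcap\mathcal{Y}$, whence $\bigcap\mathcal{Y}\in\mathcal{U}$ by upward closure, and we are done. Otherwise every fibre of the map $X\mapsto B_X$ is finite, so the image $\mathcal{B}'=\{B_X:X\in\mathcal{X}\}\sse\mathcal{B}$ is uncountable. The point of this bookkeeping is that, because $\mathcal{U}$ is upward closed and $B_X\sse X$, any infinite subfamily of $\mathcal{B}'$ whose intersection lies in $\mathcal{U}$ lifts (by choosing one $X$ per value) to an infinite subfamily of $\mathcal{X}$ with intersection in $\mathcal{U}$. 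Thus it is enough to find an infinite subfamily of $\mathcal{B}'$ whose intersection lies in $\mathcal{U}$.

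The core step exploits that $\mathcal{P}(\om)$, carrying the Cantor topology, is a second countable metric space, so the uncountable set $\mathcal{B}'$ has at most countably many isolated points in its subspace topology (distinct isolated points require distinct members of a countable base). Hence $\mathcal{B}'$ has a non-isolated point $W$, and crucially $W\in\mathcal{B}'\sse\mathcal{B}$. Being non-isolated, $W$ is the limit of a sequence $(A_n)_{n<\om}$ of pairwise distinct members of $\mathcal{B}'\setminus\{W\}$. This is exactly a sequence from $\mathcal{B}$ converging to an element of $\mathcal{B}$, so the defining property of basic generation applies and yields a subsequence $(A_{n_k})_{k<\om}$ with $\bigcap_{k<\om}A_{n_k}\in\mathcal{U}$; the set $\{A_{n_k}:k<\om\}$ is the sought infinite subfamily of $\mathcal{B}'$. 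Unwinding the reduction produces the infinite $\mathcal{Y}\sse\mathcal{X}$, so the criterion of Fact \ref{fact.Tukeytopchar} fails for $\mathcal{U}$ and, together with $\le_T$, we conclude $\mathcal{U}<_T[\mathfrak{c}]^{<\om}$.

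I expect the only real obstacle to be arranging that the convergent sequence produced has its limit \emph{inside} $\mathcal{B}$, since the basic generation hypothesis is inert for sequences converging outside $\mathcal{B}$. The isolated-point observation is precisely what guarantees a non-isolated point lying in $\mathcal{B}'$ itself, thereby supplying a convergent sequence in $\mathcal{B}$ whose limit is in $\mathcal{B}$; the finite-fibre and upward-closure bookkeeping of the reduction step is what then lets me pass freely between $\mathcal{X}\sse\mathcal{U}$ and $\mathcal{B}'\sse\mathcal{B}$.
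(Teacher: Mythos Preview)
Your proof is correct and follows essentially the same approach as the paper: reduce from $\mathcal{X}\sse\mathcal{U}$ to the basis via a choice $X\mapsto B_X$, dispose of the case where some fibre is infinite, and in the remaining case locate a sequence of distinct elements of $\{B_X:X\in\mathcal{X}\}$ converging to a point of the same set so that the basic-generation hypothesis applies. Your isolated-point argument is simply a more explicit justification of the convergent-sequence step than the paper gives; the structure and substance are the same.
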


\begin{proof}
Let $\mathcal{U}$ be basically generated.
Then there is  a filter basis $\mathcal{B}\sse\mathcal{U}$ 
with the property that each sequence 
$(A_n)_{n<\om}\sse\mathcal{B}$ converging to an element of $\mathcal{B}$
has a subsequence $(A_{n_k})_{k<\om}$ such that $\bigcap_{k<\om}A_{n_k}\in\mathcal{U}$.

Let $\mathcal{X}$ be any subset of $\mathcal{U}$ of cardinality $\mathfrak{c}$.
For each $X\in\mathcal{X}$, choose one $B_X\in\mathcal{B}$ such that $B_X\sse X$.
If 
there is an infinite $\mathcal{Y}\sse\mathcal{X}$ and a $B\in\mathcal{B}$ such that 
all $X\in\mathcal{Y}$ have  $B_X=B$, then this $B\sse\bigcap\mathcal{Y}$.
Otherwise, $\{B_X:X\in\mathcal{X}\}$ is uncountable, so there is a sequence $(A_n)_{n<\om}\sse \{B_X:X\in\mathcal{X}\}$ which converges to some $B\in\{B_X:X\in\mathcal{X}\}$, and such that all $A_n$ are distinct.
Since $\mathcal{B}$ witnesses that $\mathcal{U}$ is basically generated,
there is a subsequence $(A_{n_k})_{k<\om}$ such that $\bigcap_{k<\om}A_{n_k}\in\mathcal{U}$.
Taking $\mathcal{Y}$ to be the collection of $X\in\mathcal{X}$ such that $B_X=A_{n_k}$ for some $k$,
we have that $\mathcal{Y}$ is infinite and 
$\bigcap\mathcal{Y}\contains \bigcap_{k<\om}A_{n_k}$ which is in $\mathcal{U}$.
By Fact \ref{fact.Tukeytopchar},
$(\mathcal{B},\contains)<_T([\mathfrak{c}]^{<\om},\sse)$.
\end{proof}

\begin{cor}\label{cor.pptnottop}
Every p-point has Tukey type strictly below the top Tukey type.
\end{cor}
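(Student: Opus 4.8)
The plan is to deduce the corollary directly from the two theorems already established, by showing that every p-point satisfies the hypothesis of Theorem \ref{thm.4}. The chain of implications I will use is: p-point $\Rightarrow$ basic $\Rightarrow$ basically generated, after which Theorem \ref{thm.4} gives $\mathcal{U}<_T[\mathfrak{c}]^{<\om}$ at once. First I would invoke Theorem \ref{thm.stevosummer}, which tells us that a p-point $\mathcal{U}$ is basic. Being basic means in particular that property (3) of Definition \ref{defn.basic} holds: every convergent sequence of elements of $\mathcal{U}$ has a bounded subsequence, where ``bounded'' is understood in the ordering $\contains$.

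Next I would argue that $\mathcal{U}$ can serve as its own witnessing filter basis, i.e.\ take $\mathcal{B}=\mathcal{U}$ in the definition of basically generated. The one point requiring care is the translation of ``bounded subsequence'' into the language of that definition. If $(A_{n_k})_{k<\om}$ is bounded above in $(\mathcal{U},\contains)$, there is $W\in\mathcal{U}$ with $A_{n_k}\le W$ for every $k$; unwinding the identification $\le\,=\,\contains$, this says $W\sse A_{n_k}$ for all $k$, hence $W\sse\bigcap_{k<\om}A_{n_k}$. Since $\mathcal{U}$ is upward closed and contains $W$, it follows that $\bigcap_{k<\om}A_{n_k}\in\mathcal{U}$. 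Thus any convergent sequence in $\mathcal{B}=\mathcal{U}$ --- in particular any sequence converging to an element of $\mathcal{B}$ --- has a subsequence whose intersection lies in $\mathcal{U}$, which is exactly the basically-generated property (and $\mathcal{U}$ is trivially a filter basis of itself, closed under finite intersections).

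With $\mathcal{U}$ shown to be basically generated, Theorem \ref{thm.4} applies directly and yields $\mathcal{U}<_T[\mathfrak{c}]^{<\om}$, completing the proof. I do not expect a serious obstacle here, since the real content is carried by Theorems \ref{thm.stevosummer} and \ref{thm.4}; the only step needing attention is the bookkeeping in the preceding paragraph, namely checking that the basic property (3), read through the identification $\le\,=\,\contains$, delivers exactly the ``intersection in $\mathcal{U}$'' conclusion demanded by the definition of basically generated.
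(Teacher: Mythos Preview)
Your proof is correct and follows essentially the same route as the paper: use Theorem~\ref{thm.stevosummer} to get that a p-point is basic, observe that basic implies basically generated (with $\mathcal{B}=\mathcal{U}$), and then apply Theorem~\ref{thm.4}. The paper states the middle implication in one clause (``every basic ultrafilter is basically generated''), whereas you have spelled out why a bounded subsequence in $(\mathcal{U},\contains)$ yields an intersection in $\mathcal{U}$; this is exactly the intended argument.
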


\begin{proof}
Since every basic ultrafilter is basically generated, it follows from Theorems \ref{thm.stevosummer}
and \ref{thm.4}
that every p-point has Tukey type strictly below $[\mathfrak{c}]^{<\om}$.
\end{proof}

The next theorem gives a canonical form for cofinal maps from p-points to any other ultrafilter.
This theorem or similar ideas will be used in the majority of proofs in the rest of this paper.

Recall that any subset of $\mathcal{P}(\om)$ is a   topological space, with the subspace topology inherited from the Cantor space.
Thus, given any
$\mathcal{X},\mathcal{Y}\sse\mathcal{P}(\om)$, 
a function $f:\mathcal{X}\ra\mathcal{Y}$ is continuous if it is continuous with respect to the subspace topologies on $\mathcal{X}$ and $\mathcal{Y}$.
Equivalently,
 a function $f:\mathcal{X}\ra\mathcal{Y}$ is continuous if for each sequence $(X_n)_{n<\om}\sse\mathcal{X}$ which converges to some $X\in\mathcal{X}$,
the sequence $(f(X_n))_{n<\om}$ converges to $f(X)$.

If  $X\in\mathcal{U}$, then we use $\mathcal{U}\re X$ to denote $\{Y\in\mathcal{U}:Y\sse X\}$.
Note that $\mathcal{U}\re X$ is a filter base for $\mathcal{U}$, and hence $(\mathcal{U},\contains)\equiv_T(\mathcal{U}\re X,\contains)$.

\begin{thm}\label{thm.5}
Suppose $\mathcal{U}$ is a p-point on $\om$ and that $\mathcal{V}$ is an arbitrary ultrafilter on $\om$ such that $\mathcal{U}\ge_T \mathcal{V}$.
Then there is a continuous monotone map
$f^*:\mathcal{P}(\om)\ra\mathcal{P}(\om)$ 
 whose restriction to $\mathcal{U}$ is continuous
and has cofinal range in $\mathcal{V}$.
Hence,  there is a continuous monotone cofinal map from $\mathcal{U}$ into $\mathcal{V}$ witnessing that $\mathcal{U}\ge_T\mathcal{V}$.
\end{thm}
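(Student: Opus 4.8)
The plan is to start from a monotone cofinal map and upgrade it to a continuous one by reading it off continuously along a single carefully chosen member of $\mathcal{U}$. By Fact \ref{fact.monotone} I fix a monotone cofinal map $f:\mathcal{U}\ra\mathcal{V}$, so that $W\contains X$ implies $f(W)\contains f(X)$ and $\{f(X):X\in\mathcal{U}\}$ is cofinal in $(\mathcal{V},\contains)$. I then build a set $A\in\mathcal{U}$, an increasing sequence of output thresholds $\ell_0<\ell_1<\cdots$, and a continuous monotone map $g:\mathcal{P}(\om)\ra\mathcal{P}(\om)$ which reproduces $f$ on $\mathcal{U}\re A$, and set $f^*(Y)=g(Y\cap A)$. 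Since $Y\mapsto Y\cap A$ is continuous and monotone and $A\in\mathcal{U}$, this $f^*$ will be continuous and monotone on all of $\mathcal{P}(\om)$, and for $X\in\mathcal{U}$ one has $X\cap A\in\mathcal{U}\re A$, so cofinality of the range transfers from $f$.

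The construction of $A$ and $g$ is a p-point fusion whose point is to make $f$ depend only on finite data below each fixed threshold. Enumerating $A=\{a_0<a_1<\cdots\}$, I arrange that for every $k$ and every $s\sse\{a_0,\dots,a_{k-1}\}$ the value $f(X)\cap\ell_k$ is the same for all $X\in\mathcal{U}$ with $X\cap\{a_0,\dots,a_{k-1}\}=s$ and $X\sse s\cup A_k$, where $A_0\contains A_1\contains\cdots$ is a descending sequence in $\mathcal{U}$ produced during the fusion with $a_k\in A_k$. This common value is obtained as a minimum: the family of such $X$ is closed under intersection, hence downward directed, and $X\mapsto f(X)\cap\ell_k$ takes finitely many values in $\mathcal{P}(\ell_k)$ which shrink as $X$ shrinks, so a least value $v(s)$ is attained by some $X_0^s\in\mathcal{U}$; replacing the tail of $A_k$ by $A_k\cap X_0^s$ forces $f(X)\cap\ell_k=v(s)$ on the whole cylinder. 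Performing this successively for the finitely many $s\sse\{a_0,\dots,a_{k-1}\}$ at stage $k$, then choosing $a_k$ and passing to a tail $A_{k+1}\in\mathcal{U}$, completes the recursion, and I define $g(X)=\bigcup_k v(X\cap\{a_0,\dots,a_{k-1}\})$.

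Continuity of $g$ is immediate, since the stabilization makes $g(X)\cap\ell_k$ depend only on $X\cap\{a_0,\dots,a_{k-1}\}$ while $\ell_k\to\infty$. Monotonicity reduces to $v(s)\sse v(s')$ whenever $s\sse s'$: given a witness $X_0^s$ for $v(s)$, the set $Z=X_0^s\cup(s'\setminus s)$ lies in $\mathcal{U}$, falls in the $s'$-cylinder, and contains $X_0^s$, so $v(s')=f(Z)\cap\ell_k\contains f(X_0^s)\cap\ell_k=v(s)$. For $X\in\mathcal{U}\re A$ the set $X$ itself witnesses its own cylinder value at each stage, so $g(X)\cap\ell_k=f(X)\cap\ell_k$ for all $k$, whence $g(X)=f(X)\in\mathcal{V}$. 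Consequently, given $V\in\mathcal{V}$, choosing $X'\in\mathcal{U}$ with $f(X')\sse V$ and putting $X=X'\cap A$ yields $f^*(X)=g(X)=f(X)\sse f(X')\sse V$, so the restriction of $f^*$ to $\mathcal{U}$ has cofinal range in $\mathcal{V}$.

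The crux, and the only place the hypothesis on $\mathcal{U}$ is essential, is the fusion: each stabilization step shrinks the relevant set only inside $\mathcal{U}$, but extracting a single diagonal set $A\in\mathcal{U}$ from the descending sequence $(A_k)$ requires a pseudo-intersection of $(A_k)$ that again lies in $\mathcal{U}$, which is exactly the p-point property. The main technical care will be in aligning the enumeration of this pseudo-intersection with the stages of the fusion, so that for every $X\in\mathcal{U}\re A$ the tail of $X$ sits inside $A_k$ at the matching stage $k$ up to a finite set; this mod-finite bookkeeping, rather than any single hard estimate, is where the real work lies.
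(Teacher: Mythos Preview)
Your overall architecture matches the paper's: start from a monotone cofinal $f$, run a stabilization fusion producing a descending sequence in $\mathcal{U}$, invoke the p-point property for a pseudo-intersection, and read off a continuous map from the stabilized data. Your direct definition $g(X)=\bigcup_k v\bigl(X\cap\{a_0,\dots,a_{k-1}\}\bigr)$ is in fact a bit cleaner than the paper's final extension step, which passes through cofinite approximations $f^*(Z)=\bigcap_n f'((Z\cap n)\cup[n,\om))$.

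There is, however, a real gap in the fusion as you describe it. You stabilize at stage $k$ only for $s\sse\{a_0,\dots,a_{k-1}\}$ and \emph{choose $a_k$ during the recursion}. For the verification $g\re(\mathcal U\re A)=f\re(\mathcal U\re A)$ you then need $A=\{a_k:k<\om\}\in\mathcal U$, and the p-point property does \emph{not} give this: being able to diagonalize through a descending sequence by picking one point per stage and landing in $\mathcal U$ is exactly selectivity. What the p-point property gives is a pseudo-intersection $B\in\mathcal U$ with $B\sse^* A_k$; but $B$ need not be a subset of $\{a_k\}$, so your stabilized values $v(s)$ (defined only for $s\sse\{a_0,\dots,a_{k-1}\}$) need not cover the initial segments of members of $\mathcal U\re B$. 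Your closing remark that the tail of $X$ will sit inside $A_k$ ``up to a finite set'' is also not enough: the cylinder condition $X\setminus\{a_0,\dots,a_{k-1}\}\sse A_k$ must hold \emph{exactly} for the equality $f(X)\cap\ell_k=v(s)$ to apply.

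The paper resolves both issues simultaneously by decoupling the stabilization from any pre-chosen enumeration: at stage $n$ it stabilizes for \emph{every} $s\sse n$ and every output bit $k\le n$, obtaining $X_n\in\mathcal U$. After taking a pseudo-intersection $Y\sse^* X_n$, it chooses an increasing sequence $(n_i)$ with $Y\setminus n_{i+1}\sse X_n$ for all $n\le n_i$, and lets $\tilde X$ be $Y$ intersected with alternating blocks $\bigcup_i[n_{2i},n_{2i+1})$ (or its complement). This interval trick converts the mod-finite containment into the exact containment $\tilde X\setminus n_{2i_k+1}\sse X_{n_{2i_k+1}}$, which is precisely what your ``alignment'' has to accomplish. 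So the step you flag as mere bookkeeping is where the actual argument lives; once you stabilize over all $s\sse n$ rather than only $s\sse\{a_0,\dots,a_{k-1}\}$, your outline goes through.
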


\begin{proof}
Let $\mathcal{U}$ be a p-point, $\mathcal{V}$ be an ultrafilter, and suppose that $\mathcal{U}\ge_T\mathcal{V}$.
By Fact \ref{fact.monotone}, there is a monotone cofinal map $f:\mathcal{U}\ra\mathcal{V}$.
We claim that there is an $\tilde{X}\in\mathcal{U}$ such that $f:\mathcal{U}\re \tilde{X}\ra\mathcal{V}$ is continuous.

Construct a decreasing sequence $X_0\contains X_1\contains\dots$ of elements of $\mathcal{U}$ as follows.
Let $X_0=\om$.
For $1\le n<\om$, given $X_{n-1}$, we take an $X_n\in\mathcal{U}$ with the following properties:
\begin{enumerate}
\item
$X_n\sse X_{n-1}$;
\item
 $X_n\cap n=\emptyset$;
\item  for each $s\sse n$, for each $k\le n$, if there is a $Y'\in\mathcal{U}$ such that $s=Y'\cap(n+1)$ and $k\not\in f(Y')$, then $k\not\in f(s\cup X_n)$.
\end{enumerate}

That there is such a sequence of $X_n$ follows from $f$ being monotone, as we shall see now.
Suppose we already have $X_{n-1}$.
Fix a $W_0\in\mathcal{U}$ such that $W_0\sse X_{n-1}$ and $W_0\cap n=\emptyset$.
List out all subsets of $n$ as $s_1,\dots,s_{2^n}$.
Suppose there is a $Y'\in\mathcal{U}$ such that $s_1=Y'\cap (n+1)$ and $k\not\in f(Y')$.
Then take some such $Y'_1$ and let $W_1= W_0\cap Y_1'$. 
If there is no such $Y'\in\mathcal{U}$,
then let $W_1=W_0$.
For $1\le l<2^n$, given $W_0\contains\dots\contains W_l$,
if there is a 
$Y'\in\mathcal{U}$ such that $s_{l+1}=Y'\cap (n+1)$ and $k\not\in f(Y')$,
then take some such $Y'_{l+1}$ and let $W_{l+1}= W_l\cap Y'_{l+1}$. 
If there is no such $Y'\in\mathcal{U}$,
then let $W_{l+1}=W_l$.
After the $2^n$ many steps of this process, we
let $X_n=W_{2^n}$.

Note the following for each $1\le l\le 2^n$. 
If there is a 
$Y'\in\mathcal{U}$ such that $s_{l}=Y'\cap (n+1)$ and $k\not\in f(Y')$, then
$W_l$ was taken to be $W_{l-1}\cap Y'_l$.
So
for any $U\in\mathcal{U}\re X_n$, we have $s_{l}\cup U\sse Y'_l$.
Since $f$ is monotone, we have $f(s_{l}\cup U)\sse f(Y'_l)$.
Thus, $k\not\in f(s_{l}\cup U)$, since $k\not\in f(Y'_l)$.

We check that $X_n$ has the desired properties.
By construction, (1) holds.
Since $X_n\sse W_0$, we have that $X_n\cap n=\emptyset$, so (2) holds.
Let $s$ be any subset of $n$.  Then there is some $1\le l\le 2^n$ such that $s=s_l$.
Suppose there is a $Y'\in\mathcal{U}$ such that $s_l=Y'\cap(n+1)$ and $k\not\in f(Y')$.
Then by the preceding paragraph, $k$ is not in $f(s\cup X_n)$.

Since $\mathcal{U}$ is a p-point, fix some 
 $Y\in\mathcal{U}$ be such that for each $n<\om$, $Y\sse^* X_n$.
Let $0=n_0<n_1<\dots$ be such that for each $i<\om$, for each $n\le n_i$, $Y\setminus n_{i+1}\sse X_n$.
Let $Z=\bigcup_{i=0}^{\infty}[n_{2i+1},n_{2i+2})$.
Without loss of generality, assume that $Z\not\in\mathcal{U}$.
(If $Z$ is in $\mathcal{U}$, then  let $\tilde{X}$ be $Y\cap Z$.  The proof for this case goes through exactly as the one we give below, with the minor modification of readjusting the indexes by 1 at the outset.)
Let $\tilde{X}=Y\setminus Z$.
We show that  $f\re(\mathcal{U}\re \tilde{X})$ is continuous.
Precisely, we shall show that 
there is a  non-decreasing sequence $(m_k)_{k<\om}$ such that 
for each $W\in\mathcal{U}\re \tilde{X}$,
the initial segment $f(W)\cap (k+1)$ of $f(W)$ is determined by $W\cap m_k$.

Given $k<\om$,  let $i_k$ denote the least $i$ for which $n_{2i_k +1}\ge k$.
Let $W\in\mathcal{U}\re \tilde{X}$ be given and let $s= W\cap n_{2i_k +1}$.
Recalling that $\tilde{X}\cap[n_{2i_k+1},n_{2i_k+2})=\emptyset$, we have that
$W\setminus n_{2i_k +1}\sse\tilde{X}\setminus n_{2i_k +1}
=\tilde{X}\setminus n_{2i_k+2}
\sse Y\setminus n_{2i_k +2}
\sse X_{n_{2i_k +1}}$.
Therefore,
$k\in f(W)$ iff 
$k\in f(s\cup X_{n_{2i_k +1}})$ iff
$k\in f(s\cup (\tilde{X}\setminus n_{2i_k+1}))$.
Letting $m_k=n_{2i_k+1}$, we see that $f\re(\mathcal{U}\re\tilde{X})$ is continuous,
since the question of whether or not $k\in f(W)$  is determined by the finite initial segment $W\cap m_k$ along with $\tilde{X}\setminus m_k$.

Next, we extend $f$ on $\mathcal{U}\re \tilde{X}$ to all of $\mathcal{U}$ by defining $f'(X)=f(X\cap \tilde{X})$, for $X\in\mathcal{U}$. 
Then $f':\mathcal{U}\ra\mathcal{V}$ is again monotone.
Moreover, for each $X\in\mathcal{U}$ and $k<\om$,
$k\in f'(X)$ iff $k\in f(X\cap\tilde{X})$
iff $k\in f(s\cup (\tilde{X}\setminus m_k))$, where $s= X\cap\tilde{X}\cap m_k$.
So whether or not $k$ is in $f'(X)$ is determined by the initial segment $X\cap\tilde{X}\cap m_k$ of $X\cap\tilde{X}$;
hence $f'$ is continuous.

Finally, we extend $f'$ to a monotone continuous map $f^*$ defined on all of $\mathcal{P}(\om)$.
For an arbitrary $Z\sse\omega$ set
\begin{equation}\label{eq.+}
f^*(Z)=\bigcap\{f'(Z): X\contains Z\text{ and } X\text{ is  cofinite}\}.
\end{equation}
Note that since $f'$ is monotone, 
$f^*(Z)$ is exactly $\bigcap\{f'((Z\cap n)\cup[n,\om)):n<\om\}$, since every cofinite $X$ containing $Z$ contains $(Z\cap n)\cup[n,\om)$ for some $n$.
From the definition  of $f^*$ and the fact that $f'$ is monotone, it follows that $f^*$ is monotone.

First, we show that $f^*\re\mathcal{U}=f'$.
Let $Z\in\mathcal{U}$ be given.
Let $Z_n=(Z\cap n)\cup[n,\om)$, for each $n<\om$.
Then $f^*(Z)=\bigcap\{f'(Z_n):n<\om\}$.
Since  $Z_n\ra Z$ and $f'$ is continuous on $\mathcal{U}$,
it follows that $f'(Z_n)\ra f'(Z)$.
This, along with the fact that each $f'(Z_n)\contains f'(Z)$ imply that $\bigcap_{n<\om}f'(Z_n)$ equals $f'(Z)$.
Hence,
 $f^*(Z)=\bigcap_{n<\om} f'(Z_n)=f'(Z)$.
Thus,  $f^*\re\mathcal{U}=f'$.

To see that $f^*$ is continuous, 
we show that for each $k<\om$ and $Z\sse\om$, whether or not $k$ is in $f^*(Z)$ is determined by the initial segment $Z\cap\tilde{X}\cap m_k$ of $Z\cap\tilde{X}$, along with $\tilde{X}\setminus m_k$.
Let $Z\sse\om$ and $k<\om$, and let $Z_n=(Z\cap n)\cup[n,\om)$ for each $n<\om$.
Then 
$k\in f^*(Z)$ iff
for each $n<\om$, $k\in f'(Z_n)$
iff for each $n\ge m_k$, $k\in f'(Z_n)$
iff for each $n\ge m_k$, $k\in f'(Z_n\cap \tilde{X})$
iff $k\in f(s\cup (\tilde{X}\setminus m_k))$, where $s=Z\cap\tilde{X}\cap m_k$.
\end{proof}

\begin{rem}
Note that Theorem \ref{thm.5}
 gives the canonical form of cofinal maps that is
likely going to be the main object of study in this area from now on:
Every Tukey reduction $\mathcal{U}\ge_T\mathcal{V}$ for $\mathcal{U}$ a p-point 
is witnessed by some monotone continuous $f^*:\mathcal{P}(\om)\ra\mathcal{P}(\om)$ such that 
$f^*\re\mathcal{U}$ is a cofinal map from $\mathcal{U}$ into $\mathcal{V}$.
Moreover,  for
any monotone cofinal map $f:\mathcal{U}\ra\mathcal{V}$, (where $\mathcal{U}$ is a p-point),
there is a a cofinal subset of the form $\mathcal{U}\re\tilde{X}$ for some $\tilde{X}\in\mathcal{U}$ such that $f\re(\mathcal{U}\re\tilde{X})$ is continuous.
Note that the restriction of $f$ to any cofinal subset of $\mathcal{U}\re\tilde{X}$ retains continuity, justifying the use of the word {\em canonical}.
\end{rem}

\begin{rem}
Whereas the top Tukey type has cardinality $2^{\mathfrak{c}}$, the previous theorem implies that the Tukey type of any p-point has cardinality $\mathfrak{c}$.
\end{rem}

\begin{cor}\label{cor.5.1.5}
Every $\le_T$-chain of p-points on $\om$ has cardinality $\le\mathfrak{c}^+$.
\end{cor}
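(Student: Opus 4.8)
The plan is to combine the canonical continuous form from Theorem \ref{thm.5} with an elementary counting argument for linear orders. The crucial observation is that for a fixed p-point $\mathcal{U}$ there are at most $\mathfrak{c}$ many ultrafilters $\mathcal{V}$ with $\mathcal{U}\ge_T\mathcal{V}$. Indeed, any such $\mathcal{V}$ is witnessed, by Theorem \ref{thm.5}, by a continuous monotone map $f^*:\mathcal{P}(\om)\ra\mathcal{P}(\om)$ whose restriction to $\mathcal{U}$ has cofinal range in $\mathcal{V}$. Since the range is cofinal, $\mathcal{V}=\{X\sse\om:\exists U\in\mathcal{U}\ (f^*(U)\sse X)\}$, so with $\mathcal{U}$ fixed, $\mathcal{V}$ is completely determined by $f^*$. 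A continuous function on the separable metric space $\mathcal{P}(\om)$ is determined by its restriction to a countable dense set, so there are only $\mathfrak{c}^{\aleph_0}=\mathfrak{c}$ continuous maps $\mathcal{P}(\om)\ra\mathcal{P}(\om)$; hence $|\{\mathcal{V}:\mathcal{U}\ge_T\mathcal{V}\}|\le\mathfrak{c}$.

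First I would record this bound on the down-set of a p-point. Then, given a $\le_T$-chain $\mathcal{C}$ of p-points, I note that for each $\mathcal{U}\in\mathcal{C}$ the initial segment $I(\mathcal{U}):=\{\mathcal{V}\in\mathcal{C}:\mathcal{V}\le_T\mathcal{U}\}$ is contained in $\{\mathcal{V}:\mathcal{U}\ge_T\mathcal{V}\}$, and so $|I(\mathcal{U})|\le\mathfrak{c}$.

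Next I would run a cofinal-sequence argument. By transfinite recursion choose $\mathcal{U}_\al\in\mathcal{C}$ not $\le_T$ any $\mathcal{U}_\be$ ($\be<\al$), stopping at the least stage $\la$ at which $\{\mathcal{U}_\be:\be<\la\}$ is $\le_T$-cofinal in $\mathcal{C}$. Because the chain is total, such a choice forces $\mathcal{U}_\be<_T\mathcal{U}_\al$ for all $\be<\al$, so the $\mathcal{U}_\be$ ($\be\le\al$) lie in pairwise distinct Tukey types and hence are $|\al+1|$ distinct elements of $I(\mathcal{U}_\al)$. Thus $|\al|\le\mathfrak{c}$, i.e.\ $\al<\mathfrak{c}^+$, whence $\la\le\mathfrak{c}^+$. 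Since the resulting sequence is cofinal, $\mathcal{C}=\bigcup_{\be<\la}I(\mathcal{U}_\be)$, and therefore $|\mathcal{C}|\le\la\cdot\mathfrak{c}\le\mathfrak{c}^+$.

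I expect the only genuine content to be the down-set count, which rests entirely on the continuity supplied by Theorem \ref{thm.5}; without it one gets only the trivial bound $2^{\mathfrak{c}}$. The order-theoretic step is routine. The one point to handle with care is that distinct elements of $\mathcal{C}$ may be Tukey equivalent, so I treat $\mathcal{C}$ as totally preordered by $\le_T$ and use only strict increase of Tukey type along the recursion; any $\equiv_T$-class meeting $\mathcal{C}$ is itself contained in some $I(\mathcal{U})$ and so has size $\le\mathfrak{c}$, which is exactly why passing to Tukey types loses nothing in the final estimate.
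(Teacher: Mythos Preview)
Your proof is correct and follows essentially the same approach as the paper: both use Theorem \ref{thm.5} to bound each initial segment $\{\mathcal{V}\in\mathcal{C}:\mathcal{V}\le_T\mathcal{U}\}$ by $\mathfrak{c}$ (the paper phrases this as the chain being ``$\mathfrak{c}^+$-like''), and the cardinality bound follows. The paper leaves the passage from ``$\mathfrak{c}^+$-like'' to ``cardinality $\le\mathfrak{c}^+$'' implicit, whereas you spell out the cofinal-sequence argument; this extra detail is harmless and your handling of the preorder issue (Tukey-equivalent elements in $\mathcal{C}$) is appropriate.
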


\begin{proof}
Theorem \ref{thm.5} 
 shows that every Tukey chain $\mathcal{F}\sse\{$p-points$\}$ is $\mathfrak{c}^+$-like, that is, $|\{\mathcal{V}\in\mathcal{F}:\mathcal{V}\le_T\mathcal{U}\}|\le\mathfrak{c}$ for all $\mathcal{U}\in\mathcal{F}$.
\end{proof}

Recall the Free Set Lemma of Hajnal.

\begin{lem}[Free Set Lemma of Hajnal \cite{Juhasz75}]\label{lem.freeset}
If $|X|=\kappa$ and $\lambda<\kappa$ and 
$F:X\ra\mathcal{P}(X)$ satisfies $x\not\in F(x)$ and $|F(x)|<\lambda$, for all $x\in X$, 
then there is a $Y\sse X$ with $x\not\in F(y)$ and $y\not\in F(x)$ for all $x,y\in Y$ and $|Y|=\kappa$.
\end{lem}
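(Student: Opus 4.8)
The plan is to prove this standard combinatorial statement under the assumption that $\kappa$ is regular, which is all that is needed in the intended applications, where $\kappa$ is a successor cardinal; the singular case reduces to the regular one by a routine induction on $\kappa$ and I will not dwell on it. First I would fix a bijection $b:\kappa\ra X$ and transport $F$ to a set mapping $\hat F:\kappa\ra[\kappa]^{<\lambda}$ by $\hat F(\gamma)=b^{-1}[F(b(\gamma))]$, so that $\gamma\notin\hat F(\gamma)$ and $|\hat F(\gamma)|<\lambda$ for all $\gamma<\kappa$. A free set for $\hat F$ transports back under $b$ to a free set for $F$ of the same cardinality, so it suffices to treat the case $X=\kappa$ with $\kappa$ regular.

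The main obstacle is that freeness is two-directional: for $y,y'\in Y$ one must rule out \emph{both} $y\in F(y')$ and $y'\in F(y)$. A naive recursion can always avoid placing a newly chosen point into the $F$-images of the earlier points, but a single earlier point may lie in $F(\alpha)$ for cofinally many $\alpha$, so the reverse direction cannot be controlled pointwise. My plan is to split the two directions and dispatch each by a separate device. For the ``upward'' direction I would pass to a club: set $h(\beta)=\sup(F(\beta))+1$, which is $<\kappa$ by regularity since $|F(\beta)|<\lambda<\kappa$, and let $C$ be the club of closure points of $h$. Then for any $\beta<\alpha$ with $\alpha\in C$ we have $F(\beta)\subseteq h(\beta)<\alpha$, hence $\alpha\notin F(\beta)$. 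This already kills one direction uniformly on all of $C$.

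For the ``downward'' direction I would enumerate $C=\{\delta_\xi:\xi<\kappa\}$ increasingly and define the regressive set mapping $G(\xi)=\{\eta<\xi:\delta_\eta\in F(\delta_\xi)\}$, noting $G(\xi)\subseteq\xi$ and $|G(\xi)|<\lambda$. The key observation is that a regressive set mapping with small values admits a greedy colouring: recursively colour $\xi$ by the least element of $\lambda$ not used on $G(\xi)$, which is possible because $G(\xi)\subseteq\xi$ has already been coloured and $|G(\xi)|<\lambda$. Each colour class $Z_i$ is then $G$-free, since whenever $\eta<\xi$ share a colour we must have $\eta\notin G(\xi)$ (and $\xi\notin G(\eta)$ holds trivially as $G(\eta)\subseteq\eta<\xi$). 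Since $\kappa=\bigcup_{i<\lambda}Z_i$ and $\lambda<\kappa=\cf(\kappa)$, some class $Z$ has cardinality $\kappa$.

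Finally I would take $Y=\{\delta_\xi:\xi\in Z\}$. Given distinct $\delta_\eta,\delta_\xi\in Y$ with $\eta<\xi$, the $G$-freeness of $Z$ gives $\eta\notin G(\xi)$, that is $\delta_\eta\notin F(\delta_\xi)$, while the club closure gives $\delta_\xi\notin F(\delta_\eta)$ because $\delta_\eta<\delta_\xi\in C$. Hence $Y$ is free for $F$ and $|Y|=\kappa$, as required. I expect the genuinely delicate point to be the separation-of-directions idea itself: once the club-closure step removes the direction that resists pointwise control, what survives is exactly a regressive set mapping, and for those freeness follows immediately from greedy colouring.
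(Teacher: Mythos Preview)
The paper does not give its own proof of this lemma; it merely cites it from \cite{Juhasz75} and uses it as a black box in Corollary~\ref{cor.5.5}. So there is no paper argument to compare against, and your proof for the regular case is correct and nicely organized: the club-of-closure-points step kills the ``upward'' direction, and the greedy $\lambda$-colouring of the induced regressive set mapping kills the ``downward'' one.

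One correction, however. You write that the regular case ``is all that is needed in the intended applications, where $\kappa$ is a successor cardinal.'' That is not accurate for this paper. In Corollary~\ref{cor.5.5} the lemma is applied with $\kappa=|\mathcal{X}|$ an \emph{arbitrary} cardinal $>\mathfrak{c}^+$ and $\lambda=\mathfrak{c}^+$; nothing forces $\kappa$ to be regular or a successor, and the conclusion asks for a free subfamily of size exactly $\kappa$. So the singular case is genuinely needed here, and your one-line dismissal (``reduces to the regular one by a routine induction'') leaves a real gap. The reduction is standard but not entirely trivial: one typically writes $\kappa$ as an increasing union of regular cardinals $\kappa_i>\max(\lambda,\cf(\kappa))$, applies the regular case inside each piece, and then has to argue carefully (using the regular case once more at the level of $\cf(\kappa)$, or a back-and-forth thinning) that the resulting free sets can be amalgamated into a single free set of full size $\kappa$. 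If you want your write-up to stand on its own for the paper's application, you should supply at least a sketch of this step.
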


\begin{cor}\label{cor.5.5}
Every family $\mathcal{X}$ of p-points on $\om$ of cardinality $>\mathfrak{c}^+$ contains a subfamily $\mathcal{Z}\sse\mathcal{X}$ of equal size such that $\mathcal{U}\not\le_T\mathcal{V}$ whenever $\mathcal{U}\ne\mathcal{V}$ are in $\mathcal{Z}$.
\end{cor}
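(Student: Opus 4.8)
The plan is to realize $\mathcal{Z}$ as a free set for the ``strictly Tukey below'' relation and then invoke Hajnal's Free Set Lemma (Lemma \ref{lem.freeset}). First I would record the cardinality bound already implicit in the proof of Corollary \ref{cor.5.1.5}: for any p-point $\mathcal{U}$, the collection $\{\mathcal{V}:\mathcal{U}\ge_T\mathcal{V}\}$ has cardinality at most $\mathfrak{c}$. Indeed, by Theorem \ref{thm.5} every reduction $\mathcal{U}\ge_T\mathcal{V}$ is witnessed by a continuous monotone map $f^*:\mathcal{P}(\om)\ra\mathcal{P}(\om)$ whose restriction to $\mathcal{U}$ has cofinal range in $\mathcal{V}$. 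A continuous map on the separable metric space $\mathcal{P}(\om)$ is determined by its values on a fixed countable dense set, so there are only $\mathfrak{c}$ many such $f^*$; and each $f^*$ pins down $\mathcal{V}$ uniquely, since $\mathcal{V}$ must be the unique ultrafilter for which $f^*{}''\mathcal{U}$ is cofinal, i.e.\ the ultrafilter generated by $f^*{}''\mathcal{U}$. Hence at most $\mathfrak{c}$ ultrafilters lie Tukey below a given p-point.

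Next I would set up the free-set machinery. Writing $\kappa=|\mathcal{X}|$, so that $\kappa>\mathfrak{c}^+$, I define $F:\mathcal{X}\ra\mathcal{P}(\mathcal{X})$ by
\[
F(\mathcal{U})=\{\mathcal{V}\in\mathcal{X}:\mathcal{V}\ne\mathcal{U}\text{ and }\mathcal{V}\le_T\mathcal{U}\}.
\]
Since every member of $\mathcal{X}$ is a p-point, the previous paragraph gives $|F(\mathcal{U})|\le\mathfrak{c}$ for every $\mathcal{U}\in\mathcal{X}$, and clearly $\mathcal{U}\not\in F(\mathcal{U})$. Taking $\lambda=\mathfrak{c}^+$ we have $\lambda<\kappa$ and $|F(\mathcal{U})|\le\mathfrak{c}<\lambda$, so the hypotheses of Lemma \ref{lem.freeset} are satisfied. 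The lemma then yields $\mathcal{Z}\sse\mathcal{X}$ with $|\mathcal{Z}|=\kappa=|\mathcal{X}|$ such that $\mathcal{U}\not\in F(\mathcal{V})$ and $\mathcal{V}\not\in F(\mathcal{U})$ for all $\mathcal{U},\mathcal{V}\in\mathcal{Z}$.

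Finally I would verify that $\mathcal{Z}$ is the desired antichain. Fix distinct $\mathcal{U},\mathcal{V}\in\mathcal{Z}$. Because $\mathcal{U}\ne\mathcal{V}$, membership $\mathcal{U}\in F(\mathcal{V})$ is equivalent to $\mathcal{U}\le_T\mathcal{V}$; hence the free-set condition $\mathcal{U}\not\in F(\mathcal{V})$ unwinds to exactly $\mathcal{U}\not\le_T\mathcal{V}$. Thus $\mathcal{U}\not\le_T\mathcal{V}$ whenever $\mathcal{U}\ne\mathcal{V}$ are in $\mathcal{Z}$, as required.

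I expect the only genuinely substantive point to be the cardinality bound $|\{\mathcal{V}:\mathcal{U}\ge_T\mathcal{V}\}|\le\mathfrak{c}$; once that is in hand---and it is essentially the same estimate already used for Corollary \ref{cor.5.1.5}---the remainder is a direct application of the Free Set Lemma with $\lambda=\mathfrak{c}^+$. The one small care needed is confirming that a continuous monotone $f^*$ determines $\mathcal{V}$ uniquely rather than merely up to Tukey equivalence, which holds because the cofinal range of $f^*\re\mathcal{U}$ generates $\mathcal{V}$ as an ultrafilter.
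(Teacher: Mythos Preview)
Your proof is correct and follows essentially the same strategy as the paper: bound the number of Tukey predecessors of a p-point by $\mathfrak{c}$ via Theorem \ref{thm.5}, then apply Hajnal's Free Set Lemma with $\lambda=\mathfrak{c}^+$. The one difference is that you define $F(\mathcal{U})$ using $\le_T$ (excluding $\mathcal{U}$ itself), whereas the paper uses $<_T$; your choice is slightly cleaner, since the paper's version only yields a set $\mathcal{Y}$ in which no two distinct members satisfy $\mathcal{U}<_T\mathcal{V}$, and a further thinning step (using again that each Tukey class of a p-point has size $\le\mathfrak{c}$) is needed to eliminate Tukey-equivalent pairs. Your formulation gets the full conclusion in one application of the Free Set Lemma.
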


\begin{proof}
Let $\mathcal{X}$ be a family of p-points   such that  $\kappa:=|\mathcal{X}|>\mathfrak{c}^+$.
Define $F:\mathcal{X}\ra\mathcal{P}(\mathcal{X})$ by
$F(\mathcal{U})=\{\mathcal{V}\in\mathcal{X}:\mathcal{V}<_T\mathcal{U}\}$.
By Theorem \ref{thm.5},
for each $\mathcal{U}\in \mathcal{X}$,
$|F(\mathcal{U})|< \mathfrak{c}^+$.
So, by the Free Set Lemma \ref{lem.freeset}, 
there is a family $\mathcal{Y}\sse\mathcal{X}$ such that $|\mathcal{Y}|=\kappa$ and for each $\mathcal{U},\mathcal{V}\in\mathcal{Y}$,
$\mathcal{U}\not\in F(\mathcal{V})$ and $\mathcal{V}\not\in F(\mathcal{U})$; that is, $\mathcal{U}\not<_T\mathcal{V}$ and $\mathcal{V}\not<_T\mathcal{U}$.
By Theorem \ref{thm.5}, there are at most $\mathfrak{c}$ many ultrafilters Tukey equivalent to any given p-point.
Thus, there is a subfamily $\mathcal{Z}\sse\mathcal{Y}$ also of cardinality $\al$ such that every two p-points in $\mathcal{Z}$ are Tukey incomparable.
\end{proof}

\begin{rem}
A similar trick was used by Rudin and Shelah in \cite{Shelah/Rudin78} in part of their proof that there are always $2^{\mathfrak{c}}$ many Rudin-Keisler incomparable ultrafilters.
\end{rem}


Next, we use Theorem \ref{thm.5} to  see that some strength of selective ultrafilters is preserved  downward in the Tukey ordering.

\begin{thm}\label{thm.14}
Suppose $\mathcal{U}$ is selective and $\mathcal{U}\ge_T\mathcal{V}$.
Then $\mathcal{V}$ is basically generated.
\end{thm}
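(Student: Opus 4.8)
The plan is to push the p-point machinery of Theorem~\ref{thm.5} through a cofinal map and then invoke the extra Ramsey strength of selectivity to control the images of pseudo-intersections.

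First, since every selective ultrafilter is a p-point and $\mathcal{U}\ge_T\mathcal{V}$, Theorem~\ref{thm.5} supplies a monotone continuous $f^*:\mathcal{P}(\om)\ra\mathcal{P}(\om)$ whose restriction to $\mathcal{U}$ is cofinal in $\mathcal{V}$; moreover the proof of that theorem yields a set $\tilde X\in\mathcal{U}$ and a nondecreasing sequence $(m_k)_{k<\om}$ such that, for $W\sse\tilde X$, whether $k\in f^*(W)$ is decided by the finite trace $W\cap m_k$. I would take $\mathcal{B}=\{f^*(U):U\in\mathcal{U},\ U\sse\tilde X\}$ as the candidate filter basis for $\mathcal{V}$: since $f^*\re\mathcal{U}$ has cofinal range, $\mathcal{B}$ is cofinal in $\mathcal{V}$ and hence a filter basis, and by monotonicity of $f^*$ it can be arranged to be closed under finite intersections.

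Next I would verify the subsequence--intersection clause. Given $A_n\ra B$ in $\mathcal{B}$, choose preimages $U_n,U_\infty\sse\tilde X$ in $\mathcal{U}$ with $f^*(U_n)=A_n$ and $f^*(U_\infty)=B$. Because $\mathcal{V}$ is a filter (closed under supersets) and $f^*$ is monotone, I may replace each $A_n$ by the smaller set $f^*(U_n\cap U_\infty)\sse A_n$, which still lies in $\mathcal{B}$; this only shrinks the eventual intersection, so without loss of generality $U_n\sse U_\infty$ and $A_n\sse B$. The goal is then to produce a single $U^*\in\mathcal{U}$ with $U^*\sse U_\infty$ together with a subsequence $(n_k)$ for which $f^*(U^*)\sse A_{n_k}$ for every $k$; granting this, $\bigcap_k A_{n_k}\contains f^*(U^*)\in\mathcal{V}$ and we are done.

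The hard part is exactly producing that $U^*$ and subsequence, and this is where selectivity---not merely the p-point property---must be used. The difficulty is that the continuity modulus makes the bit $k\in f^*(W)$ depend on the \emph{entire} initial trace $W\cap m_k$, so a finite discrepancy low down in the input can corrupt arbitrarily high bits of the output; thus the mod-finite pseudo-intersection that a p-point alone provides does not yield genuine containment of images, and monotonicity only gives the free but insufficient bound $f^*(U^*\cap U_{n_k})\sse A_{n_k}$. My plan is to run a fusion in which the Ramsey (equivalently q-point) strength of the selective $\mathcal{U}$ is used to build $U^*$ meeting each interval $[m_{k-1},m_k)$ in a controlled way and to thin $(U_n)$ to a subsequence $(n_k)$ so that the finitely many places where $U^*$ could stick out of $U_{n_k}$ are pushed into the stabilized part of the convergence $A_n\ra B$ (so those bits already agree with $B\contains f^*(U^*)$), or else their index set is driven outside $\mathcal{U}$, exactly as in the toy case $U_n=\om\setminus\{n\}$. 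Coordinating this selective diagonalization with the continuity intervals $[m_{k-1},m_k)$ and with the rate of convergence of $A_n$ to $B$, so that the pseudo-containment becomes honest containment of the images, is the technical heart of the argument; once it is in place, $f^*(U^*)$ witnesses the required property and $\mathcal{V}$ is basically generated.
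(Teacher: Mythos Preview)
Your setup is right and you correctly isolate the obstacle: a pseudo-intersection $U^*\sse^* U_n$ need not push forward to $f^*(U^*)\sse A_n$, since low bits of $U^*\setminus U_n$ can poison arbitrarily high output bits. But the proposal stops exactly where the work begins. The announced fusion is never carried out, and there is an internal tension in the outline: your reduction to $U_n\sse U_\infty$ replaces $A_n$ by $f^*(U_n\cap U_\infty)$, which need not converge to $B$ (monotone continuous maps do not in general satisfy $f^*(X\cap Y)=f^*(X)\cap f^*(Y)$), yet the plan afterward leans on ``the stabilized part of the convergence $A_n\ra B$''. More fundamentally, the preimages $U_n\in (f^*)^{-1}(A_n)\cap\mathcal{U}$ are \emph{arbitrary} choices; the convergence of the $A_n$ gives no grip on them because $f^*$ is far from injective, and nothing forces the finite defects $U^*\setminus U_n$ to align with the continuity intervals $[m_{k-1},m_k)$. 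Note the danger: if your argument succeeded without genuinely using convergence (and after the reduction it is unclear what remains of it), it would show that \emph{every} sequence in $\mathcal{B}$ has a subsequence with intersection in $\mathcal{V}$, i.e.\ that $\mathcal{V}$ is essentially a p-point---already false for $\mathcal{V}=\mathcal{U}\cdot\mathcal{U}$.

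What is missing is a device that produces preimages which \emph{do} converge, and this is precisely what the paper imports from the selective version of the Pr\"{o}mel--Voight canonical Ramsey theorem: on some $M\in\mathcal{U}$ the map factors as $f=\psi\circ\varphi$ with $\varphi$ Lipschitz and \emph{inner} ($\varphi(X)\sse X$) and $\psi$ a one-to-one homeomorphism onto its range. Innerness makes each fiber $\varphi^{-1}(Y)$ a closed subset of the compact cube $\{A:Y\sse A\}$, hence compact; the homeomorphism $\psi$ pulls the convergence $A_n\ra B$ back to $Y_n\ra Y$; and compactness of the fibers then yields representatives in $\mathcal{U}$ with a convergent subsequence. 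Only at that point does the basic (p-point) property of $\mathcal{U}$ finish the job via monotonicity. Your q-point diagonalization sketch offers no substitute for this canonization step, and I do not see how to complete the argument without it or an equivalent.
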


\begin{proof}
By Theorem \ref{thm.5}, there is a continuous monotone map $f:\mathcal{P}(\om)\ra\mathcal{P}(\om)$
such that $f''\mathcal{U}\sse\mathcal{V}$ and $f''\mathcal{U}$ generates $\mathcal{V}$.
By  the selective version of the Pr\"{o}mel-Voight canonical form of the Galvin-Prikry Theorem,
there is an
 $M\in\mathcal{U}$,
 a   Lipschitz map $\varphi:[\om]^{\om}\ra\mathcal{P}(\om)$ such that $\varphi(X)\sse X$ for each $X\in[\om]^{\om}$,
 and a 1-1 homeomorphism $\psi:$ range$(\varphi)\ra\mathcal{P}(\om)$ such that $f=\psi\circ\varphi$.

Let $\mathcal{B}=f''\mathcal{U}\re M$.
Note that $\mathcal{B}$ is a cofinal subset of $\mathcal{V}$.
We claim that 
every converging sequence $X_n\ra X$ of elements of $\mathcal{B}$ has a subsequence $X_{n_k}$ such that $\bigcap_{k<\om}X_{n_k}\in\mathcal{V}$.
Let $X_n$, $n<\om$, and $X$ be elements of $\mathcal{B}$ such that $X_n\ra X$.
Let $Y=\psi^{-1}(X)$ and $Y_n=\psi^{-1}(X_n)$.
Then $Y_n\ra Y$, since $\psi$ is a 1-1 homeomorphism.
Let $K=\{A\in\mathcal{U}:\varphi(A)=Y\}$ and 
$K_n=\{A\in\mathcal{U}:\varphi(A)=Y_n\}$ $(n\in\om)$.
Then $K$ and $K_n$ are compact subsets of $\mathcal{U}$ such that $K_n\ra K$. 
So in particular for an arbitrary choice $A_n\in K_n$, $(n\in\om)$
we can find a subsequence $A_{n_k}$ converging to a member $B$ in $K$.
Note that $A_{n_k}$ is a  sequence in $\mathcal{U}$ converging to the member $B$, which is in $\mathcal{U}$.
Since $\mathcal{U}$ is basic there is a further subsequence $A_{n_{k_i}}$ such that 
$$
A=\bigcap_{i<\om}A_{n_{k_i}}\in\mathcal{U}.
$$
It follows that $X_{n_{k_i}}=f(A_{n_{k_i}})\contains f(A)$ for all $i<\om$ and so in particular, 
$f(A) \in\mathcal{V}$ and $f(A)\sse\bigcap_{i<\om}X_{n_{k_i}}$.
Thus, $\mathcal{B}$ witnesses that $\mathcal{V}$ is basically generated.
\end{proof}

It will be shown in Section \ref{sec.omom}
that  for each selective ultrafilter $\mathcal{U}$, 
$\mathcal{U}\cdot\mathcal{U}\equiv_T\mathcal{U}$;
hence $\mathcal{U}\equiv_T\mathcal{V}$ does not imply that $\mathcal{V}$ is selective.

\begin{question}\label{q.p-pointdown}
If $\mathcal{U}$ is a p-point and $\mathcal{U}\ge_T\mathcal{V}$, does it follow that $\mathcal{V}$ is basically generated?
\end{question}

\begin{question}
From Theorem \ref{thm.2},
we know that every iteration of Fubini products of p-points is basically generated.
Is there 
an ultrafilter which is basically generated but is not a Fubini limit  of p-points?
\end{question}

\begin{question}
Can Theorem \ref{thm.5} be improved to show that if $\mathcal{U}$ is basically generated and $\mathcal{U}\ge_T\mathcal{V}$,
then there is a continuous (or definable) monotone cofinal map $f:\mathcal{U}\ra\mathcal{V}$ witnessing this?
\end{question}

More generally,
\begin{question}
If $\mathcal{V}\le_T\mathcal{U}<_T[\mathfrak{c}]^{<\om}$,
then is there a continuous (or definable) monotone cofinal map $f:\mathcal{U}\ra\mathcal{V}$ witnessing this?
\end{question}

One might first try to show that
 the existence of a continuous cofinal map propagates Tukey downwards, or in other words,

\begin{question}
Suppose  that  $\mathcal{U}$ is such that whenever $\mathcal{U}\ge_T\mathcal{V}$ then there is a continuous monotone cofinal map from $\mathcal{U}$ to $\mathcal{V}$.
If $\mathcal{U}\ge_T\mathcal{W}$, then does it follow that for each $\mathcal{V}\le_T\mathcal{W}$ there is a continuous monotone cofinal map from $\mathcal{W}$ into $\mathcal{V}$?
\end{question}




\section{Comparing Tukey types of ultrafilters with $(\om^{\om},\le)$}\label{sec.omom}

In this section we investigate which ultrafilters are above $(\om^{\om},\le)$, where  $h\le g$ iff for each $n<\om$, $h(n)\le g(n)$.

\begin{fact}\label{fact.rapid>omom}
If $\mathcal{U}$ is a rapid ultrafilter, then $\mathcal{U}\ge_T \om^{\om}$.
\end{fact}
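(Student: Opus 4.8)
The plan is to exhibit a Tukey map $g:\om^{\om}\ra\mathcal{U}$, that is, a map sending unbounded subsets of $(\om^{\om},\le)$ to unbounded subsets of $(\mathcal{U},\contains)$. By the Schmidt characterization recalled in the introduction, producing such a $g$ suffices to conclude $\mathcal{U}\ge_T\om^{\om}$. The essential tool will be rapidity: for each $h\in\om^{\om}$ we want to attach a set $g(h)\in\mathcal{U}$ that is ``thin enough'' to encode how fast $h$ grows, so that an unbounded family of functions is forced to map to a family of sets with no common lower bound (no common member of $\mathcal{U}$ contained in all of them).

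\textbf{Construction of the map.}
First I would fix, once and for all, a single rapid set as a scaffold. Given an arbitrary $h\in\om^{\om}$, apply the definition of rapidity to (a suitable monotonization of) $h$ to obtain an $X_h\in\mathcal{U}$ with $|X_h\cap h(n)|\le n$ for every $n$. The idea is that $X_h$ is sparse in a way that is calibrated by $h$: if $h$ grows quickly, then $X_h$ must omit long intervals. I would then set $g(h)=X_h$, perhaps after replacing $h$ by $\max(h,\mathrm{id})$ or by its increasing enumeration so that the defining inequality is genuinely informative. The key quantitative feature to extract is a lower bound on the gaps of $X_h$: writing $X_h=\{x_0<x_1<\dots\}$, the condition $|X_h\cap h(n)|\le n$ translates into $x_n\ge h(n)$, i.e.\ the $n$-th element of $X_h$ dominates $h(n)$.

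\textbf{Verifying the Tukey property.}
Now let $W\sse\om^{\om}$ be unbounded; I must show $\{g(h):h\in W\}$ is unbounded in $(\mathcal{U},\contains)$, i.e.\ has no common member of $\mathcal{U}$ below it. Suppose toward a contradiction that some $A\in\mathcal{U}$ satisfies $A\sse X_h$ for all $h\in W$. Enumerate $A=\{a_0<a_1<\dots\}$; since $A\sse X_h$, the $n$-th element of $A$ is at least the $n$-th element of $X_h$, which by the gap estimate is $\ge h(n)$. Hence the single function $n\mapsto a_n$ dominates every $h\in W$ pointwise (or pointwise eventually), contradicting unboundedness of $W$. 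Thus no such $A$ exists, $g''W$ is unbounded, and $g$ is a Tukey map.

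\textbf{Main obstacle.}
The step I expect to require the most care is the bookkeeping that converts the rapidity inequality $|X_h\cap h(n)|\le n$ into the clean domination statement ``the enumerating function of any $A\sse X_h$ dominates $h$.'' The inequality as stated bounds the number of elements of $X_h$ below $h(n)$, and one must check that passing to a subset $A$ can only thin $X_h$ further, so the bound is preserved, and that the indexing matches up (the off-by-one issues between ``$\le n$ elements below $h(n)$'' and ``$n$-th element $\ge h(n)$''). Choosing to work with monotone $h$ and taking the enumeration functions keeps this transparent; once the domination inequality is in hand, the contradiction with unboundedness of $W$ is immediate.
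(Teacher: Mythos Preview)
Your proof is correct. It takes the dual route to the paper's: you build a Tukey map $g:\om^{\om}\ra\mathcal{U}$, while the paper builds a cofinal map $f:\mathcal{U}\ra\om^{\om}$, sending $X\in\mathcal{U}$ to the increasing enumeration of $X$ (shifted by one). Both arguments rest on exactly the same observation---rapidity applied to $h$ gives an $X\in\mathcal{U}$ whose increasing enumeration dominates $h$ pointwise---but they package it in opposite directions.

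The paper's version has the minor advantage that the map $f$ is canonical and monotone, and rapidity is invoked only to check that its range is cofinal; your version requires an axiom-of-choice selection of $X_h$ for each $h$, but in exchange the verification that bounded images come from bounded preimages is very direct. Neither approach is harder than the other, and your bookkeeping (that $A\sse X_h$ implies the $n$-th element of $A$ is $\ge$ the $n$-th element of $X_h$, hence $\ge h(n)$) is exactly right---no monotonization of $h$ is actually needed, and the domination you obtain is genuinely pointwise, matching the ordering $\le$ on $\om^{\om}$ used in the paper.
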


\begin{proof}
Define $f:\mathcal{U}\ra\om^{\om}$ by letting $f(X)$ be the function which enumerates all but the least element of $X$ in strictly increasing order. It is not hard to check that  $f$ is a cofinal map.
\end{proof}

Hence each selective ultrafilter and each q-point is Tukey above  $\om^{\om}$.

\begin{fact}\label{fact.Uoo}
For each ultrafilter $\mathcal{U}$,
$\mathcal{U}\cdot\mathcal{U}\ge_T \om^{\om}$.
\end{fact}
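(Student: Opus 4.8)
The plan is to exhibit an explicit monotone cofinal map $f:\mathcal{U}\cdot\mathcal{U}\ra(\om^{\om},\le)$ and then invoke Fact \ref{fact.monotonemap}, so that it suffices to verify monotonicity together with cofinality of the range. I assume throughout, as is standard in this theory, that $\mathcal{U}$ is nonprincipal, so that $\mathcal{U}$ contains every cofinite subset of $\om$. For $A\in\mathcal{U}\cdot\mathcal{U}$ write $(A)_i=\{j:(i,j)\in A\}$ and $I_A=\{i:(A)_i\in\mathcal{U}\}$. By the definition of $\mathcal{U}\cdot\mathcal{U}$ we have $I_A\in\mathcal{U}$, so $I_A$ is infinite and each column $(A)_i$ with $i\in I_A$ is a nonempty member of $\mathcal{U}$. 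I would define
$$f(A)(n)=\min\{\min (A)_i : i\in I_A \text{ and } i\ge n\}.$$
The tail restriction $i\ge n$ makes $f(A)$ non-decreasing in $n$ and, more importantly, is the device that will secure monotonicity in $A$.

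The key step is monotonicity. Suppose $A\contains A'$ are both in $\mathcal{U}\cdot\mathcal{U}$, so that $A'\ge A$ in the reverse-inclusion order. Then $I_{A'}\sse I_A$, since $(A')_i\in\mathcal{U}$ forces $(A)_i\contains(A')_i$ to lie in $\mathcal{U}$ as well; and for $i\in I_{A'}$ we have $\min (A)_i\le\min (A')_i$. Hence each term $\min (A')_i$ with $i\in I_{A'}$ and $i\ge n$ that appears in $f(A')(n)$ dominates $\min (A)_i\ge f(A)(n)$, giving $f(A')(n)\ge f(A)(n)$. Thus $A'\ge A$ implies $f(A')\ge f(A)$, as required.

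The main obstacle the construction has to overcome is precisely the presence of columns that are empty or fail to lie in $\mathcal{U}$: a naive assignment using the $n$-th column, such as $f(A)(n)=\min (A)_n$, is not monotone, because as $A$ shrinks a column can drop out of $\mathcal{U}$ (indeed become empty), and its contribution would have to leap from a large value down to $0$. Restricting the minimum to indices in $I_A$ and to the tail $i\ge n$ is exactly what circumvents this, since $I_{A'}\sse I_A$ keeps the admissible index set shrinking in the right direction.

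Finally, for cofinality of the range, given $h\in\om^{\om}$ I would pass to an increasing majorant $\bar h$, say $\bar h(n)=\max_{i\le n}h(i)\ge h(n)$, and set $A_h=\{(i,j):j\ge\bar h(i)\}$. Since $\mathcal{U}$ is nonprincipal, each column $[\bar h(i),\om)$ is cofinite, hence in $\mathcal{U}$, so $I_{A_h}=\om$ and $A_h\in\mathcal{U}\cdot\mathcal{U}$; moreover $\min (A_h)_i=\bar h(i)$, and since $\bar h$ is increasing, $f(A_h)(n)=\min\{\bar h(i):i\ge n\}=\bar h(n)\ge h(n)$. Thus $f(A_h)\ge h$, so $f''(\mathcal{U}\cdot\mathcal{U})$ is cofinal in $\om^{\om}$. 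By Fact \ref{fact.monotonemap}, $f$ is a cofinal map, whence $\mathcal{U}\cdot\mathcal{U}\ge_T\om^{\om}$.
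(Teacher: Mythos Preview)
Your proof is correct. The approach differs from the paper's in a useful way: the paper defines $g_A(k)=\min(A)_{n_k}$, where $(n_k)_k$ enumerates $I_A$, and this map is \emph{not} monotone on all of $\mathcal{U}\cdot\mathcal{U}$, so the paper must first pass to the cofinal base $\mathcal{X}$ of sets $A$ whose nonempty columns are all in $\mathcal{U}$ and whose column minima are non-decreasing, and only then verify monotonicity on $\mathcal{X}$. Your tail-minimum definition $f(A)(n)=\min\{\min(A)_i:i\in I_A,\ i\ge n\}$ builds the needed monotonicity directly into the map, so no restriction to a base is required; the price is that your $f(A)$ is always non-decreasing, but this costs nothing since non-decreasing functions are cofinal in $(\om^{\om},\le)$. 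Both arguments construct essentially the same witness for cofinality of the range. Your route is slightly more streamlined.
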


\begin{proof} 
Define $f:\mathcal{U}\cdot\mathcal{U}\ra\om^{\om}$ by letting 
$f(A)$ be the function $g_A:\om\ra\om$ defined by 
$g_A(n)=\min(A)_{n_k}$,
where $(n_k)_{k<\om}$ enumerates those $n$ for which $(A)_n\in\mathcal{U}$.
We shall show that $f$ is a cofinal map.

Let $\mathcal{X}$ consist of those $A\in\mathcal{U}\cdot\mathcal{U}$ with the properties that (a) whenever $(A)_n\ne\emptyset$, then $(A)_n\in\mathcal{U}$, and  (b)
whenever $m<n$ and $(A)_m,(A)_n\in\mathcal{U}$, then
$\min(A)_m\le\min(A)_n$.
Note that $\mathcal{X}$ is a base for $\mathcal{U}\cdot\mathcal{U}$, so it suffices to show that $f\re\mathcal{X}$ is a cofinal map from $\mathcal{X}$ into $\om^{\om}$.
We show that $f\re\mathcal{X}$ is monotone and has range which is cofinal in $\om^{\om}$,
hence by Fact \ref{fact.monotonemap},
$f\re\mathcal{X}$ is a cofinal map from $\mathcal{X}$ into $\om^{\om}$.

Let $A,B\in\mathcal{X}$ be given such that $A\contains B$.
Then the sequence $(i_k)_{k<\om}$ enumerating those $n$ for which $(B)_n\in\mathcal{U}$ is a subsequence of the sequence $(n_k)_{k<\om}$ enumerating those $n$ for which $(A)_n\in\mathcal{U}$.
Hence, for each $k$, $n_k\le i_k$. Since $A,B$ are in $\mathcal{X}$ and $A\contains B$,
we have  that $\min(A)_{n_k}\le \min(A)_{i_k}\le\min(B)_{i_k}$;
hence $g_A(k)\le g_B(k)$ for all $k<\om$.
Therefore, $f\re\mathcal{X}$ is monotone.

Next, let $h:\om\ra\om$ be given.
Define $A$ to be the collection of pairs $(n,l)$ such that $l>\max\{h(i):i\le n\}$.  Then $A\in\mathcal{X}$ and $g_A(n)\ge h(n)$ for all $n<\om$.
Thus, $f\re\mathcal{X}$ has cofinal range in $\om^{\om}$.
\end{proof}

\begin{thm}\label{thm.10}
For any ultrafilters $\mathcal{U},\mathcal{U}_n$ ($n<\om$), $\lim_{n\ra\mathcal{U}}\mathcal{U}_n
\le_T\mathcal{U}\times\Pi_{n<\om}\mathcal{U}_n$,
where $\mathcal{U}\times\Pi_{n<\om}\mathcal{U}_n$ is given its natural product ordering.
In particular,  $\mathcal{U}\cdot\mathcal{U}\le_T\Pi_{n<\om}\mathcal{U}$.
\end{thm}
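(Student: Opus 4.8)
The plan is to exhibit a single monotone map from $\mathcal{U}\times\Pi_{n<\om}\mathcal{U}_n$ into $\lim_{n\ra\mathcal{U}}\mathcal{U}_n$ whose range is cofinal, and then invoke Fact \ref{fact.monotonemap}. For $A\sse\om\times\om$ write $(A)_n=\{j:(n,j)\in A\}$ for the $n$-th vertical section, so that by definition $A\in\lim_{n\ra\mathcal{U}}\mathcal{U}_n$ iff $\{n:(A)_n\in\mathcal{U}_n\}\in\mathcal{U}$. The natural candidate is the map that reassembles a point of the product into a set with prescribed sections: for $\lgl U,(V_n)_{n<\om}\rgl\in\mathcal{U}\times\Pi_{n<\om}\mathcal{U}_n$ set
$$
g(U,(V_n))=\{(n,j):n\in U\text{ and }j\in V_n\}.
$$

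First I would check that $g$ lands in $\lim_{n\ra\mathcal{U}}\mathcal{U}_n$: the sections of $g(U,(V_n))$ are $(g(U,(V_n)))_n=V_n$ when $n\in U$ and $\emptyset$ otherwise, so $\{n:(g(U,(V_n)))_n\in\mathcal{U}_n\}\contains U\in\mathcal{U}$, whence $g(U,(V_n))\in\lim_{n\ra\mathcal{U}}\mathcal{U}_n$. Monotonicity is then immediate from the definition: if $U\contains U'$ and $V_n\contains V'_n$ for all $n$ (that is, $\lgl U,(V_n)\rgl\le\lgl U',(V'_n)\rgl$ in the product order), then $g(U,(V_n))\contains g(U',(V'_n))$, which is exactly $\le$ in $(\lim_{n\ra\mathcal{U}}\mathcal{U}_n,\contains)$.

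Next I would verify that the range is cofinal, which is the only step with any content. Given $A\in\lim_{n\ra\mathcal{U}}\mathcal{U}_n$, put $U=\{n:(A)_n\in\mathcal{U}_n\}\in\mathcal{U}$, let $V_n=(A)_n$ for $n\in U$, and let $V_n$ be any fixed member of $\mathcal{U}_n$ (say $\om$) for $n\notin U$; then $\lgl U,(V_n)\rgl$ is a legitimate point of the product and $g(U,(V_n))=A\cap(U\times\om)\sse A$, so $A\le g(U,(V_n))$. Hence $g''(\mathcal{U}\times\Pi_{n<\om}\mathcal{U}_n)$ is cofinal, and by Fact \ref{fact.monotonemap} the map $g$ is cofinal, giving $\lim_{n\ra\mathcal{U}}\mathcal{U}_n\le_T\mathcal{U}\times\Pi_{n<\om}\mathcal{U}_n$. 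The only delicate point is to remember to fill in the coordinates $n\notin U$ by arbitrary ultrafilter members so that the preimage is genuinely an element of the full product; otherwise the verification is routine.

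For the ``in particular'' clause I would observe that $\mathcal{U}\cdot\mathcal{U}=\lim_{n\ra\mathcal{U}}\mathcal{U}_n$ when every $\mathcal{U}_n=\mathcal{U}$, so the main statement yields $\mathcal{U}\cdot\mathcal{U}\le_T\mathcal{U}\times\Pi_{n<\om}\mathcal{U}$. Finally, reindexing the index set $\{*\}\cup\om$ by a bijection onto $\om$ gives an order isomorphism $\mathcal{U}\times\Pi_{n<\om}\mathcal{U}\cong\Pi_{n<\om}\mathcal{U}$, hence $\mathcal{U}\times\Pi_{n<\om}\mathcal{U}\equiv_T\Pi_{n<\om}\mathcal{U}$, and composing reductions yields $\mathcal{U}\cdot\mathcal{U}\le_T\Pi_{n<\om}\mathcal{U}$.
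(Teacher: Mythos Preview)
Your proof is correct. It differs from the paper's argument in direction: the paper builds a Tukey (unbounded) map $g:\mathcal{B}\ra\mathcal{U}\times\Pi_{n<\om}\mathcal{U}_n$ from a basis $\mathcal{B}$ of the Fubini limit, sending $A$ to $(p_1[A],(q_n(A))_{n<\om})$ with $q_n(A)=(A)_n$ or $\om$, and then checks that preimages of bounded sets are bounded. You instead construct the cofinal map in the opposite direction, $(U,(V_n))\mapsto\bigcup_{n\in U}\{n\}\times V_n$, and invoke Fact~\ref{fact.monotonemap}. The two maps are essentially inverse to one another on the relevant bases, so the approaches are dual rather than genuinely different; your route is slightly more direct since monotonicity plus cofinal range is immediate, whereas the paper's bounded-preimage verification requires one more line of unwinding. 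Your handling of the ``in particular'' clause via the reindexing isomorphism $\mathcal{U}\times\Pi_{n<\om}\mathcal{U}\cong\Pi_{n<\om}\mathcal{U}$ is also fine and matches what the paper implicitly uses.
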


\begin{proof}
Let $\mathcal{V}$ denote $\lim_{n\ra\mathcal{U}}\mathcal{U}_n$.
Let  $\mathcal{B}=\{A\in \mathcal{V}:$ for each $n<\om$, either $(A)_n=\emptyset$ or $(A)_n\in \mathcal{U}_n\}$. 
Note that $\mathcal{B}$ is a basis for 
 $\mathcal{V}$; hence it suffices to construct a Tukey map $g:\mathcal{B}\ra\mathcal{U}\times\Pi_{n<\om}\mathcal{U}_n$.
Given $A\in\mathcal{B}$ let $g(A)=(p_1[A],(q_n(A))_{n<\om})$,
where $q_n(A)=(A)_n$ if $n\in p_1[A]$ and $q_n(A)=\om$ otherwise.

To verify $g$ is a Tukey map 
let $\mathcal{Y}$ be a bounded subset of $\mathcal{V}$.
Then there is some $(C,(D_n:n<\om))\in\mathcal{U}\times\Pi_{n<\om}\mathcal{U}_n$ 
which bounds $\mathcal{Y}$.
Let
$\mathcal{X}=\{A\in\mathcal{B}: p_1[A]\contains C$ and $\forall n<\om,\ q_n(A)\contains D_n\}$.
Note that $\mathcal{X}$ contains the $g$-preimage of $\mathcal{Y}$.
Let $B=\bigcap\mathcal{X}$.
Then $p_1[B]\contains C$ and for each $n\in C$,
$(B)_n\contains D_n$, so $B\in\mathcal{V}$.
Moreover, by its definition, $B$ bounds $\mathcal{X}$.
Hence $B$ also bounds the $g$-preimage of $\mathcal{Y}$.
\end{proof}

\begin{thm}\label{thm.8}
If $\mathcal{U}$ is a p-point, then $\Pi_{n<\om}\mathcal{U}\equiv_T\mathcal{U}\times\om^{\om}$
and therefore $\Pi_{n<\om}\mathcal{U}\equiv_T\mathcal{U}\cdot\mathcal{U}\cdot\mathcal{U}$.
\end{thm}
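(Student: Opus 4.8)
The plan is to establish the single equivalence $\Pi_{n<\om}\mathcal{U}\equiv_T\mathcal{U}\times\om^{\om}$ first, and then read off $\Pi_{n<\om}\mathcal{U}\equiv_T\mathcal{U}\cdot\mathcal{U}\cdot\mathcal{U}$ as a consequence of it together with Theorem \ref{thm.10}, Fact \ref{fact.Uoo}, and the basic product facts. For the main equivalence I would prove the two inequalities separately, the reduction $\Pi_{n<\om}\mathcal{U}\le_T\mathcal{U}\times\om^{\om}$ being the only place where the p-point hypothesis is genuinely used.

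For $\Pi_{n<\om}\mathcal{U}\le_T\mathcal{U}\times\om^{\om}$ I would exhibit a monotone map $g:\mathcal{U}\times\om^{\om}\ra\Pi_{n<\om}\mathcal{U}$ by $g(X,h)=(X\setminus h(n))_{n<\om}$ and apply Fact \ref{fact.monotonemap}. Each coordinate $X\setminus h(n)$ lies in $\mathcal{U}$ since $\mathcal{U}$ is nonprincipal, and monotonicity is routine: if $X\contains X'$ and $h(n)\le h'(n)$ for all $n$, then $X\setminus h(n)\contains X'\setminus h'(n)$ coordinatewise. The content is that $g$ has cofinal range, and this is exactly the p-point property: given any $(Z_n)_{n<\om}\in\Pi_{n<\om}\mathcal{U}$, choose a pseudo-intersection $X\in\mathcal{U}$ with $X\sse^* Z_n$ for every $n$, and let $h(n)$ be least with $X\setminus h(n)\sse Z_n$; then $g(X,h)$ lies coordinatewise below $(Z_n)_{n<\om}$. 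Thus a coordinatewise-decreasing sequence is coded by a single pseudo-intersection together with the $\om^{\om}$-function recording how far into $X$ one must pass before $X$ sinks below each $Z_n$. I expect this coding to be the main (and only genuinely non-formal) obstacle, though it is short once the right map is spotted.

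For the reverse inequality $\mathcal{U}\times\om^{\om}\le_T\Pi_{n<\om}\mathcal{U}$ I would give a single monotone cofinal map $F:\Pi_{n<\om}\mathcal{U}\ra\mathcal{U}\times\om^{\om}$, namely $F((X_n)_{n<\om})=(X_0,(\min X_k)_{k<\om})$. Monotonicity is immediate, and given any target $(A,h)$ the sequence $X_n=A\setminus h(n)$ satisfies $F((X_n)_{n<\om})\ge(A,h)$, so the range is cofinal and Fact \ref{fact.monotonemap} applies. (Alternatively one may note $\Pi_{n<\om}\mathcal{U}\ge_T\mathcal{U}$ by projection and $\Pi_{n<\om}\mathcal{U}\ge_T\om^{\om}$ by the coordinatewise-minimum map, and invoke the least-upper-bound property of $\times$ from item (4) of the basic facts, which holds for arbitrary directed sets.) Together with the previous paragraph this gives $\Pi_{n<\om}\mathcal{U}\equiv_T\mathcal{U}\times\om^{\om}$.

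Finally, for the $\mathcal{U}\cdot\mathcal{U}\cdot\mathcal{U}$ clause I would show $\mathcal{U}^3\equiv_T\mathcal{U}\times\om^{\om}$. The lower bound is purely formal: $\mathcal{U}^3\ge_T\mathcal{U}$ and $\mathcal{U}^3\ge_T\mathcal{U}^2\ge_T\om^{\om}$ by item (5) of the basic facts and Fact \ref{fact.Uoo}, so $\mathcal{U}^3\ge_T\mathcal{U}\times\om^{\om}$ by the least-upper-bound property. For the upper bound I would write $\mathcal{U}^3=\lim_{n\ra\mathcal{U}}\mathcal{U}^2$ and apply Theorem \ref{thm.10} to get $\mathcal{U}^3\le_T\mathcal{U}\times\Pi_{n<\om}\mathcal{U}^2$; then, since $\mathcal{U}^2\le_T\Pi_{m<\om}\mathcal{U}$ (again Theorem \ref{thm.10}) and $\le_T$ is preserved under countable products taken coordinatewise (the countable analogue of item (3)), one has $\Pi_{n<\om}\mathcal{U}^2\le_T\Pi_{n<\om}\Pi_{m<\om}\mathcal{U}$, which is order-isomorphic to $\Pi_{k<\om}\mathcal{U}$ via a bijection $\om\times\om\cong\om$ and hence $\equiv_T\mathcal{U}\times\om^{\om}$ by the first part. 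Combining this with $\mathcal{U}\times\mathcal{U}\equiv_T\mathcal{U}$ yields $\mathcal{U}^3\le_T\mathcal{U}\times(\mathcal{U}\times\om^{\om})\equiv_T\mathcal{U}\times\om^{\om}$, and the two bounds give $\mathcal{U}^3\equiv_T\mathcal{U}\times\om^{\om}\equiv_T\Pi_{n<\om}\mathcal{U}$, as required.
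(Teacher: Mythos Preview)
Your proof is correct and follows the same overall architecture as the paper: establish $\Pi_{n<\om}\mathcal{U}\equiv_T\mathcal{U}\times\om^{\om}$ by two inequalities, then derive the $\mathcal{U}\cdot\mathcal{U}\cdot\mathcal{U}$ clause from Theorem~\ref{thm.10} and Fact~\ref{fact.Uoo}.

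The tactical choices differ slightly, and yours are arguably cleaner. For $\Pi_{n<\om}\mathcal{U}\le_T\mathcal{U}\times\om^{\om}$, the paper builds a Tukey (unbounded) map $g:\Pi_{n<\om}\mathcal{U}\ra\mathcal{U}\times\om^{\om}$ by \emph{choosing} a pseudo-intersection and an $h$ for each sequence, then verifying that bounded images pull back to bounded sets; you instead exhibit the explicit monotone cofinal map $(X,h)\mapsto(X\setminus h(n))_{n<\om}$ in the opposite direction, which is the natural inverse of the paper's construction and avoids any choice. For the reverse inequality the paper routes through $\om^{\om}\le_T\mathcal{U}\cdot\mathcal{U}\le_T\Pi_{n<\om}\mathcal{U}$, whereas your direct map $(X_n)\mapsto(X_0,(\min X_k)_k)$ bypasses Fact~\ref{fact.Uoo} entirely. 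Finally, for the upper bound on $\mathcal{U}^3$ the paper exploits associativity to write $\mathcal{U}^3\cong(\mathcal{U}\cdot\mathcal{U})\cdot\mathcal{U}=\lim_{n\ra\mathcal{U}\cdot\mathcal{U}}\mathcal{U}$ and applies Theorem~\ref{thm.10} once, landing directly in $(\mathcal{U}\cdot\mathcal{U})\times\Pi_{n<\om}\mathcal{U}$; your route via $\mathcal{U}\cdot\mathcal{U}^2$ and $\Pi_n\Pi_m\mathcal{U}\cong\Pi_k\mathcal{U}$ is one step longer but equally valid (the preservation of $\le_T$ under countable products follows by taking the product of Tukey maps coordinatewise).
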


\begin{proof}
First, we show that $\Pi_{n<\om}\mathcal{U}\le_T\mathcal{U}\times\om^{\om}$.
Given a sequence $(A_n)_{n<\om}\in\Pi_{n<\om}\mathcal{U}$,
choose a $B\in\mathcal{U}$ and an $h:\om\ra\om$ such that 
$B\setminus h(n)\sse A_n$ for each $n$.
(Since $\mathcal{U}$  is a p-point, there is a $B\in\mathcal{U}$ such that $B\sse^* A_n$ for each $n$.  Let $h(n)$ be the least $m$ such that $B\setminus m\sse A_n$.)
Set $g((A_n)_{n<\om})=(B,h)$.

$g$ is a Tukey map.
To see this, let 
$\mathcal{Y}$ be a bounded subset of $\mathcal{U}\times\om^{\om}$.
Then there is some
$(B_*,h_*)\in\mathcal{U}\times\om^{\om}$ which bounds $\mathcal{Y}$.
Let $\mathcal{X}=\{(A_n)_{n<\om}:g((A_n)_{n<\om})\le (B_*,h_*)\}$.
Note that $\mathcal{X}$ set contains the $g$-preimage of $\mathcal{Y}$.
We claim that
$\mathcal{X}$ is bounded by $(B_*\setminus h_*(n))_{n<\om}$.
For given any $(A_n)_{n<\om}\in\mathcal{X}$,
letting $(B,h)$ denote $g((A_n)_{n<\om})$,
we have that $(B,h)\le (B_*,h_*)$, 
which means that $B\contains B_*$ and $h(n)\le h_*(n)$ for all $n$.
So for each $n$, $B_*\setminus h_*(n)\sse B\setminus h(n)\sse A_n$.
Thus, $(B_*\setminus h_*(n))_{n<\om}$ is a bound for $\mathcal{X}$.

On the other hand, $\om^{\om}\le_T\mathcal{U}\cdot\mathcal{U}\le_T\Pi_{n<\om}\mathcal{U}$, by Fact \ref{fact.Uoo} and Theorem \ref{thm.10}.
So $\mathcal{U}\times\om^{\om}\le_T\mathcal{U}\times\Pi_{n<\om}\mathcal{U}=\Pi_{n<\om}\mathcal{U}$.

Finally,
$\mathcal{U}\cdot\mathcal{U}\le_T\mathcal{U}\cdot\mathcal{U}\cdot\mathcal{U}$ and Fact \ref{fact.Uoo} imply that 
$\mathcal{U}\times\om^{\om}
\le_T\mathcal{U}\times(\mathcal{U}\cdot\mathcal{U})
\equiv_T\mathcal{U}\cdot\mathcal{U}
\le_T\mathcal{U}\cdot\mathcal{U}\cdot\mathcal{U}$.
On the other hand, applying Theorem \ref{thm.10} twice,
we have 
$\mathcal{U}\cdot\mathcal{U}\cdot\mathcal{U}\equiv_T\lim_{n\ra\mathcal{U}\cdot\mathcal{U}}\mathcal{U}
\le_T(\mathcal{U}\cdot\mathcal{U})\times\Pi_{n<\om}\mathcal{U}
\le_T\Pi_{n<\om}\mathcal{U}\times\Pi_{n<\om}\mathcal{U}=\Pi_{n<\om}\mathcal{U}$.
Thus, $\mathcal{U}\cdot\mathcal{U}\cdot\mathcal{U}\equiv_T\Pi_{n<\om}\mathcal{U}$.
\end{proof}

\begin{cor}\label{cor.dot}
If $\mathcal{V}$ is a p-point, $\mathcal{V}\ge_T\om^{\om}$, and $\mathcal{U}$ is any ultrafilter,
then $\mathcal{U}\cdot\mathcal{V}\equiv_T\mathcal{U}\times\mathcal{V}$.
\end{cor}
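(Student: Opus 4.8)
The plan is to establish the two Tukey inequalities separately. One direction is immediate and requires no hypotheses: part (5) of the basic Fact on products gives $\mathcal{U}\cdot\mathcal{V}\ge_T\mathcal{U}\times\mathcal{V}$ for arbitrary $\mathcal{U},\mathcal{V}$. So the whole content of the corollary lies in the reverse inequality $\mathcal{U}\cdot\mathcal{V}\le_T\mathcal{U}\times\mathcal{V}$, and it is here that the hypotheses that $\mathcal{V}$ be a p-point with $\mathcal{V}\ge_T\om^{\om}$ will be used.

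For the nontrivial direction, I would first observe that $\mathcal{U}\cdot\mathcal{V}$ is precisely $\lim_{n\ra\mathcal{U}}\mathcal{U}_n$ for the constant sequence $\mathcal{U}_n=\mathcal{V}$. Feeding this into Theorem \ref{thm.10} yields $\mathcal{U}\cdot\mathcal{V}\le_T\mathcal{U}\times\Pi_{n<\om}\mathcal{V}$. It therefore suffices to prove $\mathcal{U}\times\Pi_{n<\om}\mathcal{V}\equiv_T\mathcal{U}\times\mathcal{V}$, and by monotonicity of the product under $\le_T$ (part (3) of the basic Fact) this reduces to the single identity $\Pi_{n<\om}\mathcal{V}\equiv_T\mathcal{V}$.

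The identity $\Pi_{n<\om}\mathcal{V}\equiv_T\mathcal{V}$ is exactly where both hypotheses enter. Since $\mathcal{V}$ is a p-point, Theorem \ref{thm.8} gives $\Pi_{n<\om}\mathcal{V}\equiv_T\mathcal{V}\times\om^{\om}$. Since moreover $\mathcal{V}\ge_T\om^{\om}$, part (4) of the basic Fact applies: because $\mathcal{V}\ge_T\mathcal{V}$ and $\mathcal{V}\ge_T\om^{\om}$, the minimality of the product forces $\mathcal{V}\ge_T\mathcal{V}\times\om^{\om}$, while $\mathcal{V}\times\om^{\om}\ge_T\mathcal{V}$ is automatic, so $\mathcal{V}\times\om^{\om}\equiv_T\mathcal{V}$. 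Chaining $\Pi_{n<\om}\mathcal{V}\equiv_T\mathcal{V}\times\om^{\om}\equiv_T\mathcal{V}$ and tracing back through the previous paragraph gives $\mathcal{U}\cdot\mathcal{V}\le_T\mathcal{U}\times\mathcal{V}$; combining with the trivial $\ge_T$ direction yields $\mathcal{U}\cdot\mathcal{V}\equiv_T\mathcal{U}\times\mathcal{V}$.

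I do not anticipate a genuine obstacle: the substantive work is already contained in Theorems \ref{thm.10} and \ref{thm.8}, and the corollary amounts to assembling them correctly. The only step that truly depends on the stated hypotheses, rather than holding for all $\mathcal{U},\mathcal{V}$, is the collapse $\Pi_{n<\om}\mathcal{V}\equiv_T\mathcal{V}$, which needs $\mathcal{V}$ to be a p-point so that Theorem \ref{thm.8} is available, and needs $\mathcal{V}\ge_T\om^{\om}$ so that the extraneous $\om^{\om}$ factor is absorbed.
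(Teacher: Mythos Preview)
Your proof is correct and follows essentially the same route as the paper: apply Theorem~\ref{thm.10} to get $\mathcal{U}\cdot\mathcal{V}\le_T\mathcal{U}\times\Pi_{n<\om}\mathcal{V}$, then use Theorem~\ref{thm.8} and the hypothesis $\mathcal{V}\ge_T\om^{\om}$ to collapse $\Pi_{n<\om}\mathcal{V}$ to $\mathcal{V}$, and combine with the automatic reverse inequality. The only difference is that you spell out the justifications via the basic Fact more explicitly than the paper does.
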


\begin{proof}
By Theorem \ref{thm.10},
$\mathcal{U}\cdot\mathcal{V}
\le_T\mathcal{U}\times\Pi_{n<\om}\mathcal{V}$.
Since $\mathcal{V}$ is a p-point, $\Pi_{n<\om}\mathcal{V}
\equiv_T\mathcal{V}\times\om^{\om}$,
by Theorem \ref{thm.8}.
$\mathcal{V}\ge_T\om^{\om}$ implies that $\mathcal{V}\times\om^{\om}\equiv_T\mathcal{V}$.
Therefore,
$\mathcal{U}\cdot\mathcal{V}
\le_T\mathcal{U}\times\Pi_{n<\om}\mathcal{V}
\equiv_T\mathcal{U}\times\mathcal{V}
\le_T\mathcal{U}\cdot\mathcal{V}$.
\end{proof}

\begin{thm}\label{thm.7}
The following are equivalent for a p-point  $\mathcal{U}$.
\begin{enumerate}
\item
$\mathcal{U}\ge_T\om^{\om}$;
\item
$\mathcal{U}\equiv_T\Pi_{n<\om}\mathcal{U}$;
\item
$\mathcal{U}\equiv_T\mathcal{U}\cdot\mathcal{U}$.
\end{enumerate}
\end{thm}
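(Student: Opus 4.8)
The plan is to establish the three equivalences by proving the cycle $(1)\Rightarrow(2)\Rightarrow(3)\Rightarrow(1)$, in each case assembling results already in hand so that essentially no new computation is required. The genuine content has already been isolated into Theorems \ref{thm.8} and \ref{thm.10} and Fact \ref{fact.Uoo}; the present statement is then a bookkeeping argument that routes everything through the p-point identity $\Pi_{n<\om}\mathcal{U}\equiv_T\mathcal{U}\times\om^{\om}$.

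For $(1)\Rightarrow(2)$ I would first observe that the hypothesis $\mathcal{U}\ge_T\om^{\om}$ forces $\om^{\om}$ to be absorbed by $\mathcal{U}$ in the product. Since $\mathcal{U}\ge_T\mathcal{U}$ holds trivially and $\mathcal{U}\ge_T\om^{\om}$ by assumption, the minimality of the product join (the basic product facts following Notation \ref{notation.cross}) gives $\mathcal{U}\ge_T\mathcal{U}\times\om^{\om}$, while the projection gives the reverse inequality; hence $\mathcal{U}\times\om^{\om}\equiv_T\mathcal{U}$. Feeding this into Theorem \ref{thm.8}, which for a p-point yields $\Pi_{n<\om}\mathcal{U}\equiv_T\mathcal{U}\times\om^{\om}$, I obtain $\Pi_{n<\om}\mathcal{U}\equiv_T\mathcal{U}$, which is (2).

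For $(2)\Rightarrow(3)$ I would chain the inequality $\mathcal{U}\cdot\mathcal{U}\le_T\Pi_{n<\om}\mathcal{U}$ of Theorem \ref{thm.10} with the hypothesis $\Pi_{n<\om}\mathcal{U}\equiv_T\mathcal{U}$ to get $\mathcal{U}\cdot\mathcal{U}\le_T\mathcal{U}$; the reverse $\mathcal{U}\le_T\mathcal{U}\cdot\mathcal{U}$ always holds, since the projection $\pi_1$ is a Rudin--Keisler map and Rudin--Keisler maps are cofinal by Fact \ref{fact.TRK}. Thus $\mathcal{U}\equiv_T\mathcal{U}\cdot\mathcal{U}$. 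For $(3)\Rightarrow(1)$ I would simply invoke Fact \ref{fact.Uoo}, which gives $\mathcal{U}\cdot\mathcal{U}\ge_T\om^{\om}$ unconditionally; combined with $\mathcal{U}\equiv_T\mathcal{U}\cdot\mathcal{U}$ this yields $\mathcal{U}\ge_T\om^{\om}$, closing the cycle.

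I expect no serious obstacle. The one place that deserves an explicit line of justification is the absorption step $\mathcal{U}\times\om^{\om}\equiv_T\mathcal{U}$ in $(1)\Rightarrow(2)$: the join property was recorded above for ultrafilters, but its proof uses only that the factors are directed partial orderings, so it applies with the (non-ultrafilter) directed set $\om^{\om}$ occupying one slot. Everything else is direct substitution of the cited equivalences, and one should double-check only that each $\le_T$ and $\ge_T$ points in the direction the earlier statements actually provide.
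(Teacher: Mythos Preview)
Your proposal is correct and follows essentially the same route as the paper: the cycle $(1)\Rightarrow(2)\Rightarrow(3)\Rightarrow(1)$ via Theorem~\ref{thm.8}, Theorem~\ref{thm.10}, and Fact~\ref{fact.Uoo}, with the absorption $\mathcal{U}\times\om^{\om}\equiv_T\mathcal{U}$ as the hinge for $(1)\Rightarrow(2)$. Your explicit remark that the join property (item (4) of the basic product facts) is stated for ultrafilters but extends verbatim to directed partial orderings, and hence applies with $\om^{\om}$ in one slot, is exactly the small point the paper leaves implicit.
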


\begin{proof}
Suppose $\mathcal{U}\ge_T\om^{\om}$.
By Theorem \ref{thm.8}, $\Pi_{n<\om}\mathcal{U}\equiv_T\mathcal{U}\times\om^{\om}\equiv_T\mathcal{U}\le_T\Pi_{n<\om}\mathcal{U}$.
Suppose $\mathcal{U}\equiv_T\Pi_{n<\om}\mathcal{U}$.
Since always $\mathcal{U}\le_T\mathcal{U}\cdot\mathcal{U}$, and $\mathcal{U}\cdot\mathcal{U}\le_T\Pi_{n<\om}\mathcal{U}$ by Theorem \ref{thm.10},
we have that $\mathcal{U}\equiv_T\mathcal{U}\cdot\mathcal{U}$.
If $\mathcal{U}\equiv_T\mathcal{U}\cdot\mathcal{U}$, then since $\mathcal{U}\cdot\mathcal{U}\ge_T\om^{\om}$, 
we have that  $\mathcal{U}\ge_T\om^{\om}$.
\end{proof}

\begin{cor}
If $\mathcal{U}$ is a p-point of cofinality $<\mathfrak{d}$, then $\mathcal{U}\not\ge_T \om^{\om}$ and therefore $\mathcal{U}<_T\mathcal{U}\cdot\mathcal{U}$.
\end{cor}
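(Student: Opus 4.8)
The plan is to prove the two assertions in turn, deriving the second from the first together with Theorem \ref{thm.7}. The first assertion, $\mathcal{U}\not\ge_T\om^\om$, does not actually use that $\mathcal{U}$ is a p-point; it rests only on the monotonicity of cofinality under Tukey reduction, combined with the identity $\cf(\om^\om,\le)=\mathfrak{d}$. The p-point hypothesis enters only in the passage from $\mathcal{U}\not\ge_T\om^\om$ to $\mathcal{U}<_T\mathcal{U}\cdot\mathcal{U}$, where Theorem \ref{thm.7} is invoked.

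For the first assertion I would argue by contradiction. Suppose $\mathcal{U}\ge_T\om^\om$, which by definition means there is a cofinal map $f:\mathcal{U}\ra\om^\om$ witnessing $\om^\om\le_T\mathcal{U}$. Fix a base $\mathcal{B}\sse\mathcal{U}$ of minimal size, so that $|\mathcal{B}|=\cf(\mathcal{U})<\mathfrak{d}$. Since a base is precisely a cofinal subset of $(\mathcal{U},\contains)$, and $f$ is a cofinal map, the image $f''\mathcal{B}$ is cofinal in $(\om^\om,\le)$, i.e.\ a dominating family. But $|f''\mathcal{B}|\le|\mathcal{B}|=\cf(\mathcal{U})<\mathfrak{d}$, contradicting the fact that every dominating family has cardinality at least $\mathfrak{d}$. (This is just the general principle that $D\le_T E$ implies $\cf(D)\le\cf(E)$, applied to $\om^\om\le_T\mathcal{U}$, together with $\cf(\om^\om,\le)=\mathfrak{d}$.)

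For the second assertion, recall that $\mathcal{U}\le_T\mathcal{U}\cdot\mathcal{U}$ always holds. Since $\mathcal{U}$ is a p-point, Theorem \ref{thm.7} shows that $\mathcal{U}\equiv_T\mathcal{U}\cdot\mathcal{U}$ is equivalent to $\mathcal{U}\ge_T\om^\om$. As we have just shown $\mathcal{U}\not\ge_T\om^\om$, it follows that $\mathcal{U}\not\equiv_T\mathcal{U}\cdot\mathcal{U}$; combining this with $\mathcal{U}\le_T\mathcal{U}\cdot\mathcal{U}$ yields $\mathcal{U}<_T\mathcal{U}\cdot\mathcal{U}$.

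The only genuinely non-formal point is the identity $\cf(\om^\om,\le)=\mathfrak{d}$, since $\mathfrak{d}$ is usually defined via eventual domination $\le^*$. One observes that any $\le$-dominating family is a fortiori $\le^*$-dominating, while conversely a $\le^*$-dominating family of size $\mathfrak{d}$ is turned into a $\le$-dominating family of the same cardinality by closing under finite modifications of initial segments (only countably many per function). Everything else is routine bookkeeping about cofinal maps together with the appeal to Theorem \ref{thm.7}, so I expect no real obstacle beyond making this cardinal-arithmetic remark precise.
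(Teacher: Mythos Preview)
Your proof is correct and is exactly the argument the paper leaves implicit: the corollary is stated without proof, relying on the standard fact that Tukey reduction cannot increase cofinality together with $\cf(\om^{\om},\le)=\mathfrak{d}$, and then the appeal to Theorem~\ref{thm.7} for the second clause. Your remark clarifying why $\cf(\om^{\om},\le)=\mathfrak{d}$ coincides with the usual $\le^*$-definition of $\mathfrak{d}$ is a helpful addition.
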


\begin{rem}
Such an ultrafilter $\mathcal{U}$ exists in any extension of a model of CH by a countable support iteration of length $\om_2$ of superperfect-set forcing since by a result of Shelah such an iteration preserves p-points.
\end{rem}

\begin{cor}\label{cor.12}
If $\mathcal{U}$ is a rapid p-point then $\Pi_{n<\om}\mathcal{U}\equiv_T\mathcal{U}\cdot\mathcal{U}\equiv_T\mathcal{U}$.
\end{cor}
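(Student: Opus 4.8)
The plan is to reduce this immediately to the two results already established, namely Theorem \ref{thm.7} and Corollary \ref{cor.dot} (or Theorem \ref{thm.8}). The hypothesis is that $\mathcal{U}$ is a rapid p-point, and the claim is the chain of Tukey equivalences $\Pi_{n<\om}\mathcal{U}\equiv_T\mathcal{U}\cdot\mathcal{U}\equiv_T\mathcal{U}$.

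First I would invoke Fact \ref{fact.rapid>omom}, which tells us that every rapid ultrafilter satisfies $\mathcal{U}\ge_T\om^{\om}$. Since $\mathcal{U}$ is rapid, this gives us condition (1) of Theorem \ref{thm.7}. Because $\mathcal{U}$ is also a p-point, Theorem \ref{thm.7} applies, and the equivalence of its three conditions yields at once both $\mathcal{U}\equiv_T\Pi_{n<\om}\mathcal{U}$ (condition (2)) and $\mathcal{U}\equiv_T\mathcal{U}\cdot\mathcal{U}$ (condition (3)). Chaining these through the common term $\mathcal{U}$ gives $\Pi_{n<\om}\mathcal{U}\equiv_T\mathcal{U}\cdot\mathcal{U}\equiv_T\mathcal{U}$, which is exactly the desired conclusion.

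There is essentially no obstacle here: the corollary is a direct specialization. The only point requiring the slightest care is checking that both hypotheses of Theorem \ref{thm.7} are met — that $\mathcal{U}$ is a p-point (given) and that $\mathcal{U}\ge_T\om^{\om}$ (supplied by rapidity via Fact \ref{fact.rapid>omom}) — after which the stated equivalence is immediate. So the entire proof is a single sentence: by Fact \ref{fact.rapid>omom}, a rapid p-point $\mathcal{U}$ satisfies $\mathcal{U}\ge_T\om^{\om}$, whence Theorem \ref{thm.7} gives $\Pi_{n<\om}\mathcal{U}\equiv_T\mathcal{U}\cdot\mathcal{U}\equiv_T\mathcal{U}$.
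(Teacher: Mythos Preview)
Your proof is correct and is exactly the intended argument: the paper states Corollary \ref{cor.12} without proof precisely because it follows immediately from Fact \ref{fact.rapid>omom} together with Theorem \ref{thm.7}, just as you describe.
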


\begin{rem}
By Corollary \ref{cor.12},
 for each  selective ultrafilter $\mathcal{U}$, the Tukey type of $\mathcal{U}$ is strictly coarser than the Rudin-Keisler type of $\mathcal{U}$, even though they both have cardinality $\mathfrak{c}$.
That is, if  $\mathcal{U}$ is selective,
then
 $\mathcal{U}\cdot\mathcal{U}$ is not a p-point yet $\mathcal{U}\equiv_T\mathcal{U}\cdot\mathcal{U}$.
However, if $\mathcal{U}\equiv_{RK}\mathcal{V}$ then $\mathcal{V}$ is selective.
We remark here that Todorcevic has more recently shown that if $\mathcal{U}$ is a p-point, $\mathcal{V}$ is selective and $\mathcal{U}\ge_T\mathcal{V}$, 
then $\mathcal{U}\ge_{RK}\mathcal{V}$,
and hence, $\mathcal{V}\equiv_{RK}\mathcal{U}$. 
\footnote{By the time of printing, this result  has been  extended by Raghavan in \cite{Raghavan/Todorcevic11} to the following more general context: For any ultrafilters $\mathcal{U}\ge_T\mathcal{V}$, if  $\mathcal{V}$ is a q-point and there is a continuous cofinal map from $\mathcal{U}$ into $\mathcal{V}$,
then $\mathcal{U}\ge_{RB}\mathcal{V}$.}
Hence, although the Tukey type of a selective ultrafilter includes non-p-points, any two selective ultrafilters with the same Tukey type are isomorphic.
\end{rem}

\begin{thm}\label{thm.18}
Assuming $\mathfrak{p}=\mathfrak{c}$, there is a p-point $\mathcal{U}$ such that $\mathcal{U}\not\ge_T \om^{\om}$ and therefore $\mathcal{U}<_T\mathcal{U}\cdot\mathcal{U}<_T\mathcal{U}_{\text{top}}$.
\end{thm}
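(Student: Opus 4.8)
The plan is to split the statement into a structural reduction and a combinatorial core. Granting for the moment a p-point $\mathcal{U}$ with $\mathcal{U}\not\ge_T\om^\om$, the displayed chain is immediate from results already available. Since $\mathcal{U}\le_T\mathcal{U}\cdot\mathcal{U}$ always holds, while the equivalence of (1) and (3) in Theorem~\ref{thm.7} turns $\mathcal{U}\not\ge_T\om^\om$ into $\mathcal{U}\not\equiv_T\mathcal{U}\cdot\mathcal{U}$, we obtain $\mathcal{U}<_T\mathcal{U}\cdot\mathcal{U}$. Moreover $\mathcal{U}\cdot\mathcal{U}=\lim_{n\ra\mathcal{U}}\mathcal{U}$ is a Fubini product of p-points, so it is basically generated by a finite-intersection-closed base by Theorem~\ref{thm.2}, and hence $\mathcal{U}\cdot\mathcal{U}<_T[\mathfrak{c}]^{<\om}$ by Theorem~\ref{thm.4}. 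Thus the entire theorem reduces to producing, under $\mathfrak{p}=\mathfrak{c}$, a single p-point that is not Tukey above $\om^\om$.

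The key reduction I would establish is that, \emph{for a p-point $\mathcal{U}$, $\mathcal{U}\ge_T\om^\om$ forces $\mathcal{U}$ to be rapid} (the converse being Fact~\ref{fact.rapid>omom}); equivalently, a non-rapid p-point is never Tukey above $\om^\om$. Starting from $\mathcal{U}\ge_T\om^\om$, fix a monotone cofinal map $f\colon\mathcal{U}\ra\om^\om$ (monotone witnesses can be arranged by a routine monotonization, as in the proof of Fact~\ref{fact.monotone}), and pass to its hypograph map $H(W)=\{(k,j):j\le f(W)(k)\}$, which is a monotone map of $\mathcal{U}$ into $\mathcal{P}(\om\times\om)$ recording exactly the same cofinality data as $f$. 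Running the continuization argument of Theorem~\ref{thm.5} on $H$ yields an $\tilde X\in\mathcal{U}$ together with thresholds $m_{k,j}$ such that, for every $W\in\mathcal{U}\re\tilde X$, the membership $(k,j)\in H(W)$, i.e.\ whether $f(W)(k)\ge j$, is decided by the finite set $W\cap m_{k,j}$. Monotonicity makes this predicate downward closed in $W\cap m_{k,j}$, so the inequality $f(W)(k)\ge j$ can only be achieved by sets $W$ that are correspondingly sparse below $m_{k,j}$. As $h$ ranges over $\om^\om$ the functions $k\mapsto m_{k,h(k)}$ are cofinal in $\om^\om$, so the cofinality of $f$ forces $\mathcal{U}$ to contain, for every prescribed rate, a set sparse with respect to that rate; this is precisely rapidity. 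I expect this continuization-and-sparseness step to be the main obstacle: unlike the setting of Theorem~\ref{thm.5} the target $\om^\om$ is not an ultrafilter, so no bound on the values of $f$ is available, and the doubly-indexed thresholds $m_{k,j}$ have to be organized carefully to read off a genuine rapid set for each requested rate.

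It then remains to construct, under $\mathfrak{p}=\mathfrak{c}$, a non-rapid p-point, after which the previous paragraph gives $\mathcal{U}\not\ge_T\om^\om$ and the first paragraph finishes the proof. I would fix a fast-growing $h^*$ and a $P$-ideal $\mathcal{I}$ of ``$h^*$-thin'' sets (a density-type ideal) arranged so that every $A\in\mathcal{I}$ satisfies $|A\cap h^*(n)|\le n$ for almost all $n$; then any ultrafilter disjoint from $\mathcal{I}$ is non-rapid, since $h^*$ already witnesses the failure of the rapid condition. Such a $\mathcal{U}$ is built by a recursion of length $\mathfrak{c}$: one maintains a filter of size $<\mathfrak{c}=\mathfrak{p}$ all of whose members are $\mathcal{I}$-positive, at successor stages decides the next subset of $\om$ so as to keep the filter $\mathcal{I}$-positive (thereby reaching an ultrafilter), and uses $\mathfrak{p}=\mathfrak{c}$ to adjoin pseudo-intersections of countable subfamilies (securing the p-point property), the $P$-ideal hypothesis on $\mathcal{I}$ guaranteeing that these pseudo-intersections can be chosen $\mathcal{I}$-positive. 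The resulting $\mathcal{U}$ is a non-rapid p-point, as required.
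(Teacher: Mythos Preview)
Your first paragraph correctly reduces the theorem to producing a p-point $\mathcal{U}\not\ge_T\omega^\omega$, and non-rapid p-points can indeed be built under $\mathfrak{p}=\mathfrak{c}$ (though your specific family $\{A:|A\cap h^*(n)|\le n\text{ for a.e.\ }n\}$ is not an ideal---the bound only doubles under finite unions---so one must work instead with a genuine analytic P-ideal such as a summable or density-zero ideal). The real gap is your middle paragraph: the implication ``p-point and $\mathcal{U}\ge_T\omega^\omega$ imply $\mathcal{U}$ rapid'' is not established by your sketch, and you yourself flag it as the main obstacle without resolving it. After continuizing, the predicate ``$f(W)(k)\ge j$'' is determined by $W\cap m_{k,j}$ and is downward-closed in that finite set; but a downward-closed family need not impose any sparseness. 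For instance $D_{k,j}$ could be $\{s\subseteq\tilde X\cap m_{k,j}:a_{k,j}\notin s\}$ for a single point $a_{k,j}$, so that achieving $f(W)(k)\ge j$ merely asks $W$ to omit one prescribed integer and says nothing about $|W\cap m|$ for any $m$. Nothing in your outline links membership in the families $D_{k,j}$ to the enumeration function of $W$, and without such a link rapidity does not follow.

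The paper takes a completely different route that sidesteps this implication. It enumerates all Souslin-measurable maps $f_\alpha:\omega^\omega\to[\omega]^\omega$ and, using $\mathfrak{p}=\mathfrak{c}$, builds a $\supseteq^*$-decreasing tower $\langle A_\alpha:\alpha<\mathfrak{c}\rangle$ whose generated filter $\mathcal{U}$ is a p-point; at stage $\alpha$ one passes to $A''_\alpha\setminus f_\alpha(x)$ for some $x\in\omega^\omega$ whenever that set is infinite. If the resulting $\mathcal{U}$ were $\ge_T\omega^\omega$, a theorem of Solecki--Todorcevic would supply a Souslin-measurable Tukey map $f=f_\alpha:\omega^\omega\to\mathcal{U}$; since its range lies inside $\mathcal{U}$, the diagonalization at stage $\alpha$ must have failed, forcing $A_\alpha\subseteq^* f_\alpha(x)$ for every $x$. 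Writing $P_n=\{x:A_\alpha\setminus n\subseteq f_\alpha(x)\}$ one has $\omega^\omega=\bigcup_n P_n$, so some $P_{n_0}$ is unbounded under eventual domination; choosing $x_i\in P_{n_0}$ with $x_i(k)\ge i$ at a fixed coordinate $k$ produces a $\le$-unbounded set whose $f_\alpha$-image is bounded by $A_\alpha\setminus n_0\in\mathcal{U}$, contradicting that $f_\alpha$ is Tukey. Thus the paper's argument never needs to decide whether its $\mathcal{U}$ is rapid.
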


\begin{proof}
Let $\{f_{\al}:0<\al<\mathfrak{c}\}$ be an enumeration of all Souslin-measurable mappings from $\om^{\om}$ into $[\om]^{\om}$,
and  
let $\{X_{\al}:\al<\mathfrak{c}\}$ be an enumeration of $\mathcal{P}(\om)$.
We build an ultrafilter $\mathcal{U}$  to be generated by a $\contains^*$ chain $\lgl A_{\al}:\al<\mathfrak{c}\rgl$ 
of infinite subsets of $\om$, while diagonalizing over all Souslin-measurable mappings of the form $f_{\al}:\om^{\om}\ra[\om]^{\om}$ ($\al<\mathfrak{c}$).

Let $A_0=\om$.
Given $\al<\mathfrak{c}$ and $\{A_{\xi}:\xi<\al\}$,
using the fact that $\mathfrak{p}=\mathfrak{c}$,
there is an $A'_{\al}\in[\om]^{\om}$ such that $A'_{\al}\sse^* A_{\xi}$ for all $\xi<\al$.
Let $A''_{\al}=A'_{\al}\cap X_{\al}$ if this is infinite, otherwise, let $A''_{\al}=A'_{\al}\setminus X_{\al}$.
If there is an $x\in\om^{\om}$ such that $A''_{\al}\setminus f_{\al}(x)$ is infinite,
then let
$A_{\al}=A''_{\al}\setminus f_{\al}(x)$.
Otherwise, 
we let $A_{\al}=A''_{\al}$.

Let $\mathcal{U}$ be the p-point generated by the tower $\{A_{\al}:\al<\mathfrak{c}\}$.
We need to show that $\mathcal{U}\not\ge_T\om^{\om}$.
Suppose toward a contradiction 
that $\mathcal{U}\ge_T\om^{\om}$.
Then applying [Theorem 5.3 (i),
\cite{Solecki/Todorcevic04}],
there is a Souslin measurable map $f:\om^{\om}\ra\mathcal{U}$ such that $f$ is a Tukey map.
Since we listed all Souslin measurable maps from $\om^{\om}$ into $[\om]^{\om}$, there is an
 $\al<\mathfrak{c}$ such that $f_{\al}=f$.
Since the range of $f$ is contained in $\mathcal{U}$,
$A_{\al}$ is not $A'_{\al}\setminus f_{\al}(x)$ for any $x\in\om^{\om}$.
Hence, $A_{\al}=A''_{\al}$ 
and 
$A_{\al}\sse^*f_{\al}(x)$ for all $x\in\om^{\om}$.

Define $P_n$ to be $\{x\in\om^{\om}:A_{\al}\setminus n\sse f_{\al}(x)\}$.
There is  an $n_0\in\om$ such that
$P_{n_0}$,
 is not bounded in $\om^{\om}$ relative to the ordering of eventual domination.
(For if not, then for each $n$,
there is some $g_n\in\om^{\om}$ which eventually dominates every element of
$P_n$.
Let $g$ be a function which eventually dominates each $g_n$.
Then $g\ge^* x$ for each $x$ such that  for some $n$,
$A_{\al}\setminus n \sse f_{\al}(x)$.
But  $A_{\al}\sse^* f_{\al}(x)$ for all $x\in\om^{\om}$, 
and hence $g$ eventually dominates every member of $\om^{\om}$, contradiction.)
In particular, there is a $k\in\om$ and an infinite subset $\{x_i:i<\om\}\sse P_{n_0}$ such that $x_i(k)\ge i$ for all $i<\om$.
It follows that $\{x_i:i<\om\}$ is unbounded in $(\om^{\om},\le)$ but its image $\{f_{\al}(x_i):i<\om\}$ is bounded by $A_{\al}\setminus n$, which is in $\mathcal{U}$.
Thus, $f_{\al}$ is not a Tukey map from $\om^{\om}$ into $\mathcal{U}$.
\end{proof}

\begin{question}\label{question.21}
Is there an ultrafilter $\mathcal{U}$ on $\om$ such that $\mathcal{U}<_T\mathcal{U}\cdot\mathcal{U}<_T\mathcal{U} \cdot\mathcal{U}\cdot\mathcal{U}<_T\mathcal{U}_{top}$?\footnote{After  this article was first circulated, Blass and Milovich have independently shown that for all ultrafilters $\mathcal{U}$, $\mathcal{U}\cdot\mathcal{U}\equiv_T\mathcal{U}\cdot\mathcal{U}\cdot\mathcal{U}$.}
\end{question}


\begin{rem}\label{rem.22}
Using some assumptions like $\mathfrak{p}=\mathfrak{c}$, it seems possible to get Tukey chains of p-points of order-type $\mathfrak{c}^+$ which is, as we know, maximal possible. By Corollary \ref{cor.cheap?} below,  CH implies there are Tukey chains of p-points of length $\mathfrak{c}$.
 Dilip Raghavan has shown that, assuming CH, there is a Tukey chain of p-points isomorphic to the reals.
\end{rem}

\begin{question}
 Is there an ultrafilter $\mathcal{U}<_T\mathcal{U}_{\mathrm{top}}$ which is not Tukey reducible to any p-point? 
\end{question}

\begin{question}
Is every basically generated ultrafilter Tukey reducible to a  p-point? 
\end{question}

Both of the preceding two questions are answered using the assumption $\mathcal{U}\not\ge_T\om^{\om}$ for any p-point $\mathcal{U}$ (which is true in the iterated superperfect extension).
Namely, then $\mathcal{U}\cdot\mathcal{U}\not\le_T\mathcal{V}$  for every ultrafilter $\mathcal{U}$ and every p-point $\mathcal{V}$.

\begin{question}\label{question.19}
Is there a p-ideal $I$ on $\om$ which is not countably generated but $I\not\ge_T\om^{\om}$?
\end{question}

\begin{rem}\label{rem.20}
If $\mathfrak{b}\ne\mathfrak{d}$ there is such a p-ideal, 
so the question is whether we can get one with no extra set-theoretic assumptions.
\end{rem}

\begin{question}
Does $\mathcal{U}\cdot\mathcal{U}\equiv_T\mathcal{U}<_T\mathcal{U}_{\mathrm{top}}$ imply $\mathcal{U}$ is basically generated? 
\end{question}


\section{Antichains, chains, and incomparable predecessors}\label{sec.chainantichain}

We now investigate the structure of the Tukey types of p-points and selective ultrafilters in terms of which chains, antichains, and incomparable ultrafilters with a common upper bound embed into the Tukey types.

\begin{thm}\label{thm.selective}
\begin{enumerate}
\item
Assume cov$(\mathscr{M})=\mathfrak{c}$. 
Then there are $2^{\mathfrak{c}}$ pairwise Tukey incomparable selective ultrafilters.
\item
Assume  $\mathfrak{d}=\mathfrak{u}=\mathfrak{c}$.
Then there are $2^{\mathfrak{c}}$ pairwise Tukey incomparable p-points.
\end{enumerate}
\end{thm}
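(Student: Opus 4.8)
The plan is to build $2^{\mathfrak{c}}$ many selective ultrafilters (and, for part (2), p-points) using independent families together with a diagonalization that simultaneously (a) makes each ultrafilter selective (resp. a p-point) and (b) defeats, for each candidate pair $\mathcal{U}\ne\mathcal{V}$ in our family, every possible Tukey reduction between them. The canonical form of cofinal maps from Theorem~\ref{thm.5} is the crucial tool: since each $\mathcal{U}$ we construct is a p-point, any reduction $\mathcal{U}\ge_T\mathcal{V}$ is witnessed by a \emph{continuous monotone} cofinal map $f^*:\mathcal{P}(\om)\ra\mathcal{P}(\om)$. A continuous monotone map is coded by a countable amount of information (it is determined by its values on finite initial segments), so there are only $\mathfrak{c}$ many such maps. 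This is what makes the diagonalization feasible: at each stage we must kill only $\mathfrak{c}$-many potential reductions, and we have $\mathfrak{c}$-many steps.

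First I would set up a tree of $2^{\mathfrak{c}}$ branches. Using an independent family of size $\mathfrak{c}$, one builds a perfect tree of ultrafilter-constructions so that distinct branches yield ultrafilters that disagree on a prescribed set; the standard Kunen/Rudin--Shelah machinery (cf.\ the remark after Corollary~\ref{cor.5.5}) gives $2^{\mathfrak{c}}$ pairwise non-isomorphic ultrafilters. The substance is to interleave, along each branch, two kinds of requirements. The \emph{selectivity (resp.\ p-point) requirements} are handled by the usual diagonalization against all functions $h:\om\ra\om$ (resp.\ all countable descending sequences), which is where cov$(\mathscr{M})=\mathfrak{c}$ (resp.\ $\mathfrak{d}=\mathfrak{u}=\mathfrak{c}$) is consumed: these invariants guarantee we can always find a pseudo-intersection avoiding the next ``bad'' constraint. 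The \emph{Tukey-incomparability requirements} enumerate all continuous monotone $f^*:\mathcal{P}(\om)\ra\mathcal{P}(\om)$ in order type $\mathfrak{c}$, and at the appropriate stage we add a set to the branch for $\mathcal{U}$ that prevents $f^*$ from having cofinal range in the branch for $\mathcal{V}$, i.e.\ we produce a target $V\in\mathcal{V}$ that no $f^*(U)$, $U\in\mathcal{U}$, can refine.

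The key diagonalization step to defeat a fixed continuous monotone $f^*$ goes as follows: because $f^*\re\mathcal{U}$ is determined continuously, whether $k\in f^*(X)$ depends only on a finite initial segment $X\cap m_k$; so I would choose the generators of $\mathcal{U}$ and $\mathcal{V}$ on disjoint ``coordinate blocks'' and arrange that the continuous data of $f^*$ cannot steer the image into a prescribed small target of $\mathcal{V}$. Concretely, one reserves an infinite set of blocks on which $\mathcal{V}$ is allowed to shrink freely while the relevant part of $\mathcal{U}$ has already been frozen, so that the finitely-determined output $f^*(U)\cap(k{+}1)$ is fixed before we decide $V$, letting us pick $V\in\mathcal{V}$ with $f^*(U)\not\sse V$ for all $U$ below the current generator. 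For part (2) the same scheme runs with the p-point construction in place of selectivity, using $\mathfrak{d}$ to dominate the ``$m_k$''-type growth rates arising from continuity and $\mathfrak{u}=\mathfrak{c}$ to keep the towers generating ultrafilters.

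The main obstacle I anticipate is the bookkeeping that makes the two sets of requirements compatible \emph{simultaneously along every branch of the $2^{\mathfrak{c}}$-tree}. A single branch is a routine $\mathfrak{c}$-length construction, but I must ensure that the incomparability-killing moves for the pair $(\mathcal{U},\mathcal{V})$ do not get undone by later selectivity/p-point moves, and that a move committed on one branch remains valid no matter how the \emph{other} branch is later extended. The clean way to manage this is to make each killing move \emph{permanent}: once I have frozen enough of $\mathcal{U}$ and chosen a target $V\in\mathcal{V}$ defeating $f^*$, all later generators stay below these sets, so continuity of $f^*$ locks the failure in place. Verifying that the reserved blocks can always be kept disjoint and infinite across the full tree, while still leaving room to satisfy selectivity (resp.\ the p-point property) at every node, is the delicate heart of the argument; this is precisely where the cardinal-invariant hypotheses are genuinely needed rather than merely convenient.
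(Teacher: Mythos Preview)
Your plan has a real counting problem that you have not confronted. You propose a tree of height $\mathfrak{c}$ with $2^{\mathfrak{c}}$ branches, and along the way you want to kill, for every pair of branches $(\mathcal{U},\mathcal{V})$ and every continuous monotone $f^*$, the possibility that $f^*$ witnesses $\mathcal{U}\ge_T\mathcal{V}$. You are right that there are only $\mathfrak{c}$ many continuous monotone maps, but the killing move you describe is \emph{pair-specific}: to defeat $f^*$ you must place a particular target $V$ into $\mathcal{V}$ (and/or a particular generator into $\mathcal{U}$), and what that set is depends on which branch $\mathcal{V}$ is. There are $2^{\mathfrak{c}}$ pairs of branches, so you need $2^{\mathfrak{c}}$ coordinated moves, yet your construction has only $\mathfrak{c}$ stages and at each stage any two branches that have not yet split are indistinguishable. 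Your final paragraph flags exactly this as ``delicate'', but in fact it is not merely delicate bookkeeping---as stated, it cannot be done.

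The paper sidesteps this by a case split you are missing. When $2^{\mathfrak{c}}>\mathfrak{c}^+$, one does \emph{no} Tukey-killing at all: one simply builds $2^{\mathfrak{c}}$ pairwise distinct selective ultrafilters (resp.\ p-points) via the standard $2^{<\mathfrak{c}}$ tree, and then invokes Theorem~\ref{thm.5} (each p-point's Tukey type has size $\le\mathfrak{c}$) together with Hajnal's Free Set Lemma (Corollary~\ref{cor.5.5}) to extract $2^{\mathfrak{c}}$ pairwise incomparable ones. When $2^{\mathfrak{c}}=\mathfrak{c}^+$, one builds the ultrafilters \emph{one at a time} by a recursion of length $\mathfrak{c}^+$: at each step, the previously constructed $\le\mathfrak{c}$ many ultrafilters are already \emph{completed}, so the key Lemma~\ref{lem.fix} applies---given a fixed ultrafilter $\mathcal{U}$ and a small filter $\mathcal{Y}$, one can add a single set to $\mathcal{Y}$ that permanently blocks $f$ from being cofinal into $\mathcal{U}$ from any ultrafilter extension of $\mathcal{Y}$. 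Now there are only $\mathfrak{c}\times\mathfrak{c}=\mathfrak{c}$ (map,\,target) pairs to diagonalize against in a single $\mathfrak{c}$-length construction, and this is routine. The point is that the asymmetry ``target is a finished ultrafilter, source is still a small filter'' is what makes the killing lemma work; your simultaneous tree construction never has that asymmetry.
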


We prove Theorem \ref{thm.selective} by proving it first in the case that $2^{\mathfrak{c}}>\mathfrak{c}^+$ (see Theorem \ref{thm.selective2c>c+}),
and then proving it in the case that $2^{\mathfrak{c}}=\mathfrak{c}^+$  (see Theorem \ref{thm.selective2c=c+}).
Of use will be  two propositions of Ketonen.
Recall  [Theorem 1.7, \cite{Ketonen76}] of Ketonen: If  cov$(\mathscr{M})=\mathfrak{c}$ then every filter with a filter  base of size less than $\mathfrak{c}$ can be extended to a selective ultrafilter.
The key part of his proof uses the following proposition.

\begin{prop}[Ketonen, Proposition 1.8 \cite{Ketonen76}]\label{prop.Ketonen}
If  cov$(\mathscr{M})=\mathfrak{c}$ and $\mathcal{F}$ is a filter generated by less than $\mathfrak{c}$ many sets, and $\{P_i:i<\om\}$ is a partition of $\om$ so that for each $i<\om$, $\bigcup\{P_j:j>i\}\in\mathcal{F}$,
then there exists a set $X\sse\om$ such that $\{X\}\cup\mathcal{F}$ has the  finite intersection property, and for every $i<\om$,
$|X\cap P_i|\le 1$.
\end{prop}

The following proposition of Ketonen was used in his proof of [Theorem 1.2, \cite{Ketonen76}]:  $\mathfrak{d}=\mathfrak{c}$ if and only if
any filter generated by a base of cardinality less than $\mathfrak{c}$ can be extended to a p-point.

\begin{prop}[Ketonen, Proposition 1.3 \cite{Ketonen76}]\label{prop.Ketonen1.3}
If $\mathfrak{d}=\mathfrak{c}$,
then given any filter $\mathcal{F}$ generated by less than $\mathfrak{c}$ elements and a sequence $\lgl A_i:i<\om\rgl$ of elements of $\mathcal{F}$,
there exists a set $A\sse\om$ so that $\mathcal{F}\cup\{A\}$ has the finite intersection property, and for each $i<\om$, $A\sse^* A_i$.
\end{prop}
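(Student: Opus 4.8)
The plan is to unwind the hypothesis $\mathfrak{d}=\mathfrak{c}$ into the assertion that any family of fewer than $\mathfrak{c}$ functions in $\om^\om$ fails to be dominating, and then to use the resulting ``escaping'' function to write down $A$ explicitly. First I would make two harmless normalizations: replace each $A_i$ by $\bigcap_{j\le i}A_j$, so that $\lgl A_i:i<\om\rgl$ is $\sse$-decreasing, and fix a base $\{F_\al:\al<\kappa\}$ for $\mathcal{F}$ with $\kappa<\mathfrak{c}$ that is closed under finite intersections (the closure of a generating family of size $<\mathfrak{c}$ under finite intersections again has size $<\mathfrak{c}$). Since $\mathcal{F}$ is free, each $F_\al\cap A_n$ lies in $\mathcal{F}$ and is therefore infinite.

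The key device is a counting function. For each $\al<\kappa$ I would define $f_\al:\om\ra\om$ by letting $f_\al(n)$ be the least $m$ such that $F_\al\cap A_n\cap m$ has at least $n+1$ elements; this is well defined precisely because $F_\al\cap A_n$ is infinite. Because $\kappa<\mathfrak{c}=\mathfrak{d}$, the family $\{f_\al:\al<\kappa\}$ is not dominating, so I can choose a single $g\in\om^\om$ with $g\not\le^* f_\al$ for every $\al$; that is, for each $\al$ the set $S_\al=\{n:g(n)>f_\al(n)\}$ is infinite.

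I would then simply set $A=\bigcup_{n<\om}(A_n\cap g(n))$, identifying $g(n)$ with $\{0,\dots,g(n)-1\}$, and verify the two required properties. For the pseudo-intersection property, observe that any $x\in A\setminus A_i$ must belong to $A_n\cap g(n)$ for some $n<i$ only, since a witness with $n\ge i$ would give $x\in A_n\sse A_i$; hence $A\setminus A_i\sse\bigcup_{n<i}(A_n\cap g(n))$ is finite, so $A\sse^* A_i$. For compatibility with $\mathcal{F}$, fix $\al$ and take $n\in S_\al$: then $g(n)>f_\al(n)$ forces $F_\al\cap A_n\cap g(n)$ to contain at least $n+1$ elements, all of which lie in $A\cap F_\al$. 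As $n$ runs over the infinite set $S_\al$ these lower bounds are unbounded, so $A\cap F_\al$ is infinite. Because the base is closed under finite intersections, every member of $\mathcal{F}$ contains some $F_\al$ and so meets $A$ in an infinite set, giving $\mathcal{F}\cup\{A\}$ the finite intersection property.

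The step I expect to be the crux is the definition of $f_\al$ together with the correct reading of ``not dominating''; the real content is compressing enough information into one function $g$ so that a single explicit set $A$ meets both the countably many pseudo-intersection demands and the (possibly uncountably many) infinitely-often-meeting demands at once. The decreasing arrangement of the $A_i$ is exactly what lets the growing element counts below $g(n)$ accumulate inside $A$ without spoiling $A\sse^* A_i$. Once $f_\al$ and $g$ are in place, both checks are short counting arguments, so no fusion or transfinite recursion is needed inside this lemma.
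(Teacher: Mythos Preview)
The paper does not supply its own proof of this proposition; it is quoted from Ketonen \cite{Ketonen76} and used as a black box. Your argument is correct and is essentially Ketonen's original proof: encode for each basic set $F_\al$ the growth rate needed to trap arbitrarily many points of $F_\al\cap A_n$ into a single function $f_\al$, use $\kappa<\mathfrak{d}$ to find a $g$ escaping every $f_\al$ infinitely often, and read off $A$ from $g$.

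One small point: you invoke ``since $\mathcal{F}$ is free'' to guarantee each $F_\al\cap A_n$ is infinite, but freeness is not part of the stated hypothesis. In the paper's applications the filters under construction always contain the Fr\'echet filter, so this is harmless in context; still, it is worth flagging that your $f_\al$ is well defined only under that standing assumption. Everything else---the decreasing normalization of the $A_i$, the verification that $A\setminus A_i$ is finite, and the count showing $|A\cap F_\al|\ge n+1$ along $S_\al$---is clean and complete.
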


We are now equipped to prove Theorem \ref{thm.selective}
in the case that $2^{\mathfrak{c}}>\mathfrak{c}^+$.

\begin{thm}\label{thm.selective2c>c+}
Assume $2^{\mathfrak{c}}>\mathfrak{c}^+$.
\begin{enumerate}
\item
Assume cov$(\mathscr{M})=\mathfrak{c}$. 
Then there are $2^{\mathfrak{c}}$ pairwise Tukey incomparable selective ultrafilters.
\item
Assume  $\mathfrak{d}=\mathfrak{u}=\mathfrak{c}$.
Then there are $2^{\mathfrak{c}}$ pairwise Tukey incomparable p-points.
\end{enumerate}
\end{thm}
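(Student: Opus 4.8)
The plan is to reduce both parts of the theorem to the existence of a large supply of ultrafilters of the required type, and then to let Corollary \ref{cor.5.5} do the work of producing incomparability. Precisely, for part (1) I would first establish that cov$(\mathscr{M})=\mathfrak{c}$ yields $2^{\mathfrak{c}}$ pairwise distinct selective ultrafilters, and for part (2) that $\mathfrak{d}=\mathfrak{u}=\mathfrak{c}$ yields $2^{\mathfrak{c}}$ pairwise distinct p-points. Since we assume $2^{\mathfrak{c}}>\mathfrak{c}^+$, the resulting family $\mathcal{X}$ has cardinality $>\mathfrak{c}^+$, so Corollary \ref{cor.5.5} (which applies to any family of p-points, and selective ultrafilters are p-points) produces a subfamily $\mathcal{Z}\sse\mathcal{X}$ with $|\mathcal{Z}|=|\mathcal{X}|=2^{\mathfrak{c}}$ such that $\mathcal{U}\not\le_T\mathcal{V}$ for all distinct $\mathcal{U},\mathcal{V}\in\mathcal{Z}$. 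Thus the entire content of the theorem is to build the $2^{\mathfrak{c}}$ ultrafilters; the incomparability is then immediate.

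To build them I would run a transfinite recursion of length $\mathfrak{c}$ producing a binary splitting tree of filters $\mathcal{F}_s$, indexed by $s\in 2^{<\mathfrak{c}}$, with each $\mathcal{F}_s$ generated by a base of size $<\mathfrak{c}$. Fix enumerations of length $\mathfrak{c}$ of all genericity requirements: in the selective case all partitions of $\om$, to be met by selectors via Ketonen's Proposition \ref{prop.Ketonen}; in the p-point case all countable subfamilies, to be met by pseudo-intersections via Proposition \ref{prop.Ketonen1.3}; together, in both cases, with an enumeration of $\mathcal{P}(\om)$ to force maximality. At stage $\al$, given a node $\mathcal{F}_s$ with $\length(s)=\al$, I would first extend $\mathcal{F}_s$ to some $\mathcal{F}'_s$ of base size $<\mathfrak{c}$ addressing the $\al$-th requirement using the relevant Ketonen proposition, and then split $\mathcal{F}'_s$: choose $Y_s$ so that both $\mathcal{F}'_s\cup\{Y_s\}$ and $\mathcal{F}'_s\cup\{\om\setminus Y_s\}$ have the finite intersection property, letting $\mathcal{F}_{s^\frown 0}$ and $\mathcal{F}_{s^\frown 1}$ be the filters they generate. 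At limit stages take unions of bases, which preserves base size $<\mathfrak{c}$. For each branch $b\in 2^{\mathfrak{c}}$ the filter $\mathcal{F}_b=\bcup_{\al<\mathfrak{c}}\mathcal{F}_{b\re\al}$ is then a selective ultrafilter (resp.\ p-point), since along $b$ every requirement is addressed at its stage and every member of $\mathcal{P}(\om)$ is decided; and distinct branches $b\ne b'$ first differ at some $\al$, where $\mathcal{F}_b$ and $\mathcal{F}_{b'}$ contain $Y_s$ and $\om\setminus Y_s$ for $s=b\re\al=b'\re\al$, hence are distinct. This produces $2^{\mathfrak{c}}$ ultrafilters of the desired kind.

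The existence of the splitting set $Y_s$ at each node, compatibly with extendability, is where the cardinal hypotheses enter and is the main obstacle. In the p-point case this is clean: $\mathfrak{u}=\mathfrak{c}$ guarantees that a filter with base of size $<\mathfrak{c}$ is never an ultrafilter, so some $Y_s$ splits it, and $\mathfrak{d}=\mathfrak{c}$ (Proposition \ref{prop.Ketonen1.3}) guarantees that each of the two resulting filters, still of base size $<\mathfrak{c}$, extends to a p-point. The selective case is more delicate: here I would secure the branching by fixing in advance an independent family $\{I_\al:\al<\mathfrak{c}\}$ and splitting on $I_\al$ at stage $\al$, the subtle point being that the selectors introduced by Proposition \ref{prop.Ketonen} must not destroy the independence of the not-yet-used $I_\be$ for $\be\ge\al$. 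Maintaining this independence modulo the filter built so far across all $\mathfrak{c}$ stages, and in particular passing through limit stages while keeping every base below $\mathfrak{c}$ so that Ketonen's extension results continue to apply, is the technical heart of the argument. Once the $2^{\mathfrak{c}}$ ultrafilters are constructed, the passage to a Tukey-incomparable subfamily of the same size via Corollary \ref{cor.5.5} completes both parts.
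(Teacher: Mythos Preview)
Your overall plan is right and matches the paper: build a binary tree of small-based filters of height $\mathfrak{c}$, meeting the Ketonen requirements along each branch, so that the $2^{\mathfrak{c}}$ branches yield pairwise distinct selective ultrafilters (resp.\ p-points), and then invoke Corollary~\ref{cor.5.5} together with $2^{\mathfrak{c}}>\mathfrak{c}^+$ to extract a Tukey-incomparable subfamily of full size. Your treatment of part~(2) is essentially the paper's proof.

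For part~(1), however, you have made the branching step harder than it is, and the difficulty you flag is one you have not actually overcome. You propose to split along a pre-fixed independent family $\{I_\al:\al<\mathfrak{c}\}$ and then worry, correctly, that the selectors produced by Proposition~\ref{prop.Ketonen} might kill the independence of the remaining $I_\beta$'s modulo the current filter. Preserving independence of $\mathfrak{c}$-many sets through $\mathfrak{c}$-many applications of Ketonen's proposition is not something that proposition gives you for free, and you do not indicate how to arrange it; as written this is a genuine gap.

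The paper sidesteps this entirely with a one-line observation you missed: $\mathrm{cov}(\mathscr{M})=\mathfrak{c}$ implies $\mathfrak{u}=\mathfrak{c}$. Hence at every node the filter $\mathcal{F}'_s$, having a base of size $<\mathfrak{c}$, is \emph{not} an ultrafilter, and one simply picks any $D$ with both $D$ and $\omega\setminus D$ in $(\mathcal{F}'_s)^+$ to split---exactly as you already do in part~(2). No independent family, no preservation problem. With this in hand, parts~(1) and~(2) run in parallel, differing only in which Ketonen proposition is invoked at the ``requirement'' step; the selective case is not more delicate than the p-point case.
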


\begin{proof}
We prove (1) first.
Recall that cov$(\mathscr{M})=\mathfrak{c}$ implies $\mathfrak{u}=\mathfrak{c}$, so every 
filter base of cardinality less than $\mathfrak{c}$ does not generate an ultrafilter.
We fix some notation used throughout the proof.
Fix a listing $\lgl D_{\al}:\al<\mathfrak{c}\rgl$ of all the infinite subsets of $\om$.
There are $\mathfrak{c}$ many partitions of $\om$, so we fix a sequence $\lgl \vec{P}_{\al}:\al<\mathfrak{c}\rgl$ such that each $\vec{P}_{\al}=\lgl P_{\al}^n:n<\om\rgl$ is a partition of $\om$ 
(that is, $\bigcup_{n<\om}P_{\al}^n=\om$ and for each $m\ne n$, $P_{\al}^m\cap P_{\al}^n=\emptyset$)
 and each partition of $\om$ appears  in the listing.
We shall say that a filter $\mathcal{U}$  is \em selective for the partition $\vec{P}_{\al}$ \rm if
either there is some $n<\om$ such that $P_{\al}^n\in\mathcal{U}$ or else there is some $X\in\mathcal{U}$ such that $|X\cap P_{\al}^n|\le 1$ for each $n<\om$.

We now begin the construction.
In a very  similar manner to the proof of [Theorem 2, \cite{Blass73}] of Blass,
 we will construct selective ultrafilters $\mathcal{U}_x$, $x\in 2^{\mathfrak{c}}$, such that for $x\ne y$, $\mathcal{U}_x\ne\mathcal{U}_y$.
Let $\mathcal{U}_{\lgl\rgl}$ be the Fr\'{e}chet filter.
If there is an $i<\om$ such that $P_0^i$ is infinite, then let $\mathcal{U}_{\lgl\rgl}'$ be the filter generated by  $\mathcal{U}_{\lgl\rgl}\cup\{P_0^i\}$.
Otherwise, for each $i<\om$, $P_0^i$ is finite. 
Then take some infinite  $X$ such that for each $i$, $|X\cap P_0^i|\le 1$ and let $\mathcal{U}_{\lgl\rgl}'$ be the filter generated by  $\mathcal{U}_{\lgl\rgl}\cup\{X\}$.
Take $\al_0$ minimal such that both $D_{\al_0}$ and $D_{\al_0}^c$ are in $(\mathcal{U}_{\lgl\rgl}')^+$.
Let $\mathcal{U}_{\lgl 0\rgl}$ be the filter generated by $\mathcal{U}_{\lgl\rgl}'\cup\{D_{\al_0}\}$ and let $\mathcal{U}_{\lgl 1\rgl}$ be the filter generated by $\mathcal{U}_{\lgl\rgl}'\cup\{D_{\al_0}^c\}$.
Note that both $\mathcal{U}_{\lgl 0\rgl}$ and $\mathcal{U}_{\lgl 1\rgl}$  have countable filter bases,  are selective for $\vec{P}_0$,
and any ultrafilter extending $\mathcal{U}_{\lgl i\rgl}$ does not extend $\mathcal{U}_{\lgl 1-i\rgl}$,
for each $i\le 1$.

Suppose for  $t\in 2^{<\mathfrak{c}}$,
 the filter  $\mathcal{U}_t$ has been constructed and has a filter base of cardinality less than $\mathfrak{c}$.
Let $\beta$ be the length of $t$. 
The partition of $\om$ under consideration is 
$\vec{P}_{\beta}=\lgl P_{\beta}^n:n<\om\rgl$.
If there is an $n<\om$ such that 
$P_{\beta}^n\in \mathcal{U}_t$,
then let $\mathcal{U}_t'=\mathcal{U}_t$.
Otherwise, for each $n<\om$,
$\bigcup_{j>n} P_{\beta}^j\in \mathcal{U}_t$.
Apply Proposition  \ref{prop.Ketonen}  to find an $X\in[\om]^{\om}$ such that 
$\{X\}\cup \mathcal{U}_t$ has the finite intersection property, and such that for each $n<\om$,
$|X\cap P_{\beta}^n|\le 1$.
Let $\mathcal{U}'_t$ be the filter generated by $\{X\}\cup \mathcal{U}_t$.
Take $\al_{\beta}$ minimal such that both $D_{\al_{\beta}}$ and $D_{\al_{\beta}}^c$ are in $(\mathcal{U}_t')^+$.
(Note that $\al_{\beta}\ge\beta$.)
Let $\mathcal{U}_{t^{\frown}0}$ be the filter generated by $\mathcal{U}_t'\cup\{D_{\al_{\beta}}\}$ and let $\mathcal{U}_{t^{\frown}1}$ be the filter generated by $\mathcal{U}_t'\cup\{D_{\al_{\beta}}^c\}$.
Note that for each $i\le 1$, both $\mathcal{U}_{t^{\frown}i}$  have  filter bases of cardinality less than $\mathfrak{c}$,  are selective for $\vec{P}_{\beta}$,
and any ultrafilter extending $\mathcal{U}_{t^{\frown} i}$ does not extend $\mathcal{U}_{t^{\frown}(1-i)}$.

For $t\in 2^{<\mathfrak{c}}$ with length of $t$ some limit ordinal $\gamma$,
if for all $\beta<\gamma$, $\mathcal{U}_{t\re\beta}$ has been constructed,
then we let $\mathcal{U}=\bigcup_{\beta<\gamma}\mathcal{U}_{t\re\beta}$.

This constructs filters $\mathcal{U}_t$, $t\in 2^{<\mathfrak{c}}$, satisfying the following.
For each $t\in 2^{<\mathfrak{c}}$,
\begin{enumerate}
\item
$\mathcal{U}_t$ is a filter with a filter base of cardinality less than $\mathfrak{c}$;
\item
If $s$  is an initial segment of  $t$,
then $\mathcal{U}_s\sse\mathcal{U}_t$;
\item
If the length of $t$ is $\al+1$ for some $\al<\mathfrak{c}$,
then 
for all $\beta\le\al$,
$\mathcal{U}_t$ is selective for $\vec{P}_{\beta}$,
 and either $D_{\beta}$ or $D_{\beta}^c$ is in $\mathcal{U}_t$;
\item
No ultrafilter can extend both $\mathcal{U}_{t^{\frown}0}$ and $\mathcal{U}_{t^{\frown}1}$.
\end{enumerate}

For each $x\in 2^{\mathfrak{c}}$,
let $\mathcal{U}_x=\bigcup_{\beta<\mathfrak{c}}\mathcal{U}_{x\re\beta}$.
Then by (1) - (3), each $\mathcal{U}_x$ is a selective  ultrafilter.
Furthermore, (4) implies that for $x,y\in 2^{\mathfrak{c}}$,
if $x\ne y$,
then  $\mathcal{U}_x\ne\mathcal{U}_y$.
Thus, we have $2^{\mathfrak{c}}$ selective ultrafilters. 
By Theorem \ref{thm.5},
each $\mathcal{U}_x$ has Tukey type of cardinality at most $\mathfrak{c}$.
Thus, there are $2^{\mathfrak{c}}$ Tukey types among the collection of Tukey types of the $\mathcal{U}_x$,
$x\in 2^{\mathfrak{c}}$.
Since $2^{\mathfrak{c}}>\mathfrak{c}^+$, Corollary \ref{cor.5.5} yields $2^{\mathfrak{c}}$  Tukey incomparable selective ultrafilters.

The proof of  (2) of the Theorem follows exactly the same steps as for  (1)
with only the following modification which ensures that we build p-points (instead of selective ultrafilters).
Before starting the construction, 
fix an enumeration
$\lgl \vec{A}_{\al}:\al<\mathfrak{c}\rgl$,
where $\vec{A}_{\al}=\lgl A_{\al}^n:n<\om\rgl$,
such that for each countable collection $\vec{B}=\lgl B_n:n<\om\rgl$ of infinite subsets of $\om$,
$\vec{B}=\vec{A}_{\al}$ for cofinally many $\al<\mathfrak{c}$.


We now begin the construction for (2).
Let $\mathcal{U}_{\lgl\rgl}$ be the Fr\'{e}chet filter.
If 
the sequence $\lgl A_0^n:n<\om\rgl$ is contained in $\mathcal{U}_{\lgl\rgl}$,
then
apply Proposition \ref{prop.Ketonen1.3}
to obtain a  set $B$ such that $B\sse^* A_0^n$ for each $n<\om$
and such that $\{B\}\cup\mathcal{U}_{\lgl\rgl}$  has the finite intersection property.
In this case, let $\mathcal{U}_{\lgl\rgl}'$ denote the filter generated by $\{B\}\cup\mathcal{U}_{\lgl\rgl}$.
If the sequence $\lgl A_0^n:n<\om\rgl$ is not contained in $\mathcal{U}_{\lgl\rgl}$,
then let $\mathcal{U}_{\lgl\rgl}'=\mathcal{U}_{\lgl\rgl}$.
Take $\al_0$ minimal such that both $D_{\al_0}$ and $D_{\al_0}^c$ are in $(\mathcal{U}_{\lgl\rgl}')^+$.
Let $\mathcal{U}_{\lgl 0\rgl}$ be the filter generated by $\mathcal{U}_{\lgl\rgl}'\cup\{D_{\al_0}\}$ and let $\mathcal{U}_{\lgl 1\rgl}$ be the filter generated by $\mathcal{U}_{\lgl\rgl}'\cup\{D_{\al_0}^c\}$.

Suppose for  $t\in 2^{<\mathfrak{c}}$,
 the filter  $\mathcal{U}_t$ has been constructed and has a filter base of size less than $\mathfrak{c}$.
Let $\beta$ be the length of $t$. 
If 
the sequence $\lgl A_{\beta}^n:n<\om\rgl$ is contained in $\mathcal{U}_t$,
then
apply Proposition \ref{prop.Ketonen1.3}
to obtain a  set $B$ such that $B\sse^* A_{\beta}^n$ for each $n<\om$
and such that $\{B\}\cup\mathcal{U}_t$  
has the finite intersection property.
In this case, 
let $\mathcal{U}_t'$ denote the filter generated by $\{B\}\cup\mathcal{U}_t$.
If the sequence $\lgl A_{\beta}^n:n<\om\rgl$ is not contained in $\mathcal{U}_t$,
then let $\mathcal{U}_t'=\mathcal{U}_t$.
Take $\al_{\beta}$ minimal such that both $D_{\al_{\beta}}$ and $D_{\al_{\beta}}^c$ are in $(\mathcal{U}_t')^+$.
Let $\mathcal{U}_{t^{\frown}0}$ be the filter generated by $\mathcal{U}_t'\cup\{D_{\al_{\beta}}\}$ and let $\mathcal{U}_{t^{\frown}1}$ be the filter generated by $\mathcal{U}_t'\cup\{D_{\al_{\beta}}^c\}$.
For $t\in 2^{<\mathfrak{c}}$ such that length of $t$ is some limit ordinal $\gamma$,
if for all $\beta<\gamma$, $\mathcal{U}_{t\re\beta}$ has been constructed,
then we let $\mathcal{U}_t=\bigcup_{\beta<\gamma}\mathcal{U}_{t\re\beta}$.

For each $x\in 2^{\mathfrak{c}}$,
let $\mathcal{U}_x=\bigcup_{\beta<\mathfrak{c}}\mathcal{U}_{x\re\beta}$.
By similar arguments as for (1),
 each $\mathcal{U}_x$ is  an ultrafilter
and for $x\ne y$,
$\mathcal{U}_x\ne\mathcal{U}_y$.
Moreover, $\mathfrak{d}=\mathfrak{c}$ implies that the cofinality of $\mathfrak{c}$ is uncountable.
Thus, any countable collection of elements of $\mathcal{U}_x$ appears in $\mathcal{U}_t$ for some $t\in 2^{<\mathfrak{c}}$ such that $t\sqsubseteq x$ and hence is considered at some stage in the construction of $\mathcal{U}_x$.
Thus, $\mathcal{U}_x$ is 
a p-point.
By Theorem \ref{thm.5} and Corollary \ref{cor.5.5},
we obtain $2^{\mathfrak{c}}$  Tukey incomparable p-points.
\end{proof}

Next we take care of the case when $2^{\mathfrak{c}}=\mathfrak{c}^+$.
In this case, Corollary \ref{cor.5.5}
does not apply,
so we present a new way of constructing $\mathfrak{c}^+$ Tukey incomparable selective ultrafilters (or p-points).
To do so we shall use the following notion.

Given a continuous monotone function $f:\mathcal{P}(\om)\ra\mathcal{P}(\om)$,
define $\hat{f}:2^{<\om}\ra\mathcal{P}(\om)$ by letting
$\hat{f}(s)=\bigcap_{n\ge m}f(\tilde{s}\cup[n,\om))$,
for each $m<\om$ and each $s\in 2^m$, 
where $\tilde{s}$ denotes $\{i<m:s(i)=1\}$.
We shall say that  $f$ is {\em presented} by the function $\hat{f}$ if  the following hold:
\begin{enumerate}
\item 
For each $Z\sse\om$,
$f(Z)=\bigcup\{\hat{f}(s):s\sqsubseteq Z\}$,
where $Z$ is identified with its characteristic function;
\item
For any $X\sse\om$ and any $l<\om$,  $l\in f(X)$ iff $l\in \hat{f}(X\cap (l+1))$, where $X\cap (l+1)$ is identifed  with its characteristic function of length $l+1$.
\end{enumerate}

In the proof of Theorem \ref{thm.5}, it was shown that 
for any p-point $\mathcal{Z}$, any ultrafilter $\mathcal{U}$, and any monotone cofinal map $f:\mathcal{Z}\ra\mathcal{U}$,
there is a continuous monotone  map $f^*:\mathcal{P}(\om)\ra\mathcal{P}(\om)$ and a cofinal subset $\mathcal{Z}\re\tilde{X}$ of $\mathcal{Z}$ such that $f^*\re(\mathcal{Z}\re\tilde{X})$ equals $f\re(\mathcal{Z}\re\tilde{X})$.
Moreover, the proof of Theorem \ref{thm.5}  shows that this $f^*$ is presented by  $\hat{f^*}$.
Thus, it suffices to consider only continuous monotone maps $f:\mathcal{P}(\om)\ra\mathcal{P}(\om)$ which are presented  by  $\hat{f}$.

\begin{lem}\label{lem.fix}
Let $f:\mathcal{P}(\om)\ra\mathcal{P}(\om)$ be a continuous monotone map presented by a map $\hat{f}:2^{<\om}\ra\mathcal{P}(\om)$, let $\mathcal{U}$ be a non-principal ultrafilter, and let $\mathcal{Y}$ be a filter containing the Fr\'{e}chet filter with a filter base of size less than $\mathfrak{u}$.
Then there is a $Y\in\mathcal{Y}^+$ such that for any ultrafilter $\mathcal{Z}$ which extends $\mathcal{Y}\cup\{Y\}$,
$f\re\mathcal{Z}$ is not a cofinal map from $\mathcal{Z}$ into $\mathcal{U}$.
\end{lem}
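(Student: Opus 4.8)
The plan is to reduce the whole statement to a clean dichotomy on the behaviour of $f$ on $\mathcal{Y}$-positive sets, using the presentation $\hat{f}$ only through its two structural features: $f$ is monotone, and for each $l<\om$ whether $l\in f(Z)$ depends only on $Z\cap(l+1)$, so that $l\in f(Z)$ iff $Z\cap(l+1)$ contains one of the finitely many minimal finite sets $w$ with $l\in\hat{f}(w)$ (which I shall call the \emph{witnesses} of $l$). First I would record the two observations that let a single $Y$ work for \emph{all} extensions $\mathcal{Z}$. If $f(Y)\notin\mathcal{U}$ then, since any $\mathcal{Z}\supseteq\mathcal{Y}\cup\{Y\}$ has $Y$ as a member, $f\re\mathcal{Z}$ is not even a function into $\mathcal{U}$. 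And if some fixed $U^{*}\in\mathcal{U}$ satisfies $f(Z)\not\sse U^{*}$ for every $Z\in\mathcal{Y}^{+}$, then, because every member of every such $\mathcal{Z}$ lies in $\mathcal{Y}^{+}$ (it meets each set in $\mathcal{Y}\sse\mathcal{Z}$), no member of $\mathcal{Z}$ maps below $U^{*}$, so $f\re\mathcal{Z}$ has non-cofinal range.

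In Case 1, where some $Y\in\mathcal{Y}^{+}$ has $f(Y)\notin\mathcal{U}$, that $Y$ is as required by the first observation and we are done. In Case 2, where $f(Y)\in\mathcal{U}$ for every $Y\in\mathcal{Y}^{+}$, I would take $Y=\om$ (which is $\mathcal{Y}$-positive) and produce the set $U^{*}$ of the second observation. If $f(\emptyset)\ne\emptyset$ this is immediate: by monotonicity $f(\emptyset)\sse f(Z)$ for all $Z$, so fixing $p\in f(\emptyset)$ the cofinite set $U^{*}=\om\setminus\{p\}\in\mathcal{U}$ contains no image $f(Z)$. The remaining, genuinely hard, case is $f(\emptyset)=\emptyset$ together with $f(Z)\in\mathcal{U}$ for all positive $Z$; the model example is $f(Z)=[\min Z,\om)$, whose images are exactly the cofinite tails and where $U^{*}$ may be taken to be any member of $\mathcal{U}$ with infinite complement.

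For this last case the task is to find $U^{*}\in\mathcal{U}$ with $f(Z)\not\sse U^{*}$ for all positive $Z$; writing $R=\om\setminus U^{*}$ and unwinding the witness description, this is exactly the request for a set $R\notin\mathcal{U}$ such that the family $\mathcal{W}_{R}$ of all witnesses of elements of $R$ meets every $\mathcal{Y}$-positive set. I would argue by contradiction: if no such $R$ exists, then for every $U\in\mathcal{U}$ there is a positive $Z_{U}$ with $f(Z_{U})\sse U$, that is, the images $\{f(Z):Z\in\mathcal{Y}^{+}\}$ form a base for $\mathcal{U}$. The plan is then to compress this into a base of size at most $|\mathcal{B}|<\mathfrak{u}$: using that $f(Z)\sse U$ is a countable conjunction of finite conditions on $Z$ (one for each $l\notin U$) and that $Z$ need only be $\mathcal{Y}$-positive, one replaces each witnessing $Z_{U}$ by a set determined through at most $|\mathcal{B}|$ much information carried by the fixed base $\mathcal{B}$, so that the sets $f(Z_{U})$ already occur among $\le|\mathcal{B}|$ many images. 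A base for $\mathcal{U}$ of cardinality $<\mathfrak{u}$ contradicts the minimality of $\mathfrak{u}$, completing the proof. The main obstacle is precisely this compression step — turning the continuum-many positive witnesses into a base of size $<\mathfrak{u}$ — and this is the only place where the hypothesis $|\mathcal{B}|<\mathfrak{u}$, as opposed to merely ``$\mathcal{Y}$ is not an ultrafilter,'' is essential.
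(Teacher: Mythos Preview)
Your initial reductions are correct and match the paper exactly: if some $Y\in\mathcal{Y}^+$ has $f(Y)\notin\mathcal{U}$, or if some fixed $U^*\in\mathcal{U}$ lies below no $f(Z)$ with $Z\in\mathcal{Y}^+$, you are done. The paper also begins with precisely these two observations and then enters the hard case where $f''\mathcal{Y}^+\sse\mathcal{U}$ and $f''\mathcal{Y}^+$ is cofinal in $\mathcal{U}$.

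The gap is your ``compression step.'' You assert that because $f(Z)\sse U$ is a countable conjunction of finite conditions on $Z$, and $Z$ only needs to be $\mathcal{Y}$-positive, one can replace each $Z_U$ by something determined by at most $|\mathcal{B}|$ much data, yielding a base for $\mathcal{U}$ of size $<\mathfrak{u}$. This is not justified, and I do not see how to make it work: the set $\mathcal{Y}^+$ has size $\mathfrak{c}$ regardless of $|\mathcal{B}|$, and the family $\{f(Z):Z\in\mathcal{Y}^+\}$ can easily have size $\mathfrak{c}$ as well. There is no canonical ``small'' choice of $Z_U$ for each $U$, and nothing in your outline produces one. You yourself flag this as ``the main obstacle,'' but then simply declare it done.

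The paper handles the hard case by an entirely different mechanism: rather than trying to show $\mathcal{U}$ has a small base, it derives a contradiction with the standing assumption $f''\mathcal{Y}^+\sse\mathcal{U}$ by manufacturing a single positive set whose image is not in $\mathcal{U}$. Concretely, it splits into subcases according to whether the finite-stage values $\hat{f}(s)$ (for $s$ along positive sets) are ever in $\mathcal{U}$, and if not, whether they are infinite or can be made finite. In the genuinely hard subcase (all $\hat{f}(s)$ finite along some positive $X_2$), one builds a fast-growing block sequence $(j_i)$ so that $f(\bigcup_{i\in I}[j_{2i},j_{2i+1}))\sse\bigcup_{i\in I}[j_{2i},j_{2i+2})$ for every $I$; then, using the hypothesis $|\mathcal{B}|<\mathfrak{u}$ on an auxiliary ``index'' filter, one finds an infinite, co-infinite $K$ such that $A=\bigcup_{i\in K}[j_{2i},j_{2i+1})$ and $B=\bigcup_{i\in K^c}[j_{2i},j_{2i+1})$ are both $\mathcal{Y}$-positive. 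The block structure forces $f(A)\cap f(B)=\emptyset$, so one of $f(A),f(B)$ is outside $\mathcal{U}$ --- contradiction. This is where $|\mathcal{B}|<\mathfrak{u}$ actually enters, and it is used to split, not to bound the character of $\mathcal{U}$.
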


\begin{proof}
Let $f$, $\mathcal{U}$, and $\mathcal{Y}$ satisfy the hypotheses.
If there is a $Y\in\mathcal{Y}^+$ such that $f(Y)\not\in\mathcal{U}$,
then we are done.
So now suppose that for each $Y\in\mathcal{Y}^+$,
$f(Y)\in\mathcal{U}$.
If there is a $U\in\mathcal{U}$ such that for each $Y\in\mathcal{Y}^+$, $f(Y)\not\sse U$,
then for every ultrafilter $\mathcal{Z}$ extending $\mathcal{Y}$,
$f''\mathcal{Z}$ is not cofinal in $\mathcal{U}$.

Thus, the remaining case is that $f''\mathcal{Y}^+$ is cofinal in $\mathcal{U}$, which we assume throughout the rest of the proof of the lemma.
Let $\hat{f}:2^{<\om}\ra\mathcal{P}(\om)$ be given such that $f$ is presented by $\hat{f}$.
Recall that for each $s\in 2^{<\om}$, 
$\hat{f}(s)$ is the set of all $k$ which must be in $f(X)$ for every extension $X$ of $s$, and 
 $\hat{f}$ has the property that for any $X\sse\om$ and any $l<\om$,  $l\in f(X)$ iff $l\in \hat{f}(X\cap (l+1))$.
For a filter $\mathcal{W}$,  the dual ideal is denoted by $\mathcal{W}^*$.

\begin{claim1}\label{claim.1fix}
For any ultrafilter $\mathcal{U}$,
given any collection $\{C_i:i<\om\}\sse\mathcal{U}^*$ such that each $C_i$ is infinite,
there is a $U\in\mathcal{U}$ such that for each $i<\om$,
$C_i\not\sse U$.
\end{claim1}

\begin{proof}
Let $\{C_i:i<\om\}$ be a collection of infinite sets such that each $C_i\in\mathcal{U}^*$.
Let $a_0=\min(C_0)$ and $b_0=\min(C_0\setminus\{a_0\})$.
Let $I_0=\{i<\om:\{a_0,b_0\}\sse C_i\}$ and let 
$I_1=\{i<\om:\{a_0,b_0\}\not\sse C_i\}$.
Let $i_1=\min(I_1)$.
Let $a_1=\min(C_{i_1}\setminus\{a_0,b_0\})$ and let 
$b_1=\min(C_{i_1}\setminus\{a_0,b_0,a_1\})$.
Let $I_2=\{i\in I_1:\{a_1,b_1\}\not\sse C_i\}$.
For general $m$, given $I_m$,
let $i_m=\min(I_m)$,
let $a_m=\min(C_{i_m}\setminus(\{a_j:j<m\}\cup\{b_j:j<m\}))$ and let 
$b_m=\min(C_{i_m}\setminus(\{a_j:j\le m\}\cup\{b_j:j<m\}))$.
Let $I_{m+1}=\{i\in I_m:\{a_m,b_m\}\not\sse C_i\}$.


Let $A=\{a_m:m<\om\}$ and $B=\{b_m:m<\om\}$.
Then $A$ and $B$ are infinite and $A\cap B=\emptyset$.
Moreover, for each $j<\om$,
there is an $m$ such that $j< i_{m+1}$,
so $\{a_{m'},b_{m'}\}\sse C_j$ for some $m'<m+1$.
Hence, $A\cap C_j\ne\emptyset$ and $B\cap C_j\ne\emptyset$.
Therefore, for each $j<\om$,
$C_j\not\sse A$ and $C_j\not\sse B$.
Note that one of $A$ and $B\cup(\om\setminus A)$ must be in $\mathcal{U}$.
However, neither $A$ nor $B\cup(\om\setminus A)$ contains $C_j$ for any $j<\om$.
\end{proof}

Now we exhaust the possible  cases regarding $\hat{f}$.
\vskip.1in

\it Case 1. \rm
For each $X\in\mathcal{Y}^+$, identifying $X$  with its characteristic function, 
there is a finite initial segment $s\sqsubseteq X$  such that $\hat{f}(s)\in\mathcal{U}$.
Let $\mathcal{S}$ be the collection of $s\in 2^{<\om}$ such that $\hat{f}(s)\in\mathcal{U}$.
Then for each $X\in\mathcal{Y}^+$,
$f(X)$ is the union of the $\hat{f}(s)$, where $s\in\mathcal{S}$ and $s\sqsubseteq X$.
Since there are only countably many $\hat{f}(s)$, $s\in\mathcal{S}$,
they cannot generate the ultrafilter $\mathcal{U}$.
Hence, for any ultrafilter extension $\mathcal{Z}$ of $\mathcal{Y}$,
$f''\mathcal{Z}$ is not cofinal in $\mathcal{U}$.
\vskip.1in

\it Case 2.  \rm
Not Case 1.
Then there is an $X_0\in\mathcal{Y}^+$ such that for each finite initial segment $s\sqsubseteq X_0$, $\hat{f}(s)$ is not in $\mathcal{U}$.
\vskip.1in

\it Subcase 2(a). \rm
There is an $X_1\sse X_0$ in $\mathcal{Y}^+$ such that for each $Y\in\mathcal{Y}^+$ with $Y\sse X_1$,
there is a finite initial segment $s$ of $Y$ such that $\hat{f}(s)$ is infinite.
Let $\mathcal{S}$ be the collection of finite initial segments $s$ of members $Y\sse X_1$ in $\mathcal{Y}^+$ such that $\hat{f}(s)$ is infinite.
Then $\{\hat{f}(s):s\in\mathcal{S}\}$ satisfies the hypotheses of Claim 1.
Thus, there is a $U\in\mathcal{U}$ such that for each $s\in\mathcal{S}$, 
$\hat{f}(s)\not\sse U$.
Therefore, for any ultrafilter $\mathcal{Z}$ extending $\mathcal{Y}\cup\{X_1\}$,
$f''\mathcal{Z}$ is not cofinal in $\mathcal{U}$,
since for any $Z\in\mathcal{Z}$, $f(Z)=\bigcup\{\hat{f}(s):s\sqsubseteq Z\}$.
\vskip.1in

\it Subcase 2(b). \rm
For each $X_1\sse X_0$ in $\mathcal{Y}^+$,
there is some $X_2\sse X_1$ also in $\mathcal{Y}^+$ such that for each finite initial segment $s\sqsubseteq X_2$,
$\hat{f}(s)$ is finite.
Fix some such $X_2$.
Then note that for each $s\in 2^{<\om}$ such that $\tilde{s}\in[X_2]^{<\om}$,
$\hat{f}(s)$ is finite.
(Recall that $\tilde{s}$ denotes $\{i\in\dom(s):s(i)=1\}$.)
Let $\mathcal{S}_2$ denote $\{s\in 2^{<\om}:\tilde{s}\sse X_2\}$.

\begin{claim2}
There is a $Y\in\mathcal{Y}^+$ such that $Y\sse X_2$ and $f(Y)\not\in\mathcal{U}$.
\end{claim2}

\begin{proof}
Since each $\hat{f}(s)$ is finite for $s\in \mathcal{S}_2$,
for each $k$ there is an $m$
such that for each $s\in 2^k\cap\mathcal{S}_2$,
$\max(\hat{f}(s))<m$.
Let $j_0=0$.
Given $j_i$,
choose $j_{i+1}$ to be the least  $m>j_i$ such that for each $s\in 2^{j_i}\cap\mathcal{S}_2$,
$\max(\hat{f}(s))<m$.
Notice that for each $i<\om$ and each $s\in 2^{j_i}\cap\mathcal{S}_2$,
we have that $\max(\hat{f}(s))<j_{i+1}$.

Let $\mathcal{W}$ be the filter generated by $\mathcal{Y}\cup\{X_2\}$.
Then $\mathcal{W}$ has a base of size less than $\mathfrak{u}$ (since $\mathcal{Y}$ does), so $\mathcal{W}$ is not an ultrafilter.
Let $H=\bigcup_{i<\om}[j_{2i},j_{2i+1})$.
Then $H$ and $H^c$ cannot both be in $\mathcal{W}^*$, since $\mathcal{W}^*$ is a proper ideal.
Without loss of generality, assume that $H\not\in\mathcal{W}^*$.
Then $H\in\mathcal{W}^+$.
(If $H$ is in $\mathcal{W}^*$, then use $H^c$ and modify the indexes in the following argument.)

\begin{subclaimn}
There is an infinite, co-infinite set $K\sse\om$ such that both $\bigcup_{i\in K}[j_{2i},j_{2i+1})$ and $\bigcup_{i\in K^c}[j_{2i},j_{2i+1})$ are in $\mathcal{W}^+$.
\end{subclaimn}

\begin{proof}
For each $i<\om$, let $\bar{j}_i$ denote the interval $[j_{2i},j_{2i+1})$.
Let $\mathcal{K}=\{K\sse\om:\exists W\in\mathcal{W}\ \forall i<\om\ (W\cap\bar{j}_i\ne\emptyset\ra i\in K)\}$.
Note that $\mathcal{K}$ is a filter:
By its definition, $\mathcal{K}$ is closed under supersets and contains the Fr\'{e}chet filter since $\mathcal{W}\contains\mathcal{Y}$ contains the Fr\'{e}chet filter.
Also if $K$ and $K'$ are in $\mathcal{K}$ as witnessed by $W,W'\in\mathcal{W}$, respectively,
then $W\cap W'\in\mathcal{W}$, and 
$W\cap W'$ witnesses that $K\cap K'\in\mathcal{K}$.

Let $\mathcal{C}$ be a base of size less than $\mathfrak{u}$ for the filter $\mathcal{W}$.
For each $W\in\mathcal{C}$,
define $K_W=\{i\in\om:W\cap\bar{j}_i\ne\emptyset\}$.
Let $\mathcal{B}=\{K_W:W\in\mathcal{C}\}$.
Note that $\mathcal{B}$ is a base for the filter $\mathcal{K}$.
Also, $|\mathcal{B}|\le|\mathcal{C}|<\mathfrak{u}$,
so $\mathcal{K}$ is not an ultrafilter.
Thus, we can fix a $K\in\mathcal{K}^+\setminus\mathcal{K}$.
Then also $K^c\in\mathcal{K}^+\setminus\mathcal{K}$;
 so $K$ and $K^c$ are both infinite.
Define $A$ to be $\bigcup_{i\in K}[j_{2i},j_{2i+1})$
and $B$ to be $\bigcup_{i\in K^c}[j_{2i},j_{2i+1})$.
Note that both $A$ and $B$ are subsets of $H$,  $A\cap B=\emptyset$, and $A\cup B=H$.

We claim that both $A$ and $B$ are in $\mathcal{W}^+$.
Since $K\in\mathcal{K}^+$, it follows that for each $J\in\mathcal{K}$, $|K\cap J|=\om$.
Since  $\mathcal{B}$ generates $\mathcal{K}$,
we have that for each $W\in\mathcal{C}$,
$|K\cap K_W|=\om$.
Therefore, for each $W\in \mathcal{C}$,
$\{i\in K:W\cap \bar{j}_i\ne\emptyset\}$ is infinite.
Thus,
$A\cap W=(\bigcup_{i\in K}\bar{j}_i)\cap W$ is infinite for each $W\in \mathcal{C}$. 
Hence,   $A\cap W$ is infinite for each $W\in\mathcal{W}$.
Thus, $A\in\mathcal{W}^+$.
Likewise, since $K^c$ is in $\mathcal{K}^+$,
we have that $B\in\mathcal{W}^+$.
This finishes the proof of the Subclaim.
\end{proof}

We claim that $f(A)\cap f(B)=\emptyset$.
We shall prove more: For any $I\sse\om$,
$f(\bigcup_{i\in I}[j_{2i},j_{2i+1}))\sse\bigcup_{i\in I}[j_{2i},j_{2i+2})$.
It suffices to prove this for all finite $I\sse\om$ since for any $I\sse\om$,
$f(\bigcup_{i\in I}[j_{2i},j_{2i+1}))=\bigcup_{k<\om}\hat{f}(\bigcup_{i\in I\cap k}[j_{2i},j_{2i+1}))$.

$\hat{f}(\emptyset)$ must be the emptyset, (for if not, then $f$ would not map $\mathcal{Y}^+$ cofinally into $\mathcal{U}$).
$\hat{f}([j_0,j_1))\sse[j_0,j_2)$, by definition of 
$j_2$.
Suppose that $k\ge 1$ and given any  finite $I\sse k$,
$\hat{f}(\bigcup_{i\in I}[j_{2i},j_{2i+1}))\sse\bigcup_{i\in I}[j_{2i},j_{2i+2})$.
Let $I'\sse k+1$ be given and let $I$ denote $I'\cap k$.
By the induction hypothesis, 
$\hat{f}(\bigcup_{i\in I}[j_{2i},j_{2i+1}))\sse\bigcup_{i\in I}[j_{2i},j_{2i+2})$.
If $I=I'$, we are done.
If $I\ne I'$, then $k\in I'$.
Recall the fact that  $\hat{f}$ has the property that for any $X\sse\om$ and any $l<\om$,  $l\in f(X)$ iff $l\in \hat{f}(X\cap (l+1))$.
Hence, by our choice of the $j_i$,
we have that $\hat{f}(\bigcup_{i\in I'}[j_{2i},j_{2i+1}))\cap j_{2k}=
\hat{f}(\bigcup_{i\in I}[j_{2i},j_{2i+1}))$.
Thus,
$\hat{f}(\bigcup_{i\in I'}[j_{2i},j_{2i+1}))\sse\bigcup_{i\in I'}[j_{2i},j_{2i+2})$.

Thus, $f(A)\cap f(B)=\emptyset$.
This implies that at least one 
of them is not in $\mathcal{U}$.
Thus, Claim 2 holds.
\end{proof}

Taking a $Y\in\mathcal{Y}^+$ satisfying Claim 2 contradicts the hypothesis that $f''\mathcal{Y}^+\sse\mathcal{U}$.
Thus, the Lemma holds.
\end{proof}

\begin{thm}\label{thm.selective2c=c+}
\begin{enumerate}
\item
Assume cov$(\mathscr{M})=\mathfrak{c}$. 
Then there are $\mathfrak{c}^+$ pairwise Tukey incomparable selective ultrafilters.
\item
Assume  $\mathfrak{d}=\mathfrak{u}=\mathfrak{c}$.
Then there are $\mathfrak{c}^+$ pairwise Tukey incomparable p-points.
\end{enumerate}
\end{thm}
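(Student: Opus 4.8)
The plan is to construct the ultrafilters by a transfinite recursion of length $\mathfrak{c}^+$, building at stage $\al$ a selective ultrafilter $\mathcal{U}_{\al}$ (a p-point in case (2)) that is Tukey incomparable with every $\mathcal{U}_{\be}$, $\be<\al$. Each $\mathcal{U}_{\al}$ will be the union of an increasing chain of filters, each with a base of size $<\mathfrak{c}$, produced by an inner recursion of length $\mathfrak{c}$ that simultaneously decides every subset of $\om$, handles every partition of $\om$ to secure selectivity via Proposition \ref{prop.Ketonen} (p-pointness via Proposition \ref{prop.Ketonen1.3} in case (2)) just as in the proof of Theorem \ref{thm.selective2c>c+}, and meets the two new families of incomparability requirements described below. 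Since $\al<\mathfrak{c}^+$ gives $|\al|\le\mathfrak{c}$ and every ordinal $\eta<\mathfrak{c}$ has $|\eta|<\mathfrak{c}$, all intermediate filters have bases of size $<\mathfrak{c}$; as $\mathrm{cov}(\mathscr{M})=\mathfrak{c}$ implies $\mathfrak{u}=\mathfrak{c}$ (and $\mathfrak{u}=\mathfrak{c}$ is assumed outright in case (2)), this is precisely what is needed to apply both Lemma \ref{lem.fix} and the Ketonen propositions.

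The crux is the asymmetric treatment of the two directions of incomparability. Because $\mathfrak{c}^{\aleph_0}=\mathfrak{c}$, there are only $\mathfrak{c}$ continuous monotone maps presented by some $\hat{f}:2^{<\om}\ra\mathcal{P}(\om)$; enumerate them as $\lgl f_{\xi}:\xi<\mathfrak{c}\rgl$. To force $\mathcal{U}_{\al}\not\ge_T\mathcal{U}_{\be}$ for each $\be<\al$, I run through all pairs $(f_{\xi},\be)$ with $\xi<\mathfrak{c}$ and $\be<\al$, of which there are at most $\mathfrak{c}$; for each such pair I apply Lemma \ref{lem.fix} with $f=f_{\xi}$, $\mathcal{U}=\mathcal{U}_{\be}$, and $\mathcal{Y}$ the filter built so far, obtaining a set $Y\in\mathcal{Y}^+$ which I adjoin, so that no ultrafilter extending the filter has $f_{\xi}$ cofinal into $\mathcal{U}_{\be}$. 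Once all $f_{\xi}$ are handled, no presented continuous monotone map witnesses $\mathcal{U}_{\al}\ge_T\mathcal{U}_{\be}$; since $\mathcal{U}_{\al}$ is a p-point, Theorem \ref{thm.5} (together with the fact that the canonical map $f^*$ it produces is presented by $\hat{f^*}$) guarantees that any reduction $\mathcal{U}_{\al}\ge_T\mathcal{U}_{\be}$ would be witnessed by such a map, so in fact $\mathcal{U}_{\al}\not\ge_T\mathcal{U}_{\be}$.

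For the reverse direction I use a counting argument in place of Lemma \ref{lem.fix}. Since each already-built $\mathcal{U}_{\be}$ is a p-point, Theorem \ref{thm.5} shows that every ultrafilter $\mathcal{V}$ with $\mathcal{U}_{\be}\ge_T\mathcal{V}$ is generated by the cofinal image of one of the $\mathfrak{c}$ presented continuous monotone maps, so the family $\mathcal{D}_{\be}=\{\mathcal{V}:\mathcal{U}_{\be}\ge_T\mathcal{V}\}$ of Tukey predecessors of $\mathcal{U}_{\be}$ has size at most $\mathfrak{c}$; hence $\bigcup_{\be<\al}\mathcal{D}_{\be}$ has size at most $\mathfrak{c}$. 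I add each of these \emph{forbidden} ultrafilters $\mathcal{V}$ as a further requirement: since the current filter $\mathcal{Y}$ is not an ultrafilter (its base has size $<\mathfrak{u}$), I may choose a set $D$ with $D,D^c\in\mathcal{Y}^+$ and adjoin whichever of $D,D^c$ lies outside $\mathcal{V}$, forcing $\mathcal{U}_{\al}\ne\mathcal{V}$. After $\mathcal{U}_{\al}$ has been separated from every member of $\bigcup_{\be<\al}\mathcal{D}_{\be}$, it belongs to no $\mathcal{D}_{\be}$, whence $\mathcal{U}_{\be}\not\ge_T\mathcal{U}_{\al}$ for all $\be<\al$.

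All told, the requirements at stage $\al$ — decide each subset of $\om$, handle each partition, handle each pair $(f_{\xi},\be)$, and separate from each forbidden ultrafilter — number at most $\mathfrak{c}$, so I interleave them in one recursion of length $\mathfrak{c}$, disposing of a single requirement and adjoining at most two sets at each step; this keeps every intermediate base of size $<\mathfrak{c}$. Selectivity (resp. p-pointness) follows exactly as in Theorem \ref{thm.selective2c>c+}, using that $\mathrm{cov}(\mathscr{M})=\mathfrak{c}$ (resp. $\mathfrak{d}=\mathfrak{c}$) forces $\cf(\mathfrak{c})>\om$, so that each countable subfamily is caught at some stage $<\mathfrak{c}$. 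I expect the principal obstacle to be checking that these incomparability steps coexist with the Ketonen steps without destroying the finite intersection property or selectivity; this should be manageable, since every adjoined set is drawn from the positive coideal of the current filter, so the finite intersection property is preserved throughout.
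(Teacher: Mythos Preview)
Your proposal is correct and follows the same overall architecture as the paper: a recursion of length $\mathfrak{c}^+$, with an inner recursion of length $\mathfrak{c}$ at each stage that interleaves deciding subsets of $\om$, securing selectivity (resp.\ p-pointness) via the Ketonen propositions, and handling the incomparability requirements. The direction $\mathcal{U}_{\al}\not\ge_T\mathcal{U}_{\be}$ is handled identically, via Lemma~\ref{lem.fix}.

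The genuine difference is in the reverse direction $\mathcal{U}_{\be}\not\ge_T\mathcal{U}_{\al}$. The paper kills each potential cofinal map $f_{\xi}\re\mathcal{U}_{\be}$ individually: if $f_{\xi}''\mathcal{U}_{\be}$ is already contained in the current filter $\mathcal{Y}$, then it cannot generate an ultrafilter (since $\mathcal{Y}$ has a base of size $<\mathfrak{u}$), so $f_{\xi}$ is not cofinal into any ultrafilter extension; otherwise some $f_{\xi}(U)\notin\mathcal{Y}$, and the paper adjoins $f_{\xi}(U)^c$. You instead invoke the counting consequence of Theorem~\ref{thm.5} to list the $\le\mathfrak{c}$ Tukey predecessors of each $\mathcal{U}_{\be}$ outright and diagonalize $\mathcal{U}_{\al}$ away from every one of them. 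Your route is slightly more conceptual---it uses the bound on the Tukey type of a p-point as a black box---while the paper's is more hands-on, working map by map; both arrive at the same place with the same bookkeeping burden.
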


\begin{proof}
Proof of (1).
Assume cov$(\mathscr{M})=\mathfrak{c}$. 
To show that there are $\mathfrak{c}^+$ Tukey incomparable selective ultrafilters,
we shall show that given $\le \mathfrak{c}$ selective ultrafilters,
there is another selective ultrafilter Tukey incomparable with each of them.

Let $\mathcal{U}_{\gamma}$, $\gamma<\kappa$, where $\kappa\le\mathfrak{c}$, be a collection of selective ultrafilters.
Fix a listing $\lgl D_{\al}:\al<\mathfrak{c}\rgl$ of all the infinite subsets of $\om$.
Fix a sequence $\lgl \vec{P}_{\al}:\al<\mathfrak{c}\rgl$ such that each $\vec{P}_{\al}=\lgl P_{\al}^n:n<\om\rgl$ is a partition of $\om$ 
 and each partition of $\om$ appears  in the listing.
Fix a listing $\lgl f_{\beta}:\beta<\mathfrak{c}\rgl$ of all continuous monotone maps $f:\mathcal{P}(\om)\ra\mathcal{P}(\om)$ which is represented by  $\hat{f}:2^{<\om}\ra\mathcal{P}(\om)$.
Finally, fix an onto function $\theta:\mathfrak{c}\ra\{\mathcal{U}_{\gamma}:\gamma<\kappa\}\times\{f_{\beta}:\beta<\mathfrak{c}\}$.

We will construct filters $\mathcal{Y}_{\al}$,  $\al<\mathfrak{c}$, satisfying the following:
\begin{enumerate}
\item
For $\al<\al'<\mathfrak{c}$,
$\mathcal{Y}_{\al}\sse \mathcal{Y}_{\al'}$;
\item
$\mathcal{Y}_{\al}$
has a base of cardinality less than $\mathfrak{c}$;
\item
$\mathcal{Y}_{\al+1}$ is selective for $\vec{P}_{\al}$;
\item
Either $D_{\al}$ or $D_{\al}^c$ is in $\mathcal{Y}_{\al+1}$;
\item
If $\theta(\al)$ is the pair $\lgl \mathcal{U}_{\gamma_{\al}},f_{\beta_{\al}}\rgl$,
then
for each ultrafilter $\mathcal{Z}$ extending $\mathcal{Y}_{\al+1}$,
$f_{\beta_{\al}}\re\mathcal{U}_{\gamma_{\al}}$ does not map $\mathcal{U}_{\gamma_{\al}}$ cofinally into $\mathcal{Z}$,
and $f_{\beta_{\al}}\re\mathcal{Z}$ does not map $\mathcal{Z}$ cofinally into $\mathcal{U}_{\gamma_{\al}}$.
\end{enumerate}

We now begin the construction.
Let $\mathcal{Y}_0$ be the Fr\'{e}chet filter.
Suppose 
 the filter  $\mathcal{Y}_{\al}$ has been constructed.
The partition of $\om$ under consideration is 
$\vec{P}_{\al}=\lgl P_{\al}^n:n<\om\rgl$.
If there is an $n<\om$ such that 
$P_{\al}^n\in \mathcal{Y}_{\al}$,
then let $\mathcal{Y}_{\al+1}^{(0)}=\mathcal{Y}_{\al}$.
Otherwise, for each $n<\om$,
$\bigcup_{j>n} P_{\al}^j\in \mathcal{Y}_{\al}$.
Apply Proposition  \ref{prop.Ketonen}  to find an $X\in[\om]^{\om}$ such that 
$\{X\}\cup \mathcal{Y}_{\al}$ has the finite intersection property, and such that for each $n<\om$,
$|X\cap P_{\al}^n|\le 1$.
Then let $\mathcal{Y}_{\al+1}^{(0)}$ be the filter generated by $\{X\}\cup \mathcal{Y}_{\al}$.
If $D_{\al}\in(\mathcal{Y}_{\al+1}^{(0)})^+$,
then let $\mathcal{Y}_{\al+1}^{(1)}$ be the filter generated by 
$\{D_{\al}\}\cup \mathcal{Y}_{\al+1}^{(0)}$.
Otherwise, let 
 $\mathcal{Y}_{\al+1}^{(1)}$ be the filter generated by 
$\{D_{\al}^c\}\cup \mathcal{Y}_{\al+1}^{(0)}$.

Next we consider  $\theta(\al)$, which is a pair $
\lgl \mathcal{U}_{\gamma_{\al}},f_{\beta_{\al}}\rgl$ for some $\gamma_{\al}<\kappa$ and $\beta_{\al}<\mathfrak{c}$.
If $f_{\beta_{\al}}''\mathcal{U}_{\gamma_{\al}}\sse\mathcal{Y}_{\al+1}^{(1)}$,
then $f_{\beta_{\al}}\re\mathcal{U}_{\gamma_{\al}}$ will not be cofinal into any ultrafilter extending $\mathcal{Y}_{\al+1}^{(1)}$.
In this case, let $\mathcal{Y}_{\al+1}^{(2)}=\mathcal{Y}_{\al+1}^{(1)}$.
If $f_{\beta_{\al}}''\mathcal{U}_{\gamma_{\al}}\not\sse\mathcal{Y}_{\al+1}^{(1)}$,
then take some $U\in\mathcal{U}_{\gamma_{\al}}$ such that $f_{\beta_{\al}}(U)\not\in\mathcal{Y}_{\al+1}^{(1)}$ and 
let $\mathcal{Y}_{\al+1}^{(2)}$ be the filter generated by $\mathcal{Y}_{\al+1}^{(1)}\cup\{f_{\beta_{\al}}(U)^c\}$.
Note that $f_{\beta_{\al}}\re\mathcal{U}_{\gamma_{\al}}$ cannot be cofinal into any ultrafilter extending $\mathcal{Y}_{\al+1}^{(2)}$.
By Lemma \ref{lem.fix},
there is a 
$Y\in(\mathcal{Y}_{\al+1}^{(2)})^+$ such that for any ultrafilter $\mathcal{Z}$ which extends $\mathcal{Y}_{\al+1}^{(2)}\cup\{Y\}$,
$f_{\beta_{\al}}\re\mathcal{Z}$ is not a cofinal map from $\mathcal{Z}$ into $\mathcal{U}_{\gamma_{\al}}$.
Let $\mathcal{Y}_{\al+1}$ be the filter generated by $\mathcal{Y}_{\al+1}^{(2)}\cup\{Y\}$.

For limit ordinals $\lambda<\mathfrak{c}$,
let $\mathcal{Y}_{\lambda}=\bigcup_{\al<\lambda}\mathcal{Y}_{\al}$.

Let $\mathcal{Y}=\bigcup_{\al<\mathfrak{c}}\mathcal{Y}_{\al}$.
Then $\mathcal{Y}$ is a selective ultrafilter, by (1) - (4).
Moreover, $\mathcal{Y}$ is Tukey incomparable with each $\mathcal{U}_{\gamma}$, $\gamma<\kappa$, by (5).

Since for each collection of selective ultrafilters of cardinality less than or equal to $\mathfrak{c}$ we can build another selective ultrafilter which is Tukey inequivalent to each of them, 
it follows that there are $\mathfrak{c}^+$ Tukey inequivalent selective ultrafilters.
\vskip.1in

The proof of  (2) of the Theorem follows exactly the same steps as for  (1)
with only the following modification.
Before starting the construction, 
let $\mathcal{U}_{\gamma}$, $\gamma<\kappa$, where $\kappa\le\mathfrak{c}$, be a collection of p-points.
Fix an enumeration
$\lgl \vec{A}_{\al}:\al<\mathfrak{c}\rgl$,
where $\vec{A}_{\al}=\lgl A_{\al}^n:n<\om\rgl$,
such that for each countable collection $\vec{B}=\lgl B_n:n<\om\rgl$ of infinite subsets of $\om$,
$\vec{B}=\vec{A}_{\al}$ for cofinally many $\al<\mathfrak{c}$.

Let $\mathcal{Y}_0$ be the Fr\'{e}chet filter.
Given the filter $\mathcal{Y}_{\al}$,
if 
the collection $\{ A_{\al}^n:n<\om\}$ is not contained in $\mathcal{Y}_{\al}$,
then let $\mathcal{Y}_{\al+1}^{(0)}=\mathcal{Y}_{\al}$.
If $\{ A_{\al}^n:n<\om\}$ is  contained in $\mathcal{Y}_{\al}$,
apply Proposition \ref{prop.Ketonen1.3}
to obtain a  set $B$ such that $B\sse^* A_{\al}^n$ for each $n<\om$
and such that $\{B\}\cup\mathcal{Y}_{\al}$  has the finite intersection property.
In this case, let $\mathcal{Y}_{\al+1}^{(0)}$ denote the filter generated by $\{B\}\cup\mathcal{Y}_{\al}$.
The rest of the construction of $\mathcal{Y}_{\al+1}$ proceeds exactly as in part (1).
Letting $\mathcal{Y}=\bigcup_{\al<\mathfrak{c}}\mathcal{Y}_{\al}$,
we see that $\mathcal{Y}$ is a p-point which is Tukey inequivalent to every p-point $\mathcal{U}_{\gamma}$, $\gamma<\kappa$.
The Theorem then follows as in part (1).
\end{proof}

Theorem \ref{thm.selective} follows from Theorem \ref{thm.selective2c>c+}
and Theorem \ref{thm.selective2c=c+}.

\begin{rem}
The stipulation  in (1) in Theorem \ref{thm.selective}
that cov$(\mathcal{M})=\mathfrak{c}$ is optimal, at least for this construction.
For by results of Fremlin and Canjar, (see Theorem 4.6.6 of  \cite{Bartoszynski/JudahBK}),
cov$(\mathcal{M})=\mathfrak{c}$ iff every filter with base of cardinality less than $\mathfrak{c}$ can be extended to a selective ultrafilter.
The stipulation in (2) of Theorem \ref{thm.selective} that $\mathfrak{u}=\mathfrak{d}=\mathfrak{c}$ is perhaps not  optimal, since p-points exist just under the assumption that $\mathfrak{d}=\mathfrak{c}$.
It remains open whether, just assuming $\mathfrak{d}=\mathfrak{c}$, there are $2^{\kappa}$ Tukey incomparable ultrafilters for any $\kappa$ such that cf$(\kappa)=$ cf$(\mathfrak{c})$ and $2^{<\kappa}=\mathfrak{c}$.
\end{rem}

One way of making Tukey increasing chains of ultrafilters is by using $\kappa$-OK points.
We give the following definition straight from \cite{Kunen78}.

\begin{defn}[Kunen \cite{Kunen78}]\label{defn.OK}
Let $X$ be a topological space and $\kappa$ any cardinal.
If $p\in X$ and $U_n$ $(n<\om)$ are neighborhoods of $p$, a {\em $\kappa$-refinement system} for $\lgl U_n:n<\om\rgl$ is a $\kappa$-sequence of neighborhoods of $p$, $\lgl V_{\al}:\al<\kappa\rgl$ such that for all $n\ge 1$,
$$
\forall \al_1<\al_2<\dots<\al_n<\kappa \ \ (V_{\al_1}\cap\dots \cap V_{\al_n}\sse U_n).
$$
A point $p\in X$ is {\em $\kappa$-OK} iff whenever $U_n$ ($n<\om$) are neighborhoods of $p$, 
$\lgl U_n:n<\om\rgl$ has a $\kappa$-refinement system.
\end{defn}

Translating this into the context of ultrafilters,
we let $X$ be the \v{C}ech-Stone remainder $\beta{\om}\setminus\om$,  the collection of all non-principle ultrafilters on $\om$.
A non-principle ultrafilter $\mathcal{U}$ 
is $\kappa$-OK iff whenever $U_n\in\mathcal{U}$ ($n<\om$),
there is a $\kappa$-sequence $\lgl V_{\al}:\al<\kappa\rgl$ of elements of $\mathcal{U}$ such that for all $n\ge 1$,
for all $\al_1<\dots<\al_n<\kappa$,
$V_{\al_1}\cap\dots\cap V_{\al_n}\sse^* U_n$.

Kunen remarked in \cite{Kunen78} 
that if $\mathcal{U}$ is $\kappa$-OK and $\kappa>$ cof$(\mathcal{U})$, then $\mathcal{U}$ is a p-point.
It is easy to see the following.

\begin{prop}
If $\mathcal{U}$ is  $\kappa$-OK but not a p-point, then $\mathcal{U}\ge_T[\kappa]^{<\om}$.
Hence, if $\mathcal{U}$ is  $\kappa$-OK but not a p-point, then cof$(\mathcal{U})=\kappa$ iff $\mathcal{U}\equiv_T [\kappa]^{<\om}$.
\end{prop}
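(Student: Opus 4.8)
The plan is to prove the two assertions in turn, with essentially all the content concentrated in the inequality $\mathcal{U}\ge_T[\kappa]^{<\om}$; the ``Hence'' clause will then follow from standard Tukey bookkeeping together with the Facts already established. The strategy for the inequality is to manufacture, from the combination of $\kappa$-OK-ness and the failure of the p-point property, a $\kappa$-indexed family in $\mathcal{U}$ every infinite subfamily of which has intersection \emph{outside} $\mathcal{U}$, and then feed this family into a direct Tukey map exactly as in the characterization of Fact \ref{fact.Tukeytopchar}.

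First I would exploit that $\mathcal{U}$ is not a p-point: there is a countable family $\{U_n:n<\om\}\sse\mathcal{U}$ admitting no pseudo-intersection in $\mathcal{U}$, and by replacing $U_n$ with $U_0\cap\dots\cap U_n$ I may take the sequence $\contains$-decreasing. Applying the $\kappa$-OK property to $\lgl U_n:n<\om\rgl$ produces a $\kappa$-sequence $\lgl V_{\al}:\al<\kappa\rgl$ of elements of $\mathcal{U}$ with $V_{\al_1}\cap\dots\cap V_{\al_n}\sse^* U_n$ whenever $\al_1<\dots<\al_n<\kappa$ and $n\ge 1$. The key claim is that for every infinite $I\sse\kappa$ we have $\bigcap_{\al\in I}V_{\al}\notin\mathcal{U}$: if $X:=\bigcap_{\al\in I}V_{\al}$ were in $\mathcal{U}$, then for each $n$ we could pick $\al_1<\dots<\al_n$ in $I$ (as $I$ is infinite) and get $X\sse V_{\al_1}\cap\dots\cap V_{\al_n}\sse^* U_n$, so $X\sse^* U_n$ for all $n$; since the $U_n$ are decreasing this makes $X$ a pseudo-intersection of $\lgl U_n\rgl$ lying in $\mathcal{U}$, contradicting the choice of the $U_n$.

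Granted this family, I would define $g:[\kappa]^{<\om}\ra\mathcal{U}$ by $g(F)=\bigcap_{\al\in F}V_{\al}$ (with $g(\emptyset)=\om$) and verify that $g$ is a Tukey map. If $\mathcal{A}\sse[\kappa]^{<\om}$ is unbounded then $\bigcup\mathcal{A}$ is infinite, and any $U\in\mathcal{U}$ bounding $g''\mathcal{A}$ would satisfy $U\sse\bigcap_{\al\in F}V_{\al}$ for every $F\in\mathcal{A}$, hence $U\sse\bigcap_{\al\in\bigcup\mathcal{A}}V_{\al}$, forcing $\bigcap_{\al\in\bigcup\mathcal{A}}V_{\al}\in\mathcal{U}$ and contradicting the claim. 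Thus $g''\mathcal{A}$ is unbounded, $g$ is Tukey, and $\mathcal{U}\ge_T[\kappa]^{<\om}$.

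For the ``Hence'', the inequality just proved holds unconditionally, so $\mathcal{U}\equiv_T[\kappa]^{<\om}$ reduces to the single inequality $\mathcal{U}\le_T[\kappa]^{<\om}$. If $\operatorname{cof}(\mathcal{U})=\kappa$, then $\mathcal{U}$ has a filter base $C$ of size $\kappa$; since $(C,\contains)\equiv_T(\mathcal{U},\contains)$ by Fact \ref{fact.cofinalTukeysame} and any directed poset of cardinality $\kappa$ is $\le_T[\kappa]^{<\om}$ by the verbatim argument of Fact \ref{fact.c<omtop}, we obtain $\mathcal{U}\le_T[\kappa]^{<\om}$ and hence $\mathcal{U}\equiv_T[\kappa]^{<\om}$. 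Conversely, a cofinal map carries a cofinal set to a cofinal set, so $\operatorname{cof}$ is monotone under $\le_T$ and therefore invariant under $\equiv_T$; since $\operatorname{cof}([\kappa]^{<\om})=\kappa$, the equivalence $\mathcal{U}\equiv_T[\kappa]^{<\om}$ forces $\operatorname{cof}(\mathcal{U})=\kappa$. The main obstacle is the middle step, extracting the family $\lgl V_{\al}\rgl$ with the infinite-intersection property, as this is precisely where the definitions of $\kappa$-OK and non-p-point must be combined; everything else is routine given Facts \ref{fact.c<omtop}, \ref{fact.Tukeytopchar}, and \ref{fact.cofinalTukeysame}.
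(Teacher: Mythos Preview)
Your proof is correct and follows essentially the same approach as the paper: both extract a $\kappa$-refinement system for a witness to the failure of the p-point property, observe that any infinite subfamily has intersection outside $\mathcal{U}$, and use this to build a Tukey map from $[\kappa]^{<\om}$. The only cosmetic difference is your choice of Tukey map $g(F)=\bigcap_{\al\in F}V_{\al}$ (which is monotone and makes the unboundedness argument slightly more direct), whereas the paper uses an arbitrary injection into the refinement system and argues via infinite subsets; the ``Hence'' clause is handled the same way in both, via invariance of cofinality under $\equiv_T$.
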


\begin{proof}
Let $\mathcal{U}$ be $\kappa$-OK but not a p-point. 
Then there are $X_n\in\mathcal{U}$ such that for each $X\in\mathcal{U}$, there is an $n<\om$ such that $X\not\sse^* X_n$.
Let $\{C_{\al}:\al\in[\kappa]^{<\om}\}\sse\mathcal{U}$ witness that $\mathcal{U}$ is $\kappa$-OK for $\lgl X_n\rgl_{n<\om}$.
Let $g:[\kappa]^{<\om}\ra\mathcal{U}$ by
$g(\al)=C_{\al}$ for each $\al\in[\kappa]^{<\om}$.
If $\mathcal{X}\sse[\kappa]^{<\om}$ is unbounded, then $\mathcal{X}$ is infinite.
Hence, $g''\mathcal{X}$ is infinite, since $g$ is 1-1.
Take $\{C_{\al_n}:n<\om\}$ to be any infinite subset of  $g''\mathcal{X}$.
Suppose $\{C_{\al_n}:n<\om\}$ is $\contains^*$ bounded below by $Y\in\mathcal{U}$.
Then for each $k$,
$Y\sse^*\bigcap_{n\le k} C_{\al_n}\sse^* X_k$.
But then for each $n$, $Y\sse^* X_n$, contradicting our choice of $\{X_n:n<\om\}$.
Thus, $g: [\kappa]^{<\om}\ra(\mathcal{U},\contains^*)$ is a Tukey map.
Therefore, $[\kappa]^{<\om}\le_T(\mathcal{U},\contains^*)\le_T(\mathcal{U},\contains)$.

If cof$(\mathcal{U})\ne\kappa$,
then $\mathcal{U}\not\equiv_T[\kappa]^{<\om}$; 
hence, $\mathcal{U}>_T[\kappa]^{<\om}$.
If cof$(\mathcal{U})=\kappa$,
then $\mathcal{U}\le_T[\kappa]^{<\om}$.
\end{proof}

It follows that if there are $\kappa$-OK non p-points with cofinality $\kappa$ for each uncountable $\kappa<\mathfrak{c}$, then there is a strictly increasing chain of ultrafilters of length $\al$, where $\al$ is such that $\aleph_{\al}=\mathfrak{c}$.
We would like to point out that Milovich showed in \cite{Milovich08}
 that and ultrafilter $\mathcal{U}$ is  $\mathfrak{c}$-OK and not of top degree iff $\mathcal{U}$ is a p-point.

We now give a general method for building Tukey increasing chains of p-points.

\begin{thm}\label{thm.p-point.chains}
Assuming CH,
for each p-point $D$ there is a p-point $E$ such that 
  $E>_{RK} D$ and moreover, $E>_T D$.
\end{thm}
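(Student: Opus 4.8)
The plan is to realize $D$ as an ultrafilter on $\om$ and to build $E$ on $\om\times\om$ (identified with $\om$ via a fixed bijection) so that the first projection $\pi_1\colon\om\times\om\ra\om$ satisfies $\pi_1(E)=D$. This is arranged by forcing every vertical set $A\times\om$ with $A\in D$ into $E$; since $\pi_1(E)$ is then an ultrafilter containing $D$, it must equal $D$, so $E\ge_{RK}D$ and hence $E\ge_T D$ by Fact \ref{fact.TRK}. Under CH I would construct $E$ as a $\sse^*$-decreasing tower $\lgl X_\xi:\xi<\om_1\rgl$ of subsets of $\om\times\om$ whose projections $\pi_1 X_\xi$ form a $\sse^*$-base of $D$ and which decides every subset of $\om\times\om$; a $\sse^*$-decreasing $\om_1$-tower generating an ultrafilter is automatically a p-point, so this makes $E$ a p-point. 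Note that since $[k,\om)\times\om\in E$ for all $k$, the p-point property forces some (hence cofinally many) generators $X_\xi$ to have all columns $(X_\xi)_n$ finite; I will keep each $X_\xi$ with finite, nonempty columns over $\pi_1 X_\xi\in D$, maintaining this invariant by performing every refinement through column-splitting over $D$-positive sets and by invoking the p-point property of $D$ at limit stages to keep projections inside $D$.

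The strictness $E>_T D$ is obtained by blocking $D\ge_T E$. By Theorem \ref{thm.5}, any witness to $D\ge_T E$ (with $D$ a p-point) may be taken to be a continuous monotone map $f\colon\mathcal P(\om)\ra\mathcal P(\om)$ with $f\re D$ cofinal into $E$ and presented by some $\hat f\colon 2^{<\om}\ra\mathcal P(\om)$; under CH there are only $\mathfrak c=\aleph_1$ such maps, so I enumerate them $\lgl f_\al:\al<\om_1\rgl$. At stage $\al$ I ensure that $f_\al\re D$ is \emph{not} a cofinal map from $D$ into $E$. Because the source is the fixed p-point $D$ and the target is the ultrafilter $E$ under construction, this is the direction \emph{opposite} to that of Lemma \ref{lem.fix}, and it is the one handled by the easy half of condition (5) in the proof of Theorem \ref{thm.selective2c=c+}: it suffices either to place into $E$ a single $W$ with $f_\al(A)\not\sse W$ for every $A\in D$, or to force $f_\al(A_0)\notin E$ for some $A_0\in D$, which already breaks $f_\al{}''D\sse E$.

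The stage-$\al$ action splits according to how the images $f_\al(A)$ sit relative to the current generator $X_\xi$. In the \emph{escaping} case there is $A\in D$ with $B:=\{n\in\pi_1 X_\xi:(X_\xi)_n\not\sse f_\al(A)\}\in D$; I then set $X_{\xi+1}$ to be $X_\xi\setminus f_\al(A)$ restricted to the columns in $B$, which has finite columns and projection $B\in D$ and is disjoint from $f_\al(A)$, so $f_\al(A)\notin E$. In the \emph{filling} case $f_\al(A)\contains X_\xi\re A'$ for some $A'\in D$ and every $A\in D$; here I choose $X_{\xi+1}\sse X_\xi$ dropping a point from each of a $D$-set of columns, so that no later $W\in E$ contains a full $X_\xi$-column on a $D$-large set, and consequently $f_\al(A)\not\sse W$ for all $A$. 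These case analyses are exactly in the spirit of Claim 1 and Claim 2 in the proof of Theorem \ref{thm.selective2c=c+}, now read for the map out of $D$ rather than into $D$.

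I expect the main obstacle to be the simultaneous maintenance of $\pi_1 E=D$ (equivalently, of finite nonempty columns over $D$) against the filling case, since that case repeatedly asks to thin columns on $D$-large sets while each fixed column contains only finitely many points. The intended remedy is to fix the fibers to have rapidly increasing finite size and to amortize the thinning across stages, using $D$'s p-point property to take pseudo-intersections with projection in $D$ at limits; this is the delicate combinatorial core and the analogue, in the reverse direction, of the reservation argument underlying Lemma \ref{lem.fix}. Once the construction succeeds, $E$ is a p-point with $E\ge_T D$ and $D\not\ge_T E$, hence $E>_T D$; and since $E\ge_{RK}D$ while $E\equiv_{RK}D$ would imply $E\equiv_T D$ by Fact \ref{fact.TRK}, we also get $E>_{RK}D$, as required.
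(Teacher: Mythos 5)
Your overall architecture is the same as the paper's: you realize $E$ on $\om\times\om$ with $\pi_1(E)=D$ to get $E\ge_{RK}D$, and you block $D\ge_T E$ by using Theorem \ref{thm.5} to reduce to a length-$\om_1$ diagonalization against continuous monotone maps $\mathcal{P}(\om)\ra\mathcal{P}(\om)$, splitting each stage into the two cases ``some image escapes the fiber structure'' versus ``every image swallows whole columns.'' Deriving $E>_{RK}D$ afterwards from $E>_T D$ (rather than, as the paper does, forbidding all small sets so that $\pi_1$ is nowhere injective on $E$) is a legitimate variant and still feeds Corollary \ref{cor.cheap?}. The problem is the invariant you carry through the construction.

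The invariant ``finite nonempty columns over a set of $D$'' is not preserved by your stage actions, and this is not a deferrable detail but the entire content of the proof. In the escaping case, $(X_\xi)_n\setminus f_\al(A)$ is only guaranteed to be nonempty; if it is a singleton for every $n$ in the relevant $D$-set, your new generator is the graph of a function on a set of $D$, every later generator is then also such a graph, and $E\equiv_{RK}D$, hence $E\equiv_T D$ --- the construction defeats itself. In the filling case, each occurrence deletes one point from every column of a set all of whose columns are finite; you have no control over how many of the $\aleph_1$ maps trigger this case, and after $\om$ occurrences the columns are exhausted, while the promised ``amortization'' is never specified and must also survive the escaping case and the $A$-or-$A^c$ deciding steps, each of which can cut columns arbitrarily. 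Your appeal to ``the easy half of (5)'' in Theorem \ref{thm.selective2c=c+} is misleading here: there the target ultrafilter is unconstrained, whereas your $E$ must keep projecting onto $D$ without collapsing onto $D$, so even the ``source is fixed'' direction needs quantitative room in the fibers. The correct strengthening of your invariant is exactly the paper's notion of a \emph{large} set ($c_Y$ unbounded on every set of $D$, equivalently bounded below by a nondecreasing unbounded function on some $W\in D$, Claim 1 of the paper's proof), together with the splitting statement (Claim 2) that every countable filter base of large sets admits a $Z$ with $Z\cap Y$ and $Z^c\cap Y$ both large for all $Y$ in the base; one of $Z,Z^c$ then absorbs $h_\al''D$ and the other is added to $E$, preserving largeness forever. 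Without that (or an equivalent) invariant and its preservation lemma, your proof has a genuine gap at precisely the step you flag as ``the delicate combinatorial core.''
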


\begin{proof}
We use the notation from \cite{Blass73}.
In [Theorem 6, \cite{Blass73}],
Blass proved assuming MA that given a 
 p-point $D$ one can construct a p-point $E>_{RK} D$.
Hence, $E\ge_T D$.
His construction can be slightly modified to kill all possible cofinal maps from $D$ into $E$ so that we construct a p-point $E$ which is both Rudin-Keisler and Tukey strictly above $D$.

Let $D$ be a given p-point. 
Fix a bijective pairing $J:\om\times\om\ra\om$ with inverse $(\pi_1,\pi_2)$, and identify $\om$ with $\om\times\om$ via $J$.
A subset $Y\sse\om\times\om$ is called {\em small} iff the function $c_Y(i):=|\{y\in\om:(i,y)\in Y\}|$ is bounded by some $n<\om$ for all $i$ in some $X\in D$.  
Otherwise $Y$ is called {\em large}.
It is useful to note that  from [Lemma 1, p152, \cite{Blass73}], it follows that 
$\om\times\om$ is large, the union of any two small sets is small, the complement of a small set is large, and any superset of a large set is large.
We give the following characterization of large sets.

\begin{claim1}
Let $Y\sse\om\times\om$.
$Y$ is large iff there is a $W\in D$ such that $c_Y\re W$ is bounded below by a non-decreasing, unbounded function on $W$.
\end{claim1}

\begin{proof}
First note that for any $Y\sse\om\times\om$,
$Y$ is large iff for each $n<\om$, $\{i<\om:c_Y(i)\le n\}\not\in D$
iff for each $n<\om$,
$\{i<\om:c_Y(i)>n\}\in D$.
Let $Y\sse\om\times\om$ be large.
For each $n<\om$, define $W_n=\{i<\om:c_Y(i)>n\}$.
Then each $W_n\in D$ and $W_n\contains W_{n+1}$.
Since $D$ is a p-point, there is a $W\in D$ such that for each $n<\om$, $W\sse^* W_n$.
Let $k_n$ be a strictly increasing sequence such that for each $n<\om$,
$W\setminus k_n\sse W_n$.
Note that for each $i\in W\setminus k_n$,  $c_Y(i)>n$.
Therefore, for each $n<\om$, for each $i\in W\cap(k_n,k_{n+1}]$, $c_Y(i)>n$.
Hence, $c_Y$ is bounded below on $W$ by the function $g:W\ra\om$,  where for each $n$, for each $i\in W\cap(k_n,k_{n+1}]$,
$g(i)=n$.

For the reverse direction, if $Y\sse\om\times\om$, $W\in D$ and $c_Y\re W$ is bounded below by a non-decreasing unbounded  function, then for each $n<\om$, $\{i\in W:c_Y(i)\le n\}$ is finite, hence  $\{i<\om:c_Y(i)\le n\}\not\in D$. 
 Therefore, $Y$ is large.
\end{proof}

For the sake of readability, we repeat an argument of Blass [pp 151-152, \cite{Blass73}] in this paragraph.
We are going to construct a p-point $E$ on $\om\times\om$ such that $\pi_1(E)=D$.
To ensure that $E\not\equiv_{RK} D$, it  will suffice that $\pi_1$ is not one-to-one on any set of $E$.
This means that $E$ must contain the complement of the graph of every function from $\om$ to $\om$.
Hence, $E$ must also contain the complement of every finite union of such graphs. 
If $Y$ is the graph of a function, then $Y$ is small, for $c_Y$ is bounded by $1$ on all of $\om$.
Also, if $A\in D$ and $Y=(\om\times\om)-\pi_1^{-1}(A)$,
then $Y$ is small, for $c_Y$ is bounded by $0$ on $A$.
Therefore, if $E$ is an ultrafilter on $\om\times\om$ containing no small set, then $E>_{RK} D$.

We now construct an ultrafilter $E$ in $\om_1$ stages.
Let $\lgl f_{\al}:\al<\om_1\rgl$ enumerate all functions from $\om\times\om$ into $\om$,
and let $\lgl h_{\al}:\al<\om_1\rgl$ enumerate
all continuous monotone maps from $\mathcal{P}(\om)$ into $\mathcal{P}(\om)$.
We build filter bases $\mathcal{Y}_{\al}$, $\al<\om_1$, with the following properties.
\begin{enumerate}
\item
Every set in $\mathcal{Y}_{\al}$ is large.
\item
If $\beta<\al<\om_1$,
then $\mathcal{Y}_{\beta}\sse\mathcal{Y}_{\al}$.
\item
$\mathcal{Y}_{\al}$ is countable.
\item
$f_{\al}$ is finite-to-one or bounded on some set of $\mathcal{Y}_{\al+1}$.
\item
$h_{\al}\re D$ is not a cofinal map from $D$ into any ultrafilter extending $\mathcal{Y}_{\al+1}$.
\end{enumerate}

Let $\mathcal{Y}_0=\{\om\times\om\}$.
If $\al<\om_1$ is a limit ordinal and $\mathcal{Y}_{\beta}$ has been constructed for all $\beta<\al$,
then let $\mathcal{Y}_{\al}=\bigcup_{\beta<\al}\mathcal{Y}_{\al}$.

If $\mathcal{Y}_{\al}$ is given,  do the following.
By [Lemma 3, p 153, \cite{Blass73}],
there is a set $T\sse\om\times\om$ on which $f_{\al}$ is finite-to-one or bounded, and such that $T\cap Y$ is large for each $Y\in\mathcal{Y}_{\al}$.
Let $\mathcal{Y}_{\al}'$ be the filter base obtained by adjoining $T$ to $\mathcal{Y}_{\al}$ and closing under finite intersections.

Next, consider the continuous monotone map $h_{\al}$.
If $h_{\al}"D$ does not generate an ultrafilter, there is nothing to do; let $\mathcal{Y}_{\al+1}=\mathcal{Y}_{\al}'$.
Suppose now that  $h_{\al}"D$ generates an ultrafilter.

\begin{claim2}\label{claim.>T}
There is a set $Z$ such that $Z\cap Y$ and $Z^c\cap Y$ are large for each $Y\in \mathcal{Y}'_{\al}$.
\end{claim2}

\begin{proof}
By the inductive construction,  $\mathcal{Y}'_{\al}$ is countable and every element of $\mathcal{Y}'_{\al}$ is large.
Let $X_n$ ($n<\om$) be a base for $\mathcal{Y}'_{\al}$ such that each $X_n\contains X_{n+1}$.
Since each $X_n$ is large, by Claim 1, there is a $W_n\in D$ and a non-decreasing unbounded  function $g_n:W_n\ra\om$  such that for each $i\in W_n$,  $c_{X_n}(i)\ge g_n(i)$.
Without loss of generality, we can assume that each $W_n\contains W_{n+1}$.
Since $D$ is a p-point, let $W\in D$ satisfy for each $n<\om$, $W\sse^* W_n$.

We shall build disjoint $Z_0,Z_1\sse\om\times\om$ and a strictly increasing sequence $\lgl k_n:n<\om\rgl$ as follows.
Let $k_0$ be least such that for each $i\in[k_0,\om)\cap W_0$, 
$g_0(i)\ge 2$ and $W\setminus k_0\sse W_0$.
In general, choose $k_{m}>k_{m-1}$ satisfying
\begin{enumerate}
\item
for each $j\le m$ and each 
$i\in [k_m,\om)\cap W_j$, $g_j(i)\ge 2(m+1)^2$;
\item
$W\setminus k_m\sse W_m$ (and hence for each $j<m$, $W\setminus k_m\sse W_j$).
\end{enumerate}
Given $m<\om$ and $i\in W\cap[k_m,k_{m+1})$,
for each $j\le m$,
choose $x_{i,j,l},y_{i,j,l}$, $l\le m$, distinct in $\{z\in\om: (i,z)\in X_j\}\setminus\{x_{i,q,l},y_{i,q,l}:l\le m,\ q<j\}$.
(This is possible since for each $i\in W\cap[k_m,k_{m+1})$, for each $j\le m$, 
$c_{X_j}(i)\ge g_j(i)\ge 2(m+1)^2$.)
For each $i\in W$, define $m_i$ to be the
integer $m$ for which $i\in[k_m,k_{m+1})$.
Define $Z_0=\{(i,x_{i,j,l}):i\in W$, $j\le m_i$, $l\le m_i\}$;
$Z_1=\{(i,y_{i,j,l}):i\in W$, $j\le m_i$, $l\le m_i\}$.
Note that $Z_0,Z_1$ are large, disjoint, and have large intersection with each $X_n$.
Letting $Z=Z_0$, then both $Z$ and $Z^c$ have 
the desired properties.  
\end{proof}

Take $Z$ as in Claim 2.
If $Z\in h_{\al}"D$, let $\mathcal{Y}_{\al+1}$ be the filter base  obtained by closing $\mathcal{Y}'_{\al}\cup\{Z^c\}$ under finite intersections; 
and if $Z^c\in h_{\al}"D$ then
let $\mathcal{Y}_{\al+1}$ be the filter base obtained by closing $\mathcal{Y}'_{\al}\cup\{Z\}$ under finite intersections.
Then $h_{\al}\re D$ cannot be a cofinal map from $D$ into any ultrafilter extending $\mathcal{Y}_{\al+1}$.

As in the final argument of [Theorem 6, \cite{Blass73}],
let $\mathcal{Y}=\bigcup_{\al<\om_1}\mathcal{Y}_{\al}$,
and let
$\mathcal{B}$ be the filter of all sets whose complements are small.
Every set of $\mathcal{Y}$, being large, has infinite intersection with  every set of $\mathcal{B}$, so there is an ultrafilter
$E$  extending $\mathcal{Y}\cup\mathcal{B}$. 
Then $E>_{RK} D$, and $E$ is a p-point since requirement (4) is met for all  $\al<\om_1$.
Moreover,
 $E>_T D$, since for every continuous monotone map  $h:\mathcal{P}(\om)\ra\mathcal{P}(\om)$, $h\re D$ is not a cofinal map from $D$ into $E$.
\end{proof}

\begin{rem}
Dilip Raghavan has independently observed Theorem \ref{thm.p-point.chains}.
\end{rem}

\begin{rem}
If one is only interested in building an ultrafilter $E$ Tukey strictly above $D$, then one does not have to use large sets in the previous construction, but one  only needs to ensure that $E$ is a p-point and that all continuous monotone maps are prevented from being cofinal maps from $D$ into $E$.  
In the above proof, we used large sets to ensure that $E$  also be Rudin-Keisler strictly above $D$ in order to obtain the following Corollary.
\end{rem}

\begin{cor}\label{cor.cheap?}
Assuming CH, there is a Tukey strictly increasing chain of p-points of order type $\mathfrak{c}$.
\end{cor}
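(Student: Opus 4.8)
The plan is a transfinite recursion of length $\om_1$. Since we assume CH we have $\mathfrak{c}=\om_1$, so it suffices to construct p-points $\langle D_\al:\al<\om_1\rangle$ maintaining two invariants: (A) for $\al<\beta$, $D_\beta\ge_{RK}D_\al$ (hence $D_\beta\ge_T D_\al$ by Fact \ref{fact.TRK}); and (B) for each $\al$, $D_{\al+1}>_T D_\al$. Start with $D_0$ any p-point, which exists under CH. At a successor $\al+1$, apply Theorem \ref{thm.p-point.chains} to $D_\al$ to obtain a p-point $D_{\al+1}$ with $D_{\al+1}>_{RK}D_\al$ and $D_{\al+1}>_T D_\al$; invariant (A) is then preserved by transitivity of $\ge_{RK}$, and (B) is exactly what the theorem provides. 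This is precisely why Theorem \ref{thm.p-point.chains} was arranged to give Rudin-Keisler strictness (cf.\ the Remark following it), as explained below.

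First I would dispose of the strictness bookkeeping, which is purely order-theoretic. Granting (A) and (B), I claim $D_\al<_T D_\beta$ whenever $\al<\beta$. Indeed $D_\beta\ge_T D_\al$ by (A), so it remains to rule out $D_\beta\equiv_T D_\al$. Suppose $D_\beta\le_T D_\al$. Since $\al<\beta$ we have $\al+1\le\beta$, so invariant (A) gives $D_\beta\ge_T D_{\al+1}\ge_T D_\al$; combined with $D_\al\equiv_T D_\beta$ this forces $D_{\al+1}\equiv_T D_\al$, contradicting (B). Hence the entire chain is strictly Tukey increasing and has order type $\om_1=\mathfrak{c}$, as required.

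The real work is at limit stages, and this is the main obstacle. Let $\lambda<\om_1$ be a limit ordinal; since $\lambda$ is countable, $\cf(\lambda)=\om$, so fix $\al_0<\al_1<\cdots$ cofinal in $\lambda$ and write $C_n=D_{\al_n}$, an $\le_{RK}$-increasing sequence of p-points with projections $p_n$ witnessing $C_n=p_n(C_{n+1})$. It suffices to produce a p-point $D_\lambda$ with $D_\lambda\ge_{RK}C_n$ for every $n$: then for each $\al<\lambda$ we may pick $\al_n>\al$, and $D_\lambda\ge_{RK}C_n\ge_{RK}D_\al$ recovers invariant (A) at $\lambda$. The reason I carry the recursion in the Rudin-Keisler order rather than merely the Tukey order is exactly this amalgamation step: the upper bound we build must itself be a p-point so that the recursion can continue, and Rudin-Keisler upper bounds of countable chains can be taken inside the class of p-points, whereas a naive Tukey upper bound such as the product $\Pi_{n<\om}C_n$ is not even an ultrafilter (it is only a directed poset).

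Concretely, I would build $D_\lambda$ together with coherent projections $h_n$ (with $p_n\circ h_{n+1}=h_n$ and $h_n(D_\lambda)=C_n$) by a further length-$\om_1$ CH recursion: enumerate all subsets of the base set and all countable subfamilies of it, and construct a $\sse^*$-decreasing tower generating $D_\lambda$ which at each step decides one subset (putting it or its complement into the filter), threads coherently through the inverse system $\langle C_n,p_n\rangle$ so that every finite approximation still projects onto each $C_n$, and diagonalizes one countable family to secure the p-point property. I expect the delicate point to be maintaining, through all $\om_1$ steps simultaneously, that the generated filter projects onto every $C_n$ while remaining a p-point; this is the standard but technical Rudin-Keisler amalgamation of a countable tower of p-points, available under CH. Once the limit stages are carried out, invariants (A) and (B) hold throughout, and the strictness argument of the second paragraph yields the desired Tukey strictly increasing chain of p-points of order type $\mathfrak{c}$.
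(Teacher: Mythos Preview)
Your proposal is correct and follows essentially the same strategy as the paper: carry an $\le_{RK}$-increasing chain of p-points through a length-$\om_1$ recursion, invoking Theorem~\ref{thm.p-point.chains} at successors to secure both $>_{RK}$ and $>_T$, and at limits produce a p-point RK-upper bound for the countable cofinal subsequence. Your strictness bookkeeping (deriving $D_\al<_T D_\beta$ for all $\al<\beta$ from invariants (A) and (B)) is exactly the right way to fill in what the paper leaves implicit.

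The only difference is at the limit stage: rather than building the amalgamated p-point by hand, the paper simply cites Blass's result [Theorem~7, \cite{Blass73}] that (under MA, hence under CH) any $\le_{RK}$-increasing $\om$-chain of p-points has a p-point RK-upper bound. Your sketch of the direct CH construction is along the right lines, but since you yourself call it ``standard but technical'' and do not actually carry it out, citing Blass's theorem would be both cleaner and more honest about where the work lies.
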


\begin{proof}
In
[Theorem 7, \cite{Blass73}], Blass proved that MA implies that any RK increasing chain of p-points of length $\om$ has an RK upper bound which is a p-point.
The p-point $E$ constructed in the above Theorem \ref{thm.p-point.chains}
is also RK strictly above $D$, so 
for any $\al<\om_1$,
we can construct $\om$-length chains of p-points
$D_{\al+n}$, where each $D_{\al+n+1}>_T D_{\al+n}$ and $D_{\al+n+1}>_{RK} D_{\al+n}$ ($\al<\om_1$) and 
 then use [Theorem 7, \cite{Blass73}] to find a p-point RK above each $D_{\al+n}$, $n<\om$, hence also Tukey above them. 
\end{proof}

The following questions are to be answered assuming that p-points exist or some assumption that guarantees their existence.

\begin{question}
Is there  a Tukey strictly increasing chain of p-points of length $\mathfrak{c}^+$?  
\end{question}

The Tukey increasing chain of p-points constructed in the proof of Theorem \ref{thm.p-point.chains}
is also Rudin-Keisler increasing.
This leads to the next question.

\begin{question}\label{q.31}
Given any strictly Tukey increasing sequence of p-points of length $\om$, is there always a p-point Tukey above all of them?
\end{question}

In particular,

\begin{question}\label{q.30}
Given any p-point $\mathcal{V}$, is there a p-point $\mathcal{U}$ such that $\mathcal{U}>_T\mathcal{V}$, but
$\mathcal{U}$ and $\mathcal{V}$ are RK-incomparable?
\end{question}

If the answer to Question \ref{q.30} is no, then the answer to Question \ref{q.31} is yes.

We now show that, assuming Martin's Axiom, there are incomparable p-points with a common upper bound and a common lower bound which are also  p-points.

\begin{thm}\label{thm.TnoncomparablePpoints}
Assume Martin's Axiom.
There is a p-point $D$ with two Tukey-incomparable  Tukey predecessors $\pi_1(D)$ and $\pi_2(D)$ which are also p-points, which in turn have a common Tukey lower bound $E$ which is also a p-point.
(In the following diagram, arrows represent strict Tukey reducibility.)
$$
\xymatrix{
&D \ar[ld] \ar[rd] &\\
\pi_1(D) \ar[rd] & & \pi_2(D)\ar[ld] &\\
& E&}
$$
\end{thm}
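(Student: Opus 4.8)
The plan is to realize the displayed diamond as the $\le_T$-image of a Rudin--Keisler diamond and then to use Martin's Axiom to kill, by a transfinite recursion of length $\mathfrak{c}$, every continuous monotone cofinal map along the edges that would collapse the diagram. First I would fix the combinatorial skeleton: identify $\om$ with $\om^4$, with coordinates $(x_0,x_1,y_0,y_1)$, and take $D$ to be an ultrafilter on $\om^4$. Let $\pi_1(D)$ be the projection of $D$ to the pair $(x_0,x_1)\in\om^2\cong\om$ and $\pi_2(D)$ the projection to $(y_0,y_1)$; let $E$ be the common further projection to the single coordinate $x_0$, which I require to agree with the projection of $D$ to $y_0$. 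All three projections are Rudin--Keisler maps, so by Fact \ref{fact.TRK} we obtain for free the four $\ge_T$-arrows $D\ge_T\pi_1(D),\pi_2(D)$ and $\pi_1(D),\pi_2(D)\ge_T E$; it then remains only to arrange that all four ultrafilters are p-points, that these four reductions are \emph{strict}, and that $\pi_1(D)$ and $\pi_2(D)$ are Tukey incomparable.

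Under Martin's Axiom we have $\mathfrak{p}=\mathfrak{u}=\mathfrak{d}=\mathfrak{c}$, so I would run a recursion of length $\mathfrak{c}$ building, \emph{coherently}, four increasing towers of filter bases $\lgl \mathcal{D}_\al\rgl$, $\lgl (\mathcal{U}_1)_\al\rgl$, $\lgl (\mathcal{U}_2)_\al\rgl$, $\lgl \mathcal{E}_\al\rgl$, each of character $<\mathfrak{c}$ at every stage and respecting the projections (so that $(\mathcal{U}_i)_\al$ is the $\pi_i$-pushforward of $\mathcal{D}_\al$, and $\mathcal{E}_\al$ the common pushforward of the $(\mathcal{U}_i)_\al$). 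At successor stages I would: (a) decide one further set in each factor so that the limits are ultrafilters; (b) use $\mathfrak{d}=\mathfrak{c}$, via Proposition \ref{prop.Ketonen1.3} applied in each factor, to diagonalize a prescribed countable family, guaranteeing the p-point property of all four; and (c) kill one continuous monotone map. For the killing step I would enumerate all continuous monotone $f:\mathcal{P}(\om)\ra\mathcal{P}(\om)$ (presented by $\hat f$, as in the discussion preceding Lemma \ref{lem.fix}) and, at the appropriate stage, apply the method of Lemma \ref{lem.fix} to whichever edge $f$ threatens: to secure $D\not\le_T\pi_1(D)$ I place into $\mathcal{D}_{\al+1}$ a positive set $S$ with $f(X)\not\sse S$ for all $X\in\pi_1(D)$, and symmetrically for $\pi_2(D)$; to secure $\pi_i(D)\not\le_T E$ I place a positive set into $(\mathcal{U}_i)_{\al+1}$ avoiding $f''E$; and for the incomparability $\pi_1(D)\not\le_T\pi_2(D)$ and $\pi_2(D)\not\le_T\pi_1(D)$ I place positive sets into $(\mathcal{U}_1)_{\al+1}$ resp.\ $(\mathcal{U}_2)_{\al+1}$ avoiding the image of the other. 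Since each of the four ultrafilters is a p-point, Theorem \ref{thm.5} guarantees that any Tukey reduction along an edge is witnessed by such an $f$, so killing all of them suffices; strictness of the other two arrows ($D>_T E$) then follows by composition.

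The main obstacle is precisely the coherence forced by the projections: the four filters are not independent, so when I enlarge one of the lower filters to defeat a map I must simultaneously pull the new set back (via $\pi_i^{-1}$ or the coordinate map $\sigma$) into the filters above it, and verify that this destroys neither properness, the p-point property, nor the marginal constraint $\sigma(\pi_1(D))=\sigma(\pi_2(D))=E$. The way around this is to carry out every diagonalization \emph{relative to the positive sets of the combined filter} — the base built so far together with the relevant preimages of the higher and lower bases. Because that combined filter still has character $<\mathfrak{c}=\mathfrak{u}$, the splitting argument of Lemma \ref{lem.fix}, together with the large/small-set bookkeeping of Theorem \ref{thm.p-point.chains} (now run under MA rather than CH), still applies and yields a single positive set that can be adjoined consistently at all levels at once. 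Taking unions at limit stages and passing to the generated ultrafilters at the end produces p-points $E<_T\pi_1(D),\pi_2(D)<_T D$ with $\pi_1(D)$ and $\pi_2(D)$ Tukey incomparable, which is exactly the configuration in the diagram.
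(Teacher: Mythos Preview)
Your plan has the right shape but a genuine gap at the killing step, and it is also doing more work than necessary.

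The tool you invoke to defeat a given continuous monotone $f$ is Lemma~\ref{lem.fix}. That lemma, however, takes as input a \emph{completed} ultrafilter $\mathcal{U}$ and a small filter $\mathcal{Y}$, and produces $Y\in\mathcal{Y}^+$ so that no extension of $\mathcal{Y}\cup\{Y\}$ receives $f$ as a cofinal map \emph{into the fixed $\mathcal{U}$}. In your construction neither side is fixed: $\pi_1(D)$ and $\pi_2(D)$ are being built simultaneously, and worse, they are not independent objects but are both determined by the single filter you are building on $\om^4$. So when you ``place a positive set into $(\mathcal{U}_1)_{\al+1}$ avoiding $f''\pi_2(D)$'' you are really adding a set to $\mathcal{D}_{\al+1}$, which in turn moves $\pi_2(D)$; the hypothesis of Lemma~\ref{lem.fix} is never met. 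Your last paragraph acknowledges this coherence problem but the proposed fix --- diagonalize relative to the combined pullback filter and cite the large/small bookkeeping of Theorem~\ref{thm.p-point.chains} --- does not supply the missing argument: Theorem~\ref{thm.p-point.chains} again builds one ultrafilter above a \emph{fixed} p-point $D$, and its notion of ``large'' is tailored to that one-sided situation.

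What the paper does instead is exactly to prove the needed two-sided lemma. It works on $\om\times\om$ (not $\om^4$), takes ``large'' to mean ``contains an $n$-square for every $n$'', and proves Lemma~\ref{lem.T}: given a small base $\mathcal{Y}$ of large sets and a monotone $h$, there is a large $U\sse^*\mathcal{Y}$ such that for \emph{every} ultrafilter $D'$ of large sets extending $\mathcal{Y}\cup\{U\}$, $h\re\pi_i(D')$ fails to be cofinal into $\pi_j(D')$ for $i\ne j$. The proof genuinely uses the $n$-square structure (the block sequences $L_k$ and the Case~1/Case~2 dichotomy), and this is the content your proposal is missing.

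Two further simplifications you should note. First, once $\pi_1(D)$ and $\pi_2(D)$ are Tukey incomparable, \emph{all} the strict inequalities in the diamond follow automatically: if $D\equiv_T\pi_1(D)$ then $\pi_1(D)\ge_T\pi_2(D)$, and if $\pi_1(D)\equiv_T E$ then $\pi_2(D)\ge_T\pi_1(D)$; so there is no need to kill maps along the vertical edges. Second, you do not have to build $E$ at all: since $\pi_1(D),\pi_2(D)\le_{RK}D$ and $D$ is a p-point, Blass's theorem (Theorem~5 of \cite{Blass73}) hands you a p-point $E\le_{RK}\pi_1(D),\pi_2(D)$ for free. So the entire task reduces to building a single p-point $D$ on $\om\times\om$ out of large sets while defeating every continuous monotone $h$ as a cofinal map between the two projections --- which is exactly the recursion the paper runs using Lemma~\ref{lem.T}.
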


\begin{proof}
In [Theorem 9, \cite{Blass73}], Blass proved that  assuming Martin's Axiom, there is a p-point with two RK-incomparable predecessors.
He used  the following notions which we shall also use.
A subset of $\om\times\om$ of the form $P\times Q$, where $P$ and $Q$ are subsets of $\om$ of cardinality $n<\om$, is called an {\em $n$-square}.
A subset of $\om\times\om$ is called {\em large} if it includes an $n$-square for every $n$, and {\em small} otherwise.
Blass' construction builds a p-point $D\sse\om\times\om$ consisting of large sets such that $\pi_1(D)$ and $\pi_2(D)$ are RK-incomparable. 
For $i=1,2$, $\pi_i(D)\le_{RK} D$, hence $\pi_i(D)$ are also p-points and are  $\le_T D$.
The fact that every member of $D$ is large ensures that $\pi_1(D)$ and $\pi_2(D)$ are non-principal.

The following Lemma will be useful for constructing the desired $D$.

\begin{lem}\label{lem.T}
Given $\mathcal{Y}$ a filter base on $\om\times\om$ of size $<\mathfrak{c}$ and a  monotone function  $h:\mathcal{P}(\om)\ra\mathcal{P}(\om)$,
there is a large set $U$
such that $U\sse^* Y$ for each $Y\in\mathcal{Y}$,
 and
 for any ultrafilter $D'\contains \mathcal{Y}\cup\{U\}$ consisting only of large sets, $h\re\pi_1(D')$ is not a cofinal map from $\pi_i(D')\ra\pi_{j}(D')$, for $i\ne j$.
\end{lem}

\begin{proof}
By [Lemma 2, Section 6, \cite{Blass73}] (which uses MA),
there is a large set $X$ such that $X\sse^* Y$ for each $Y\in\mathcal{Y}$.
Since $X$ is large, we can choose  $L_k\sse X$, $k<\om$, such that $L_k$ is a  $(2k)$-square and  $\lgl \pi_1(L_k):k<\om\rgl$, $\lgl \pi_2(L_k):k<\om\rgl$ form block sequences;
that is, for each $k<\om$ and $i=1,2$, each element in $\pi_i(L_k)$ is less than each element in $\pi_i(L_{k+1})$.
Let $I=\bigcup_{k<\om}\pi_1(L_k)$ and $J=\bigcup_{k<\om}\pi_2(L_k)$.

\it Case 1. \rm
There is an infinite $I'\sse I$ such that letting $J'=(\om\setminus h(I'))\cap J$ and $m_k=\min\{|I'\cap\pi_1(L_k)|,|J'\cap\pi_2(L_k)|\}$, 
the sequence $\lgl m_k:k<\om\rgl$ is unbounded.
Then there is a strictly increasing subsequence $\lgl m_{k_n}:n<\om\rgl$.
Let $W=\bigcup_{n<\om}(I'\cap\pi_1(L_{k_n}))\times (J'\cap\pi_2(L_{k_n}))$.
Then $W\sse  X$ and $W$ is large.
Note that if $D'$ is any ultrafilter extending $\mathcal{Y}\cup\{W\}$, then
$I'=\pi_1(W)$ is in $\pi_1(D')$ and $h(I')$ is disjoint from $J'=\pi_2(W)$ which is in $\pi_2(D')$.
Therefore, $f(I')\not\in\pi_2(D')$.

\it Case 2.  \rm
Not Case 1.  Then
for each infinite $I'\sse I$,  letting $J'=(\om\setminus h(I'))\cap J$, there is an $m<\om$ such that  $\min\{|I'\cap\pi_1(L_k)|,|J'\cap\pi_2(L_k)|\}\le m$ for each $k<\om$.
Let $W'=\bigcup_{k<\om}L_k$.  
Then $W'\sse X$ and $W'$ is large.

\begin{claim}
For any 
$I'\sse I$ such that $I'=\pi_1(V')$ for some large $V'\sse W'$,
there is a strictly increasing sequence $\lgl k_n:n<\om\rgl$ and an $m<\om$ 
such that  for each $n$, $|h(I')\cap\pi_2(L_{k_n})|\ge 2k_n -m$.
\end{claim}

\begin{proof}
Let $I'\sse I$ be  such that $I'=\pi_1(V')$ for some large $V'\sse W'$,
and let $J'=(\om\setminus h(I'))\cap J$.
Since we are in Case 2, there is an $m<\om$  satisfying
$\min\{|I'\cap\pi_1(L_k)|,|J'\cap\pi_2(L_k)|\}\le m$ for each $k<\om$.
Since $V'$ is large and $V'\sse W'$,
there is a subsequence $\lgl k_n:n<\om\rgl$ such that 
$\lgl |I'\cap\pi_1(L_{k_n})|: n<\om\rgl$ is a strictly increasing sequence 
of numbers greater than $m$.
Then for each $n<\om$,
it must be the case that $|J'\cap\pi_2(L_{k_n})|\le m$.
Note that for each $n$,
$(\om\setminus h(I'))\cap\pi_2(L_{k_n})
= (\om\setminus h(I'))\cap J\cap\pi_2(L_{k_n})
=J'\cap\pi_2(L_{k_n})$,
since $\pi_2(L_{k_n})=J\cap\pi_2(L_{k_n})$.
Thus, for each $n$, $|(\om\setminus h(I'))\cap \pi_2(L_{k_n})|=
|J'\cap\pi_2(L_{k_n})|\le m$.
Since $|\pi_2(L_{k_n})|=2k_n$,
it follows that $|h(I')\cap\pi_2(L_{k_n})|\ge 2k_n-m$.
\end{proof}

Divide each $\pi_2(L_k)$ into two disjoint sets each of size $k$, labeling one of them $M_k$.
Let $J^*=\bigcup_{k<\om}M_k$.
Let $W=W'\cap (\om\times J^*)$.
Then $W\sse X$ and $W$ is large.
Let $D'$ be any ultrafilter extending 
$\mathcal{Y}\cup\{W\}$
consisting only of large sets.
Since $W\in D'$, we have that $J^*\in\pi_2(D')$.
We claim that for all $I'\in\pi_1(D')$, $h(I')\not\sse J^*$.

Let $I'$ be any member of $\pi_1(D')$.
Then there is a $V''\in D'$ such that $V''\sse W$ and $I'':=\pi_1(V'')\sse I'$.
By the Claim,
there is a strictly increasing sequence $\lgl k_n:n<\om\rgl$ and an $m$ such that
for each $n$,
$|h(I'')\cap\pi_2(L_{k_n})|\ge 2k_n-m$.
However, for each $n$,
$|J^*\cap \pi_2(L_{k_n})|=|M_{k_n}|=k_n$,
 which is less than $2k_n-m$ for all large enough $n$.
Thus, $h(I'')\not\sse J^*$.
Since $h$ is monotone, $h(I')$ also cannot be contained in $J^*$.
Thus, $h\re \pi_1(D')$ is not a cofinal map from $\pi_1(D')$ into $\pi_2(D')$.
This ends Case 2.

Thus, in both Cases 1 and 2, we have found a large $W$ such that $W\sse^* Y$ for all $Y\in\mathcal{Y}$ and such that for any ultrafilter $D'$ extending $\mathcal{Y}\cup\{W\}$ consisting only of large sets,
$h\re\pi_1(D')$ is not a cofinal map from $\pi_1(D')$ into $\pi_2(D')$. 
Now repeat the entire above argument starting with $W$ in place of $X$ and reversing the roles of $\pi_1$ and $\pi_2$  to obtain a large $U\sse W$ such that 
 for any ultrafilter $D'\contains \mathcal{Y}\cup\{U\}$ consisting only of large sets, $h\re\pi_2(D')$ is not a cofinal map from $\pi_2(D')$ into $\pi_1(D')$.
This finishes the proof of the Lemma.
\end{proof}

Now we construct the desired p-point $D$ on $\om\times\om$.
Enumerate $\mathcal{P}(\om\times\om)$ as $A_{\al}$, $\al<\mathfrak{c}$, 
and 
enumerate  all  continuous monotone  maps from  $\mathcal{P}(\om)$ into $\mathcal{P}(\om)$ as  $h_{\al}$, $\al<\mathfrak{c}$.
We construct filter bases $\mathcal{Y}_{\al}$, $\al<\mathfrak{c}$, which satisfy the following.
\begin{enumerate}
\item
$\mathcal{Y}_{\al}$ is a filter base of size less than $\mathfrak{c}$.
\item
Every set in $\mathcal{Y}_{\al}$ is large.
\item
If $\beta<\al<\mathfrak{c}$,
then $\mathcal{Y}_{\beta}\sse\mathcal{Y}_{\al}$.
\item
Either $A_{\al}$ or $\om\times\om\setminus A_{\al}$ is in $\mathcal{Y}_{\al+1}$.
\item
There is a $U\in\mathcal{Y}_{\al+1}$ such that $U\sse^* Y$ for each $Y\in\mathcal{Y}_{\al}$.
\item
For any ultrafilter $D'$ extending $\mathcal{Y}_{\al+1}$ consisting only of large sets,
$f_{\al}\re\pi_1(D')$ is not a cofinal map from $\pi_1(D')$ into $\pi_2(D')$,
 and 
$f_{\al}\re\pi_2(D')$ is not a cofinal map from $\pi_2(D')$ into $\pi_1(D')$.
\end{enumerate}

Let $\mathcal{Y}_0=\{\om\times\om\}$.
If $\al$ is a limit ordinal and $\mathcal{Y}_{\beta}$
has been defined for all $\beta<\al$,
then let $\mathcal{Y}_{\al}=\bigcup_{\beta<\al}\mathcal{Y}_{\beta}$.

In the case that  $\mathcal{Y}_{\al}$
has been constructed,
construct $\mathcal{Y}_{\al+1}$ as follows.
By [Lemma 2, p 162, \cite{Blass73}],
there is a large $T$ such that $T\sse^* Y$ for each $Y\in\mathcal{Y}_{\al}$.
If $A_{\al}\cap T$ is large, then let $\mathcal{Y}_{\al}'=\mathcal{Y}_{\al}\cup\{A_{\al}\cap T\}$.
Otherwise, $A_{\al}\cap T$ is small.
Since $T$ is large, then $T\setminus A_{\al}$ is large, by [Lemma 1, p 162, \cite{Blass73}];
so let $\mathcal{Y}_{\al}'=\mathcal{Y}_{\al}\cup\{T\setminus A_{\al}\}$.

Next
 use  Lemma \ref{lem.T} for $\mathcal{Y}'_{\al}$ and $h_{\al}$
to obtain a large $U_{\al}$ such that 
such that $U_{\al}\sse^* Y$ for each $Y\in\mathcal{Y}'_{\al}$,
 and
 for any ultrafilter $D'\contains \mathcal{Y}'_{\al}\cup\{U_{\al}\}$ consisting only of large sets, $h_{\al}\re\pi_i(D')$ is not a cofinal map from $\pi_i(D')$ into $\pi_{j}(D')$, for $i\le 1$ and $j=1-i$. 
Let $\mathcal{Y}_{\al+1}=\mathcal{Y}'_{\al}\cup\{U_{\al}\}$.

Let $\mathcal{D}=\bigcup_{\al<\mathfrak{c}}\mathcal{Y}_{\al}$.
By (2), $\pi_1(D)$ and $\pi_2(D)$ are non-principal;
by (4), $D$ is an ultrafilter;
by (5), $D$ is a p-point;
and by (6), $\pi_1(D)$ and $\pi_2(D)$ are Tukey-incomparable.
Since $\pi_1(D)$ and $\pi_2(D)$ are Rudin-Keisler below $D$, they are also p-points.
Moreover, since the p-point $D$ is  Rudin-Keisler above both $\pi_1(D)$ and $\pi_2(D)$,
it
 follows from [Theorem 5,  \cite{Blass73}] that there is a p-point which is Rudin-Keisler (hence Tukey) below both $\pi_1(D)$ and $\pi_2(D)$.  
Thus, assuming MA, the diamond lattice embeds into the Tukey degrees of p-points.
\end{proof}

[Theorem 5,  \cite{Blass73}] states that if countably many p-points have an RK upper bound which is a p-point, then they have an RK lower bound (which is necessarily a p-point).

\begin{question}
If countably many p-points have a Tukey upper bound which is a p-point, do they necessarily have a Tukey lower bound which is a p-point?
\end{question}

\begin{question}
Does every Tukey strictly decreasing sequence of p-points have a Tukey lower bound which is a p-point?
\end{question}

\begin{rem}
Laflamme showed in \cite{Laflamme90} that in the NCF model of \cite{Blass/Shelah89}, the RK ordering of p-points is upwards directed, and hence also downwards directed.
Thus, in the NCF model, the Tukey degrees of p-points are both upwards and downwards directed.
(We know by Theorem \ref{thm.2} that
the class of basically generated ultrafilters with bases closed under finite intersections is upwards directed.)
Recall that the cardinal inequality $\mathfrak{u}<\mathfrak{g}$ implies NCF (see \cite{BlassLaflamme89}),
so it is natural to ask the following.
\end{rem}

\begin{question}
Does $\mathfrak{u}<\mathfrak{g}$ imply there is a minimal  Tukey degree  in the class of p-points?
\end{question}


\section{Block-basic ultrafilters on $\FIN$}\label{sec.FIN}

In this section we study the Tukey ordering between idempotent ultrafilters $\mathcal{U}$ on the index set $\FIN$
and their  Rudin-Keisler predecessors
$\mathcal{U}_{\min,\max}$, $\mathcal{U}_{\min}$, and $\mathcal{U}_{\max}$.
We begin by giving the relevant definitions for this investigation.

The following definitions may all be found in \cite{Argyros/TodorcevicBK}.
We let $\FIN$ denote the collection of nonempty finite subsets of $\om$.
Note that $\FIN$ is countable and can serve as a base set for ultrafilters.
Because of the natural structure on $\FIN$, which we shall give shortly, the ultrafilters on $\FIN$ may have some extra structure which can be utilized in the study of their Tukey types.
The set $\FIN$ carries the semigroup operation $\cup$,
where for $x,y\in\FIN$ such that $\max(x)<\min(y)$, $x\cup y$ is defined to be $\{i\in\om:i\in x$ or $i\in y\}$, the usual union.
(If $\max(x)\not\le\min(y)$, then  $x\cup y$ is undefined.)
This operation naturally extends to a semigroup operation on the collection $\beta\FIN$ of ultrafilters on $\FIN$, that is,  the \v{C}ech-Stone compactification of $\FIN$, 
as follows.
For $\mathcal{U}$ and $\mathcal{V}$ ultrafilters on $\FIN$,
$\mathcal{U}\cup\mathcal{V}$ is defined to be the collection of all $A\sse\FIN$ such that $\{x\in\FIN:\{y\in\FIN:x\cup y\in A\}\in\mathcal{U}\}\in\mathcal{V}$.
An {\em idempotent ultrafilter} on the semigroup $(\FIN,\cup)$ is an ultrafilter $\mathcal{U}$ on $\FIN$ such that $\mathcal{U}\cup\mathcal{U}=\mathcal{U}$.
The existence of idempotent ultrafilters on  $\FIN$ was established by S.\ Glazer (see \cite{ComfortBK74}).


At this point, we define some standard maps.
The map
$\min:\FIN\ra\om$ is given by $\min(x)$ is the least element of $x$, for any $x\in\FIN$.
Likewise, $\max:\FIN\ra\om$ is defined by letting $\max(x)$ be the largest element of $x$.
The map $(\min,\max):\FIN\ra\om\times\om$  is defined by $(\min,\max)(x)=(\min(x),\max(x))$.
Note that whenever $\mathcal{U}$ is an ultrafilter on $\FIN$,
then the following are ultrafilters:
$\mathcal{U}_{\min}$ is the ultrafilter on $\om$ generated by the collection of sets $\{\min(x):x\in U\}$,  $U\in\mathcal{U}$.
$\mathcal{U}_{\max}$ is the ultrafilter on $\om$ generated by the collection of sets $\{\max(x):x\in U\}$, $U\in\mathcal{U}$.
$\mathcal{U}_{\min,\max}$ is the ultrafilter on $\om\times\om$ generated by the collection of sets 
$\{(\min(x),\max(x)):x\in U\}$, $U\in\mathcal{U}$.
Note that these are all ultrafilters, since they are images of $\mathcal{U}$ under the Rudin-Keisler maps $\min$, $\max$, and $(\min,\max)$, respectively.
Thus, it also follows that 
$\mathcal{U}\ge_{RK}\mathcal{U}_{\min,\max}$,
$\mathcal{U}_{\min,\max}\ge_{RK}\mathcal{U}_{\min}$, and
$\mathcal{U}_{\min,\max}\ge_{RK}\mathcal{U}_{\max}$.
Thus, the same Tukey reductions between these ultrafilters hold.

In \cite{Blass87}, Blass showed that Glazer's proof easily adapts to show the following.

\begin{thm}[Blass, Theorem 2.1, \cite{Blass87}]\label{thm.BlassGlazer}
Let $\mathcal{V}_0$ and $\mathcal{V}_1$ be a pair of nonprincipal ultrafilters on $\om$.
Then there is an idempotent ultrafilter $\mathcal{U}$ on $\FIN$ such that $\mathcal{U}_{\min}=\mathcal{V}_0$ and $\mathcal{U}_{\max}=\mathcal{V}_1$.
\end{thm}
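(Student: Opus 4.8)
The plan is to realize the desired idempotent as an idempotent of a suitably chosen compact subsemigroup of $(\beta\FIN,\cup)$, via the Ellis--Numakura lemma. Concretely, set
\[
S=\{\mathcal{U}\in\beta\FIN:\mathcal{U}_{\min}=\mathcal{V}_0\text{ and }\mathcal{U}_{\max}=\mathcal{V}_1\}.
\]
First I would check that $S$ is nonempty: the sets $\{x\in\FIN:\min(x)\in A\}$ for $A\in\mathcal{V}_0$, together with $\{x\in\FIN:\max(x)\in B\}$ for $B\in\mathcal{V}_1$, have the finite intersection property, since for infinite $A,B$ one may pick $a\in A$, $b\in B$ with $a<b$, and then $\{a,b\}$ lies in the intersection; any ultrafilter extending the generated filter belongs to $S$. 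Next, $S$ is closed, hence compact: the continuous maps $\beta\FIN\ra\beta\om$ extending $\min$ and $\max$ send $\mathcal{U}$ to $\mathcal{U}_{\min}$ and $\mathcal{U}_{\max}$ respectively, and $S$ is the intersection of the preimages of the closed singletons $\{\mathcal{V}_0\}$ and $\{\mathcal{V}_1\}$.

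The heart of the argument is to show that $S$ is a subsemigroup, i.e.\ that $(\mathcal{U}\cup\mathcal{V})_{\min}=\mathcal{V}_0$ and $(\mathcal{U}\cup\mathcal{V})_{\max}=\mathcal{V}_1$ whenever $\mathcal{U},\mathcal{V}\in S$. The key point is that $\min(x\cup y)=\min(x)$ and $\max(x\cup y)=\max(y)$ whenever $x\cup y$ is defined, so that $\min$ and $\max$ behave as the two coordinate projections on the partial semigroup $\FIN$. To compute $(\mathcal{U}\cup\mathcal{V})_{\min}$, fix $C\sse\om$; then $C\in(\mathcal{U}\cup\mathcal{V})_{\min}$ iff $\{x:\{y:x\cup y\in\min^{-1}(C)\}\in\mathcal{U}\}\in\mathcal{V}$. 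Here I would use nonprincipality of $\mathcal{V}_0=\mathcal{U}_{\min}$: for each fixed $x$ the set $\{y:\max(x)<\min(y)\}$ of $y$ for which $x\cup y$ is defined lies in $\mathcal{U}$, and on this set $\min(x\cup y)=\min(x)$, so the inner set is in $\mathcal{U}$ exactly when $\min(x)\in C$. Thus the outer set equals $\min^{-1}(C)$, which lies in $\mathcal{V}$ iff $C\in\mathcal{V}_{\min}=\mathcal{V}_0$. The symmetric computation, using $\max(x\cup y)=\max(y)$, gives $(\mathcal{U}\cup\mathcal{V})_{\max}=\mathcal{U}_{\max}=\mathcal{V}_1$. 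The same nonprincipality argument also confirms that $\mathcal{U}\cup\mathcal{V}$ is a genuine ultrafilter, since for fixed $x$ exactly one of $\{y:x\cup y\in A\}$ and $\{y:x\cup y\in A^c\}$ is $\mathcal{U}$-large.

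Finally I would invoke the Ellis--Numakura lemma. For fixed $\mathcal{U}$ the map $\mathcal{V}\mapsto\mathcal{U}\cup\mathcal{V}$ is continuous, because the preimage of the basic clopen set $\{\mathcal{W}:A\in\mathcal{W}\}$ is $\{\mathcal{V}:B_A\in\mathcal{V}\}$, where $B_A=\{x:\{y:x\cup y\in A\}\in\mathcal{U}\}$ depends only on $A$ and $\mathcal{U}$. Hence $(\beta\FIN,\cup)$ is a compact left topological semigroup, and so is its nonempty closed subsemigroup $S$; the Ellis--Numakura lemma then yields an idempotent $\mathcal{U}\in S$, which is the required ultrafilter. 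The main obstacle is the middle step: the partiality of $\cup$ (defined only when $\max(x)<\min(y)$) must be controlled throughout, and it is precisely the nonprincipality of $\mathcal{V}_0$ and $\mathcal{V}_1$ that forces the relevant ``defined'' sets to be large, so that the projection identities for $\min$ and $\max$ hold almost everywhere.
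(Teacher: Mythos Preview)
The paper does not give its own proof of this theorem; it is stated as a result of Blass (Theorem~2.1 in \cite{Blass87}), with the remark that ``Glazer's proof easily adapts.'' Your argument is correct and is precisely that Glazer-style adaptation: restrict to the compact set $S$ of ultrafilters with prescribed $\min$- and $\max$-projections, verify that $S$ is a subsemigroup, and apply the Ellis--Numakura lemma.

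One point worth making explicit (and which Blass does make) is that $S$ sits inside $\gamma\FIN=\{\mathcal{U}\in\beta\FIN:\forall n\ \{x:\min(x)>n\}\in\mathcal{U}\}$, since $\mathcal{V}_0$ is nonprincipal. It is on $\gamma\FIN$ that the partial operation $\cup$ extends to a total associative operation, and this is what underwrites both that $\mathcal{U}\cup\mathcal{V}$ is a genuine ultrafilter and the associativity required for Ellis--Numakura. You handle the first of these (the ultrafilter check) explicitly, but associativity is silently assumed; it follows by the same nonprincipality reasoning, so the gap is cosmetic rather than substantive.
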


\begin{cor}\label
{cor.BlassGlazer}
There exist idempotent ultrafilters on $\FIN$ realizing the maximal Tukey type $\mathcal{U}_{\mathrm{top}}$.
\end{cor}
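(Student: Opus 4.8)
The plan is to assemble the corollary directly from the Blass--Glazer realization theorem (Theorem \ref{thm.BlassGlazer}), Isbell's top-type ultrafilter (Theorem \ref{thm.3}), and the fact that Rudin--Keisler reductions transfer to Tukey reductions (Fact \ref{fact.TRK}). The guiding idea is that an idempotent ultrafilter $\mathcal{U}$ on $\FIN$ sits Tukey-above each of its Rudin--Keisler images $\mathcal{U}_{\min}$, $\mathcal{U}_{\max}$, $\mathcal{U}_{\min,\max}$, so to force $\mathcal{U}$ up to the top type it suffices to prescribe one of these images to already have top type.

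First I would fix an Isbell ultrafilter $\mathcal{U}_{\mathrm{top}}$ on $\om$, so that $\mathcal{U}_{\mathrm{top}}\equiv_T[\mathfrak{c}]^{<\om}$ by Theorem \ref{thm.3}. Applying Theorem \ref{thm.BlassGlazer} to the pair $\mathcal{V}_0=\mathcal{U}_{\mathrm{top}}$ and $\mathcal{V}_1$ any nonprincipal ultrafilter on $\om$, I obtain an idempotent ultrafilter $\mathcal{U}$ on $\FIN$ with $\mathcal{U}_{\min}=\mathcal{U}_{\mathrm{top}}$. To see $\mathcal{U}\equiv_T[\mathfrak{c}]^{<\om}$ I would verify the two inequalities separately. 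For the lower bound, recall that $\min$ is a Rudin--Keisler map witnessing $\mathcal{U}\ge_{RK}\mathcal{U}_{\min}$ (as recorded just above Theorem \ref{thm.BlassGlazer}), so Fact \ref{fact.TRK} gives $\mathcal{U}\ge_T\mathcal{U}_{\min}=\mathcal{U}_{\mathrm{top}}\equiv_T[\mathfrak{c}]^{<\om}$. For the upper bound, since $\FIN$ is countable the directed partial ordering $(\mathcal{U},\contains)$ has cardinality $\mathfrak{c}$, so Fact \ref{fact.c<omtop} yields $\mathcal{U}\le_T[\mathfrak{c}]^{<\om}$. Combining the two gives $\mathcal{U}\equiv_T[\mathfrak{c}]^{<\om}=\mathcal{U}_{\mathrm{top}}$, which is the assertion.

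I do not expect a genuine obstacle here, since the corollary is essentially a packaging of the cited results; the two points to be careful about are purely bookkeeping. One is that the ``top'' type is capped from above by $[\mathfrak{c}]^{<\om}$ for every ultrafilter on a countable base set, which is exactly Fact \ref{fact.c<omtop} and is why choosing a single Rudin--Keisler image of top type already pins $\mathcal{U}$ to the top. The other is that it is harmless to let $\mathcal{V}_1$ be arbitrary nonprincipal; the construction does not require both coordinate images to be maximal, only one. If one prefers, one may equally well take $\mathcal{V}_1=\mathcal{U}_{\mathrm{top}}$ and appeal to $\mathcal{U}\ge_{RK}\mathcal{U}_{\max}$ instead, or route through $\mathcal{U}_{\min,\max}$; all three give the same conclusion.
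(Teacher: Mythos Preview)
Your proof is correct and follows essentially the same approach as the paper's: apply Theorem~\ref{thm.BlassGlazer} with a top-type ultrafilter for $\mathcal{V}_0$, then use $\mathcal{U}\ge_{RK}\mathcal{U}_{\min}$ together with Fact~\ref{fact.TRK} to push $\mathcal{U}$ up to the top. The only cosmetic difference is that the paper takes $\mathcal{V}_0=\mathcal{V}_1$ both of top type, whereas you (correctly) observe that $\mathcal{V}_1$ can be arbitrary nonprincipal; your version is also slightly more explicit about the upper bound via Fact~\ref{fact.c<omtop}.
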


\begin{proof}
Let $\mathcal{V}_0=\mathcal{V}_1$ be a nonprincipal ultrafilter on $\om$  such that $\mathcal{V}_0\equiv_T[\mathfrak{c}]^{<\om}$.
Then by Theorem \ref{thm.BlassGlazer},
$\mathcal{U}_{\min}=\mathcal{U}_{\max}=\mathcal{V}_0$.
Since $\mathcal{U}\ge_{RK}\mathcal{U}_{\min}$,
we have that $\mathcal{U}\ge_T\mathcal{V}_0$, which implies that $\mathcal{U}$ has the top Tukey type.
\end{proof}

Thus, one is naturally led to consider the conditions on idempotent ultrafilters $\mathcal{U}$ on $\FIN$ that would prevent $\mathcal{U}$ from having the maximal Tukey type.

\begin{defn}\label{def.block-gen}
A {\em block-sequence} of $\FIN$ is an infinite sequence $X=(x_n)_{n<\om}$ of elements of  $\FIN$ such that for each $n<\om$,
$\max(x_n)<\min(x_{n+1})$.
For a block-sequence $X$,
we let $[X]$ denote  $\{x_{n_1}\cup\dots\cup x_{n_k}:k<\om$ and $n_1<\dots <n_k\}$, the set of finite unions of elements of $X$.
For any $m<\om$,
let $X/m$ denote $(x_n)_{n\ge k}$ where $k$ is least such that $\min(x_k)\ge m$.

The collection of block-sequences carry the following partial ordering $\le$.
For two infinite block-sequences $X=(x_n)_{n<\om}$ and $Y=(y_n)_{n<\om}$, define
$Y\le X$ iff each member of $Y$ is a finite union of elements of $X$; i.e.\ $y_n\in [X]$ for each $n$.
We write $Y\le^* X$ to mean that $Y/m\le X$ for some $m<\om$.
That is, $Y\le^* X$ iff  there is some $k$ such that for all $n\ge k$, $y_n\in[X]$.

An idempotent  ultrafilter $\mathcal{U}$ on $\FIN$ is called {\em block-generated} if it is generated by
sets of the form $[X]$ where $X$ is an infinite block-sequence.
(Block-generated ultrafilters are called 
\it ordered-union ultrafilters
\rm in \cite{Blass87}.)
\end{defn}

We now state some relevant information about block-generated ultrafilters, much of which was proved by Blass in \cite{Blass87}.

\begin{fact}\label{facts.beginning}
Let $\mathcal{U}$ be any nonprincipal block-generated ultrafilter on $\FIN$.
\begin{enumerate}
\item
(Proposition 3.3, \cite{Blass87}) $\mathcal{U}$ is idempotent.
\item
(Corollary 3.6, \cite{Blass87})
$\mathcal{U}$ is not a p-point.
\item
$\mathcal{U}$ is not a q-point.
\item
(Corollary 3.7, \cite{Blass87})
$\mathcal{U}_{\min,\max}$ is isomorphic (i.e.\ Rudin-Keisler equivalent) to $\mathcal{U}_{\min}\cdot\mathcal{U}_{\max}$.
\item
(Proposition 3.9 \cite{Blass87})
$\mathcal{U}_{\min}$ and $\mathcal{U}_{\max}$
are q-points.
\item
$\mathcal{U}_{\min,\max}$ is neither a p-point nor a q-point.
\item
If $\mathcal{U}_{\min}$ is selective, then $\mathcal{U}_{\min,\max}$ is rapid.
\end{enumerate}
\end{fact}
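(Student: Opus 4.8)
The plan is to reduce the statement to a fact about Fubini products and then show that rapidity is preserved by such products. First I would invoke part~(4) of Fact~\ref{facts.beginning} (Blass), which gives that $\mathcal{U}_{\min,\max}$ is Rudin-Keisler isomorphic to $\mathcal{U}_{\min}\cdot\mathcal{U}_{\max}$. Rapidity is invariant under Rudin-Keisler isomorphism: for a bijection $\pi$ of the base set and a target $f$, a set $X$ witnessing sparseness for the shifted target $g(n)=1+\max\pi^{-1}([0,f(n)))$ satisfies $|\pi(X)\cap f(n)|\le|X\cap g(n)|\le n$. Hence it suffices to prove that $\mathcal{U}_{\min}\cdot\mathcal{U}_{\max}$ is rapid. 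By part~(5) of Fact~\ref{facts.beginning} both $\mathcal{U}_{\min}$ and $\mathcal{U}_{\max}$ are q-points, hence rapid since every q-point is rapid; the selectivity of $\mathcal{U}_{\min}$ gives the same for the first factor a fortiori.

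Thus the crux is the lemma: if $\mathcal{V}_0,\mathcal{V}_1$ are rapid ultrafilters on $\om$, then $\mathcal{V}_0\cdot\mathcal{V}_1$ is rapid. To prove it I would fix the bijection $J:\om\times\om\ra\om$ whose initial segment of size $r^2$ is the square $[0,r)\times[0,r)$, so that rapidity reduces to bounding the intersection of a chosen set with these squares. Given a target, I would build
\[
A=\bigcup_{\ell<\om}\{i_\ell\}\times C_{i_\ell},
\]
where $B=\{i_0<i_1<\cdots\}\in\mathcal{V}_0$ is chosen (by rapidity of $\mathcal{V}_0$) so sparse that only about $\log r$ many $i_\ell$ lie below $r$, and each $C_{i_\ell}\in\mathcal{V}_1$ is chosen (by rapidity of $\mathcal{V}_1$) progressively sparser, with $|C_{i_\ell}\cap[0,r)|$ comparable to $2^{-\ell}$ times the available budget. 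Since $\{i:(A)_i\in\mathcal{V}_1\}=B\in\mathcal{V}_0$, we get $A\in\mathcal{V}_0\cdot\mathcal{V}_1$, and
\[
|A\cap([0,r)\times[0,r))|=\sum_{\ell:\,i_\ell<r}|C_{i_\ell}\cap[0,r)|
\]
collapses to the required bound.

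The main obstacle is the bookkeeping in this last step: $B$ and the columns $C_{i_\ell}$ come from two different ultrafilters and must be coordinated so that the double sum is controlled simultaneously for all square radii. The clean way to arrange this is to reparametrize the target before applying rapidity of each factor. Writing the target square radii as $\rho_0<\rho_1<\cdots$, I would apply $\mathcal{V}_0$-rapidity to the fast sequence $(\rho_{2^\ell})_\ell$ to force $i_\ell\ge\rho_{2^\ell}$ (so fewer than $\log_2 k$ rows are active below $\rho_k$), and for each $\ell$ apply $\mathcal{V}_1$-rapidity to $(\rho_{2^{\ell+1}j})_j$ to obtain $|C_{i_\ell}\cap[0,\rho_k)|\le\lceil k/2^{\ell+1}\rceil$; the resulting geometric series gives $|A\cap[0,\rho_k)^2|\le k+\log_2 k$, and the harmless additive loss is absorbed by passing to eventual domination, which is equivalent to rapidity.

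Alternatively, and perhaps more transparently in the present $\FIN$ setting, I would argue directly from the block structure. A block sequence $X=(x_n)$ with $[X]\in\mathcal{U}$ yields the generator $\{(\min x_p,\max x_q):p\le q\}$ of $\mathcal{U}_{\min,\max}$, which meets the square $[0,r)\times[0,r)$ in exactly $\binom{N(r)+1}{2}$ points, where $N(r)=|\{q:\max x_q<r\}|$ (the constraint is carried entirely by the $\max$-values, since $\min x_p\le\max x_q$ for $p\le q$). As $\{\max x_n:n<\om\}$ generates the q-point $\mathcal{U}_{\max}$, and since block-generation guarantees that every set $U_S=\{x:\max x\in S\}\in\mathcal{U}$ contains some $[X]\in\mathcal{U}$, one can pull back a set $S\in\mathcal{U}_{\max}$ with arbitrarily fast-growing enumeration to a block sequence $X$ whose $\max$-values grow as fast as desired, making $N(r)$ and hence this triangular count sparse. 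I would note that this computation uses only the rapidity of $\mathcal{U}_{\max}$, so the hypothesis that $\mathcal{U}_{\min}$ be selective is in fact stronger than what the conclusion requires.
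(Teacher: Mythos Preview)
Your proposal is correct, and both routes you sketch work. Your second (``direct'') argument is essentially the paper's own proof: the paper also picks a block sequence $X$ with $[X]\in\mathcal{U}$ whose associated projection is sparse along a prescribed interval partition, then bounds $|[X]_{\min,\max}\cap g(n)|$ by a triangular count in the number of blocks seen below $g(n)$. The only difference is that the paper controls $X_{\min}$ (using the stated selectivity of $\mathcal{U}_{\min}$, though as you note the q-point property suffices) while you control $X_{\max}$ via the q-point $\mathcal{U}_{\max}$; the counting is the same either way because in a block sequence $\min x_p\le\max x_q$ for $p\le q$, so either projection governs the number of admissible indices. Your observation that the hypothesis on $\mathcal{U}_{\min}$ is stronger than needed is accurate.

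Your first route, reducing via part~(4) to the lemma that a Fubini product of two rapid ultrafilters is rapid, is a genuinely different and more general argument than the paper's. It trades the concrete $\FIN$ combinatorics for a reusable fact about Fubini products, at the cost of the geometric bookkeeping you describe (splitting the budget across columns and absorbing the $\log$-size overshoot by reparametrizing the target). Both approaches ultimately rest only on the rapidity of the two projected ultrafilters, which is already guaranteed by part~(5), so the selectivity assumption in~(7) is indeed not used in full.
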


\begin{proof}
(3) We provide a proof of (3) since it does not seem to yet be in the literature, though most likely it has been noticed before.
Let $\mathcal{U}$ be a nonprincipal block-generated ultrafilter on $\FIN$ and let $P_n=\{x\in\FIN:\max(x)=n\}$.
Then $(P_n)_{n<\om}$ forms a partition of $\FIN$ into finite sets.
Let $U$ be any element of $\mathcal{U}$.
Since $\mathcal{U}$ is block-generated,
 there is some block sequence $X$ such that $[X]\in\mathcal{U}$ and $[X]\sse U$.
Let $y$ be any member of $X$ except the first member of $X$, and let $n=\max(y)$.
Then there is an $x\in X$ such that $\max(x)<\min(y)$.
Thus, both $x\cup y$ and $y$ are in $[X]\cap P_n$.
Hence, for each $U\in\mathcal{U}$, there is some $n$ such that $|U\cap P_n|\ge 2$.
Therefore, $\mathcal{U}$ is not a q-point.

(6) Let $M_n=\{x_{\min,\max}:\min(x)=n\}$.
Then $\{M_n:n<\om\}$ is a partition of $\om$.
If $X$ is any block sequence, then $|[X]_{\min,\max}\cap M_n|=\om$ for infinitely many $n$.
So $\mathcal{U}_{\min,\max}$ is not a p-point.
Let $P_n=\{\iota(\{k,n\}):k<n\}$, where $\iota$ is some fixed pairing function.
Then for each $n\ge 1$, $P_n$ is finite, and $\{P_n:n\ge 1\}$ is a partition of $\om$.
If $X$ is a  block sequence,
then $|[X]_{\min,\max}\cap P_n|>1$
for infinitely many $n$.
Hence, $\mathcal{U}_{\min,\max}$ is not a q-point.

(7) Given a strictly increasing function $g:\om\ra\om$, without loss of generality assuming the coding function $\iota:[\om]^2\ra\om$ has the property that $\iota(\{m,n\})\ge n$ for each $m<n$,
let $k_{l}= 2^{l+1}$ for all $l<\om$.
Since $\mathcal{U}_{\min}$ is selective, there is an infinite block-sequence $X$  such that $[X]\in\mathcal{U}$, $|X_{\min}\cap[0,g(k_2)]|=0$, and for each $l\ge 2$, $|X_{\min}\cap(g(k_l),g(k_{l+1})]|\le 1$.
Then $|[X]_{\min,\max}\cap g(n)|< n$
for each $n<\om$.
\end{proof}

By (5),  the existence of block-generated ultrafilters on $\FIN$ cannot be proved on the basis of the usual ZFC axioms of set theory,
though using Hindman's Theorem one can easily establish the existence of such ultrafilters using CH or MA.

As noted above, no nontrivial  idempotent ultrafilter on $\FIN$ is basic, since such an ultrafilter is never a p-point,
so we are naturally led to the following relaxation of this notion.

\begin{defn}\label{def.block-basic}
For infinite block sequences $X_n=(x^n_k)_{k<\om}$ and $X=(x_k)_{k<\om}$, the sequence
$(X_n)_{n<\om}$ {\em converges} to $X$ (written $X_n\ra X$ as $n\ra\infty$)
if for each $l<\om$ there is an $m<\om$ such that for all $n\ge m$ and all $k\le l$,
$x_k^n=x_k$.
A block-generated ultrafilter $\mathcal{U}$ is {\em block-basic} if
whenever we are given a sequence $(X_n)_{n<\om}$ of infinite block sequences of elements of $\FIN$ such that each $[X_n]\in\mathcal{U}$ 
and $(X_n)_{n<\om}$ converges to some infinite block sequence $X$ such that $[X]\in\mathcal{U}$,
then there is an infinite subsequence $(X_{n_k})_{k<\om}$ such that
$\bigcap_{k<\om}[X_{n_k}]\in\mathcal{U}$.
\end{defn}

\begin{defn}\label{defn.45.5}
Let $\FIN^{[n]}$ denote the collection of all block sequences of elements of $\FIN$ of length $n$.
A block-generated ultrafilter $\mathcal{U}$ on $\FIN$ has the {\em 2-dimensional Ramsey Property}
if
for each finite coloring of $\FIN^{[2]}$, 
there is an infinite block sequence $X$ such that $[X]\in\mathcal{U}$ and
$[X]^{[2]}$ is monochromatic.
A block-generated ultrafilter $\mathcal{U}$ on $\FIN$ has the {\em Ramsey Property}
if
for each $n<\om$ and each finite coloring of $\FIN^{[n]}$, 
there is an infinite block sequence $X$ such that $[X]\in\mathcal{U}$ and
$[X]^{[n]}$ is monochromatic.
Let $\FIN^{[\infty]}$ denote the collection of all infinite block sequences of elements of $\FIN$.
A block-generated ultrafilter
$\mathcal{U}$ on $\FIN$ has the {\em $\infty$-dimensional Ramsey Property} if
for every analytic subset $\mathcal{A}$ of $\FIN^{[\infty]}$ there is an
infinite
block sequence $X$ such that  $[X]\in\mathcal{U}$ and  $[X]^{[\infty]}$ is either included in or
disjoint from $\mathcal{A}$.
(For more information about $\infty$-dimensional Ramsey Theory, see \cite{TodorcevicBK10}.)
\end{defn}

The following theorem shows how the notion of block-basic ultrafilters fits with several equivalences shown by Blass in \cite{Blass87}.

\begin{thm}\label{thm.blockbasicequiv}
The following are equivalent for a block-generated ultrafilter $\mathcal{U}$ on $\FIN$.
\begin{enumerate}
\item
$\mathcal{U}$ is block-basic.
\item
For every sequence $(X_n)$ of infinite block sequences of $\FIN$ such that $[X_n]\in\mathcal{U}$ and $X_{n+1}\le^* X_n$ for each $n$,
there is an infinite block sequence $X$ such that $[X]\in\mathcal{U}$ and $X\le^* X_n$ for each $n$.
\item
$\mathcal{U}$ has the 2-dimensional Ramsey property.
\item
$\mathcal{U}$ has the  Ramsey property.
\item
$\mathcal{U}$ has the $\infty$-dimensional Ramsey property.
\end{enumerate}
\end{thm}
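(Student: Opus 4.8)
The plan is to cite Blass for the equivalence of (2)--(5) and to supply the genuinely new link $(1)\Leftrightarrow(2)$, proved in close analogy with Theorem~\ref{thm.stevosummer} (basic $=$ p-point): here (2) plays the role of the p-point/pseudo-intersection property and (1), block-basicness, plays the role of basicness, with $\sse^*$ replaced by $\le^*$ and Cantor convergence replaced by convergence of block sequences. Indeed $(2)\Leftrightarrow(3)\Leftrightarrow(4)$ is exactly Blass's equivalence between stability of an ordered-union ultrafilter and its finite-dimensional Ramsey properties \cite{Blass87}, and $(4)\Leftrightarrow(5)$ comes from the infinite-dimensional Milliken--Taylor theory of the space $\FIN^{[\infty]}$ (see \cite{TodorcevicBK10}); so it suffices to insert (1) into this cycle.

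Two structural facts, both from block-generation, will be used throughout. First, if $[X]\in\mathcal{U}$ then every tail $[X/m]\in\mathcal{U}$: the difference $[X]\setminus[X/m]$ is contained in $\{s:\min(s)\le N\}$ for $N=\max(x_{k-1})$ (with $x_k$ the first term of $X/m$), and this set is not in $\mathcal{U}$ since $\mathcal{U}_{\min}$ is nonprincipal by Fact~\ref{facts.beginning}(5). Second, if $[W]\sse[Z]$ then $W\le Z$, because each term $w_j\in[W]\sse[Z]$ is a finite union of $Z$-terms. Consequently, since $[X_0],[X_1]\in\mathcal{U}$ gives $[X_0]\cap[X_1]\in\mathcal{U}$ and $\mathcal{U}$ is block-generated, there is always a common refinement $W\le X_0,X_1$ with $[W]\in\mathcal{U}$; that is, finite directedness of the refinement order inside $\mathcal{U}$ is automatic.

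For $(1)\Rightarrow(2)$: given a $\le^*$-descending sequence with $[X_n]\in\mathcal{U}$, passing to tails I may assume $X_{n+1}\le X_n$. Put $W_m=(x^0_0,\dots,x^0_{m-1})^{\frown}\bigl(X_m/(\max(x^0_{m-1})+1)\bigr)$; each $[W_m]\in\mathcal{U}$, since $[W_m]$ contains a tail of $X_m$, and $W_m\to X_0$ with $[X_0]\in\mathcal{U}$. Block-basicness yields a subsequence with $\bigcap_k[W_{m_k}]\in\mathcal{U}$, and choosing a block sequence $X$ with $[X]\in\mathcal{U}$ and $[X]\sse\bigcap_k[W_{m_k}]$ gives $X\le W_{m_k}$ for all $k$. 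Fixing $n$ and letting $k_0$ be least with $m_{k_0}\ge n$, any term $x_j$ with $\min(x_j)>\max(x^0_{m_{k_0}-1})$ can only be a union of the tail ($=X_{m_{k_0}}$) terms of $W_{m_{k_0}}$, so $x_j\in[X_{m_{k_0}}]\sse[X_n]$; as this covers all large $j$, $X\le^* X_n$, and $X$ is the required lower bound.

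The hard direction, and the main obstacle, is $(2)\Rightarrow(1)$, the analogue of ``nonmeager p-ideal $\Rightarrow$ basic.'' Given $X_n\to X$ with all generating sets in $\mathcal{U}$, relabel so that $X_n$ agrees with $X$ on its first $n$ terms. The plan is a fusion in $\FIN^{[\infty]}$: recursively choose a subsequence $(n_k)$ and a $\le$-descending sequence $V_0\ge V_1\ge\cdots$ with $[V_k]\in\mathcal{U}$ and $V_k\le X_{n_j}$ for all $j\le k$ (using the free finite directedness above), locking one further initial term at each stage; the convergence is precisely what keeps the locked initial terms finite unions from every later $X_{n_k}$, provided each $n_k$ is chosen large enough that the already-locked terms lie below the agreement threshold. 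The fusion $W$, whose $k$-th term is the locked $k$-th term of $V_k$, then satisfies $W\le V_k\le X_{n_j}$ for all $j$, so $[W]\sse\bigcap_k[X_{n_k}]$, and every refinement here is genuine rather than mod-finite. The single remaining point is the crux: one must run the fusion so that $[W]\in\mathcal{U}$. Since $\mathcal{U}$ is never a p-point and $\bigcap_k[V_k]$ need not lie in $\mathcal{U}$, this is exactly where stability (2) enters, via the fact that a stable ordered-union ultrafilter is a selective coideal for the Milliken space, so the decided fusion can be kept inside $\mathcal{U}$ (equivalently, one uses $(2)\Leftrightarrow(4)$ to select the subsequence so that the tails cohere into one refinement in $\mathcal{U}$). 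Securing this $\mathcal{U}$-membership of the fusion, together with the finite-union bookkeeping forced by the locking, is the step I expect to absorb essentially all of the work.
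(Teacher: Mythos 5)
Your reduction of (2)--(5) to Blass and your argument for $(1)\Rightarrow(2)$ are essentially the paper's: the paper also manufactures a convergent sequence by splicing a fixed initial segment onto tails of the $X_n$ (it converges to the sequence of singletons rather than to $X_0$, but the mechanism and the final ``all large terms of $X$ must be unions of tail terms'' argument are the same). That direction is fine.

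The direction $(2)\Rightarrow(1)$ has a genuine gap, and it is exactly the one you flag yourself. Your fusion produces a $W$ that is only a $\le^*$-lower bound of the $V_k$ (its $k$-th locked term need not be in $[V_j]$ for $j>k$ unless you also refine below $X$, which is fixable), but nothing in hypothesis (2) says that \emph{this particular} diagonal sequence satisfies $[W]\in\mathcal{U}$: (2) only asserts that \emph{some} $\le^*$-lower bound of a $\le^*$-descending sequence generates a set in $\mathcal{U}$. This is precisely analogous to the fact that in a p-point the canonical diagonal pseudo-intersection of a decreasing sequence need not itself belong to the ultrafilter. The appeal to ``a stable ordered-union ultrafilter is a selective coideal for the Milliken space'' is not available from the cited equivalences in the form you need, and a local Mathias/Milliken property strong enough to keep an arbitrary decided fusion inside $\mathcal{U}$ is at least as strong as the statement you are trying to prove; so the crux is not merely deferred bookkeeping but an unproved (and, as stated, unsupported) step.

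The paper closes this gap by a different, elementary device. It applies (2) \emph{once}, in the equivalent form that any countable family $\{[X_n]\}\sse\mathcal{U}$ admits a single $Z$ with $[Z]\in\mathcal{U}$ and $Z\le^* X_n$ for all $n$ (taking $Z\le X_0$, hence $Z\le X$ after intersecting with $[X]$ is harmless). It then chooses thresholds $m_0<m_1<\cdots$ interleaving the rate of convergence of $X_n\to X$ with the $\le^*$-rates of $Z$, splits $Z$ into the part $Z_0$ whose terms start in the intervals $[m_{4k},m_{4k+2})$ and its relative complement, and lets the \emph{ultrafilter} decide which part to keep: one of the two generates a set in $\mathcal{U}$, and a common refinement $Y$ of the chosen part and $X$ satisfies $[Y]\sse\bigcap_k[X_{n_k}]$ genuinely (not mod finite) for the matching parity subsequence, because every $y\in Y$ either lies below the agreement threshold of $X_{n_k}$ with $X$ or above the threshold past which $X_{n_k}\ge Z$, and the built-in gap prevents any $y$ from straddling the two regimes. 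If you replace your fusion by this ``one application of (2), then a two-way parity split decided by the ultrafilter'' argument, the proof goes through.
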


\begin{rem}
 (2) is called a \it stable ordered-union ultrafilter \rm in \cite{Blass87}.
\end{rem}

\begin{proof}
The equivalence of (2), (3), (4) and (5) were established in [Theorem 4.2, \cite{Blass87}].

(1) implies (2).
Suppose $\mathcal{U}$ is block-basic.
Let $(X_n)_{n<\om}$ be a sequence of block-sequences of $\FIN$ such that $[X_n]\in\mathcal{U}$ and $X_{n+1}\le^* X_n$ for each $n$.
Let $(m_n)_{n<\om}$ be a strictly increasing sequence such that $X_0\ge X_1/m_1\ge X_2/m_2\ge\dots$.
Let $Y_n={(\{l\})_{l\le m_n}}^{\frown}(X_n/m_n)$.
Then each $Y_n=^* X_n$ and $Y_n\ra (\{l\})_{l<\om}$.
By (1) there is a subsequence $(n_k)_{k<\om}$ such that $\bigcap_{k<\om}[Y_{n_k}]\in\mathcal{U}$.
Since $\mathcal{U}$ is block-generated,
there is a $Z$ such that $[Z]\in\mathcal{U}$ and $[Z]\sse\bigcap_{k<\om}[Y_{n_k}]$.
Then for each $n<\om$,
taking $k$ such that $n_k>n$, we have that 
$X_n=^* Y_n\ge^* Y_{n_k}\ge Z$.
Thus, (2) holds.

Now suppose that (2) holds.  Since $\mathcal{U}$ is block-generated,
 (2) is equivalent to the statement $(2)'$: 
For every sequence $(X_n)_{n<\om}$ of infinite block sequences of $\FIN$ such that each $[X_n]\in\mathcal{U}$,
there is an infinite block sequence $X$ such that  $[X]\in\mathcal{U}$ and $X\le^* X_n$ for each $n$.
Let $(X_n)_{n<\om}$ be a sequence of block sequences such that each $[X_n]\in\mathcal{U}$ and $(X_n)_{n<\om}\ra X$.
By $(2)'$, there is a $Z\le X_0$ such that $[Z]\in\mathcal{U}$ and for each $n<\om$, $Z\le^* X_n$.
Thus, there is a strictly increasing sequence $(m_k)_{k<\om}$ such that each $m_k=\min(z)$ for some $z\in Z$ and
\begin{enumerate}
\item[(a)]
$n\ge m_{k+1}$ implies  
$X_n\cap m_k=X\cap m_k$;
\item[(b)]
$n\le m_k$ implies $X_n/ m_{k+1}\ge Z$.
\end{enumerate}
Let $Z_0=\{z\in Z:\exists k(m_{4k}\le\min(z)<m_{4k+2})\}$.
If $[Z_0]\in\mathcal{U}$,
then take some $Y\le Z_0,X$ such that $[Y]\in\mathcal{U}$.
For each $k<\om$,
$X_{4k+3}\cap m_{4k+2}=X\cap m_{4k+2}\ge Y\cap m_{4k+2}$.
For each $y\in Y$, $y\cap[m_{4k+2}, m_{4k+4})=\emptyset$.
$X_{4k+3}/m_{4k+4}\ge Z\ge Y$.
Therefore, 
 $\bigcap_{k<\om}[X_{4k+3}]\contains [Y]$.
If $[Z_0]\not\in\mathcal{U}$,
then since $\mathcal{U}$ is block-generated, 
there is a $Z_1$ such that $[Z_1]\in\mathcal{U}$ and
$[Z_1]\sse [Z]\setminus[Z_0]$.
Since $Z_1\le Z$ and $[Z_1]\cap[Z_0]=\emptyset$,
for each $z\in Z$, if $\min(z)\in[m_{4k},m_{4k+2})$ then $z\not\in Z_1$.
Therefore, $Z_1\cap Z_0=\emptyset$.
Hence, for each $z\in Z_1$, $\min(z)\in[m_{4k+2},m_{4k+4})$.
Letting $Y\le Z_1,X$ such that $[Y]\in\mathcal{U}$, $\bigcap_{k<\om}[X_{4k+1}]\contains [Y]$.  
Hence, (1) holds.
\end{proof}

\begin{rem}
Blass showed in \cite{Blass87},  that for every  stable ordered-union ultrafilter $\mathcal{U}$ on $\FIN$, 
both $\mathcal{U}_{\min}$ and $\mathcal{U}_{\max}$ are non-isomorphic selective ultrafilters.
Thus, we have the following corollary.
\end{rem}

\begin{cor}\label{cor.blocksel}
If $\mathcal{U}$ is a block-basic ultrafilter on $\FIN$, 
then $\mathcal{U}_{\min}$ and $\mathcal{U}_{\max}$ are 
Rudin-Keisler incomparable selective ultrafilters on $\om$.
\end{cor}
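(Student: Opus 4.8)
The plan is to combine three ingredients: the equivalence of block-basic with the stable ordered-union property established in Theorem \ref{thm.blockbasicequiv}, Blass's theorem (recorded in the preceding remark) that the $\min$- and $\max$-projections of a stable ordered-union ultrafilter are non-isomorphic selective ultrafilters, and the classical fact that selective ultrafilters are Rudin-Keisler minimal. The genuine mathematical content lies in the first two, which are cited; the remaining work is only to upgrade ``non-isomorphic'' to ``Rudin-Keisler incomparable.''

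First I would note that, by the equivalence of (1) and (2) in Theorem \ref{thm.blockbasicequiv}, every block-basic ultrafilter $\mathcal{U}$ on $\FIN$ is a stable ordered-union ultrafilter. Hence Blass's result quoted in the preceding remark applies, and both $\mathcal{U}_{\min}$ and $\mathcal{U}_{\max}$ are selective ultrafilters on $\om$ with $\mathcal{U}_{\min}\not\equiv_{RK}\mathcal{U}_{\max}$. This already yields the ``selective'' half of the conclusion, and these projections are nonprincipal because $\mathcal{U}$ is nonprincipal, so it remains only to rule out a one-sided Rudin-Keisler reduction.

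For that I would invoke the minimality of selective ultrafilters in the Rudin-Keisler ordering: if $\mathcal{W}$ is selective and $\mathcal{V}\le_{RK}\mathcal{W}$ with $\mathcal{V}$ nonprincipal, then $\mathcal{V}\equiv_{RK}\mathcal{W}$. This is immediate from Definition \ref{defn.uftypes}(1): a function $h$ witnessing $\mathcal{V}=h(\mathcal{W})$ is, on some set in $\mathcal{W}$, either constant---forcing $\mathcal{V}$ to be principal---or one-to-one---forcing $h$ to be an isomorphism. Now suppose toward a contradiction that $\mathcal{U}_{\min}\le_{RK}\mathcal{U}_{\max}$. Since $\mathcal{U}_{\max}$ is selective and $\mathcal{U}_{\min}$ is nonprincipal, minimality gives $\mathcal{U}_{\min}\equiv_{RK}\mathcal{U}_{\max}$, contradicting non-isomorphism; the symmetric assumption $\mathcal{U}_{\max}\le_{RK}\mathcal{U}_{\min}$ leads to the same contradiction. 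Therefore $\mathcal{U}_{\min}$ and $\mathcal{U}_{\max}$ are Rudin-Keisler incomparable, and there is no real obstacle beyond correctly recording the minimality fact.
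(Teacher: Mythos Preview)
Your proposal is correct and follows the same route as the paper: the corollary is stated immediately after a remark citing Blass's result that for a stable ordered-union ultrafilter both projections are non-isomorphic selective ultrafilters, and Theorem~\ref{thm.blockbasicequiv} supplies the identification of block-basic with stable ordered-union. You have in fact been more explicit than the paper, which leaves the passage from ``non-isomorphic'' to ``Rudin-Keisler incomparable'' via RK-minimality of selective ultrafilters unstated; your argument for that step is exactly the standard one.
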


\begin{rem}
It follows by [Theorem 10, \cite{Raghavan/Todorcevic11}] that for any block-basic ultrafilter $\mathcal{U}$ on $\FIN$,
$\mathcal{U}_{\min}$ and $\mathcal{U}_{\max}$ are Tukey-incomparable.
\end{rem}

Applying [Theorem 2.4, \cite{Blass87}] of Blass,
we get some sort of converse to the previous corollary.

\begin{cor}\label{cor.V0V1tominmax}
Assuming CH,
for every pair $\mathcal{V}_0$ and $\mathcal{V}_1$ of non-isomorphic selective ultrafilters on $\om$,
there is a block-basic ultrafilter $\mathcal{U}$ on $\FIN$ such that $\mathcal{U}_{\min}=\mathcal{V}_0$ and $\mathcal{U}_{\max}=\mathcal{V}_1$.
\end{cor}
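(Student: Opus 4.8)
The plan is to reduce the statement to Blass's existence theorem and then read the output through the characterization of block-basic ultrafilters proved above. By Theorem~\ref{thm.blockbasicequiv}, a block-generated ultrafilter on $\FIN$ is block-basic if and only if it is a \emph{stable ordered-union} ultrafilter, i.e.\ satisfies clause~(2): every $\le^*$-decreasing sequence $(X_n)_{n<\om}$ of infinite block sequences with each $[X_n]\in\mathcal{U}$ admits a block sequence $X$ with $[X]\in\mathcal{U}$ and $X\le^* X_n$ for all $n$. Hence it suffices to produce, under CH, a stable ordered-union ultrafilter $\mathcal{U}$ with $\mathcal{U}_{\min}=\mathcal{V}_0$ and $\mathcal{U}_{\max}=\mathcal{V}_1$, and this is exactly what [Theorem~2.4, \cite{Blass87}] supplies from a pair of nonisomorphic selective ultrafilters. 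Note the contrast with Theorem~\ref{thm.BlassGlazer}: the latter already yields an \emph{idempotent} $\mathcal{U}$ with the prescribed $\min$ and $\max$ for any nonprincipal pair, but gives no stability; the content of Blass's Theorem~2.4 is that, at the price of assuming CH and that $\mathcal{V}_0,\mathcal{V}_1$ are nonisomorphic selectives, one can in addition force $\mathcal{U}$ to be stable ordered-union. Feeding $\mathcal{V}_0,\mathcal{V}_1$ into that theorem and applying the equivalence (1)$\Leftrightarrow$(2) of Theorem~\ref{thm.blockbasicequiv} to the result produces the desired block-basic $\mathcal{U}$.

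For completeness I would recall the shape of Blass's construction, since that is where all the work lies. Under CH one enumerates $\mathcal{P}(\FIN)$ in order type $\om_1$ and builds a $\le^*$-decreasing tower $\lgl X_\xi:\xi<\om_1\rgl$ of infinite block sequences whose sets $[X_\xi]$ generate $\mathcal{U}$. At successor stages one (a) decides the next listed subset $A\sse\FIN$ by passing to a further block sequence $X\le^* X_\xi$ with $[X]\sse A$ or $[X]\cap A=\emptyset$; (b) diagonalizes the selectivity requirements so that the $\min$-projection $\{\min(x):x\in X_\xi\}$ is steered into $\mathcal{V}_0$ and the $\max$-projection $\{\max(x):x\in X_\xi\}$ into $\mathcal{V}_1$; and (c) at limit stages of countable cofinality takes a common $\le^*$-lower bound of the countably many block sequences built so far, which is the stability clause being woven directly into the tower. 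The selectivity of $\mathcal{V}_0$ and $\mathcal{V}_1$ is what allows one to thin a block sequence so that the partition it induces on its $\min$'s (resp.\ $\max$'s) is refined to singletons on a set belonging to $\mathcal{V}_0$ (resp.\ $\mathcal{V}_1$), which is precisely what upgrades the automatic reductions $\mathcal{U}\ge_{RK}\mathcal{U}_{\min}$, $\mathcal{U}\ge_{RK}\mathcal{U}_{\max}$ to the exact equalities $\mathcal{U}_{\min}=\mathcal{V}_0$ and $\mathcal{U}_{\max}=\mathcal{V}_1$.

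The main obstacle, and the reason the nonisomorphism hypothesis is indispensable, is in step~(b): one must steer the $\min$- and $\max$-projections \emph{independently} onto $\mathcal{V}_0$ and $\mathcal{V}_1$ while simultaneously preserving stability at the limit stages. If $\mathcal{V}_0$ and $\mathcal{V}_1$ were isomorphic, the two projections of a block sequence would be rigidly linked by that isomorphism and could not be controlled separately; nonisomorphism of two selectives is equivalent (by selectivity, which forces RK-minimality) to their RK-incomparability, and this is exactly what decouples the two projections and permits the simultaneous diagonalization. Finally, this hypothesis is not an artifact of the method: Corollary~\ref{cor.blocksel} shows that $\mathcal{U}_{\min}$ and $\mathcal{U}_{\max}$ of any block-basic ultrafilter are automatically nonisomorphic selectives, so the present corollary is a genuine converse—modulo CH, the block-basic ultrafilters on $\FIN$ are exactly the joint $\min$/$\max$ liftings of pairs of nonisomorphic selective ultrafilters on $\om$.
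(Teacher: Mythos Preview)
Your proposal is correct and follows exactly the paper's approach: the corollary is stated as a direct application of [Theorem~2.4, \cite{Blass87}], and you have made explicit the one bridging step the paper leaves implicit, namely that Blass's ``stable ordered-union'' coincides with ``block-basic'' via Theorem~\ref{thm.blockbasicequiv}. Your second and third paragraphs go well beyond what the paper provides (which is simply the citation), but the added sketch of Blass's construction and the explanation of why nonisomorphism is needed are accurate and helpful commentary rather than a different argument.
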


Our interest in block-basic ultrafilters on $\FIN$ is based on the following fact whose proof is analogous to that of Theorem \ref{thm.5}.

\begin{thm}\label{thm.5'}
Suppose $\mathcal{U}$ is a block-basic ultrafilter on $\FIN$ and that $\mathcal{U}\ge_T\mathcal{V}$ for some ultrafilter $\mathcal{V}$ on any countable index set $I$.
Then there is a monotone continuous map $f:\mathcal{P}(\FIN)\ra\mathcal{P}(I)$ such that
$f"\mathcal{U}$ is a cofinal subset of $\mathcal{V}$.
\end{thm}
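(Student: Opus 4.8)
The plan is to run the proof of Theorem \ref{thm.5} with the block structure of $\FIN$ replacing the linear order of $\om$, and with the stable ordered-union property (the equivalence of block-basicness with clause (2) of Theorem \ref{thm.blockbasicequiv}) playing the role that the p-point property played there. First, by Fact \ref{fact.monotone}, whose proof uses nothing about the base set being $\om$, fix a monotone cofinal map $f:\mathcal{U}\ra\mathcal{V}$. Enumerate $I=\{i_k:k<\om\}$ and set $\FIN_n=\{x\in\FIN:\max(x)<n\}$, so that $(\FIN_n)_{n<\om}$ is an increasing exhaustion of $\FIN$ by finite sets; accordingly, a map $\mathcal{P}(\FIN)\ra\mathcal{P}(I)$ is continuous exactly when membership of each $i_k$ in the image of $A$ is decided by some $A\cap\FIN_n$. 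Throughout I will use that $[X/m]\in\mathcal{U}$ whenever $[X]\in\mathcal{U}$: since $\mathcal{U}_{\min}$ is nonprincipal (it is a q-point by Fact \ref{facts.beginning}(5)), the set $\{x:\min(x)\ge m\}$ lies in $\mathcal{U}$, and $[X/m]=[X]\cap\{x:\min(x)\ge m\}$.

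Next I would carry out the combinatorial core. Build a $\le^*$-decreasing sequence of block sequences $X_0,X_1,\dots$ (i.e.\ $X_{n+1}\le^* X_n$) with each $[X_n]\in\mathcal{U}$, thinning by tails so that $[X_n]\cap\FIN_n=\emptyset$, and arranging the analogue of clause (3) of the proof of Theorem \ref{thm.5}: for every $s\sse\FIN_n$ and every $k\le n$, if some $Y'$ with $[Y']\in\mathcal{U}$ satisfies $s=Y'\cap\FIN_{n+1}$ and $i_k\notin f(Y')$, then $i_k\notin f(s\cup[X_n])$. As there, this is achieved at stage $n$ in finitely many steps, each intersecting the current set with a witness and appealing to monotonicity of $f$; every set that arises still lies in $\mathcal{U}$ because it contains a tail $[X_{n-1}/m]\in\mathcal{U}$. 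In place of the p-point pseudo-intersection I would then invoke the implication (1)$\Rightarrow$(2) of Theorem \ref{thm.blockbasicequiv} to obtain a block sequence $Y$ with $[Y]\in\mathcal{U}$ and $Y\le^* X_n$ for all $n$. Choosing levels $n_0<n_1<\dots$ so that the tail of $Y$ past level $n_{i+1}$ consists of finite unions of blocks of every $X_n$ with $n\le n_i$, splitting the levels into even and odd intervals exactly as in Theorem \ref{thm.5}, and thinning to one side of the resulting partition of $\FIN$ (possible because $\mathcal{U}$ is block-generated), I pass to a block sequence $\tilde{X}$ with $[\tilde{X}]\in\mathcal{U}$. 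The same computation as in Theorem \ref{thm.5} then produces a nondecreasing sequence of levels $(m_k)_{k<\om}$ such that for every $W\in\mathcal{U}$ with $W\sse[\tilde{X}]$, whether $i_k\in f(W)$ is determined by the finite set $W\cap\FIN_{m_k}$; that is, $f\re(\mathcal{U}\re[\tilde{X}])$ is continuous.

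Finally I would extend $f$ as in Theorem \ref{thm.5}. Set $f'(A)=f(A\cap[\tilde{X}])$ for $A\in\mathcal{U}$; since $[\tilde{X}]\in\mathcal{U}$ this is a monotone map into $\mathcal{V}$, it is continuous by the previous paragraph, and its range $f''(\mathcal{U}\re[\tilde{X}])$ is cofinal in $\mathcal{V}$ because $\mathcal{U}\re[\tilde{X}]$ is a base for $\mathcal{U}$ and a monotone restriction of a cofinal map to a cofinal subset stays cofinal (using monotonicity, $f(A\cap[\tilde{X}])\sse f(A)$). Then define, on all of $\mathcal{P}(\FIN)$,
$f^*(Z)=\bigcap\{f'((Z\cap\FIN_n)\cup(\FIN\setminus\FIN_n)):n<\om\}$,
each argument being cofinite and hence in $\mathcal{U}$. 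Monotonicity of $f^*$ is immediate from monotonicity of $f'$; the identity $f^*\re\mathcal{U}=f'$ follows from continuity of $f'$ together with the decreasing approximations $Z_n=(Z\cap\FIN_n)\cup(\FIN\setminus\FIN_n)\ra Z$; and continuity of $f^*$ is checked by confirming that $i_k\in f^*(Z)$ is decided by $Z\cap\FIN_{m_k}$. This yields the required monotone continuous $f^*:\mathcal{P}(\FIN)\ra\mathcal{P}(I)$ with $f^{*}{}''\mathcal{U}=f''(\mathcal{U}\re[\tilde{X}])$ cofinal in $\mathcal{V}$.

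The main obstacle will be the bookkeeping in the combinatorial core, specifically placing the diagonal block sequence $\tilde{X}$ so that for every $W\sse[\tilde{X}]$ the relevant tail of $W$ falls inside some $[X_{n'}]$ at exactly the level needed to apply clause (3) and thereby decide $i_k\in f(W)$ from a finite initial part of $W$. This is where the even/odd splitting and the block-generated thinning are essential, and where the finite-union structure of $[\tilde{X}]$ relative to the $[X_n]$ must be tracked with care; the remaining extension and continuity verifications are routine adaptations of Theorem \ref{thm.5}.
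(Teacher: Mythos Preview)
Your outline follows the paper's own hint (the proof is ``analogous to Theorem~\ref{thm.5}''), and your extension step is correctly modeled on that theorem. But the combinatorial core has a genuine gap that is more than bookkeeping. For $W\in\mathcal{U}$ with $W\sse[\tilde{X}]$ and $s=W\cap\FIN_{m_k}$, you need both directions of $i_k\in f(W)\iff i_k\in f(s\cup[X_n])$. Your clause~(3) gives $(\Leftarrow)$; in Theorem~\ref{thm.5} the direction $(\Rightarrow)$ is monotonicity via the inclusion $W\sse s\cup X_n$, which holds there because $\tilde{X}\setminus m_k\sse X_n$. In $\FIN$ the analogue $W\setminus\FIN_{m_k}\sse[X_n]$ fails: an element $x\in W\sse[\tilde{X}]$ with $\max(x)\ge m_k$ may use an early block of $\tilde{X}$ and so have small $\min(x)$; you only have $[\tilde{X}/m]\sse[X_n]$ for large $m$, not $[\tilde{X}]\setminus\FIN_{m_k}\sse[X_n]$. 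No even/odd splitting of $\om$-levels repairs this, since finite unions in $[\tilde{X}]$ intrinsically span levels. Consequently the map $A\mapsto f(A\cap[\tilde{X}])$ is not shown to be continuous on $\mathcal{U}$, and your extension formula does not go through as written.

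The paper's detailed proof of the closely related Theorem~\ref{thm.5.2} shows the fix. State clause~(3) for finite \emph{block sequences} $t$ rather than for arbitrary $s\sse\FIN_n$; take the diagonal $Y$ by stability; then invoke the $2$-dimensional Ramsey property (equivalent to block-basicness by Theorem~\ref{thm.blockbasicequiv}) to thin to $\tilde{X}\le Y$ homogeneous for the coloring $h((y_0,y_1))=0$ iff $\max(y_0)<l_n$ and $l_{n+2}\le\min(y_1)$ for some $n$. For any block sequence $W=(w_i)_{i<\om}\le\tilde{X}$ with $[W]\in\mathcal{U}$, consecutive blocks then have a two-level gap, so there is $m$ with $W/l\le X_n$ and $[W]=[t\cup(W/l)]\sse[t\cup X_n]$ where $t=(w_0,\dots,w_m)$; now both directions go through, and $i_k\in f([W])$ is determined by $t$, hence by $[W]\cap\FIN_N$ for suitable $N$. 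This yields continuity on the cofinal base $\{[W]:W\le\tilde{X},\ [W]\in\mathcal{U}\}$ in the subspace topology from $\mathcal{P}(\FIN)$, from which one then builds the required monotone continuous extension to $\mathcal{P}(\FIN)$.
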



Though the proof the next theorem follows the general outline of that of Theorem \ref{thm.5}, 
we include the proof here since it does use some extra arguments.

\begin{thm}\label{thm.5.2}
Suppose $\mathcal{U}$ is a block-basic ultrafilter on $\FIN$
and $\mathcal{V}$ is any ultrafilter on a countable index set $I$.
If $\mathcal{U}_{\min,\max}\ge_T\mathcal{V}$,
then there are an infinite block sequence $\tilde{X}$ such that $[\tilde{X}]\in\mathcal{U}$ and
a monotone continuous function $f$ from $\{[X]_{\min,\max}:X\le \tilde{X}\}$ into $\mathcal{P}(I)$
whose restriction to $\{[X]_{\min,\max}:X\le \tilde{X},\ [X]\in\mathcal{U}\}$
has cofinal range in $\mathcal{V}$.
\end{thm}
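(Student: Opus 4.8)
The plan is to mirror the proof of Theorem \ref{thm.5}, replacing the p-point property of $\mathcal{U}$ (existence of pseudo-intersections) by the block-basic property of $\mathcal{U}$ in the guise of the stable ordered-union property, and carrying the extra bookkeeping forced by the two-dimensional $(\min,\max)$-image. By Fact \ref{fact.monotone}, the hypothesis $\mathcal{U}_{\min,\max}\ge_T\mathcal{V}$ yields a monotone cofinal map $h:\mathcal{U}_{\min,\max}\ra\mathcal{V}$. Since $\mathcal{U}$ is block-generated, the sets $[X]_{\min,\max}$ with $[X]\in\mathcal{U}$ form a base for $\mathcal{U}_{\min,\max}$, and for a block sequence $X=(x_a)_{a<\om}$ one has $[X]_{\min,\max}=\{(\min(x_a),\max(x_b)):a\le b\}$. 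The goal is to produce a single $\tilde{X}$ with $[\tilde{X}]\in\mathcal{U}$ so that $h$, restricted to the $(\min,\max)$-images of block subsequences of $\tilde{X}$, is continuous with cofinal range, and then to extend it monotonically and continuously to all of $\{[X]_{\min,\max}:X\le\tilde{X}\}$ by the verbatim analog of the passage from $f'$ to $f^*$ via equation (\ref{eq.+}) in Theorem \ref{thm.5}.

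First I would build a $\le^*$-descending sequence of block sequences $X_0\ge^* X_1\ge^*\cdots$ with each $[X_n]\in\mathcal{U}$ and with $\min(x^n_0)\ra\infty$ (arranged using nonprincipality of $\mathcal{U}_{\min}$, since $[X_n/m]=[X_n]\cap\{x:\min(x)\ge m\}\in\mathcal{U}$), carrying the analog of property (3) of Theorem \ref{thm.5}. At stage $n$, for every finite block sequence $t$ that can occur as an initial segment of a block subsequence of $X_{n-1}$ using blocks below level $n$, and every index $i$ in a fixed finite portion of $I$, if there is some $Y\le X_{n-1}$ with $[Y]\in\mathcal{U}$, initial segment $t$, and $i\notin h([Y]_{\min,\max})$, then I fix such a $Y$ and refine. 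Because $\mathcal{U}$ is block-generated and closed under finite intersections, a common refinement $Z$ with $Z\le X_{n-1}$, $Z\le Y$, and $[Z]\in\mathcal{U}$ always exists (take a block witness for $[X_{n-1}]\cap[Y]\in\mathcal{U}$); since only finitely many pairs $(t,i)$ occur at level $n$, finitely many such refinements produce $X_n$. Monotonicity of $h$ then forces the canonical extension $t{}^\frown(X_n/m)$ to omit $i$ from its $h$-image as well, which is exactly the stabilization needed for continuity.

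Next I would invoke the block-basic hypothesis. By Theorem \ref{thm.blockbasicequiv}, block-basic is equivalent to the stable ordered-union property (condition (2)), so the $\le^*$-descending sequence $(X_n)$ with $[X_n]\in\mathcal{U}$ admits a lower bound $\tilde{X}$ with $[\tilde{X}]\in\mathcal{U}$ and $\tilde{X}\le^* X_n$ for all $n$; this plays the role of the pseudo-intersection in Theorem \ref{thm.5}. After the same even/odd thinning used there (to insert gaps between the levels that decide successive coordinates), I would read off a non-decreasing sequence $(m_l)_{l<\om}$ of levels such that, for every block subsequence $X\le\tilde{X}$, whether the $l$-th element of $I$ lies in $h([X]_{\min,\max})$ depends only on the blocks of $X$ below level $m_l$ together with the fixed tail of $\tilde{X}$. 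This is precisely continuity of $h$ on $\{[X]_{\min,\max}:X\le\tilde{X}\}$. Cofinality of the range on the $\mathcal{U}$-part is inherited from $h$ being cofinal on the base $\{[X]_{\min,\max}:[X]\in\mathcal{U}\}$ of $\mathcal{U}_{\min,\max}$, exactly as in Theorem \ref{thm.5}, and the final monotone continuous extension $f$ is defined by the analog of (\ref{eq.+}).

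I expect the main obstacle to be the stabilization and continuity step in the two-dimensional setting. Unlike the one-dimensional situation of Theorem \ref{thm.5}, membership of a point $(p,q)$ in $[X]_{\min,\max}$ is governed by a pair $a\le b$ of block indices, so a finite initial segment of $X$ controls the min-coordinate directly while the max-coordinate is pinned down only once the block mins and maxes grow fast enough. The role of the thinning and of the growth condition $\min(x^n_0)\ra\infty$ is exactly to guarantee that a finite initial segment of $X$ determines a finite initial segment of $[X]_{\min,\max}$ and conversely; verifying this compatibility between the topology on block sequences and the Cantor topology on $\om\times\om$ is the extra argument absent from Theorem \ref{thm.5}.
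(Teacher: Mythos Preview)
Your outline matches the paper's proof closely: build a $\le^*$-descending fusion sequence $X_n$ carrying the analog of property (3) of Theorem \ref{thm.5}, diagonalize using the stable ordered-union form of block-basic to get $Y\le^* X_n$, then thin to $\tilde{X}$ so that consecutive blocks are separated by enough levels to make the continuity argument go through, and finally extend via the analog of (\ref{eq.+}). The one place where your description does not carry over verbatim is the thinning step. You write ``the same even/odd thinning used there,'' but in Theorem \ref{thm.5} that step works because $\mathcal{U}$ is an ultrafilter on $\om$: one simply intersects $Y$ with one half of an interval partition of $\om$ and stays in $\mathcal{U}$. For block sequences there is no such move --- passing from $Y$ to an arbitrary subsequence $\tilde{X}$ gives no reason for $[\tilde{X}]\in\mathcal{U}$, and the sets $[\tilde{X}_{\mathrm{even}}]$, $[\tilde{X}_{\mathrm{odd}}]$ do not partition $[Y]$.

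The paper handles this by invoking the $2$-dimensional Ramsey property (equivalent to block-basic by Theorem \ref{thm.blockbasicequiv}): after fixing $Y$ and the levels $l_n$, it colors $(y_0,y_1)\in[Y]^{[2]}$ according to whether there is an $n$ with $\max(y_0)<l_n$ and $l_{n+2}\le\min(y_1)$, and extracts a monochromatic $\tilde{X}\le Y$ with $[\tilde{X}]\in\mathcal{U}$; a short argument shows the color must be $0$, so every pair of consecutive blocks of $\tilde{X}$ straddles at least two levels. This is the ``extra argument absent from Theorem \ref{thm.5}'' you anticipated, and it is exactly what replaces the even/odd thinning. Once you make this substitution, the rest of your sketch --- reading off the continuity from finite initial segments and the fixed tail of $\tilde{X}$, and extending to all $X\le\tilde{X}$ by intersecting over $\mathcal{U}$-witnesses above $X$ --- is what the paper does.
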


\begin{proof}
Let $\mathcal{B}$ be the collection of block sequences $X$ such that $[X]\in\mathcal{U}$.
Then $\{[X]:X\in\mathcal{B}\}$ is a base for $\mathcal{U}$.
Let $\mathcal{C}=\{[X]_{\min,\max}:X\in\mathcal{B}\}$.
Then $\mathcal{C}$ is a base for $\mathcal{U}_{\min,\max}$.
For the sake of notation, let $\mathcal{W}$ denote $\mathcal{U}_{\min,\max}$.
Let $\mathcal{V}$ be any ultrafilter on some countable base set $I$ such that $\mathcal{W}\ge_T\mathcal{V}$ and let
$f:\mathcal{W}\ra\mathcal{V}$ be 
 a monotone cofinal map witnessing that $\mathcal{W}\ge_T\mathcal{V}$.
Then  $f\re\mathcal{C}$ is also a monotone cofinal map from $\mathcal{C}$ into $\mathcal{V}$.

In a similar manner as in the proof of Theorem \ref{thm.5}, we construct an $\tilde{X}\in\mathcal{B}$ such that the map $f$ is continuous on
$\{[W]_{\min,\max}:W\in\mathcal{B},\ W\le\tilde{X}\}$.
Let $\lgl  i_n:n<\om\rgl$ be an enumeration of $I$.
Let $X_0= (\{0\},\{1\},\{2\},\dots)$.
Given $X_n$,
take $X_{n+1} \le X_n$ such that,
letting $(x_i^{n+1})_{i<\om}$ denote $X_{n+1}$,
\begin{enumerate}
\item
$\min(x^{n+1}_0)\ge n+1$;
\item
For each finite block  sequence $s\sse\mathcal{P}(n+1)$, for each $k\le n$, 
if there is a $Z\in\mathcal{B}$ such that $\min(Z)\ge n+1$ and $i_k\not\in f([s\cup Z]_{\min,\max})$,
then $i_k\not\in f([s\cup X_n]_{\min,\max})$.
\end{enumerate}

Since $\mathcal{U}$ is block-basic, 
there is a  $Y\in\mathcal{B}$ such that for each $n<\om$,
$Y\le^* X_n$.
Let $l_0=0$ and for each $n<\om$, let $l_{n+1}>l_n$ satisfy $l_{n+1}=\min(y)$ for some $y\in Y$ and $Y/ l_{n+1}\le X_{l_n}$.

Color $[Y]^{[2]}$ as follows:  Let $h((y_0,y_1))=0$ if there is an $n<\om$ such that $\max(y_0)<l_{n}$ and $l_{n+2}\le\min(y_1)$; $1$ otherwise. 
Since $\mathcal{U}$ has the Ramsey property for pairs, 
there is a block-sequence $\tilde{X}\le Y$ such that $h$ is  constant on $[\tilde{X}]^{[2]}$.
$h$ cannot be constantly $1$ on $[W]^{[2]}$ for any block-sequence $W$, since for any block-sequence $(z_k)$,
there will be some $n$ and some $k<k'$ such that $\max(z_k)<l_n$ and $l_{n+2}\le \min(z_{k'})$;
and such a pair will have color $0$.  
Thus, $h$ is constantly $0$ on $[\tilde{X}]^{[2]}$.

Let $(l_{n_j})$  be
 a subsequence of $(l_n)$ such that 
for each $x$ in $\tilde{X}$, 
either $\max(x)<l_{n_{2j+1}}$ or $l_{n_{2j+2}}\le\min(x)$. 
Suppose $W=(w_0,w_1,w_2,\dots)\le\tilde{X}$ and is in $\mathcal{B}$.
Let $C=[W]_{\min,\max}$ and let $i\in I$.
Let $k$ be such that $i=i_k$.
Take $j$ large enough that $k<l_{n_{2j+1}}$ 
and there is an $m$ such that 
$\max(w_m)<l_{n_{2j+1}}$ and $\min(w_{m+1})\ge l_{n_{2j+2}}$.
Note that $W/ l_{n_{2j+1}}
\le\tilde{X}/ l_{n_{2j+1}}
\sse Y/ l_{n_{2j+1}}
=Y/ l_{n_{2j+2}}
\le X_{l_{n_{2j+1}}}$.
Thus,
$i\not\in f(C)$
iff
$i\not\in f([t\cup(W/ l_{n_{2j+1}})]_{\min,\max})$, 
where $t=(w_0,\dots,w_m)$,
iff
$i\not\in f([t\cup X_{l_{n_{2j+1}}}]_{\min,\max})$
iff
$i\not\in f([t\cup(\tilde{X}/ l_{n_{2j+1}})]_{\min,\max})$.
Thus, $f$ is continuous on $\{[W]_{\min,\max}:W\in\mathcal{B},\ W\le\tilde{X}\}$.

In the following natural way, $f\re \{[W]_{\min,\max}:W\in\mathcal{B},\ W\le\tilde{X}\}$ can be extended to a continuous monotone map from $\{[X]_{\min,\max}:X\le\tilde{X}\}$ into $\mathcal{P}(I)$.
For any $X\le\tilde{X}$,
define $f'([X]_{\min,\max})$ to be $\bigcap\{f([W]_{\min,\max}):W\in\mathcal{B},\ W\le\tilde{X},$ and $X\le W\}$.
It follows from the definition of $f'$ and the fact that $f$ is monotone on $\{[W]_{\min,\max}:W\in\mathcal{B},\ W\le\tilde{X}\}$
that $f'$ is monotone on $\{[X]_{\min,\max}:X\le\tilde{X}\}$.
Note also that when restricted to $\{[W]_{\min,\max}:W\in\mathcal{B},\ W\le\tilde{X}\}$
$f'$ is the same as $f$.
Finally, $f'$ is continuous, since 
for any $X\le\tilde{X}$ and any $k<\om$,
$k\in f'([X]_{\min,\max})$ iff
for all $W\in\mathcal{B},\ W\le\tilde{X},$ and $X\le W$,
$k\in f([W]_{\min,\max})$,
and each of these is determined by the initial segment of $W$ lying strictly below some particular $l_{n_{2_j+2}}$ depending  only on $k$.
So a finite amount of information which depends only on $k$ and $X$ determines whether or not $k$ is in $f'([X]_{\min,\max})$.
Hence, $f'$ is continuous.
\end{proof}

Recall the following theorem of Hindman from \cite{Hindman74}, which is useful for constructing block-basic ultrafilters.

\begin{thm}[Hindman's Theorem]\label{thm.Hindman}
For every finite coloring of $\FIN$,
there is an infinite block sequence $X=(x_n)$  of members of $\FIN$ such that the set $[X]$ of all finite unions of members of $X$ is monochromatic.
\end{thm}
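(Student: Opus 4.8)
The plan is to give the Galvin--Glazer ultrafilter proof, which fits naturally with the semigroup framework already developed for $(\FIN,\cup)$ in this section. First I would work in the space $\beta\FIN$ of ultrafilters on $\FIN$ equipped with the extended operation $\cup$ described above, and restrict attention to the subset $\gamma\FIN$ consisting of those ultrafilters $\mathcal{U}$ which contain every tail set $T_m=\{x\in\FIN:\min(x)\ge m\}$ for $m<\om$. One checks that $\gamma\FIN$ is nonempty, closed (hence compact), closed under $\cup$, and that for each fixed $\mathcal{V}$ the map $\mathcal{U}\mapsto\mathcal{U}\cup\mathcal{V}$ is continuous; thus $(\gamma\FIN,\cup)$ is a compact right-topological semigroup. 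By the Ellis--Namakura lemma---which is exactly the content of Glazer's argument already invoked in this section via \cite{ComfortBK74}---there is an idempotent $\mathcal{U}=\mathcal{U}\cup\mathcal{U}$ in $\gamma\FIN$.

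Next, given a finite coloring $c:\FIN\to r$, some color class $A=c^{-1}(i)$ lies in $\mathcal{U}$. The heart of the argument is an absorption lemma. For $B\sse\FIN$ and $x\in\FIN$ write $x^{-1}B=\{y\in\FIN:\max(x)<\min(y)\text{ and }x\cup y\in B\}$, and set $A^\star=\{x\in A:x^{-1}A\in\mathcal{U}\}$. Unwinding the definition of $\mathcal{U}\cup\mathcal{U}$ and using idempotency, one shows first that $A^\star\in\mathcal{U}$, and then the self-improving property that $x^{-1}A^\star\in\mathcal{U}$ for every $x\in A^\star$.

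Finally I would recursively build the block sequence. Choose $x_0\in A^\star$. Having chosen a block sequence $x_0,\dots,x_n$ all of whose finite unions lie in $A^\star$, the set $B=A^\star\cap\bigcap\{u^{-1}A^\star:u\in[(x_0,\dots,x_n)]\}$ is a finite intersection of members of $\mathcal{U}$, hence in $\mathcal{U}$; since $\mathcal{U}\in\gamma\FIN$ we also have $T_{\max(x_n)+1}\in\mathcal{U}$, so I may pick $x_{n+1}\in B\cap T_{\max(x_n)+1}$. Then $\max(x_n)<\min(x_{n+1})$, and because $x_{n+1}\in u^{-1}A^\star$ for every union $u$ of the earlier blocks (and $x_{n+1}\in A^\star$ itself), every finite union of $x_0,\dots,x_{n+1}$ again lies in $A^\star$. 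The resulting infinite block sequence $X=(x_n)_{n<\om}$ satisfies $[X]\sse A^\star\sse A$, so $[X]$ is monochromatic of color $i$.

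The main obstacle is the absorption lemma together with the verification that $\gamma\FIN$ really is a closed subsemigroup on which the product is right-continuous: the operation $\cup$ on $\FIN$ is only \emph{partially} defined, requiring $\max(x)<\min(y)$, so one must check that the extension to $\gamma\FIN$ is well behaved. Restricting to the tail-concentrated ultrafilters is precisely what repairs this partiality---it guarantees that the relevant witnessing sets remain in $\mathcal{U}$ and that the blocks can always be chosen with strictly increasing supports---thereby making the Ellis--Namakura machinery and the recursion applicable.
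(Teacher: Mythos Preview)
The paper does not give a proof of this statement; it merely records Hindman's theorem with a reference to \cite{Hindman74} and then uses it as a tool in the construction of Theorem~\ref{thm.5Tukey}. So there is no ``paper's own proof'' to compare against.

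That said, your Galvin--Glazer argument is correct and well presented. The verification that $\gamma\FIN$ is a closed subsemigroup, the absorption step $A^\star\in\mathcal{U}$ with $x^{-1}A^\star\in\mathcal{U}$ for $x\in A^\star$, and the recursive construction of the block sequence all go through exactly as you describe; the key identity $y^{-1}(x^{-1}A)=(x\cup y)^{-1}A$ for $\max(x)<\min(y)$ is what makes the self-improvement work, and your use of the tail sets $T_m$ handles the partiality of $\cup$ cleanly. This proof is in fact different from Hindman's original 1974 argument, which was a direct and rather intricate combinatorial induction; the idempotent-ultrafilter route you give is due to Galvin and Glazer and is now the standard approach, and it meshes naturally with the semigroup framework the paper sets up earlier in this section.
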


\begin{thm}\label{thm.5Tukey}
Assuming CH, there is a block-basic ultrafilter $\mathcal{U}$ on $\FIN$ such that $\mathcal{U}_{\min,\max}<_T\mathcal{U}$ and  
$\mathcal{U}_{\min}$ and $\mathcal{U}_{\max}$ are Tukey incomparable.  (In the following diagram, arrows represent strict Tukey reducibility.)
$$
\xymatrix{
&\mathcal{U} \ar[d] &\\
&\mathcal{U}_{\min,\max} \ar[ld] \ar[rd] &\\
\mathcal{U}_{\min}\ar[rd] & & \mathcal{U}_{\max}\ar[ld]\\
&\mathbf{1}}
$$


\end{thm}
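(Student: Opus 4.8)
The plan is to carry out a transfinite construction of length $\om_1$ under CH, building $\mathcal{U}$ as a block-generated ultrafilter generated by a $\le^*$-decreasing tower $\lgl X_\al:\al<\om_1\rgl$ of infinite block sequences, so that $\mathcal{U}=\{A\sse\FIN:\exists\al<\om_1\ [X_\al]\sse A\}$. At each successor stage there are three jobs: (a) decide membership of the $\al$-th subset of $\FIN$ (to make $\mathcal{U}$ an ultrafilter) while keeping the tower stable in the sense of Theorem \ref{thm.blockbasicequiv}(2) (to make $\mathcal{U}$ block-basic); (b) defeat a fixed continuous monotone map as a potential witness of $\mathcal{U}_{\min,\max}\ge_T\mathcal{U}$; and, if one wants a self-contained argument, (c) defeat continuous monotone maps witnessing $\mathcal{U}_{\min}\ge_T\mathcal{U}_{\max}$ or $\mathcal{U}_{\max}\ge_T\mathcal{U}_{\min}$. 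Job (a) is the standard CH construction of a stable ordered-union ultrafilter: apply Hindman's Theorem \ref{thm.Hindman} to the two-colouring given by the set being decided, pass to a monochromatic sub-block-sequence, and use $\mathfrak{c}=\aleph_1$ together with Theorem \ref{thm.blockbasicequiv} to diagonalise through all countable subfamilies so that the final tower is stable, hence block-basic. (Existence of block-basic ultrafilters under CH is in any case guaranteed by Corollary \ref{cor.V0V1tominmax}.)

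The heart of the proof is job (b), which yields the strict reduction $\mathcal{U}_{\min,\max}<_T\mathcal{U}$. Since $\mathcal{U}\ge_{RK}\mathcal{U}_{\min,\max}$ gives $\mathcal{U}\ge_T\mathcal{U}_{\min,\max}$ automatically, only $\mathcal{U}_{\min,\max}\not\ge_T\mathcal{U}$ must be arranged. Here Theorem \ref{thm.5.2} does the essential work: applied with $\mathcal{V}=\mathcal{U}$ on the countable index set $\FIN$, it guarantees that \emph{if} $\mathcal{U}_{\min,\max}\ge_T\mathcal{U}$ then this is already witnessed by some monotone continuous $f:\mathcal{P}(\om\times\om)\ra\mathcal{P}(\FIN)$ whose restriction to $\{[X]_{\min,\max}:[X]\in\mathcal{U}\}$ has cofinal range in $\mathcal{U}$. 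As there are only $\mathfrak{c}=\aleph_1$ such continuous monotone maps, I would fix an enumeration $\lgl f_\al:\al<\om_1\rgl$ of them and, at stage $\al+1$, choose $X_{\al+1}\le X_\al$ so that $f_\al$ fails to be cofinal into $\mathcal{U}$. Concretely, I would use the Ramsey property of $\mathcal{U}$ (Theorem \ref{thm.blockbasicequiv}) to pass to a sub-block-sequence on which the continuous image $f_\al([X]_{\min,\max})$ is controlled, and then, mimicking the splitting argument inside the proof of Theorem \ref{thm.5.2} and the diagonalisations of Lemmas \ref{lem.fix} and \ref{lem.T}, select $X_{\al+1}$ with $[X_{\al+1}]\in\mathcal{U}$ witnessing that no $[X]_{\min,\max}$ maps into $[X_{\al+1}]$. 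Running this through all $\al<\om_1$ defeats every continuous monotone map, and hence by Theorem \ref{thm.5.2} every cofinal map, so $\mathcal{U}_{\min,\max}\not\ge_T\mathcal{U}$.

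Once $\mathcal{U}$ is block-basic, the remaining relations follow from results already in hand. By Corollary \ref{cor.blocksel}, $\mathcal{U}_{\min}$ and $\mathcal{U}_{\max}$ are non-isomorphic selective ultrafilters, hence nonprincipal, so both lie strictly above the trivial type $\mathbf{1}$; their Tukey-incomparability holds either by weaving job (c) into the construction (using the canonical form of Theorem \ref{thm.5} for the selective ultrafilters $\mathcal{U}_{\min},\mathcal{U}_{\max}$) or directly from the cited result of Raghavan and \Todorcevic. For the two middle arrows, Fact \ref{facts.beginning}(4) gives $\mathcal{U}_{\min,\max}\equiv_T\mathcal{U}_{\min}\cdot\mathcal{U}_{\max}$, and since $\mathcal{U}_{\max}$ is selective (hence a p-point with $\mathcal{U}_{\max}\ge_T\om^{\om}$ by rapidity and Fact \ref{fact.rapid>omom}), Corollary \ref{cor.dot} yields $\mathcal{U}_{\min}\cdot\mathcal{U}_{\max}\equiv_T\mathcal{U}_{\min}\times\mathcal{U}_{\max}$. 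Thus $\mathcal{U}_{\min,\max}$ is, up to $\equiv_T$, the join of $\mathcal{U}_{\min}$ and $\mathcal{U}_{\max}$. Were $\mathcal{U}_{\min,\max}\le_T\mathcal{U}_{\min}$, we would obtain $\mathcal{U}_{\max}\le_T\mathcal{U}_{\min}\times\mathcal{U}_{\max}\le_T\mathcal{U}_{\min}$, contradicting incomparability; hence $\mathcal{U}_{\min}<_T\mathcal{U}_{\min,\max}$ strictly, and symmetrically $\mathcal{U}_{\max}<_T\mathcal{U}_{\min,\max}$, completing the diagram.

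I expect the main obstacle to be job (b) performed simultaneously with job (a): because $\mathcal{U}_{\min,\max}$ is only the $(\min,\max)$-image of the ultrafilter $\mathcal{U}$ still being built, the diagonalisation defeating $f_\al$ must be realised by the choice of the next block sequence $X_{\al+1}$, and one must check this choice is compatible with keeping the tower stable and with the Hindman/Ramsey steps used to decide sets and secure incomparability of $\mathcal{U}_{\min}$ and $\mathcal{U}_{\max}$. The technical core is therefore the combinatorial selection lemma---the block-sequence analogue of Lemmas \ref{lem.fix} and \ref{lem.T}---showing that a continuous monotone image of the min-max projections can always be escaped inside a further member of $\mathcal{U}$; the reduction of arbitrary cofinal maps to continuous ones via Theorem \ref{thm.5.2} is precisely what makes the entire $\om_1$-step diagonalisation feasible.
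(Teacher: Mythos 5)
Your overall architecture coincides with the paper's: reduce everything to showing $\mathcal{U}_{\min,\max}\not\ge_T\mathcal{U}$ (the bottom of the diagram being handled by Corollary \ref{cor.blocksel}, the Raghavan--\Todorcevic\ result, and the identification $\mathcal{U}_{\min,\max}\equiv_T\mathcal{U}_{\min}\times\mathcal{U}_{\max}$ via Fact \ref{facts.beginning}(4), Fact \ref{fact.rapid>omom} and Corollary \ref{cor.dot}), then run a CH-length diagonalization against the continuous monotone maps supplied by Theorem \ref{thm.5.2}, interleaved with the standard Hindman steps that make $\mathcal{U}$ a block-basic ultrafilter. All of that is correct and is exactly what the paper does (with the minor bookkeeping difference that the paper enumerates \emph{pairs} $(f_\beta,\tilde X_\beta)$, since the map from Theorem \ref{thm.5.2} is only defined on $\{[Z]_{\min,\max}:Z\le\tilde X\}$ for some particular $\tilde X$, which is what makes the trivial escape case $[S_\al]\cap[\tilde X_\al]=\emptyset$ available).

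However, the step you yourself flag as "the technical core" --- the combinatorial selection lemma showing that a continuous monotone image of the min-max projections can always be escaped inside a further member of $\mathcal{U}$ --- is precisely the new content of this theorem, and "mimicking Lemmas \ref{lem.fix} and \ref{lem.T}" does not produce it: those lemmas exploit structure ($\mathcal{P}(\om)$-splitting, large squares) that has no direct analogue here. The paper's actual mechanism is a specific trick exploiting the fact that $f_\al$ sees only $(\min,\max)$ data. In the hard case, where every $W\le Z$ admits $W'\le W$ with $f_\al([W']_{\min,\max})\sse[W]$, one groups the blocks of $Z=(z_i)$ in triples, setting $w_i=z_{3i}\cup z_{3i+1}\cup z_{3i+2}$, takes $W'\le W$ with $f_\al([W']_{\min,\max})\sse[W]$, and then replaces each $w'_j=\bigcup_{i\in I_j}w_i$ by $s_j=z_{3\min(I_j)}\cup z_{3\max(I_j)+2}$. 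The resulting $S_\al=(s_j)$ has exactly the same min and max on each block as $W'$, so $[S_\al]_{\min,\max}=[W']_{\min,\max}$ and hence $f_\al([S_\al]_{\min,\max})\sse[W]$; but $[S_\al]\cap[W]=\emptyset$ because each $s_j$ omits the middle piece $z_{3i+1}$ of every $w_i$ it meets. Thus $[S_\al]_{\min,\max}$ is forced into $\mathcal{U}_{\min,\max}$ while its $f_\al$-image is disjoint from $[S_\al]\in\mathcal{U}$, killing cofinality. Without this (or an equivalent) construction your stage-$\al$ step has no way to proceed in the hard case, so as written the proposal has a genuine gap at its central point, even though everything surrounding it is sound.
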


\begin{proof}
Recall that for every block-generated ultrafilter $\mathcal{U}$ on $\FIN$, $\mathcal{U}_{\min,\max}\equiv_{RK}\mathcal{U}_{\min}\cdot\mathcal{U}_{\max}$, and by 
Fact \ref{fact.rapid>omom}
 and Corollary \ref{cor.dot},
$\mathcal{U}_{\min}\cdot\mathcal{U}_{\max}\equiv_T\mathcal{U}_{\min}\times\mathcal{U}_{\max}$.
Recall that $\mathcal{U}_{\min}$ and $\mathcal{U}_{\max}$ are Tukey incomparable, since they are non-isomorphic selective ultrafilters.
Thus, it suffices to construct a block-basic ultrafilter
$\mathcal{U}$ on $\FIN$ such that  $\mathcal{U}_{\min,\max}<_T\mathcal{U}$.
Assuming CH, one can  construct a block-basic ultrafilter on $\FIN$ in the standard way (see \cite{Blass87}).

Fix a well-ordering  $\lgl A_{\beta}:\beta<\om_1\rgl$ of $\mathcal{P}(\FIN)$.
By Theorem \ref{thm.5.2}, we can
enumerate   as $\lgl (f_{\beta},\tilde{X}_{\beta}):\beta<\om_1\rgl$,
all
pairs $(f,\tilde{X})$
such that $\tilde{X}\in\FIN^{[\infty]}$ and $f:\{[Z]_{\min,\max}:Z\le\tilde{X}\}\ra\mathcal{P}(\FIN)$
is a monotone continuous function. 
We build a sequence $\lgl S_{\al}:\al<\om_1\rgl$ of elements of $\FIN^{[\infty]}$ such that for each $\al<\om_1$,
\begin{enumerate}
\item[(i)]
For all $\beta<\al$, $S_{\al}\le^* S_{\beta}$;
\item[(ii)]
Either $[S_{\al}]\sse A_{\al}$ or else $[S_{\al}]\cap A_{\al}=\emptyset$;
\item[(iii)] One of the following hold:\\
(a) $[S_{\al}]\cap[\tilde{X}_{\al}]=\emptyset$; or\\
(b) for each $W'\le S_{\al}$, $f_{\al}([W']_{\min,\max})\not\sse [S_{\al}]$;
or\\ 
(c)
$f_{\al}([S_{\al}]_{\min,\max})\cap [S_{\al}]= \emptyset$.
\end{enumerate}

Let $S_0$ be any block sequence such that either $[S_0]\sse A_0$ or else $[S_0]\cap A_0=\emptyset$.
Such an $S_0$ exists by Hindman's Theorem.
At stage $\al$ in the construction, 
let $Y$ be a block sequence  such that
\begin{enumerate}
\item[(i)]
 for all $\beta<\al$, $Y\le^* S_{\beta}$, and
\item[(ii)]
either $[Y]\sse A_{\al}$ or else $[Y]\cap A_{\al}=\emptyset$.
\end{enumerate}
(The standard argument using Hindman's Theorem to find such a $Y$ can be found on p.\ 93 of \cite{Blass87}.)

Now we show there is an $S_{\al}\le Y$ satisfying (iii).
If there is no block sequence $Z\le Y,\tilde{X}_{\al}$, then the domain of $f_{\al}$ is not contained in $\mathcal{U}_{\min,\max}$ for any block-generated ultrafilter $\mathcal{U}$ extending $\{[S_{\beta}]:\beta<\al\}$.
In this case, use Hindman's Theorem to find an $S_{\al}\le Y$ such that $[S_{\al}]\cap[\tilde{X}_{\al}]=\emptyset$.

Now suppose there is a $Z\le Y,\tilde{X}_{\al}$.
If there is a $W\le Z$ such that for each $W'\le W$, $f_{\al}([W']_{\min,\max})\not\sse [W]$, then 
let $S_{\al}=W$.
This ensures that $f_{\al}$ cannot be cofinal 
into any block-generated ultrafilter  extending 
the filter generated by $\{[S_{\beta}]:\beta\le\al\}$, since $f_{\al}$ is monotone.

Otherwise, for each $W\le Z$, there is a $W'\le W$ such that $f_{\al}([W']_{\min,\max})\sse[W]$.
Let $(z_i)$ denote $Z$.
Fix $W=(w_i)$ to be the block sequence where each 
$w_i=z_{3i}\cup z_{3i+1}\cup z_{3i+2}$.
Thus, $W\le Z$.
Fix some $W'\le W$ such that $f_{\al}([W']_{\min,\max})\sse[W]$.
$W'\le W$ means that $W'=(w'_j)$, where each $w'_j=
\bigcup_{i\in I_j}w_i$, where each $I_j$ is some finite set.
Let $m_j=\min(I_j)$ and $k_j=\max(I_j)$.
Let $S_{\al}=(s_j)$, where each
$s_j=z_{3m_j}\cup z_{3k_j+2}$.
Then $\min(s_j)=\min(w'_j)$ and $\max(s_j)=\max(w'_j)$ for all $j<\om$;
so $[W']_{\min,\max}=[S_{\al}]_{\min,\max}$.
Note that $[W]\cap[S_{\al}]=\emptyset$, and $S_{\al}\le Z$.
Note that
for any ultrafilter $\mathcal{U}$ extending $\{[S_{\beta}]:\beta\le\al\}$, $[S_{\al}]_{\min,\max}\in\mathcal{U}_{\min,\max}$.
Hence, $f_{\al}([S_{\al}]_{\min,\max})=f_{\al}([W']_{\min,\max})\sse[W]$ which is disjoint from $[S_{\al}]$.
Thus, the range of $f_{\al}$ will not be contained in $\mathcal{U}$.
By this and the previous two paragraphs, we have satisfied  (iii).

Let $\mathcal{U}$ be the filter generated by $\{[S_{\al}]:\al<\om_1\}$.
Condition (ii) ensures that $\mathcal{U}$ is an ultrafilter which is block-generated.
Condition (iii) ensures that
 $\mathcal{U}_{\min,\max}\not\ge_T\mathcal{U}$, and thus  $\mathcal{U}>_T\mathcal{U}_{\min,\max}$.
\end{proof}

\begin{question}\label{q.UUmin}
If $\mathcal{U}$ is any block-basic ultrafilter, does it follow that $\mathcal{U}>_T\mathcal{U}_{\min,\max}$?
\end{question}

\begin{rem}
Note that the proof of Theorem \ref{thm.5Tukey}
 shows that the generic filter for the forcing notion $(\FIN^{[\infty]},\le^*)$ adjoins a block-basic ultrafilter $\mathcal{U}$ on $\FIN$ with the properties stated in Theorem \ref{thm.blockbasicequiv}.
On the other hand, an argument analogous with the case of selective ultrafilters  on $\om$ (see Theorem 4.9 of Todorcevic appearing in  \cite{Farah98})
shows that if there is a supercompact cardinal,
then every block-basic ultrafilter $\mathcal{U}$ on $\FIN$ is generic over $L(\mathbb{R})$ for the forcing notion
$(\FIN^{[\infty]},\le^*)$.
Thus, the conclusion of Theorem 4.9 in \cite{Farah98}
is true for any block-basic ultrafilter $\mathcal{U}$ on $\FIN$ assuming the existence of a supercompact cardinal.
This leads us also to the following related problem.
\end{rem}

\begin{problem}\label{problem.LR5}
Assume the existence of a supercompact cardinal.
Let $\mathcal{U}$ be an arbitrary block-basic ultrafilter on $\FIN$.
Show that the inner model $L(\mathbb{R})[\mathcal{U}]$
has exactly five Tukey types of ultrafilters on a countable index set.
\end{problem}

This problem is based on the $\mathcal{U}$-version of Taylor's canonical Ramsey Theorem for $\FIN$
stating that for each map $f:\FIN\ra\om$,
there is an $[X]\in\mathcal{U}$ such that $f\re[X]$ is equivalent to one of the five mappings:
constant, identity, min, max, (min,max) (see \cite{Argyros/TodorcevicBK}, \cite{Taylor70}).
If the answer to this problem is positive, 
then one can look at ultrafilters $\mathcal{U}$ on the index set $\FIN_k$ $(k=1,2,3,\dots)$ with analogous Ramsey-theoretic properties
whose corresponding inner models $L(\mathbb{R})[\mathcal{U}]$ have different finite numbers of Tukey types.
This will of course be based on Gower's Theorem for $\FIN_k$ and Lopez-Abad's canonical Ramsey Theorem for $\FIN_k$
(see \cite{Argyros/TodorcevicBK}, \cite{Lopez-Abad07}, \cite{Gowers02}).
For example, for a block-basic ultrafilter $\mathcal{U}$ on $\FIN_2$,
one could expect exactly 43 Tukey types of ultrafilters in $L(\mathbb{R})[\mathcal{U}]$.

The following is a subproblem of Problem \ref{problem.LR5}.

\begin{question}\label{q.5?}
Is it true that for each block-basic $\mathcal{U}$, there are no Tukey types (a) strictly between $\mathcal{U}$ and $\mathcal{U}_{\min,\max}$,
(b) strictly between $\mathcal{U}_{\min,\max}$ and $\mathcal{U}_{\min}$,
and (c) strictly between $\mathcal{U}_{\min,\max}$ and $\mathcal{U}_{\max}$?
\end{question}

\begin{question}
Are there block-basic  ultrafilters $\mathcal{U},\mathcal{V}$ on $\FIN$ which are Tukey equivalent but RK incomparable?
\end{question}


\section{A characterization of ultrafilters which are not of Tukey top degree}

In this section we investigate Isbell's question of whether ZFC implies that there is always an ultrafilter which does not have top Tukey degree. 
It will be useful here to consider the directed partial ordering $\contains^*$ on ultrafilters as well as the one we have been considering all along, namely $\contains$.
We note that always $(\mathcal{U},\contains)\le_T(\mathcal{U},\contains^*)$; for any subset $\mathcal{X}\sse\mathcal{U}$ which is unbounded in $(\mathcal{U},\contains^*)$ is also unbounded in $(\mathcal{U},\contains)$, so the identity map
$id_{\mathcal{U}}:(\mathcal{U},\contains)\ra(\mathcal{U},\contains^*)$ is a Tukey map.
Hence, if $(\mathcal{U},\contains^*)<_T[\mathfrak{c}]^{\om}$,
then also $(\mathcal{U},\contains)<_T[\mathfrak{c}]^{\om}$.
Milovich showed in \cite{Milovich08}
that for any ultrafilter $\mathcal{U}$, there is an ultrafilter $\mathcal{W}$ such that $(\mathcal{W},\contains^*)\le_T(\mathcal{U},\contains)$.
Thus, there is an ultrafilter $\mathcal{U}$ such that $(\mathcal{U},\contains)<_T([\mathfrak{c}]^{<\om},\sse)$ if and only if there is an ultrafilter $\mathcal{W}$ such that 
$(\mathcal{W},\contains^*)<_T([\mathfrak{c}]^{<\om},\sse)$.

CH implies the existence of p-points, which solves Isbell's problem, since p-points  have Tukey type strictly below the top, by Corollary \ref{cor.pptnottop}.
Thus,  we now assume $\neg$CH throughout this section.
Assuming $\neg$CH, the following combinatorial principle holds.

\begin{defn}[Todorcevic]\label{defn.diamond}
$\lozenge_{[\mathfrak{c}]^{\om}}$ is the statement: There exist sets $S_A\sse A$, $A\in[\mathfrak{c}]^{\om}$, such that for each $X\sse\mathfrak{c}$, 
$\{A\in[\mathfrak{c}]^{\om}:X\cap A=S_A\}$ is stationary in $[\mathfrak{c}]^{\om}$.
\end{defn}

This implies the next combinatorial principle  in the same way that the standard $\lozenge$ implies $\lozenge^-$.

\begin{defn}[Todorcevic]\label{defn.diamondminus}
$\lozenge^-_{[[\om]^{\om}]^{\om}}$ is the statement:
There exist ordered pairs $(\mathcal{U}_A,\mathcal{X}_A)$,
where $A\in[[\om]^{\om}]^{\om}$ and $\mathcal{X}_A\sse\mathcal{U}_A\sse A$,
such that for each pair $(\mathcal{U},\mathcal{X})$ with $\mathcal{X}\sse\mathcal{U}$ and $\mathcal{X},\mathcal{U}\in[[\om]^{\om}]^{\mathfrak{c}}$,
$\{A\in[[\om]^{\om}]^{\om}:\mathcal{U}_A=\mathcal{U}\cap A$, $\mathcal{X}_A=\mathcal{X}\cap A\}$ is stationary in $[[\om]^{\om}]^{\om}$.
\end{defn}


We now proceed to define some dense subsets of $[\om]^{\om}$, denoted $D_A$ and $D_A'$  which can be used to give conditions under which an ultrafilter on $\om$ is a p-point, and other conditions under which  it has Tukey type less than $[\mathfrak{c}]^{<\om}$.

For the rest of this section, fix a $\lozenge^-_{[[\om]^{\om}]^{\om}}$ sequence
$(\mathcal{U}_A,\mathcal{X}_A)$,
where $A\in[[\om]^{\om}]^{\om}$.

\begin{defn}\label{defn.DA}
Let $P_A=\{W\in[\om]^{\om}:\exists X\in\mathcal{U}_A(W\cap X=\emptyset)\}$, and let
$Q_A=\{W\in[\om]^{\om}:\exists(B_n)_{n<\om}\sse\mathcal{X}_A(\forall n<\om,\ W\sse^* B_n)\}$.
Let $D_A=P_A\cup Q_A$.
\end{defn}

\begin{fact}\label{fact.DAdense}
For each $A\in[[\om]^{\om}]^{\om}$,
$D_A$ is dense open in the partial ordering $([\om]^{\om},\contains)$.
\end{fact}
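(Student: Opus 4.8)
The plan is to verify the two defining properties of $D_A=P_A\cup Q_A$ separately, using the standard reading for this forcing: since a condition is refined by shrinking it, ``open'' means closed under passing to infinite subsets, and ``dense'' means that every infinite $W\sse\om$ has an infinite subset lying in $D_A$. (This matches the closure behaviour of $P_A$ and $Q_A$ below and is the sense in which these sets will be met by the generic filter.)

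\emph{Openness} I would dispatch first, as it is immediate. If $W\in P_A$ is witnessed by $X\in\mathcal{U}_A$ with $W\cap X=\emptyset$, then every infinite $W''\sse W$ still satisfies $W''\cap X=\emptyset$, so $W''\in P_A$. Likewise, if $W\in Q_A$ is witnessed by a sequence $(B_n)_{n<\om}\sse\mathcal{X}_A$ with $W\sse^* B_n$ for all $n$, then any infinite $W''\sse W$ has $W''\sse^* B_n$ for all $n$, so $W''\in Q_A$. Hence $D_A$ is closed under infinite subsets.

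\emph{Density} is the heart of the matter, and I would prove it by a dichotomy applied to an arbitrary infinite $W\sse\om$. Either (i) there is some $X\in\mathcal{U}_A$ with $W\setminus X$ infinite, in which case $W':=W\setminus X$ is an infinite subset of $W$ with $W'\cap X=\emptyset$, so $W'\in P_A\sse D_A$; or (ii) $W\sse^* X$ for every $X\in\mathcal{U}_A$. In case (ii), since $\mathcal{X}_A\sse\mathcal{U}_A\sse A$ is countable (as $A$ is countable), I would enumerate $\mathcal{X}_A$ as $(B_n)_{n<\om}$; then $W\sse^* B_n$ for every $n$, so $W\in Q_A\sse D_A$ and $W'=W$ works. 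The two cases are exhaustive, since the failure of (ii) is precisely the existence in (i) of an $X\in\mathcal{U}_A$ with $W\setminus X$ infinite.

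The one point demanding care, and the only genuine obstacle, is the degenerate instance of case (ii) in which $\mathcal{X}_A=\emptyset$ (in particular $\mathcal{U}_A=\emptyset$): then $Q_A=\emptyset$ and the recipe above yields no witness. I would handle this by the harmless normalization of the $\lozenge^-_{[[\om]^{\om}]^{\om}}$ sequence under which each guessed pair $(\mathcal{U}_A,\mathcal{X}_A)$ is arranged to contain a fixed infinite set, so that $\mathcal{X}_A\neq\emptyset$ throughout and case (ii) always lands in $Q_A$; the density we actually use in the construction is in any event only needed at the stationarily many $A$ where $(\mathcal{U}_A,\mathcal{X}_A)$ correctly approximates a genuine pair $\mathcal{X}\sse\mathcal{U}$ of size $\mathfrak{c}$, and there $\mathcal{X}_A$ is automatically nonempty. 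With this convention the dichotomy is complete, establishing that $D_A$ is dense open.
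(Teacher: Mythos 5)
Your proof is correct, and it follows the same basic strategy as the paper's -- shrink into $P_A$ whenever some member of $\mathcal{U}_A$ can be avoided, and fall back on $Q_A$ otherwise -- but your case analysis is cleaner. The paper first distinguishes whether $\mathcal{U}_A$ generates a nonprincipal filter, then whether the given set $Y$ is positive for that filter, and in the positive case it diagonalizes against the countably many members of $\mathcal{U}_A$ to produce a pseudo-intersection $W\in[Y]^{\om}$ with $W\sse^* B$ for every $B\in\mathcal{U}_A$. Your dichotomy -- either some $X\in\mathcal{U}_A$ leaves $W\setminus X$ infinite, or $W$ is already almost contained in every member of $\mathcal{U}_A$ -- makes that diagonalization unnecessary, since in the second case $W$ itself is the witness (using $\mathcal{X}_A\sse\mathcal{U}_A$ to pass to the required sequence from $\mathcal{X}_A$). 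You also isolate the degenerate case $\mathcal{X}_A=\emptyset$, where $Q_A=\emptyset$ and density can genuinely fail (for instance $\mathcal{U}_A=\{\om\}$ gives $D_A=\emptyset$); the paper passes over this silently, and your normalization of the $\lozenge^-_{[[\om]^{\om}]^{\om}}$ sequence is the right harmless repair, especially since, as you note, only the stationarily many $A$ that correctly guess a genuine pair matter for the application.
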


\begin{proof}
Let $Y\in[\om]^{\om}$.
Suppose 
$\mathcal{U}_A$ does not generate a nonprincipal filter.
Then
there are $U,V\in\mathcal{U}_A$ such that $|U\cap V|<\om$.
Then either $|Y\setminus U|=\om$ or $|Y\setminus V|=\om$.
Thus, there is a $W\in[Y]^{\om}$ such that for some $X\in\mathcal{U}_A$,
$W\cap X=\emptyset$.  Hence $W\in P_A$, and moreover, any $W'\in[W]^{\om}$ is also in $P_A$.
Suppose that $\mathcal{U}_A$  generates a nonprincipal filter.
Then
for any $U,V\in\mathcal{U}_A$,
$U$ and $V$ have infinite intersection.
If $Y\not\in\lgl\mathcal{U}_A\rgl^+$, then there is an $X\in\mathcal{U}_A$ such that $|Y\cap X|<\om$.
So $W=Y\setminus  X\in P_A$, and any $W'\in[W]^{\om}$ is also in $P_A$.
Otherwise, $Y\in\lgl\mathcal{U}_A\rgl^+$.  Then there is a $W\in[Y]^{\om}$ such that for each $B\in\mathcal{U}_A$, 
$W\sse^* B$, since $|\mathcal{U}_A|\le\om$.
Thus, $W\in Q_A$.  Moreover, any $W'\in[W]^{\om}$ is also in $Q_A$.
Therefore, $D_A$ is dense open in $[\om]^{\om}$.
\end{proof}

\begin{fact}\label{fact.necessary}
For any nonprincipal ultrafilter $\mathcal{U}$, 
$\{A\in[[\om]^{\om}]^{\om}:\mathcal{U}\cap D_A\ne\emptyset\}$ is stationary.
\end{fact}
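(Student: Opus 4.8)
The plan is to apply the guessing principle $\lozenge^-_{[[\om]^{\om}]^{\om}}$ to a carefully chosen pair, arranging matters so that the witness for $\mathcal{U}\cap D_A\ne\emptyset$ comes from the $P_A$ part of $D_A$ rather than from the $Q_A$ part. The key preliminary observation is that if $W\in\mathcal{U}$ then one can never have $W\cap X=\emptyset$ for $X\in\mathcal{U}$, so feeding the diamond the tautological pair with first coordinate $\mathcal{U}$ (so that $\mathcal{U}_A=\mathcal{U}\cap A\subseteq\mathcal{U}$) would be useless for $P_A$ and would force an appeal to pseudo-intersections. The trick is instead to feed the diamond a \emph{larger} family as first coordinate. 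Concretely, I would apply $\lozenge^-_{[[\om]^{\om}]^{\om}}$ to the pair $(\mathcal{U}^*,\mathcal{X}^*)=([\om]^{\om},\mathcal{U})$. Both coordinates lie in $[[\om]^{\om}]^{\mathfrak{c}}$ (since $[\om]^{\om}$ and the nonprincipal ultrafilter $\mathcal{U}$ each have cardinality $\mathfrak{c}$) and $\mathcal{X}^*=\mathcal{U}\subseteq[\om]^{\om}=\mathcal{U}^*$, so the hypotheses of the principle are met. This yields a stationary set $S=\{A\in[[\om]^{\om}]^{\om}:\mathcal{U}_A=[\om]^{\om}\cap A,\ \mathcal{X}_A=\mathcal{U}\cap A\}$; since each such $A$ is a countable subset of $[\om]^{\om}$, for $A\in S$ we simply have $\mathcal{U}_A=A$.

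Next I would fix, once and for all, an infinite set $N_0\in[\om]^{\om}$ with $N_0\notin\mathcal{U}$. Such an $N_0$ exists because $\mathcal{U}$ is a nonprincipal ultrafilter: partitioning $\om$ into two infinite halves, exactly one half lies in $\mathcal{U}$, and I take $N_0$ to be the other, so that $N_0$ is infinite, $\om\setminus N_0$ is infinite, and $\om\setminus N_0\in\mathcal{U}$. Then $C=\{A\in[[\om]^{\om}]^{\om}:N_0\in A\}$ is club: it is closed under increasing countable unions, and it is cofinal because any countable $B\subseteq[\om]^{\om}$ is contained in the countable set $B\cup\{N_0\}\in C$. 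Hence $S\cap C$ is stationary, being the intersection of a stationary set with a club.

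Finally, for any $A\in S\cap C$ I would verify $\mathcal{U}\cap D_A\ne\emptyset$ directly. From $A\in C$ we have $N_0\in A=\mathcal{U}_A$, and $\om\setminus N_0$ is an infinite set with $(\om\setminus N_0)\cap N_0=\emptyset$, so by the definition of $P_A$ the set $\om\setminus N_0$ lies in $P_A\subseteq D_A$; at the same time $\om\setminus N_0\in\mathcal{U}$. Thus $\om\setminus N_0\in\mathcal{U}\cap D_A$, and $\{A:\mathcal{U}\cap D_A\ne\emptyset\}\supseteq S\cap C$ is stationary, as required. I do not expect a genuine obstacle; the only point requiring care is the choice of first coordinate for the diamond—taking it to be all of $[\om]^{\om}$ rather than $\mathcal{U}$ itself—which is exactly what forces $\mathcal{U}_A$ to contain sets outside $\mathcal{U}$ and lets the cheap $P_A$ witness do the work, with no recourse to $Q_A$, pseudo-intersections, or any $p$-point-like hypothesis on $\mathcal{U}$.
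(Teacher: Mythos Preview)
Your proof is correct, and it takes a genuinely different route from the paper's argument. The paper argues by contradiction: it applies $\lozenge^-_{[[\om]^{\om}]^{\om}}$ to the pair $(\mathcal{U},\mathcal{U})$, so that on the relevant $A$'s one has $\mathcal{U}_A=\mathcal{X}_A=\mathcal{U}\cap A$, and then derives a contradiction from $\mathcal{U}\cap Q_A=\emptyset$ by observing that $\om$ itself lies in $Q_A\cap\mathcal{U}$ once $A$ contains enough cofinite sets. In other words, the paper uses the $Q_A$ half of $D_A$ and pseudo-intersection. You instead feed the diamond the pair $([\om]^{\om},\mathcal{U})$, which forces $\mathcal{U}_A=A$ and thus guarantees that $\mathcal{U}_A$ contains a set outside $\mathcal{U}$; a single fixed $N_0\in[\om]^{\om}\setminus\mathcal{U}$ then gives the explicit witness $\om\setminus N_0\in\mathcal{U}\cap P_A$ on stationarily many $A$. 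Your argument is direct, avoids $Q_A$ entirely, and makes no appeal to pseudo-intersections; the paper's argument, while less elementary, has the virtue of foreshadowing the role of $Q_A$ in the subsequent Facts about p-points and non-top Tukey type.
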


\begin{proof}
Let $\mathcal{U}$ be an ultrafilter and suppose that $\{A\in[[\om]^{\om}]^{\om}:\mathcal{U}\cap D_A\ne\emptyset\}$ is not stationary.
Then $\{A\in[[\om]^{\om}]^{\om}:\mathcal{U}\cap D_A =\emptyset\}$ contains a club set, call it $\mathcal{C}$.
Let $\mathcal{X}=\bigcup\{\mathcal{X}_A:A\in\mathcal{C}\}$.
Let $X\in\mathcal{U}$.  
There are club many $A\in[[\om]^{\om}]^{\om}$ with $X\in A$.
Thus, there are club many $A$ with $(A,\mathcal{U}\cap A)\prec([\om]^{\om},\mathcal{U})$.
By $\lozenge^-_{[[\om]^{\om}]^{\om}}$, 
there is an $A\in[\om]^{\om}$ with $X\in A$ such that $\mathcal{U}\cap A=\mathcal{U}_A$ and $\mathcal{U}\cap A=\mathcal{X}_A$.
Therefore, $\mathcal{U}\sse\mathcal{X}$.

We claim that for each $\mathcal{Y}\in[\mathcal{U}]^{\om}$,
there is no $X\in\mathcal{U}$ such that $X\sse^* Y$ for each $Y\in\mathcal{Y}$.
Take an $A\in\mathcal{C}$ containing the Fr\'{e}chet filter with $\mathcal{Y}\sse A$ such that $\mathcal{U}\cap D_A=\emptyset$, $\mathcal{U}_A=\mathcal{U}\cap A$, and $\mathcal{X}_A=\mathcal{U}\cap A$.
Then $\mathcal{Y}\sse\mathcal{U}\cap A=\mathcal{X}_A$ and for
each infinite subset $\mathcal{Z}$ of $\mathcal{Y}$ in $\mathcal{U}$,
there is no $U\in\mathcal{U}$ such that $U\sse^* Z$ for all $Z\in\mathcal{Z}$, 
since $Q_A\cap\mathcal{U}=\emptyset$.
Contradiction,
since $\mathcal{U}$ contains the Fr\'{e}chet filter and $\om\in Q_A\cap\mathcal{U}\cap A$.
\end{proof}

\begin{fact}\label{fact.clubnontop}
If $\mathcal{U}$ is an ultrafilter and $\mathcal{U}\cap D_A\ne\emptyset$ for club many $A\in[[\om]^{\om}]^{\om}$,
then $\mathcal{U}<_T\mathcal{U}_{\mathrm{top}}$.
\end{fact}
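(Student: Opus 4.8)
The plan is to prove the stronger statement that $(\mathcal{U},\contains^*)<_T[\mathfrak{c}]^{<\om}$; by the reduction recorded at the start of this section, namely that $(\mathcal{U},\contains)\le_T(\mathcal{U},\contains^*)$ always holds, this immediately yields $(\mathcal{U},\contains)<_T[\mathfrak{c}]^{<\om}$, i.e.\ $\mathcal{U}<_T\mathcal{U}_{\mathrm{top}}$. We may assume $\mathcal{U}$ is nonprincipal, since otherwise the conclusion is trivial. Passing to $\contains^*$ is essential, because $Q_A$ is phrased in terms of $\sse^*$ rather than honest inclusion, so the natural bound produced below is a pseudo-intersection, not a genuine intersection.

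First I would record the evident $\contains^*$-analogue of Fact \ref{fact.Tukeytopchar}: $(\mathcal{U},\contains^*)\equiv_T[\mathfrak{c}]^{<\om}$ if and only if there is a subset $\mathcal{X}\sse\mathcal{U}$ with $|\mathcal{X}|=\mathfrak{c}$ such that every infinite $\mathcal{Y}\sse\mathcal{X}$ is unbounded in $(\mathcal{U},\contains^*)$, that is, there is no $Z\in\mathcal{U}$ with $Z\sse^* Y$ for every $Y\in\mathcal{Y}$. The proof is verbatim that of Fact \ref{fact.Tukeytopchar}, with the honest intersection $\bigcap\mathcal{Y}$ and the requirement $\bigcap\mathcal{Y}\notin\mathcal{U}$ replaced throughout by a pseudo-intersection of $\mathcal{Y}$ lying in $\mathcal{U}$ and its nonexistence.

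Then I would argue by contradiction. Suppose $(\mathcal{U},\contains^*)\equiv_T[\mathfrak{c}]^{<\om}$, and fix a witness $\mathcal{X}\sse\mathcal{U}$ with $|\mathcal{X}|=\mathfrak{c}$ as above. Since $\mathcal{X}\sse\mathcal{U}\sse[\om]^{\om}$ and $|\mathcal{X}|=|\mathcal{U}|=\mathfrak{c}$, the pair $(\mathcal{U},\mathcal{X})$ is admissible for $\lozenge^-_{[[\om]^{\om}]^{\om}}$, so $\{A\in[[\om]^{\om}]^{\om}:\mathcal{U}_A=\mathcal{U}\cap A\text{ and }\mathcal{X}_A=\mathcal{X}\cap A\}$ is stationary. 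By hypothesis $\{A:\mathcal{U}\cap D_A\ne\emptyset\}$ contains a club, so intersecting a stationary set with a club I may fix a single $A$ enjoying all three properties simultaneously, together with some $W\in\mathcal{U}\cap D_A$. Now I would exploit the dichotomy $D_A=P_A\cup Q_A$. The case $W\in P_A$ is impossible, for it would produce an $X\in\mathcal{U}_A=\mathcal{U}\cap A\sse\mathcal{U}$ with $W\cap X=\emptyset$, contradicting that $\mathcal{U}$ is a proper filter containing both $W$ and $X$. Hence $W\in Q_A$, so $W\in\mathcal{U}$ is a pseudo-intersection of an infinite subfamily $\{B_n:n<\om\}\sse\mathcal{X}_A=\mathcal{X}\cap A\sse\mathcal{X}$, meaning $W\sse^* B_n$ for all $n<\om$. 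Thus the infinite set $\mathcal{Y}=\{B_n:n<\om\}\sse\mathcal{X}$ is bounded in $(\mathcal{U},\contains^*)$ by $W$, contradicting the defining property of $\mathcal{X}$. This contradiction gives $(\mathcal{U},\contains^*)<_T[\mathfrak{c}]^{<\om}$, and the reduction above completes the proof.

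I expect the main conceptual point to be the choice of ordering: because $Q_A$ records only almost-containment, the whole argument must run through $(\mathcal{U},\contains^*)$ and the $\contains^*$-form of the top-type characterization, being transferred back to $\contains$ only at the very end. A second point requiring care is the reading of $Q_A$, where the witnessing sequence $(B_n)_{n<\om}$ must be taken to enumerate infinitely many distinct members of $\mathcal{X}_A$, so that $\mathcal{Y}$ is genuinely an infinite subfamily of $\mathcal{X}$; it is precisely this infiniteness that collides with the top-type witness and produces the contradiction. The combination of $\lozenge^-$ (yielding stationarily many correctly reflected $A$) with the strengthened hypothesis (yielding club-many $A$ meeting $D_A$) is what makes a usable $A$ available, and is the step where the club assumption, as opposed to the merely stationary conclusion of Fact \ref{fact.necessary}, is used.
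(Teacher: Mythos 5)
Your proof is correct and follows essentially the same route as the paper's: fix a witness $\mathcal{X}\in[\mathcal{U}]^{\mathfrak{c}}$, use $\lozenge^-_{[[\om]^{\om}]^{\om}}$ together with the club hypothesis to find an $A$ with $\mathcal{U}_A=\mathcal{U}\cap A$, $\mathcal{X}_A=\mathcal{X}\cap A$ and $\mathcal{U}\cap D_A\ne\emptyset$, rule out $P_A$ by properness of the filter, and let $Q_A$ produce an infinite $\contains^*$-bounded subfamily of $\mathcal{X}$. Your version is in fact slightly more careful than the paper's terse argument, since you make explicit the detour through $(\mathcal{U},\contains^*)$ and the $\contains^*$-analogue of Fact~\ref{fact.Tukeytopchar} (which the paper only sets up implicitly in the section's opening remarks), as well as the need for the witnessing sequence in $Q_A$ to consist of infinitely many distinct sets.
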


\begin{proof}
Let $\mathcal{X}\in[\mathcal{U}]^{\mathfrak{c}}$.
$\{A\in[[\om]^{\om}]^{\om}:(A,\mathcal{U}\cap A,\mathcal{X}\cap A)\prec([\om]^{\om},\mathcal{U},\mathcal{X})\}$ is club in $[[\om]^{\om}]^{\om}$.
$\{A\in [[\om]^{\om}]^{\om}: \mathcal{U}\cap A=\mathcal{U}_A,\ \mathcal{X}\cap A=\mathcal{X}_A\}$ is stationary.
If $\mathcal{U}\cap D_A\ne\emptyset$ for club many $A$,
then there are stationary many $A$ such that $\mathcal{U}\cap A=\mathcal{U}_A$, $\mathcal{X}\cap A=\mathcal{X}_A$;
and either there is a $U\in\mathcal{U}_A$ and a $W\in\mathcal{U}$ such that $U\cap W=\emptyset$,
which is impossible,
or else there is a $W\in\mathcal{U}$ and $(B_n)_{n<\om}\sse\mathcal{X}_A=A\cap \mathcal{X}$ such that for each $n<\om$,
$W\sse^* B_n$.
Therefore, $\mathcal{U}$ is not of Tukey top degree. 
\end{proof}

\begin{fact}\label{fact.ppointhitsall}
If $\mathcal{U}$ is a p-point, then $\mathcal{U}\cap D_A\ne\emptyset$ for all $A\in[[\om]^{\om}]^{\om}$.
\end{fact}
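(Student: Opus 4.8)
The plan is to fix an arbitrary $A\in[[\om]^{\om}]^{\om}$ and run the same dichotomy on $\mathcal{U}_A$ that already appears in the proof of Fact \ref{fact.DAdense}: I would split according to whether the guessed filter $\mathcal{U}_A$ is contained in $\mathcal{U}$, and in each case exhibit an explicit member of $\mathcal{U}$ lying in $D_A=P_A\cup Q_A$. The only properties of $\mathcal{U}$ I expect to invoke are that it is a nonprincipal ultrafilter (hence contains the Fr\'echet filter and the complement of each of its non-members) and that it is a p-point in the sense of Definition \ref{defn.uftypes}(2), so that every countable subfamily has a pseudo-intersection back in $\mathcal{U}$.

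First I would treat the case $\mathcal{U}_A\not\sse\mathcal{U}$. Here I pick some $X\in\mathcal{U}_A$ with $X\not\in\mathcal{U}$. Since $\mathcal{U}$ is nonprincipal it contains every cofinite set, so $X$ is not cofinite and $W:=\om\setminus X$ is infinite; and $W\in\mathcal{U}$ because $\mathcal{U}$ is an ultrafilter. As $W\cap X=\emptyset$ with $X\in\mathcal{U}_A$, this $W$ witnesses $W\in P_A$, so $W\in P_A\cap\mathcal{U}\sse D_A\cap\mathcal{U}$. Note that nonprincipality is exactly what keeps $W$ infinite, so this boundary point should be flagged but is harmless.

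The remaining and main case is $\mathcal{U}_A\sse\mathcal{U}$, where $\mathcal{X}_A\sse\mathcal{U}_A\sse\mathcal{U}$ is a countable subfamily of $\mathcal{U}$; this is where the p-point hypothesis does real work. By Definition \ref{defn.uftypes}(2) there is a single $W\in\mathcal{U}$ with $W\sse^* B$ for every $B\in\mathcal{X}_A$. Enumerating $\mathcal{X}_A$ as $(B_n)_{n<\om}$ (repeating a fixed element if $\mathcal{X}_A$ is finite) produces a sequence in $\mathcal{X}_A$ with $W\sse^* B_n$ for all $n$, so $W\in Q_A\cap\mathcal{U}\sse D_A\cap\mathcal{U}$. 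The essential content of the Fact is precisely that, unlike for an arbitrary ultrafilter (where Fact \ref{fact.necessary} only yields stationarily many good $A$), the p-point property guarantees this pseudo-intersection always stays inside $\mathcal{U}$.

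The one genuinely delicate point, which I would address at the outset, is the degenerate subcase $\mathcal{X}_A=\emptyset$ with $\mathcal{U}_A\sse\mathcal{U}$: then $Q_A=\emptyset$ and, since every $X\in\mathcal{U}_A$ already belongs to $\mathcal{U}$ and so meets every member of $\mathcal{U}$, one also has $P_A\cap\mathcal{U}=\emptyset$. To handle this I would first record the trivial inclusion $\mathcal{X}_A\sse Q_A$ (for any $B\in\mathcal{X}_A$ the constant sequence $B_n=B$ together with $W=B$ shows $B\in Q_A$), so that $\mathcal{X}_A\cap\mathcal{U}\neq\emptyset$ already suffices and the only real gap is $\mathcal{X}_A=\emptyset$; I would then rule this out by normalizing the fixed $\lozenge^-_{[[\om]^{\om}]^{\om}}$ sequence so that each $\mathcal{X}_A$ contains a cofinite set, equivalently $\om\in Q_A$ — exactly the fact used implicitly in the proof of Fact \ref{fact.necessary}. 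Such a normalization costs nothing, since every pair $(\mathcal{U},\mathcal{X})$ that must be guessed contains the Fr\'echet filter. Thus the main obstacle is not a hard combinatorial step but the bookkeeping of these degeneracies: keeping $\om\setminus X$ infinite via nonprincipality, and keeping $\mathcal{X}_A$ nonempty via the cofinite-set convention.
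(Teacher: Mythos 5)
Your argument is correct and essentially identical to the paper's proof: the same dichotomy on whether $\mathcal{U}_A\sse\mathcal{U}$, with $\om\setminus X$ (for $X\in\mathcal{U}_A\setminus\mathcal{U}$) witnessing membership in $P_A$ in the first case, and a p-point pseudo-intersection of the countable family $\mathcal{X}_A$ witnessing membership in $Q_A$ in the second. Your observation that the statement as literally written fails when $\mathcal{X}_A=\emptyset$ (so $Q_A=\emptyset$, while $\mathcal{U}_A\sse\mathcal{U}$ forces $P_A\cap\mathcal{U}=\emptyset$) identifies a degenerate case the paper's one-line proof silently ignores, and normalizing the $\lozenge^-_{[[\om]^{\om}]^{\om}}$ sequence so that each $\mathcal{X}_A$ contains a cofinite set is the natural repair.
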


\begin{proof}
Let $A\in[[\om]^{\om}]^{\om}$ be given.
If $\mathcal{U}_A\not\sse \mathcal{U}$,
then  taking an $X\in\mathcal{U}_A\setminus\mathcal{U}$,
we have
$\om\setminus X\in\mathcal{U}\cap P_A$.
If $\mathcal{U}_A\sse\mathcal{U}$,
then since $\mathcal{X}_A$ is countable, there is a $W\in\mathcal{U}$ which is
almost contained in every member of
 $\mathcal{X}_A$.
Hence, $W\in\mathcal{U}\cap Q_A$.
\end{proof}

Let $Q'_A=\{W\in[\om]^{\om}: \forall X\in\mathcal{X}_A(W\sse^*X)\}$.
Let $D'_A=P_A\cup Q'_A$.
By the same proof as for $D_A$, we see that $D'_A$ is dense open in $[\om]^{\om}$.

\begin{fact}\label{fact.ppointclub}
If $\mathcal{U}\cap D'_A\ne\emptyset$ for club many $A$,
then $\mathcal{U}$ is a p-point.
\end{fact}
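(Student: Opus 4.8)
The plan is to verify the defining property of a p-point directly, using the reflection furnished by the fixed $\lozenge^-_{[[\om]^{\om}]^{\om}}$ sequence in the same spirit as Fact \ref{fact.clubnontop}, but now exploiting the clause $Q'_A$ (almost containment in \emph{every} member of $\mathcal{X}_A$) rather than $Q_A$. First I would fix an arbitrary countable family $\{X_n:n<\om\}\sse\mathcal{U}$ and aim to produce a single $W\in\mathcal{U}$ with $W\sse^* X_n$ for all $n$. We may assume $\mathcal{U}$ is nonprincipal, since otherwise the conclusion is trivial; thus each $X_n\in[\om]^{\om}$ and $|\mathcal{U}|=\mathfrak{c}$.

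Next I would apply the $\lozenge^-_{[[\om]^{\om}]^{\om}}$ principle to the pair $(\mathcal{U},\mathcal{U})$ (legitimate since $\mathcal{U}\sse\mathcal{U}$ and $\mathcal{U}\in[[\om]^{\om}]^{\mathfrak{c}}$): the set $S=\{A\in[[\om]^{\om}]^{\om}:\mathcal{U}_A=\mathcal{U}\cap A\text{ and }\mathcal{X}_A=\mathcal{U}\cap A\}$ is stationary. I would then intersect $S$ with two club sets: the club $C_1$ of all $A$ containing the fixed countable set $\{X_n:n<\om\}$, and the club $C_2$ witnessing the hypothesis that $\mathcal{U}\cap D'_A\ne\emptyset$. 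Since a stationary set meets every club, $S\cap C_1\cap C_2$ is stationary, in particular nonempty; fix some $A$ in it.

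Finally I would analyze a witness $W\in\mathcal{U}\cap D'_A=\mathcal{U}\cap(P_A\cup Q'_A)$. If $W\in P_A$, there is $X\in\mathcal{U}_A=\mathcal{U}\cap A\sse\mathcal{U}$ with $W\cap X=\emptyset$; but $W,X\in\mathcal{U}$ forces $W\cap X\in\mathcal{U}$, contradicting $\emptyset\notin\mathcal{U}$, so the $P_A$ alternative is excluded. Hence $W\in Q'_A$, which means $W\sse^* X$ for every $X\in\mathcal{X}_A$. Because $\{X_n:n<\om\}\sse\mathcal{U}\cap A=\mathcal{X}_A$, this yields $W\sse^* X_n$ for all $n$ together with $W\in\mathcal{U}$, which is exactly the desired pseudo-intersection.

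The main point requiring care is the cardinality mismatch: the $\lozenge^-$ principle speaks only of pairs of size $\mathfrak{c}$, whereas the p-point property concerns countable families. The plan resolves this by feeding in the full ultrafilter $\mathcal{X}=\mathcal{U}$ (so that $\mathcal{X}_A=\mathcal{U}_A$ automatically) and then relying on the club $C_1$ to guarantee that the relevant countable family lands inside $\mathcal{X}_A=\mathcal{U}\cap A$. One should also double-check that the collection of $A$ containing a fixed countable subset of $[\om]^{\om}$ is genuinely club in $[[\om]^{\om}]^{\om}$, and that the $P_A$ alternative can always be discarded precisely because $\mathcal{U}_A\sse\mathcal{U}$; both are routine once the reflection is set up correctly.
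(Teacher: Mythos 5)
Your proposal is correct and follows essentially the same route as the paper: reflect the pair $(\mathcal{U},\mathcal{U})$ via the $\lozenge^-_{[[\om]^{\om}]^{\om}}$ sequence, intersect the resulting stationary set with the club of $A$ containing the given countable family and the club on which $\mathcal{U}\cap D'_A\ne\emptyset$, and observe that the witness must lie in $Q'_A$ (your explicit elimination of the $P_A$ alternative is a detail the paper leaves implicit). No gaps.
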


\begin{proof}
Suppose that $\mathcal{C}$ is club in $[[\om]^{\om}]^{\om}$ and for each $A\in\mathcal{C}$,
$\mathcal{U}\cap D'_A\ne\emptyset$.
Let $\mathcal{Y}\in[\mathcal{U}]^{\om}$.
Take $A$ such that $\mathcal{Y}\sse A$, $(A,\mathcal{U}\cap A,\mathcal{U}\cap A)\prec ([\om]^{\om},\mathcal{U},\mathcal{U})$,
 $\mathcal{U}_A=\mathcal{U}\cap A$,
 $\mathcal{X}_A=\mathcal{U}\cap A$,
 and $\mathcal{U}\cap D'_A\ne\emptyset$.
Then $\mathcal{Y}\sse\mathcal{U}\cap A=\mathcal{X}_A$.
So since there is a $W\in\mathcal{U}$ such that for each $X\in \mathcal{X}_A$, $W\sse^* X$,
there is a 
$U\in\mathcal{U}$ such that $U\sse^* Y$ for each $Y\in
\mathcal{Y}$.
\end{proof}

\begin{rem}\label{rem.buildnontop}
Assuming $\neg$CH and that there are no p-points (the remaining open case for Isbell's Problem), 
if Isbell's Problem is solved in the affirmative with an ultrafilter $\mathcal{U}<_T[\mathfrak{c}]^{<\om}$, 
then $\mathcal{U}$ {\em must} have the following properties.
\begin{enumerate}
\item
$\mathcal{U}\cap D_A\ne\emptyset$ for club many $A\in[[\om]^{\om}]^{\om}$.
\item
The collection of $A\in[[\om]^{\om}]^{\om}$ such that $\mathcal{U}\cap D'_A\ne\emptyset$ does not contain a club set.
\end{enumerate}
To solve Isbell's Problem under the assumptions $\neg$CH and there are no p-points, it suffices to find an ultrafilter $\mathcal{U}$ such that (1) holds and 
\begin{enumerate}
\item[(3)]
There is some $A\in[[\om]^{\om}]^{\om}$ such that $\mathcal{U}\cap D_A=\emptyset$.
\end{enumerate}
\end{rem}

\begin{question}
Assume $\neg$CH and there are no p-points.
Can we use these dense sets, or similar ones, 
to obtain 
\begin{enumerate}
\item
an ultrafilter which is not Tukey top?
\item
an ultrafilter which is not Tukey top but also is not basically generated?
\end{enumerate}
\end{question}


\section{Concluding remarks and problems}\label{sec.problems}

Recall that the properties of p-point and rapid are preserved under Rudin-Keisler reducibility.  

\begin{question}
Which properties of ultrafilters are  preserved under 
Tukey reducibility?
\end{question}

By Theorem \ref{thm.7},
if a p-point $\mathcal{U}\ge_T\om^{\om}$,
then $\mathcal{U}\equiv_T\mathcal{U}\cdot\mathcal{U}$, which is not a p-point, so the property of being a p-point is not preserved by Tukey reducibility.
However, we may ask the following.

\begin{question}
If $\mathcal{U}$ is a p-point and $\mathcal{U}\ge_T\mathcal{V}$, then is there a p-point $\mathcal{W}$ such that $\mathcal{W}\equiv_T\mathcal{V}$?
\end{question}

\begin{question}
Which lattices can be embedded into the Tukey types of p-points?
In particular, are there two Tukey incomparable p-points which have no p-point as a common Tukey upper bound?
\end{question}

\begin{question}
Are there two Tukey non-comparable ultrafilters whose least upper bound is the top Tukey type?
\end{question}



\begin{question}
Does every Tukey minimal type contain a selective ultrafilter?
\end{question}




\begin{question}
What is the structure of the Rudin-Keisler types within the top Tukey type?
\end{question}

\bibliographystyle{amsplain}
\bibliography{references1}

\end{document}